\documentclass[11pt,reqno]{amsart}  
\usepackage{amsmath,amssymb,amsthm}
\theoremstyle{plain}
\newtheorem{theorem}{Theorem}[section]
\newtheorem{lemma}[theorem]{Lemma}
\newtheorem{proposition}[theorem]{Proposition}

\newtheorem{corollary}[theorem]{Corollary}

\numberwithin{equation}{section}
\allowdisplaybreaks[1]

\theoremstyle{definition}

\newtheorem{definition}[theorem]{Definition}
\newtheorem{remark}[theorem]{Remark}
\newtheorem{example}[theorem]{Example}

\theoremstyle{remark}

\newcommand{\ev}{\boldsymbol{\rm ev}}
\newcommand{\free}{{\mathbb F}^{+}_{d}}

\newcommand{\bA}{{\mathbf A}}

\newcommand{\bQ}{{\mathbf Q}}
\newcommand{\bU}{{\mathbf U}}
\newcommand{\bV}{{\mathbf V}}
\newcommand{\bZ}{{\mathbf Z}}

\newcommand{\fa}{{\mathfrak a}}
\newcommand{\fb}{{\mathfrak b}}
\newcommand{\dBR}{{\rm dBR}}

\newcommand{\biota}{{\boldsymbol \iota}}

\newcommand{\bcH}{{\boldsymbol{\mathcal H}}}

\newcommand{\bchi}{\boldsymbol{\chi}}

\newcommand{\bphi}{{\boldsymbol \varphi}}

\newcommand{\ba}{{\mathbf a}}
\newcommand{\bb}{{\mathbf b}}

\newcommand{\boldf}{{\mathbf f}}
\newcommand{\bg}{{\mathbf g}}
\newcommand{\bh}{{\mathbf h}}
\newcommand{\bi}{{\mathbf i}}

\newcommand{\bx}{{\mathbf x}}
\newcommand{\by}{{\mathbf y}}

\newcommand{\cA}{{\mathcal A}}
\newcommand{\cB}{{\mathcal B}}
\newcommand{\cC}{{\mathcal C}}
\newcommand{\cD}{{\mathcal D}}
\newcommand{\cE}{{\mathcal E}}
\newcommand{\cF}{{\mathcal F}}

\newcommand{\cH}{{\mathcal H}}
\newcommand{\cI}{{\mathcal I}}

\newcommand{\cK}{{\mathcal K}}
\newcommand{\cL}{{\mathcal L}}
\newcommand{\cM}{{\mathcal M}}

\newcommand{\cO}{{\mathcal O}}

\newcommand{\cR}{{\mathcal R}}
\newcommand{\cS}{{\mathcal S}}
\newcommand{\cSA}{\mathcal{SA}}
\newcommand{\cT}{{\mathcal T}}
\newcommand{\cU}{{\mathcal U}}
\newcommand{\cV}{{\mathcal V}}
\newcommand{\cW}{{\mathcal W}}
\newcommand{\cX}{{\mathcal X}}
\newcommand{\cY}{{\mathcal Y}}
\newcommand{\cZ}{{\mathcal Z}}

\newcommand{\fA}{{\mathfrak A}}
\newcommand{\bGamma}{\boldsymbol{\Gamma}}

\newcommand{\rowvec}{{\rm \textbf{row-vec}}}

\newcommand{\C}{{\mathbb C}}

\newcommand{\sbm}[1]{\left[\begin{smallmatrix} #1
		\end{smallmatrix}\right]}

\newcommand{\sm}[1]{\begin{smallmatrix} #1
                \end{smallmatrix}}

\newcommand{\bcD}{{\boldsymbol{\mathcal D}}}
\newcommand{\bcR}{{\boldsymbol{\mathcal R}}}

\newcommand{\diag}{{\rm diag}_{1 \le i \le k_{z}}}

\begin{document}

\title[nc interpolation and realization]
{Interpolation and transfer-function realization for the 
noncommutative Schur-Agler class}
\author[J.A.\ Ball]{Joseph A. Ball}
\address{Department of Mathematics,
Virginia Tech,
Blacksburg, VA 24061-0123, USA}
\email{joball@math.vt.edu}
\author[G.\ MarX]{Gregory Marx}
\address{Department of Mathematics,
Virginia Tech,
Blacksburg, VA 24061-0123, USA}
\email{marxg@vt.edu}
\author[V.\ Vinnikov]{Victor Vinnikov}
\address{Department of Mathematics, Ben-Gurion University of the 
Negev, Beer-Sheva, Israel, 84105}
\email{vinnikov@cs.bgu.ac.il}

\begin{abstract} The Schur-Agler class consists of functions over a 
    domain satisfying an appropriate von Neumann inequality.  
    Originally defined over the polydisk,  the idea has been 
    extended to general domains in multivariable complex 
    Euclidean space with matrix polynomial defining function  as well as to 
    certain multivariable noncommutative-operator domains with a noncommutative 
    linear-pencil defining function.  Still more recently there has 
    emerged a free noncommutative function theory (functions of noncommuting matrix 
    variables respecting direct sums and similarity 
    transformations).  The purpose of the present paper is to extend 
    the Schur-Agler-class theory to the free noncommutative function 
    setting.  This includes the positive-kernel-decomposition 
    characterization of the class, transfer-function realization and 
    Pick interpolation theory.  A special class of defining functions 
    is identified for which the associated Schur-Agler class 
    coincides with the contractive-multiplier class on an associated 
    noncommutative reproducing kernel Hilbert space; in this case, 
    solution of the Pick interpolation problem is in terms of the 
    complete positivity of an associated Pick matrix which is 
    explicitly determined from the interpolation data.
	
 \end{abstract}

\subjclass{47B32; 47A60}
\keywords{Noncommutative function, completely positive noncommutative 
kernel, noncommutative Schur-Agler class,
noncommutative contractive-multiplier class, noncommutative Pick interpolation}

\maketitle

\tableofcontents

\section{Introduction}  \label{S:Intro}
\setcounter{equation}{0}

The goal of this paper is to incorporate classical Nevanlinna-Pick 
interpolation into the the general setting of free noncommutative 
function theory as treated in the recent book \cite{KVV-book}. To 
set the results into a broader context, we first review developments 
in Nevanlinna-Pick interpolation theory, beginning with the classical 
version, continuing with more elaborate versions involving matrix- 
and operator-valued interpolants for  tangential-type interpolation 
conditions, then extensions to multivariable settings, and finally 
the free noncommutative setting.

We are now approaching the centennial of the Nevanlinna-Pick 
interpolation theorem which characterizes when there is a holomorphic 
map of the unit disk into its closure satisfying a finite collection 
of prescribed interpolation conditions:

\begin{theorem}   \label{T:NP} (See Pick (1916) \cite{Pick} and 
    Nevanlinna (1919) \cite{Nevanlinna}.) Given points $z_{1}, \dots, z_{N}$ in 
    the unit disk ${\mathbb D} = \{ z \in {\mathbb C} \colon |z| < 
    1\}$ and associated preassigned values $\lambda_{1}, \dots, 
    \lambda_{N}$ in the complex plane ${\mathbb C}$, there exists a 
    holomorphic function $s$ mapping the unit disk ${\mathbb D}$ into 
    the closed unit disk $\overline{\mathbb D}$ and satisfying the 
    interpolation conditions
    \begin{equation}   \label{interpolate1}
	s(z_{i}) = \lambda_{i} \text{ for } i = 1, \dots, N
\end{equation}
if and only if the $N \times N$ so-called Pick matrix ${\mathbb P}$ 
is positive-definite:
$$
  {\mathbb P} : = \left[ \frac{ 1 - \lambda_{i} 
  \overline{\lambda_{j}}}{1 - z_{i} \overline{z_{j}}}\right]_{i,j=1, 
  \dots, N} \succeq 0.
$$
\end{theorem}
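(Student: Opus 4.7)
The plan is to prove the two implications separately, centered on the Szegő reproducing kernel $k_w(z) = (1 - z\overline{w})^{-1}$ of the Hardy space $H^2(\mathbb{D})$. For necessity, I would invoke the standard Schur-class characterization: any holomorphic $s\colon \mathbb{D} \to \overline{\mathbb{D}}$ admits a factorization $1 - s(z)\overline{s(w)} = (1 - z\overline{w}) H(z,w)$ with $H$ a positive kernel on $\mathbb{D}$, obtainable either from the contractivity $\|M_s\|_{H^2 \to H^2} \le 1$ or from a unitary transfer-function realization of $s$. Restricting $H$ to the interpolation nodes yields a positive semidefinite matrix $[H(z_i, z_j)]$, which the factorization identifies with the Pick matrix $\mathbb{P}$.

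For sufficiency, the cleanest route is via commutant lifting. Let $\mathcal{M} = \mathrm{span}\{k_{z_1}, \ldots, k_{z_N}\} \subset H^2(\mathbb{D})$, which is $M_z^*$-invariant since $M_z^* k_{z_i} = \overline{z_i}\, k_{z_i}$. Define $T$ on $\mathcal{M}$ by $T^* k_{z_i} = \overline{\lambda_i}\, k_{z_i}$. A direct computation identifies the Gram matrix $\bigl[\langle (I - TT^*) k_{z_j}, k_{z_i}\rangle\bigr]_{i,j}$ with $\mathbb{P}$, so the hypothesis $\mathbb{P} \succeq 0$ is equivalent to $T$ being a contraction. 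Because $T^*$ and $M_z^*|_\mathcal{M}$ share the common eigenbasis $\{k_{z_i}\}$, $T$ commutes with the compression $P_\mathcal{M} M_z|_\mathcal{M}$. Sarason's theorem (a special case of commutant lifting for the shift) then produces $s \in H^\infty$ with $\|s\|_\infty \le 1$ such that $P_\mathcal{M} M_s|_\mathcal{M} = T$; pairing with $k_{z_i}$ yields $\overline{s(z_i)}\, k_{z_i} = M_s^* k_{z_i} = \overline{\lambda_i}\, k_{z_i}$, whence $s(z_i) = \lambda_i$.

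The main obstacle is sufficiency, where an actual holomorphic function must be conjured from purely numerical positivity data. The real labor is concealed inside Sarason's theorem, whose proof rests on a Parrott-type one-step extension or the Sz.-Nagy--Foias dilation machinery. A more elementary alternative is the Schur algorithm: a Möbius change of variables on both domain and target normalizes $z_1 = \lambda_1 = 0$, dividing by $z$ produces a Schur interpolation problem at $N-1$ nodes, and a Schur-complement identity shows the reduced Pick matrix remains positive semidefinite, so induction on $N$ closes the argument; this exchanges the operator-theoretic overhead for more delicate bookkeeping through the reductions. Either route foreshadows the reproducing-kernel and transfer-function methods that the present paper will need to adapt to the free noncommutative setting.
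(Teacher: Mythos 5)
Your proof is correct. Note that the paper does not actually prove Theorem~\ref{T:NP} --- it is cited as classical background from Pick and Nevanlinna --- but the paper's own machinery, worked out in Section~\ref{S:proofs} for Theorem~\ref{T:ncInt} and traced in the Introduction to Sz.-Nagy--Koranyi~\cite{NK} and Agler~\cite{Agler-Hellinger}, would specialize to a proof of Theorem~\ref{T:NP} via the cone-separation/lurking-isometry route: positivity of the Pick matrix is used to build an isometry between subspaces of a Gramian space, extended to a unitary colligation $\sbm{A&B\\C&D}$ whose transfer function $s(z)=D+zC(I-zA)^{-1}B$ is then the sought interpolant. Your sufficiency argument runs instead through Sarason's commutant lifting theorem, a genuinely different classical route. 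The two differ in where the dilation-theoretic labor is concealed: in the lurking-isometry approach it lies in the one-step extension of the partial isometry to a unitary colligation, in yours it is outsourced to Sarason's theorem (whose proof itself rests on Parrott or Sz.-Nagy--Foias dilation). Your necessity argument, via positive-kernel factorization of the de Branges-Rovnyak kernel $1-s(z)\overline{s(w)}=(1-z\overline{w})H(z,w)$, is the exact one-variable scalar case of the paper's Agler decomposition in condition~(2) of Theorem~\ref{T:tanNP}. Both routes are sound; the paper favors the realization machinery precisely because it scales to the free noncommutative setting of Theorem~\ref{T:ncInt}, where a commutant lifting theorem in the necessary generality is considerably more delicate, whereas your Schur-algorithm alternative does not scale at all.
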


Much later in the late 1960s, Sarason \cite{Sarason} introduced an operator-theoretic 
point of view to the problem which led to the Commutant Lifting 
approach to a variety of more general matrix- and operator-valued 
interpolation and moment problems.  We mention in 
particular the Fundamental Matrix Inequality approach (based on 
manipulation of positive operator-valued kernels) of Potapov
(see \cite{KY} and the references there), the detailed application 
of the Commutant Lifting approach in the books of Foias-Frazho
\cite{FF} and Gohberg-Foias-Frazho-Kaashoek \cite{FFGK}, as well as 
the state-space approach of Ball-Gohberg-Rodman \cite{BGR90} for 
rational matrix functions. Much of this work was stimulated by the 
connections with and needs of $H^{\infty}$-control, as also exposed 
in the books \cite{FF, BGR90, FFGK} which emphasized the connection 
between holomorphic functions and transfer functions of 
input/state/output linear systems.  The following is a sample theorem 
from this era.  For $\cX$ and $\cX^{*}$ any Hilbert spaces, we let 
$\cL(\cX, \cX^{*})$ denote the space of bounded linear operators from 
$\cX$ to $\cX^{*}$.  Let us use the notation $\cS(\cU, \cY)$ for the 
$\cL(\cU, \cY)$-valued {\em Schur class} consisting of holomorphic 
functions $S$ mapping the unit disk ${\mathbb D}$ into the unit ball
$\overline{\cB}\cL(\cU, \cY)$ of the space of operators $\cL(\cU, 
\cY)$ from $\cU$ into $\cY$.

\begin{theorem}   \label{T:tanNP}
    Assume that we are given a subset $\Omega$ of the unit disk 
    ${\mathbb D} \subset {\mathbb C}$, three coefficient Hilbert 
    spaces $\cE, \cU, \cY$, and functions $a \colon \Omega \to \cL(\cY, 
    \cE)$ and $b \colon \colon \Omega \to \cL(\cU, \cE)$. Then the 
    following conditions are equivalent:
    
    \begin{enumerate}
    \item
    There exists a Schur-class function $S \in \cS(\cU, \cY)$ such that $S$ 
    satisfies the set of left-tangential interpolation conditions:
   \begin{equation}   \label{tanint}
   a(z) S(z) = b(z) \text{ for each } z \in \Omega.
   \end{equation}
   
   \item
   The generalized de Branges-Rovnyak kernel 
   $$K^{\dBR}_{a,b}(z,w) : = \frac{ a(z) a(w)^{*} - b(z) b(w)^{*}}{ 1 - z 
   \overline{w}} 
   $$
   is a positive kernel on $\Omega$ (written as $K_{a,b} \succeq 0$), i.e., 
   for each finite set of 
   points $\{z_{1}, \dots, z_{N}\}$ in $\Omega$, the $N 
   \times N$ block matrix
   $$  \left[ \frac{ a(z_{i}) a(z_{j})^{*} - b(z_{i}) b(z_{j})^{*}}{ 
   1 - z_{i} \overline{z_{j}}} \right]_{i,j=1, \dots, N}
   $$
   is a positive semidefinite matrix.
   
   \item
   There is an auxiliary Hilbert space $\cX$ and a contractive (or 
   even unitary) colligation matrix
     $$
     \bU: = \begin{bmatrix} A & B \\ C & D \end{bmatrix} \colon 
     \begin{bmatrix} \cX \\ \cU \end{bmatrix} \to \begin{bmatrix} \cX 
	 \\ \cY \end{bmatrix}
 $$
 so that the $\cL(\cU, \cY)$-valued function $S$ given by
   \begin{equation}  \label{transfunc1}
     S(z) = D + z C (I - zA)^{-1} B
   \end{equation}
  satisfies the interpolation conditions \eqref{tanint} on $\Omega$.
 \end{enumerate}
 \end{theorem}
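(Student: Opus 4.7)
The cleanest route is to prove the cycle $(3)\Rightarrow(1)\Rightarrow(2)\Rightarrow(3)$. The implication $(3)\Rightarrow(1)$ is the routine direction: a straightforward computation using the contractivity of the colligation matrix $\bU$ shows that
\[
I_{\cY} - S(z)S(w)^{*} = (1 - z\overline{w})\, C(I-zA)^{-1}(I-\overline{w}A^{*})^{-1}C^{*},
\]
so $S$ is a Schur-class function; the interpolation conditions $a(z)S(z) = b(z)$ are assumed to be built in to the construction in (3).

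For $(1)\Rightarrow(2)$, I would use the standard fact that any $S \in \cS(\cU,\cY)$ has the positive de Branges--Rovnyak kernel $K_{S}(z,w) = (I - S(z)S(w)^{*})/(1-z\overline{w})$. Multiplying on the left by $a(z)$ and on the right by $a(w)^{*}$ and substituting $a(z)S(z) = b(z)$ yields
\[
K^{\dBR}_{a,b}(z,w) = a(z) K_{S}(z,w) a(w)^{*},
\]
which is positive as a conjugation of a positive kernel.

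The main work is the implication $(2)\Rightarrow(3)$, which I would carry out by the lurking-isometry method. First, use the Kolmogorov (Moore) factorization of the positive kernel $K^{\dBR}_{a,b}$ to obtain an auxiliary Hilbert space $\cX$ and a function $H\colon\Omega\to\cL(\cX,\cE)$ with
\[
\frac{a(z)a(w)^{*} - b(z)b(w)^{*}}{1-z\overline{w}} = H(z)H(w)^{*}.
\]
Clearing the denominator rearranges this into the Gramian identity
\[
a(z)a(w)^{*} + z\overline{w}\,H(z)H(w)^{*} = b(z)b(w)^{*} + H(z)H(w)^{*},
\]
which says that on the dense subspace of $\cX\oplus\cU$ spanned by vectors of the form $\binom{\overline{w}H(w)^{*}u}{a(w)^{*}u}$ (for $w\in\Omega$, $u\in\cE$), the assignment
\[
\binom{\overline{w}H(w)^{*}u}{a(w)^{*}u} \longmapsto \binom{H(w)^{*}u}{b(w)^{*}u}
\]
extends by linearity and limits to a well-defined isometry $V\colon \cD\to\cR\subseteq \cX\oplus\cY$. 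Extending $V^{*}$ to a contraction (or, by enlarging $\cX$, a unitary) $\cX\oplus\cY\to\cX\oplus\cU$ and partitioning its adjoint as $\bU = \sbm{A & B\\ C & D}$, I would then verify, by reading off the intertwining relations encoded in the defining property of $V$, the realization identity
\[
a(z)\bigl(D + zC(I-zA)^{-1}B\bigr) = b(z) \qquad (z\in\Omega).
\]

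The hard part will be the lurking-isometry step: checking that the map on the dense subspace really is well defined and isometric requires careful bookkeeping with finite sums of the generating vectors, and then extracting the realization identity from the block-matrix structure of $\bU$ demands a patient algebraic manipulation of $(I-zA)^{-1}$-terms. The other implications are essentially one-line rearrangements once the kernel identities are correctly set up.
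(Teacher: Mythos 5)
Your outline is the standard lurking-isometry proof of the classical left-tangential Nevanlinna--Pick theorem; the paper does not actually prove Theorem \ref{T:tanNP} itself (it is quoted in the Introduction as a known result from the literature), but the paper's proof of its main Theorem \ref{T:ncInt} follows exactly the same skeleton in the noncommutative setting --- a lurking-isometry argument for the step corresponding to your $(2)\Rightarrow(3)$ and a contractive-load principle for $(3)\Rightarrow(1)$ --- so your strategy is the right one. Two small bookkeeping corrections. First, since $a(w)^{*}\colon\cE\to\cY$ and $b(w)^{*}\colon\cE\to\cU$, the vectors $\sbm{\overline{w}H(w)^{*}u \\ a(w)^{*}u}$ spanning your domain $\cD$ live in $\cX\oplus\cY$ (not $\cX\oplus\cU$), while the target vectors $\sbm{H(w)^{*}u \\ b(w)^{*}u}$ spanning $\cR$ live in $\cX\oplus\cU$; thus one extends $V$ itself (not $V^{*}$) to a contraction from $\cX\oplus\cY$ to $\cX\oplus\cU$ and then defines $\bU$ to be the adjoint of that extension, so that $\bU$ has the announced mapping shape $\sbm{\cX \\ \cU}\to\sbm{\cX \\ \cY}$. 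Second, the displayed identity $I_{\cY} - S(z)S(w)^{*} = (1-z\overline{w})\,C(I-zA)^{-1}(I-\overline{w}A^{*})^{-1}C^{*}$ in your $(3)\Rightarrow(1)$ step holds as an equality only when $\bU$ is coisometric; for a merely contractive $\bU$ the right-hand side acquires an additional nonnegative summand of the form $G(z)(I-\bU\bU^{*})G(w)^{*}$ with $G(z)=\begin{bmatrix} zC(I-zA)^{-1} & I\end{bmatrix}$, which still yields the required positivity of $I_{\cY}-S(z)S(w)^{*}$. Neither point affects the validity of the overall argument.
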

 
  We note that the form \eqref{transfunc1} for a holomorphic function 
  $S(z)$ on the disk is called a {\em transfer-function realization for 
  $S$} due to the following connection with the associated input/state/output 
  linear system
  $$
  \bU \colon \left\{ \begin{array}{ccc}
  x(n+1) & = & A x(n) + B u(n) \\
  y(n) & = & C x(n) + D u(n):
  \end{array} \right.
  $$
  {\em if one runs the system with an input string $\{u(n)\}_{n \ge 0}$ 
  and initial condition $x(0) = 0$, then the output string $\{y(n) 
  \}_{n \ge 0}$ recursively generated by the system equations is 
  given by
  $$
   \sum_{n=0}^{\infty} y(n) z^{n} = S(z) \cdot \sum_{n=0}^{\infty} 
   u(n) z^{n}
  $$
  where $S(z) = D + \sum_{n=1}^{\infty} C A^{n-1} B z^{n} = D + z C (I  
  - zA)^{-1} B$ is as in \eqref{transfunc1}.}
  Note also that equivalence (1) $\Leftrightarrow$ (2) for the 
  special case where $\Omega$ is a finite set $\{z_{1}, \dots, 
  z_{N}\}$, $\cE = \cU = \cY = {\mathbb C}$, $a(z_{i}) = 1$, 
  $b(z_{i}) = \lambda_{i}$ in Theorem \ref{T:tanNP} amounts to the 
  content of Theorem \ref{T:NP}.  Another salient special case is the 
  equivalence (1) $\Leftrightarrow$ (3) for the special case $\Omega 
  = {\mathbb D}$, $\cE = \cY$, $a(z) = I_{\cY}$, $b(z) = S(z)$: then 
  the content of Theorem \ref{T:tanNP} is the realization theorem for 
  the $\cL(\cU, \cY)$-valued Schur class:  any holomorphic function 
  $S \colon {\mathbb D} \mapsto \overline{\cB} \cL(\cU, \cY)$ can be 
  realized as the transfer function of a conservative  
  input/state/output linear system (i.e., as in \eqref{transfunc1} with $\bU$ 
  unitary).
 
 The extension to multivariable domains has several new ideas.  First 
 of all, we use a $s \times r$-matrix polynomial (or a holomorphic 
 operator-valued function in possible generalizations) $Q(z)$ in $d$ 
 variables to define a domain ${\mathbb D}_{Q}$ by 
 $$
 {\mathbb D}_{Q} = \{ z = (z_{1}, \dots, z_{d}) \in {\mathbb C}^{d} \colon 
 \| Q(z) \|_{{\mathbb C}^{s \times r}} < 1\}.
 $$
 Secondly, to get a theory parallel to the classical case, it is 
 necessary to replace the Schur class of the domain ${\mathbb D}_{Q}$
 (consisting of holomorphic functions $S \colon {\mathbb D}_{Q} \to 
 \overline{\cB}\cL(\cU, \cY)$) with functions having matrix or operator rather 
 than scalar arguments.  
Let $\cK$ be any fixed auxiliary infinite-dimensional separable Hilbert space.  
 For $T$ a commutative $d$-tuple $T = (T_{1}, \dots, T_{d})$ of 
 operators on $\cK$ with Taylor spectrum inside the 
 region ${\mathbb D}_{Q}$ and $S$ a holomorphic $\cL(\cU, \cY)$-valued 
 function on ${\mathbb D}_{Q}$, it is possible to use the Taylor functional 
 calculus (using Vasilescu's adaptation of the Bochner-Martinelli 
 kernel---see \cite{AT, BB04} for details) to make sense of the function 
 $S$ applied to the commutative operator-tuple $T$ to get an operator 
 $S(T) \in \cL(\cU \otimes \cK, \cY \otimes \cK)$.  We define the 
 {\em Schur-Agler class} $\mathcal{SA}_{Q}(\cU, \cY)$ to consist of 
 those holomorphic $\cL(\cU, \cY)$-valued functions on ${\mathbb D}_{Q}$ such 
 that $\| S(T) \|_{\cL(\cU \otimes \cK, \cY \otimes \cK)} \le 1$ 
 for all commutative operator $d$-tuples $T = (T_{1}, \dots, T_{d})$ such that $\| 
 Q(T) \| < 1$.  A result from \cite{AT} guarantees that $T$ has 
 Taylor spectrum inside ${\mathbb D}_{Q}$ whenever $\| Q(T) \| < 1$, so the 
 definition makes sense.  We then may state our $Q$-analogue of 
 Theorem \ref{T:tanNP} as follows.
 
 \begin{theorem}   \label{T:QtanNP}
     Assume that $Q$ is an $s \times r$-matrix-valued polynomial 
     defining a domain ${\mathbb D}_{Q} \subset {\mathbb C}^{d}$ as above.
    Assume that we are given a subset $\Omega$ of ${\mathbb D}_{Q}$,
    three coefficient Hilbert spaces $\cE, \cU, \cY$, and functions 
    $a \colon \Omega \to \cL(\cY, \cE)$ and $b \colon \colon \Omega \to \cL(\cU, \cE)$. 
    Then the following conditions are equivalent:
    
    \begin{enumerate}
    \item
    There exists a Schur-Agler-class function $S \in \cS_{Q}(\cU, \cY)$ such that $S$ 
    satisfies the set of left-tangential interpolation conditions:
   \begin{equation}   \label{Qtanint}
   a(z) S(z) = b(z) \text{ for each } z \in \Omega.
   \end{equation}
   
   \item
   There is an auxiliary Hilbert space $\cX$ so that the kernel on $\Omega$ given by 
   $$
   K_{a,b}(z,w) : = a(z) a(w)^{*} - b(z) b(w)^{*}
   $$
   has a factorization
   \begin{equation}   \label{QAglerdecom}
       K_{a,b}(z,w) = H(z)\left( (I_{s} - Q(z) Q(w)^{*}) \otimes I_{\cX} 
       \right) H(w)^{*}
   \end{equation}
   for some operator-valued function  $H \colon \Omega \to \cL({\mathbb C}^{r} 
   \otimes \cX, \cE)$.

   \item
   There is an auxiliary Hilbert space $\cX$ and a contractive (or 
   even unitary) colligation matrix
     $$
     \bU: = \begin{bmatrix} A & B \\ C & D \end{bmatrix} \colon 
     \begin{bmatrix} \cX^{s} \\ \cU \end{bmatrix} \to \begin{bmatrix} 
	 \cX^{r} 
	 \\ \cY \end{bmatrix}
 $$
 so that the $\cL(\cU, \cY)$-valued function $S$ given by
   \begin{equation}  \label{transfunc0}
S(z) = D +  C (I - (Q(z) \otimes I_{\cX}) A)^{-1} (Q(z) \otimes I_{\cX}) B
   \end{equation}
  satisfies the interpolation conditions \eqref{Qtanint}.
 \end{enumerate}
 \end{theorem}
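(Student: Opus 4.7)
The plan is to establish the cycle $(3) \Rightarrow (1) \Rightarrow (2) \Rightarrow (3)$, mirroring the proof template of the classical (scalar-argument) Schur-Agler theory over ${\mathbb D}_{Q}$.

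For $(3) \Rightarrow (1)$: the transfer-function formula \eqref{transfunc0} produces an $\cL(\cU, \cY)$-valued holomorphic function on ${\mathbb D}_{Q}$. A standard argument, substituting a commutative tuple $T$ with $\| Q(T) \| < 1$ into the realization formula and invoking contractivity of $\bU$, yields $\| S(T) \| \le 1$, so $S \in \cS_{Q}(\cU, \cY)$; the interpolation conditions \eqref{Qtanint} come as part of the hypothesis. For $(1) \Rightarrow (2)$: I would invoke the Agler-decomposition characterization of $\cS_{Q}$ (proved via Taylor functional calculus together with an Arveson-extension argument, cf.~\cite{BB04}) to produce an auxiliary Hilbert space $\widetilde{\cX}$ and a function $K$ with
\begin{equation*}
I_\cY - S(z) S(w)^* = K(z) \bigl((I_s - Q(z) Q(w)^*) \otimes I_{\widetilde{\cX}}\bigr) K(w)^*,
\end{equation*}
and then multiply on the left by $a(z)$ and on the right by $a(w)^*$. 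Using $a(z) S(z) = b(z)$ on $\Omega$ converts the left-hand side into $K_{a,b}(z,w) = a(z) a(w)^* - b(z) b(w)^*$, giving \eqref{QAglerdecom} with $H(z) = a(z) K(z)$ and $\cX = \widetilde{\cX}$.

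The heart of the argument is $(2) \Rightarrow (3)$, via the \emph{lurking isometry} technique. Rearrange \eqref{QAglerdecom} as
\begin{equation*}
a(z) a(w)^* + H(z) (Q(z) \otimes I_\cX)(Q(w) \otimes I_\cX)^* H(w)^* = b(z) b(w)^* + H(z) H(w)^*,
\end{equation*}
equating two Gram matrices of $\cE$-indexed vectors, on the left living in $\cY \oplus \cX^r$ and on the right in $\cU \oplus \cX^s$. The densely-defined map
\begin{equation*}
V \colon \begin{bmatrix} a(w)^* \\ (Q(w) \otimes I_\cX)^* H(w)^* \end{bmatrix} e \longmapsto \begin{bmatrix} b(w)^* \\ H(w)^* \end{bmatrix} e, \quad w \in \Omega,\ e \in \cE,
\end{equation*}
is then isometric on its domain and (after extension by zero on the orthogonal complement, together with enlarging $\cX$ to absorb any defect) promotes to a unitary from $\cY \oplus \cX^r$ onto $\cU \oplus \cX^s$. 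Reading off its $2 \times 2$ block form and permuting blocks produces a unitary colligation $\bU = \sbm{A & B \\ C & D} \colon \cX^s \oplus \cU \to \cX^r \oplus \cY$. A direct algebraic calculation, using the two component identities encoded in the defining relation for $V$, then verifies that the associated transfer function \eqref{transfunc0} satisfies $a(w) S(w) = b(w)$ on $\Omega$.

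The main obstacle is the step $(1) \Rightarrow (2)$, which depends on the full $Q$-Schur-Agler Agler decomposition; its proof requires Taylor functional calculus for commuting tuples, a completely-positive-maps/Arveson-extension argument, and the nontrivial fact that the class $\cS_{Q}$ defined via $\| S(T) \| \le 1$ coincides with the class admitting an Agler factorization. A secondary technical point in $(2) \Rightarrow (3)$ is the passage from a partial isometry on a non-closed subspace to a unitary on the full space, which forces one to enlarge $\cX$ to accommodate defect spaces; if one only demands a contractive (rather than unitary) colligation, this step is immediate.
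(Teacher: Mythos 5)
The paper does not actually prove Theorem~\ref{T:QtanNP}: the text immediately after the statement says it ``follows by pasting together various pieces from the results of \cite{AT, BB04, BB05},'' so there is no in-paper proof to compare against. That said, your sketch is a sound route through the standard Schur--Agler machinery and the logical cycle is correctly closed. Where your plan genuinely diverges from the strategy the paper \emph{does} develop in detail (for the more general nc result, Theorem~\ref{T:ncInt}) is the step $(1)\Rightarrow(2)$: the paper's argument there passes through a pointwise hypothesis $(1')$ and a cone-separation argument carried out directly on the tangential data, while you shortcut this by applying the \emph{non-tangential} Agler decomposition $I_\cY - S(z)S(w)^* = K(z)\bigl((I_s-Q(z)Q(w)^*)\otimes I_{\widetilde\cX}\bigr)K(w)^*$ to the already-constructed solution $S$ on all of $\mathbb{D}_Q$ and then conjugating by $a(z),a(w)^*$ using $a(z)S(z)=b(z)$ on $\Omega$. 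That shortcut is legitimate here, and cleaner, precisely because condition $(1)$ already guarantees a globally defined $S \in \cS_Q(\cU,\cY)$; the cone-separation argument becomes unavoidable only when (as in Theorem~\ref{T:ncInt}) one wants to descend from the weaker pointwise positivity hypothesis $(1')$ that makes no reference to any globally defined interpolant. Your lurking-isometry step $(2)\Rightarrow(3)$ is correctly set up, including the observation that one must either inflate $\cX$ by an infinite-dimensional summand to match defect dimensions (for a unitary $\bU$) or settle for a contractive extension; taking adjoints in the two block rows of $V$ and solving for $H(w)^*$ via the invertibility of $I - A^*(Q(w)^*\otimes I_\cX)$ (which holds since $\|A\|\le 1$ and $\|Q(w)\|<1$ on $\mathbb{D}_Q$) does indeed recover $a(w)S(w)=b(w)$. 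The step $(3)\Rightarrow(1)$ is the routine contractive-load/von Neumann inequality argument. In short: a correct proposal, with a small but worthwhile simplification of $(1)\Rightarrow(2)$ relative to the general-purpose machinery the paper develops for its noncommutative main theorem.
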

 
 This theorem follows by pasting together various pieces from the 
 results of \cite{AT, BB04, BB05} (see also \cite{MP} for a somewhat 
 different setting and point of view).  The seminal work of Agler 
 \cite{Agler-Hellinger, Agler-pre} handled the special case where $Q$ is 
 taken to be 
$Q_{\rm diag}(z): =
 \sbm{ z_{1} & & \\ & \ddots & \\ & & z_{d}}$, so ${\mathbb D}_{Q}$ becomes 
 the polydisk
 $${\mathbb D}^{d} = \{ z = (z_{1}, \dots, z_{d}) \in 
 {\mathbb C}^{d} \colon |z_{k}| < 1 \text{ for } k=1, \dots, d\}.
 $$
 In this setting, one can work with global power series 
 representations rather than the more involved Taylor functional 
 calculus.
 For a thorough current (as of 2009) update of the ideas from the 
 Agler unpublished manuscript \cite{Agler-pre}, we recommend the paper of 
 Jury-Knese-McCullough \cite{JKMcC}.
Additional work on the polydisk case was obtained in \cite{AMcC99, BT}.
Another important special case is the case where $Q$ is taken to be 
$Q_{\rm row}(z) = \begin{bmatrix} z_{1} & \cdots & z_{d} 
\end{bmatrix}$.  Various pieces of Theorem \ref{T:QtanNP} for this 
special case appear in \cite{BTV, AriasPopescu, DP}.
For the case $Q = Q_{\rm row}$, the Schur-Agler class $\mathcal{SA}_{Q_{\rm 
row}}(\cU, \cY)$  coincides with the contractive multiplier class 
associated with the Drury-Arveson kernel---we say more about 
this special situation in Subsection \ref{S:NCvsLTOA} below.  The case 
where $Q(z) = \sbm{ Q_{1}(z)  & & \\ & \ddots & \\ & & Q_{N}(z) }$ 
where each $Q_{k}(z)$ has the
form $Q_{\rm row}(z)$ (of various sizes $d_{1}, 
\dots, d_{N}$) was considered by Tomerlin in \cite{Tomerlin}.
Note that $Q_{\rm diag}(z)$ is the special case of this where each 
$d_{k} = 1$.
One can also see the ideas of \cite{Agler-pre} as influencing the test-function 
approach of Dritschel-McCullough and collaborators, originating in
\cite{DMMcC, DMcC} with followup work in \cite{McCS, DritPick, JKMcC, BGH}.

The next development in our story is the extension to noncommutative 
variables.
With motivation from multidimensional system theory, 
Ball-Groenewald-Malakorn \cite{BGM2} introduced a noncommutative 
Schur-Agler class defined as follows.  We now let $z = 
(z_{1}, \dots, z_{d})$ be freely noncommuting  formal indeterminates.
We let $\free$ be the unital free semigroup (i.e., monoid in the 
language of algebraists) on $d$ generators, denoted here by the first 
$d$ natural numbers $\{1, \dots, d\}$.  Thus elements of $\free$ 
consist of words $\fa = i_{N} \cdots i_{1}$ where each letter 
$i_{k}$ is in the set $\{1, \dots, d\}$ with multiplication given by 
concatenation:
$$
\fa \cdot \fb = i_{N} \cdots i_{1} j_{M} \cdots j_{1} \text{ if 
} \fa = i_{N} \cdots i_{1} \text{ and } \fb = j_{M} \cdots j_{1}.
$$
Furthermore, we consider the empty word $\emptyset$ as an element of 
$\free$ and let this serve as the unit element for $\free$.
For $\fa \in \free$, we 
define the noncommutative monomial $z^{\fa}$ by
$$
 z^{\fa} = z_{i_{N}} \cdots z_{i_{1}} \text{ if } \fa = i_{N} 
 \cdots i_{1}, \quad z^{\emptyset} = 1.
$$
For $\cV$ a vector space, we let $\cV\langle \langle z \rangle \rangle$ be the space 
of formal power series
$$
  f(z) = \sum_{\fa \in \free} f_{\fa} z^{\fa}
$$
with coefficients $f_{\fa}$ in $\cV$. If all but 
finitely many of the coefficients $f_{\fa}$ vanish, we refer to 
$f(z)$ as a \textbf{noncommutative polynomial} with the notation $f \in \cV 
\langle z \rangle$.  Let us consider the special case where the 
noncommutative polynomial involves only linear terms 
$$
  Q(z) = \sum_{k=1}^{d} L_{k} z_{k}
$$
and where $\cV$ is taken to be the space ${\mathbb C}^{s \times r}$;
the paper \cite{BGM2} assumes some other structure on $Q(z)$ details 
of which we need not go into here. 
We define a noncommutative domain ${\mathbb D}_{Q} \subset \cL(\cK)^{d}$ consisting of 
operator tuples $T = (T_{1}, \dots, T_{d}) \in \cL(\cK)^{d}$ (now not 
necessarily commutative) such that
$Q(T) : = \sum_{k=1}^{d} L_{k} \otimes T_{k}$ has $\| Q(T) \| < 1$.
For $\fa$ a word in $\free$ and for $T = 
(T_{1}, \dots, T_{d}) \in \cL(\cK)^{d}$, we use the noncommutative functional 
calculus notation
$$
  T^{\fa} = T_{i_{N}} \cdots T_{i_{1}} \in \cL(\cK) \text{ if } 
  \fa = i_{N} \cdots i_{1} \in \free, \quad T^{\emptyset} = 
  I_{\cK}.
$$
where now the multiplication is operator composition rather than 
concatenation.  Given a formal series $S(z) = \sum_{\fa \in \free} 
S_{\fa} z^{\fa}$ with coefficients $S_{\fa} \in \cL(\cU, 
\cY)$ and given a operator $d$-tuple $T = (T_{1}, \dots, T_{d})$, we 
define 
\begin{equation}   \label{powerseriesrep}
  S(T) = \sum_{\fa \in \free} S_{\fa} \otimes T^{\fa} \in 
  \cL(\cU \otimes \cK, \cY \otimes \cK)
\end{equation}
whenever the series converges in some reasonable sense.
Then we define the noncommutative Schur-Agler class 
$\mathcal{SA}_{Q}(\cU, \cY)$ to consist of all formal power series
$S(z) = \sum_{\fa \in \free} S_{\fa} z^{\fa}$  in $\cL(\cU, 
\cY)\langle \langle z \rangle \rangle$ such that
$S(T)$ is defined and $\| S(T) \| \le 1$ for all $T = (T_{1}, \dots, T_{d}) \in 
\cL(\cK)^{d}$ such that $\| Q(T) \| < 1$.  Then the main result from 
\cite{BGM2} is the following realization theorem for the Schur-Agler 
class $\mathcal{SA}_{Q}(\cU, \cY)$ (where $Q(z) = \sum_{k=1}^{d} 
L_{k} z_{k}$ is a noncommutative linear function having some additional structure 
not discussed here).

\begin{theorem}   \label{T:BGM2}  Suppose that $Q$ is a linear pencil 
    as discussed above and suppose that $S \in \cL(\cU, \cY) \langle 
    \langle z \rangle \rangle$ is a given formal power series.  Then 
    the following conditions are equivalent.
    \begin{enumerate}
	\item $S$ is in the noncommutative Schur-Agler class $\mathcal{SA}_{Q}(\cU, \cY)$. 
	\item The noncommutative formal kernel $K_{I,S}(z,w) = I - S(z)S(w)^{*}$ 
	has a formal noncommutative Agler decomposition, i.e.: there exist an 
	auxiliary Hilbert space $\cX$ and a 
	formal power series $H(z) \in \cL({\mathbb C}^{r} \otimes \cX, \cY) \langle \langle z 
	\rangle \rangle$ so that
$$
  K_{I,S}(z,w) = H(z) \left( (I_{{\mathbb C}^{s}}  - Q(z)Q(w)^{*}) 
  \otimes I_{\cX} \right) H(w)^{*}.
 $$
 
 \item There is an auxiliary Hilbert space $\cX$ and a unitary 
 colligation matrix $\bU$
 $$
  \bU = \begin{bmatrix} A & B \\ C & D \end{bmatrix} \colon 
  \begin{bmatrix} \cX^{s} \\ \cU \end{bmatrix} \to \begin{bmatrix} 
      \cX^{r} \\ \cY \end{bmatrix}
 $$
 so that
 $$ 
 S(z) = D + C (I - (Q(z) \otimes I_{\cX}) A)^{-1} (Q(z) \otimes 
 I_{\cX}) B.
 $$
\end{enumerate}
\end{theorem}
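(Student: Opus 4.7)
The plan is to establish (3) $\Rightarrow$ (2), (2) $\Rightarrow$ (3), and (3) $\Rightarrow$ (1) by direct algebraic and system-theoretic arguments, and to close the loop with (1) $\Rightarrow$ (2), which is the main analytic step, via a Hahn--Banach/GNS separation in the spirit of Agler \cite{Agler-pre}. For (3) $\Rightarrow$ (2), I set $\tilde Q(z) := Q(z) \otimes I_{\cX}$ and $W(z) := (I - \tilde Q(z) A)^{-1}$ as a formal power series, so that $S(z) = D + C W(z) \tilde Q(z) B$. Using the telescoping identities $W(z) \tilde Q(z) A = W(z) - I$ and $A^{*} \tilde Q(w)^{*} W(w)^{*} = W(w)^{*} - I$ together with the unitarity relations $AA^{*} + BB^{*} = I$, $CA^{*} + DB^{*} = 0$, $CC^{*} + DD^{*} = I$, a direct (if bookkeeping-heavy) calculation collapses $I - S(z) S(w)^{*}$ to
$$I - S(z) S(w)^{*} = C W(z) \bigl( I - \tilde Q(z) \tilde Q(w)^{*} \bigr) W(w)^{*} C^{*},$$
so that $H(z) := C (I - \tilde Q(z) A)^{-1}$ realizes the Agler decomposition.

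For (2) $\Rightarrow$ (3), I rearrange the Agler identity as
$$I_{\cY} + H(z) \bigl( \tilde Q(z) \tilde Q(w)^{*} \bigr) H(w)^{*} = S(z) S(w)^{*} + H(z) H(w)^{*},$$
and read this formally as a lurking-isometry identity: matching coefficients of $z^{\fa}(w^{\fb})^{*}$, the assignment
$$V_{0} \colon \sbm{ \tilde Q(w)^{*} H(w)^{*} y \\ y } \longmapsto \sbm{ H(w)^{*} y \\ S(w)^{*} y } \quad (y \in \cY)$$
extends to an isometry on the linear span of its formal coefficient vectors, from a subspace of $\cX^{r} \oplus \cY$ into $\cX^{s} \oplus \cU$. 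After enlarging $\cX$ if necessary, complete $V_{0}$ to a full unitary $V \colon \cX^{r} \oplus \cY \to \cX^{s} \oplus \cU$; then $\bU := V^{*}$ is the required unitary colligation, and reading off its block entries and running the (3) $\Rightarrow$ (2) computation in reverse recovers the transfer-function formula for $S$. For (3) $\Rightarrow$ (1), substituting any $T$ with $\|Q(T)\| < 1$ into $S(z) = D + C (I - \tilde Q(z) A)^{-1} \tilde Q(z) B$ produces a norm-convergent series, and solving $x = \mathbf{Q}_{T}(Ax + Bu)$, $y = Cx + Du$ while applying the unitarity of $\bU \otimes I_{\cK}$ gives $\|Cx + Du\|^{2} = \|u\|^{2} + \|x\|^{2} - \|Ax + Bu\|^{2} \leq \|u\|^{2}$ since $\|x\| \leq \|Q(T)\|\,\|Ax + Bu\| \leq \|Ax + Bu\|$, whence $\|S(T)\| \leq 1$.

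The hard part is (1) $\Rightarrow$ (2), for which I plan a Hahn--Banach separation adapted to the formal noncommutative setting. For each $N \geq 0$, let $\cV_{N}$ denote the real vector space of Hermitian $\cL(\cY)$-valued kernel polynomials $K(z, w) = \sum_{|\fa|, |\fb| \leq N} K_{\fa, \fb}\, z^{\fa} (w^{\fb})^{*}$, and let $\cC_{N} \subset \cV_{N}$ be the closed convex cone generated by the degree-$N$ truncations of $H(z) \bigl( (I_{\C^{s}} - Q(z) Q(w)^{*}) \otimes I_{\cX} \bigr) H(w)^{*}$ over all polynomial $H$ and all $\cX$. If the truncation $[I - S(z) S(w)^{*}]_{N}$ failed to lie in $\cC_{N}$, Hahn--Banach would produce a real-linear functional $\varphi_{N}$ with $\varphi_{N} \geq 0$ on $\cC_{N}$ but $\varphi_{N}([I - S(z) S(w)^{*}]_{N}) < 0$. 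Positivity of $\varphi_{N}$ on $\cC_{N}$ endows $\cL(\cU, \cY) \langle z \rangle$ (truncated at degree $N$) with a semi-inner product whose standard GNS quotient $\cK_{N}$ carries a $d$-tuple $T^{(N)} = (T_{1}^{(N)}, \dots, T_{d}^{(N)})$ of left-multiplication operators satisfying $\|Q(T^{(N)})\| \leq 1$. A scaling $T^{(N)} \mapsto \rho\, T^{(N)}$ with $\rho \uparrow 1$ then produces a tuple with $\|Q(\rho T^{(N)})\| < 1$ but $\|S(\rho T^{(N)})\| > 1$, contradicting (1). Letting $N \to \infty$ and assembling the truncated decompositions into a single global Agler decomposition on a common auxiliary space $\cX$ is the main expected technical obstacle; here I would appeal to the Kolmogorov factorization of positive noncommutative formal kernels together with a weak-$*$/direct-limit argument adapted to the noncommutative function framework of \cite{KVV-book}.
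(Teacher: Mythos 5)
Two remarks are in order. First, the paper does not actually prove Theorem~\ref{T:BGM2}: it is cited from \cite{BGM2} as background, and the paper only sketches the common skeleton (cone separation $\to$ lurking isometry $\to$ contractive loading) before proving its own generalization Theorem~\ref{T:ncInt}. Your proposal does follow that same skeleton, and your algebraic steps are essentially correct. The $(3)\Rightarrow(2)$ computation collapsing $I - S(z)S(w)^{*}$ to $CW(z)(I - \tilde Q(z)\tilde Q(w)^{*})W(w)^{*}C^{*}$ via the telescoping identities and the coisometry relations $AA^{*}+BB^{*}=I$, $CA^{*}+DB^{*}=0$, $CC^{*}+DD^{*}=I$ is right; the lurking-isometry identity in $(2)\Rightarrow(3)$ is the correct rearrangement of the Agler decomposition (though note that if $H(z)\in\cL(\C^{s}\otimes\cX,\cY)$, consistent with the colligation having $A\colon\cX^{s}\to\cX^{r}$, then $\tilde Q(w)^{*}H(w)^{*}y\in\cX^{r}$, and $V_{0}$ goes from $\cX^{r}\oplus\cY$ to $\cX^{s}\oplus\cU$, so $\bU=V^{*}$ matches); and the energy-balance argument for $(3)\Rightarrow(1)$ is standard.

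The genuine gap is in $(1)\Rightarrow(2)$, and you have flagged it yourself but left it as a promissory note. Two issues need substance. First, you write ``let $\cC_{N}$ be the closed convex cone generated by \ldots''. If you take the closure by fiat, then membership of $[I-S(z)S(w)^{*}]_{N}$ in $\cC_{N}$ does not by itself give you a truncated Agler decomposition; you still need to know that a limit of truncated decompositions is again a truncated decomposition. If instead you claim the generated cone is already closed, that is precisely what must be proved, and it is not automatic: the auxiliary spaces $\cX$ and the factors $H_{j}$ in an approximating sequence vary, and you need a uniform bound on their coefficients to extract a limit. Second, even granting $[I-SS^{*}]_{N}\in\cC_{N}$ for every $N$, ``assembling'' the local decompositions into one global $H(z)$ is not a formality: the $H_{N}$'s need not be truncations of one another. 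The way both of these are resolved in the paper's proof of Theorem~\ref{T:ncInt} (Lemma~\ref{L:cone} and the $\cC_{\rho}$ refinement) is to use the defining inequality $\|Q\|<1$ to get a quantitative lower bound $I - Q(Z)Q(Z)^{*}\succeq(1-\rho^{2})I$ on the test tuples, which translates into a uniform bound $\|\Gamma_{j}(Z,Z)\|\lesssim\|K(Z,Z)\|/(1-\rho^{2})$ on the decomposition kernels, and then weak-$*$ (BW) compactness of bounded sets of completely positive maps does the rest. Your proposal never invokes the linear-pencil hypothesis, which is exactly what makes these bounds and the nilpotent-shift/truncation machinery of \cite{BGM2} work; without some such quantitative control the Hahn--Banach separation has no closed cone to separate from, and the $N\to\infty$ step does not close. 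So the plan is sound but the heart of $(1)\Rightarrow(2)$ --- proving the cone is closed and extracting the global Kolmogorov factorization --- is still to be done.
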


Theorem \ref{T:BGM2} has a couple of limitations: (1) it is missing 
an interpolation-theoretic aspect and (2) it is tied noncommutative 
domains defined by linear pencils, thereby guaranteeing global formal 
power series representations for noncommutative functions on the 
associated domain ${\mathbb D}_{Q}$. 
While there has been some work on an interpolation theory for such 
domains ${\mathbb D}_{Q}$ (see \cite{P98, CJ03, MS2004, KVV2005, 
BB07, Bala1}) and 
thereby addressing the first limitation, the second limitation is
more fundamental and is the main inspiration for the present paper.

Motivation comes from a result of Alpay--Kaliuzhnyi-Verbovetskyi \cite{AKV} 
which says that that one need only plug in $T = (T_{1}, 
\dots, T_{d}) \in \cL({\mathbb C}^{n})^{d}$ (i.e., a $d$-tuple of $n 
\times n$ matrices) sweeping over all $n \in {\mathbb N}$ to 
determine if a given power series $S \in \cL(\cU, \cY)\langle \langle 
z \rangle \rangle$ is in the noncommutative Schur-Agler class 
$\mathcal{SA}_{Q}(\cU, \cY)$ for $Q(z)$ a linear nc function 
(specifically $Q(z) = \sbm{ z_{1} & & \\ & \ddots & \\ & & z_{d}}$) as 
in Theorem \ref{T:BGM2}.  
We may thus view $S$ as a function from
the disjoint union $({\mathbb D}_{Q})_{\rm nc} : = 
\amalg_{n=1}^{\infty}({\mathbb D}_{Q})_{n}$,
where we set 
$$
  ({\mathbb D}_{Q})_{n} = \{ Z = (Z_{1}, \dots, Z_{d}) \in ({\mathbb 
  C}^{n \times n})^{d} \colon \| Q(Z) \| < 1\},
$$
into the space $\cL(\cU, \cY)_{\rm nc} = \amalg_{n=1}^{\infty} 
\cL(\cU, \cY)^{ n \times n}$ (where we identify $\cL(\cU, \cY)^{n 
\times n}$ with $\cL(\cU^{n} , \cY^{n})$ when convenient).
It is easily checked that the such a function $S$, when given by a power series 
representation as in \eqref{powerseriesrep}, satisfies the following 
axioms:
\begin{enumerate}
    \item[(A1)] $S$ is \textbf{graded}, i.e., $S$ maps $({\mathbb 
    D}_{Q})_{n}$ into $\cL(\cU, \cY)^{n \times n}$,
    
    \item[(A2)] $S$ \textbf{respects direct sums}, i.e., if $Z = 
    (Z_{1}, \dots, Z_{d}) \in ({\mathbb D}_{Q})_{n}$ and $W = (W_{1}, 
    \dots, W_{d}) \in ({\mathbb D}_{Q})_{m}$ and we set
    $$
     \sbm{ Z & 0 \\ 0 & W } = 
	 \left( \sbm{ Z_{1} & 0 \\ 0 & W_{1} }, 
	 \cdots,  \sbm{ Z_{d} & 0 \\ 0 & W_{d}} \right),
   $$
   then 
   $$
 S\left( \sbm{ Z & 0 \\ 0 & W } \right) = \sbm{ S(Z) & 0 \\ 0 & S(W) }.
  $$
   \item[(A3)]  $S$ \textbf{respects similarities}, i.e., if $\alpha$ is an 
  invertible $n \times n$ matrix over ${\mathbb C}$, if $Z = (Z_{1}, 
  \dots, Z_{d})$ and $  \alpha^{-1} Z \alpha := ( \alpha^{-1} Z_{1} \alpha, \cdots, 
  \alpha^{-1} Z_{d} \alpha )$ are both in ${\mathbb D}_{Q}$, then it follows 
  that
  $$ 
   S( \alpha^{-1} Z \alpha) =  \alpha^{-1} S(Z) \alpha.
  $$
 \end{enumerate}
Such an axiom system for an operator-valued function $S$ defined on 
square-matrix tuples of all possible sizes was introduced by J.L.\ 
Taylor \cite[Section 6]{Taylor} in connections with representations 
of the free algebra and the quest for a functional calculus for 
noncommuting operator tuples.  Recent work of Kaliuzhnyi-Verbovetskyi 
and Vinnikov \cite{KVV-book} provides additional insight and 
completeness  into the work of Taylor; in particular, there it is 
shown that, under mild local boundedness conditions, any function $S$ 
satisfying the axioms (A1), (A2), (A3) is given locally by a power 
series representation of the sort in \eqref{powerseriesrep}.  Closely 
related approaches to and results on such a  ``free noncommutative function theory''  
can be found in the work of Voiculescu  \cite{Voiculescu1, 
Voiculescu2} and Helton-Klep-McCullough \cite{HKMcC1, HKMcC2, HMcC}.

The purpose of the present paper is to extend the Nevanlinna-Pick interpolation and 
transfer-function realization theory for the Schur-Agler class as presented in the 
progression of Theorems \ref{T:NP}, \ref{T:tanNP}, \ref{T:QtanNP}, 
\ref{T:BGM2} to the general setting of free noncommutative function 
theory.  There has already appeared results in this direction in the 
work of Agler-McCarthy \cite{AMcC-global, AMcC-Pick} (see also 
\cite{Bala2}).  Our main 
result (see Theorem \ref{T:ncInt}) presents a more unified setting 
for their results as well as extending their results to a more 
natural level of generality (see the comments immediately after 
Corollary \ref{C:AMcC-Pick} below).

The proof strategy for Theorems \ref{T:QtanNP}, \ref{T:BGM2} as well 
as Theorem \ref{T:ncInt} has a common skeleton which originates from 
the seminal 1990 paper of Agler \cite{Agler-Hellinger}.  Indeed, in 
all these Theorems (as well as in the assorted incremental versions 
done in \cite{AT, BB04, BGH, DMMcC, DMcC}), the proof of (1) 
$\Rightarrow$ (2) involves a cone separation argument; an exception 
is the work of Paulsen and collaborators \cite{LMP, MP} where the 
cone-separation argument is replaced by an operator-algebra approach 
with an appeal to the Blecher-Ruan-Sinclair characterization of 
operator algebras. A new feature 
for the noncommutative setting of Theorem \ref{T:ncInt}, first 
observed by Agler-McCarthy in \cite{AMcC-Pick}, is that the cone 
separation argument still applies with a localized weaker version 
(see condition (1$^{\prime}$) in Theorem \ref{T:ncInt}) of condition 
(1) as a hypothesis.  The implication (2) $\Rightarrow$ (3) (in Theorems 
\ref{T:QtanNP}, \ref{T:BGM2} as well as in \ref{T:ncInt} and an even 
larger assortment of closely related versions in the literature) is 
via what is now called the {\em lurking isometry argument}.  While 
this idea first appears in \cite{Agler-Hellinger} for the context of 
multivariable interpolation, it actually has much earlier 
manifestations already in the univariate theory:  we mention the 
early work of Liv\v{s}ic \cite{Livsic1, Livsic2} where the 
Characteristic Operator Function was first introduced, the proof 
of the Nevanlinna-Pick interpolation theorem due to Sz.-Nagy--Koranyi 
\cite{NK}, and the Abstract Interpolation Problem framework for 
interpolation and moment problems due to 
Katsnelson-Kheifets-Yuditskii \cite{KKY, Kheifets, KY}. The proof of 
(3) $\Rightarrow$ (1) is a straightforward application of a general 
principle on composing a contractive linear fractional map with a 
contractive load (we refer \cite{HZ} for a general formulation).
Here we also introduce the notion of \textbf{complete Pick kernel} 
(see \cite{AMcC00, Quiggen} as well as the book \cite{AMcC-book}) for 
the free noncommutative setting.

The paper is organized as follows.
Section \ref{S:prelim} collects preliminary material needed for the 
proof of Theorem \ref{T:ncInt} as well as pushing out the boundary of 
the free noncommutative function theory. Included here is a review of the basics 
of free noncommutative function theory from \cite{KVV-book} as well as some 
additional material relevant to the proof of Theorem \ref{T:ncInt}:  
some calculus and open questions concerning \textbf{full 
noncommutative envelopes} and \textbf{noncommutative-Zariski closure} of any finite subset 
$\Omega$ of a full noncommutative set $\Xi$, as well as a review of material from our 
companion paper \cite{BMV1} concerning \textbf{completely positive 
noncommutative kernels}, a notion needed for the very formulation of 
condition (2) in Theorem \ref{T:ncInt}. Section \ref{S:SAint} 
introduces the noncommutative Schur-Agler class $\mathcal{SA}_{Q}(\cU, \cY)$ for 
the free noncommutative setting, poses the \textbf{Left-Tangential Interpolation 
Problem}, and states the main result Theorem \ref{T:ncInt} along with some 
corollaries and remarks exploring various special cases and 
consequences.  Section 4 then presents the proof of Theorem 
\ref{T:ncInt} in systematic fashion one step at a time:  (1) 
$\Rightarrow$ (1$^{\prime}$) $\Rightarrow$ (2) $\Rightarrow$ (3) 
$\Rightarrow$ (1).  Section \ref{S:mult} reviews material from 
\cite{BMV1} concerning noncommutative reproducing kernel Hilbert spaces, 
identifies a class of kernels $k_{Q_{0}}$ for which the associated 
Schur-Agler class $\mathcal{SA}_{Q_{0}}(\cU, \cY)$ coincides with the 
contractive multiplier class $\overline{\cB} \cM(k_{Q_{0}} \otimes 
I_{\cU}, k_{Q_{0}} \otimes I_{\cY})$ (multiplication operators mapping 
the reproducing kernel Hilbert space $\cH(k_{Q_{0}} \otimes 
I_{\cU})$ contractively into $\cH(k_{Q_{0}} \otimes 
I_{\cY})$), thereby getting noncommutative versions of the Nevanlinna-Pick 
interpolation and transfer-function realization theory for contractive multipliers 
on the Drury-Arveson space (see \cite{Arv, BTV, EP}).

\section{Preliminaries}  \label{S:prelim}
We review some preliminary material on 
noncommutative functions and completely positive noncommutative kernels
from \cite{BMV1} which are needed in the sequel.  A comprehensive 
treatise on the topic of noncommutative functions (but with no discussion 
of noncommutative kernels which are first introduced in \cite{BMV1}) 
is the book of Kaliuzhnyi-Verbovetskyi and Vinnikov \cite{KVV-book}.
Henceforth we shall use the abbreviation {\em nc} for the term {\em 
noncommutative}.

\subsection{Noncommutative functions and completely positive 
noncommutative kernels}  \label{S:ncfuncker}

We suppose that we are given a vector space $\cV$. Thus $\cV$ is 
equipped with a scalar multiplication by complex numbers which makes 
$\cV$ a bimodule over ${\mathbb C}$.  We define the 
associated nc space $\cV_{\rm nc}$ to consist of the disjoint union 
over all $n \in {\mathbb N}$ of $n \times n$ matrices over $\cV$:
$$
 \cV_{\rm nc} = \amalg_{n=1}^{\infty} \cV^{n \times n}.
$$
A subset $\Omega$ of $\cV_{\rm nc}$ is said to be a 
\textbf{nc set} 
if $\Omega$ is closed under direct sums:
$$
Z = [Z_{ij}]_{i,j=1}^{n} \in \Omega_{n},\, W = 
[W_{ij}]_{i,j=1}^{m}
\in \Omega_{m} 
\Rightarrow
Z \oplus W = \left[ \begin{smallmatrix} Z & 0 \\ 0 & W 
\end{smallmatrix} \right] \in \Omega_{n+m}.
$$

Suppose next that $\Omega$ is a subset of $\cV_{\rm nc}$), and that
 $\cV_{0}$ is another vector space (i.e., a bimodule over ${\mathbb C}$).  
 For $\alpha \in {\mathbb C}^{n \times m}$, $V \in \cV_{0}^{m \times k}$, $\beta \in {\mathbb C}^{k 
 \times \ell}$, we can use the module structure of $\cV_{0}$ over 
 ${\mathbb C}$ to make sense of the matrix multiplication $\alpha V \beta \in 
 \cV_{0}^{n \times \ell}$ and similarly $\alpha Z \beta$ makes sense as an 
 element of  $\cV^{n \times \ell}$ for $Z \in \cV^{m \times k}$.
  Given a function $f \colon \Omega \to 
 \cV_{0,nc}$, we say that $f$ is a \textbf{noncommutative (nc) function} if
 \begin{itemize}
     \item $f$ is \textbf{graded}, i.e., $f \colon \Omega_{n} \to 
     \cV_{0,n} = (\cV_{0})^{n \times n}$, and
     \item $f$  \textbf{respects intertwinings}:  
     \begin{equation}  \label{funcintertwine}
	 Z \in \Omega_{n}, \, 
     \widetilde Z \in \Omega_{m}, \, \alpha \in {\mathbb C}^{m \times n} 
     \text{ with } \alpha Z = \widetilde Z \alpha \Rightarrow \alpha f(Z) = f(\widetilde Z) 
     \alpha.
   \end{equation}
 \end{itemize}
 An equivalent characterization of nc functions is (see  \cite[Section 
 I.2.3]{KVV-book}):
 $f$ is a nc function if and only if 
 \begin{itemize}
     \item $f$ is a graded, 
     
     \item $f$ \textbf{respects direct sums}, i.e.,
     \begin{equation}   \label{funcdirsum}
     Z,W \in \Omega \text{ such that also } \sbm{Z & 0 \\ 0 & W } \in 
     \Omega \Rightarrow f\left( \sbm{ Z & 0 \\ 0 & W} 
     \right) = \sbm{f(Z) & 0 \\ 0 & f(W)},
     \end{equation}
     and 
 \item $f$  \textbf{respects similarities}, i.e.: whenever $Z, \widetilde Z \in \Omega_{n}$, 
 $\alpha \in {\mathbb C}^{n \times n}$ with $\alpha$ invertible such that 
 $\widetilde Z = \alpha Z \alpha^{-1}$, then 
 \begin{equation}   \label{funcsim}
 f(\widetilde Z) = \alpha f(Z) \alpha^{-1}.
 \end{equation}
 \end{itemize}
Following \cite{KVV-book}, we denote the set of all nc functions from
$\Omega$ into $\cV_{0,{\rm nc}}$ by $\cT(\Omega; \cV_{0,{\rm nc}})$; 
we note that we do not require that the domain $\Omega$ for a nc 
function be a nc set as is done in \cite{KVV-book}.
  
We now suppose that we are  given two additional vector spaces $\cV_{0}$ and 
$\cV_{1}$. For $K$ a function from 
$\Omega \times \Omega$ into the nc space 
$$
\cL(\cV_{1}, \cV_{0})_{\rm nc}:=  \amalg_{n,m = 1}^{\infty} 
\cL(\cV_{1}^{n \times m}, \cV_{0}^{n \times m}),
$$
we say that $K$ 
is  a \textbf{nc kernel} if 
\begin{itemize}
\item $K$ is \textbf{graded} in the sense that
\begin{equation}   \label{kergraded}
Z \in \Omega_{n},\, W \in \Omega_{m} \Rightarrow K(Z,W) \in 
\cL(\cV_{1}^{n \times m}, \cV_{0}^{n \times m})
\end{equation}
and
\item $K$ \textbf{respects intertwinings} in the following sense:
\begin{align}
   & Z \in \Omega_{n},\, \widetilde Z \in \Omega_{\widetilde n},\, \alpha \in 
    {\mathbb C}^{\widetilde n \times n} \text{ such that } \alpha Z = \widetilde Z 
    \alpha,  \notag \\
 & W \in \Omega_{m},\, \widetilde W \in \Omega_{\widetilde m}, \, 
 \beta \in {\mathbb 
 C}^{\widetilde m \times m} \text{ such that } \beta W = \widetilde W \beta.  
 \notag \\
 & P \in \cV_{1}^{n \times m} \Rightarrow  \alpha \, K(Z,W)(P) \, \beta^{*} = 
 K(\widetilde Z, \widetilde W) (\alpha P \beta^{*}).
 \label{kerintertwine}
 \end{align}
\end{itemize}
An equivalent set of conditions is: 
\begin{itemize}
    \item $K$ is graded,
    
    \item $K$ \textbf{respects direct sums}: for $Z \in \Omega_{n}$ 
    and 
    $\widetilde Z \in \Omega_{\widetilde n}$ such that $\sbm{ Z & 0 
    \\ 0 & W } \in \Omega_{n+m}$, 
    $W \in \Omega_{m}$ and $\widetilde W \in \Omega_{\widetilde m}$ such that 
   $\sbm{W & 0 \\ 0 & \widetilde W} \in \Omega_{m + \widetilde m}$, 
   and
    $P = \sbm{ P_{11} & P_{12} \\ P_{21} & P_{22} }$ in  
    $\cV_{1}^{(n+m) \times (\widetilde n+ \widetilde m)}$, we have
   \begin{equation}  \label{kerdirsum} 
   K\left( \sbm{ Z & 0 \\ 0 & \widetilde Z}, \sbm {W & 0 \\ 0 & 
    \widetilde W} \right) \left( \sbm{ P_{11} & P_{12} \\ P_{21} & 
    P_{22} } \right) =
    \begin{bmatrix}  K(Z,W)(P_{11}) & K(Z, \widetilde W)(P_{12}) \\
	  K(\widetilde Z, W)(P_{21}) & K(\widetilde Z, \widetilde 
	  W)(P_{22}) \end{bmatrix}.
\end{equation}

\item $K$ \textbf{respects similarities}:  
\begin{align}
  &  Z, \widetilde Z \in \Omega_{n}, \, \alpha \in {\mathbb C}^{n \times n} 
    \text{ invertible with } \widetilde Z = \alpha Z \alpha^{-1}, \notag \\
  & W, \widetilde W \in \Omega_{m}, \beta \in {\mathbb C}^{m \times m} 
  \text{ invertible with } \widetilde W = \beta W \beta^{-1}, \notag \\
& P \in \cA^{n \times m} \Rightarrow  K(\widetilde Z, \widetilde W)(P) =
\alpha \, K(Z,W)(\alpha^{-1} P \beta^{-1*})\,  \beta^{*}.
\label{kersim}
\end{align}
\end{itemize}
We denote the class of all such nc kernels by $\widetilde 
\cT^{1}(\Omega; \cV_{1, {\rm nc}}, \cV_{0, {\rm nc}})$.  

For the next definition we need to impose an order structure on 
$\cV_{0}$ and $\cV_{1}$ so that an appropriate notion of positivity 
is defined for square matrices over $\cV_{0}$ and $\cV_{1}$.  Recall
(see \cite{Paulsen, ERbook}) that a normed linear space $\cW$ is said to 
be an \textbf{operator space} if it is equipped with a system of norms 
$\| \cdot \|_{n}$ on $n \times n$ matrices over $\cW$ so that there 
is a map $\varphi \colon \cW \to \cL(\cX)$ (where $\cX$ is some 
Hilbert space) so that, for each $n = 1, 2, \dots$, the map
$$
 \varphi^{(n)} = 1_{{\mathbb C}^{n \times n}} \otimes \varphi \colon
 \cW^{n \times n} \to \cL(\cX)^{n \times n}
$$
defined by
\begin{equation}  \label{varphi-n}
 {\rm id}_{{\mathbb C}^{n \times n}} \otimes \varphi \colon [ w_{ij}]_{i,j = 
 1, \dots, n} \mapsto [ \varphi(w_{ij}) ]_{i,j=1, \dots, n}
\end{equation}
is a linear isometry.  By the theorem of Ruan \cite[Theorem 
2.3.5]{ERbook}, such a situation is 
characterized by the system of norms satisfying 
\begin{align}
   &  \| X \oplus Y \|_{n + m}  = \max \{ \| X\|_{n}, \| Y \|_{m} \}, 
    \text{ and }  \notag \\  
 & \| \alpha X \beta \|_{m} \le \| \alpha \| \| X \|_{n} \| \beta \| \text{ for all }
 X \in \cW^{n \times n},\, \alpha \in {\mathbb C}^{m \times n}, 
 \beta \in {\mathbb C}^{n \times m}.
 \label{Ruan}
\end{align}
We say that $\cW$ is an \textbf{operator system} if $\cW$ is an 
operator space such that the image $\varphi(\cW)$ of $\cW$ under the 
map $\varphi$ given above is a unital subspace of $\cL(\cX)$ closed 
under taking adjoints.  Note that the set of selfadjoint elements of 
$\varphi(\cW)$ is nonempty since $\varphi(\cW)$ contains the identity 
element $1_{\cX}$.  Then the adjoint operation and the notion of 
positivity in $\cL(\cX)$ pulls back to $\cW$ (as well as to square 
matrices $\cW^{n \times n}$ over $\cW$). Operator systems also have 
an abstract characterization: {\em if $\cW$ is a 
matrix-ordered  $*$-vector space with an Archimedean matrix order unit 
$e$, then $\cW$ is completely order isomorphic (and hence also 
completely isometrically isomorphic) to a concrete operator system}
by a theorem of Choi and Effros (see \cite[Theorem 13.1]{Paulsen}).

We now specialize the setting for our nc kernels defined by 
\eqref{kergraded}, \eqref{kerintertwine}, \eqref{kerdirsum}, 
\eqref{kersim} by assuming that the vector spaces $\cV_{1}$ and 
$\cV_{0}$ are both operator systems, now denoted as $\cS_{1}$ and 
$\cS_{0}$ respectively. Given a nc kernel $K
 \in\widetilde \cT^{1}(\Omega; \cS_{0, {\rm nc}}, \cS_{1, {\rm nc}})$, 
we say that $K$ is \textbf{completely positive} (cp) if in addition, 
the map
\begin{equation}  \label{kercp'}
    [P_{ij}]_{i,j = 1, \dots, n} \mapsto
    \left[ K(Z^{(i)}, Z^{(j)}) ( P_{ij}) \right]_{i,j = 1, \dots, n}
\end{equation}
is a positive map from $\cS_{1}^{N \times N}$ to $\cS_{0}^{N \times 
N}$  ($N = \sum_{i=1}^{n} m_{i}$) for any choice of $Z^{(i)} \in 
\Omega_{m_{i}}$, $m_{i} \in {\mathbb N}$,  $i = 1, \dots, n$, $n$ 
arbitrary.  In case $\Omega$ is a nc subset of $\cV_{\rm nc}$, 
we can iterate the ``respects direct sums'' property 
\eqref{kerdirsum} of $K$ to see that
$$
K \left( \sbm{ Z^{(1)} & & \\ & \ddots & \\ & & Z^{(n)} },
\sbm{ Z^{(1)} & & \\ & \ddots & \\ & & Z^{(n)}} \right) \left( [ 
P_{ij} ] \right) = 
\left[ K(Z^{(i)}, Z^{(j)})(P_{ij}) \right]_{i,j = 1, \dots, n},
$$
for any choice of $Z^{(i)} \in \Omega_{n_{i}}$, $P_{ij} \in 
\cS_{1}^{n_{i} \times n_{j}}$, $i=1, \dots, n$.  Then the condition 
\eqref{kercp'} can be written more simply as
\begin{equation} \label{kercp}
    Z \in \Omega_{n},\,  P \succeq 0 \text{ in }
    \cS_{1}^{n \times n} \Rightarrow K(Z,Z)(P) \succeq 0 \text{ in } 
    \cS_{0}^{n \times n} \text{ for all } n \in {\mathbb N}.
\end{equation}
  If $\cV_{1} = \cA_{1}$ is a 
$C^{*}$-algebra, we can rewrite \eqref{kercp'} as
\begin{equation}   \label{kercp''}
\left[ K(Z^{(i)}, Z^{(j)}) (R_{i}^{*} R_{j}) \right]_{i,j = 1, \dots, 
n} \succeq 0 \text{ in } \cV_{0}^{N \times N}
\end{equation}
for all $R_{i} \in \cA_{1}^{m_{i} \times N}$, $Z^{(i)} \in 
\Omega_{m_{i}}$, $i = 1, \dots, n$ with $n, N \in {\mathbb N}$ arbitrary.
If $\cS_{0} = \cA_{0}$ is also a $C^{*}$-algebra, \eqref{kercp''} can turn can 
be equivalently expressed as
\begin{equation}   \label{kercp'''}
    \sum_{i,j=1}^{n} V_{i}^{*} K(Z^{(i)}, Z^{(j)})(R_{i}^{*} R_{j}) V_{j}
    \succeq 0 \text{ in } \cV_{0}
\end{equation}
for all $R_{i} \in \cA_{1}^{N \times m_{i}}$, $Z^{(i)} \in 
\Omega_{m_{i}}$, $V_{i} \in \cA_{0}^{m_{i} \times 1}$, $i = 1, \dots, 
n$ with $n \in {\mathbb N}$ arbitrary.  When we restrict to the case 
$m_{i} = 1$ for all $i$, the formulation \eqref{kercp'''} amounts to 
the notion of complete positivity of a kernel given by 
Barreto-Bhat-Liebscher-Skeida in  \cite{BBLS}.

The more concrete setting for nc kernels which we shall be interested 
in here is as follows.  We again take the ambient set of points to be
the nc space $\cV_{\rm nc}$ associated with a vector space $\cV$, while 
for the operator systems $\cS_{0}$ and $\cS_{1}$ we take
$$
  \cS_{0} = \cL(\cE), \quad \cS_{1} = \cA
$$
where $\cE$ is a coefficient Hilbert space and $\cA$ is a 
$C^{*}$-algebra.  Then we have have the following 
characterization of cp nc kernels for this setting from \cite{BMV1}.

\begin{theorem}   \label{T:cpker}  
    Suppose that $\Omega$ be a subset of $\cV_{\rm 
    nc}$,  $\cE$ is a Hilbert space, $\cA$ is a $C^{*}$-algebra and
    $K \colon \Omega \times \Omega \to \cL(\cA, \cL(\cE))_{\rm nc}$ is a 
    given function.  Then the following are equivalent.
\begin{enumerate}
   \item $K$ is a cp nc kernel from $\Omega \times \Omega$ to 
   $\cL( \cA, \cL(\cE))_{\rm nc}$.
   
   \item There is a Hilbert space $\cH(K)$ whose elements are nc 
   functions $f \colon \Omega \to \cL(\cA, \cE)_{\rm nc}$ such that:
   \begin{enumerate}
   \item For each $W \in \Omega_{m}$, $v \in \cA^{1 \times m}$, and 
   $y \in \cE^{m}$, the function 
 $$
 K_{W,v,y} \colon \Omega_{n} \to \cL(\cA, \cE)^{n \times n} \cong 
 \cL(\cA^{n}, \cE^{n})
 $$
 defined by
 \begin{equation}   \label{kerel}
 K_{W,v,y}(Z) u = K(Z,W)(uv) y
 \end{equation}
 for $Z \in \Omega_{n}$, $u \in \cA^{n}$ belongs to $\cH(K)$.
 
 \item The kernel elements $K_{W,v,y}$ as in \eqref{kerel} have the reproducing property:  for 
 $f \in \cH(K)$, $W \in \Omega_{m}$, $v \in \cA^{1 \times m}$,
 \begin{equation}   \label{reprod}
  \langle f(W)(v^{*}), y \rangle_{\cE^{m}} = \langle f, K_{W,v,y} 
  \rangle_{\cH(K)}.
 \end{equation}
 
  \item $\cH(K)$ is equipped with a unital $*$-representation  
  $\sigma$ mapping $\cA$ to  $\cL(\cH(K))$  such that
 \begin{equation}  \label{rep1}
     \left( \sigma(a) f\right)(W)(v^{*}) = f(W)(v^{*}a)
 \end{equation}
 for $a \in \cA$, $W \in \Omega_{m}$, $v \in \cA^{1 \times m}$, with 
 action on kernel elements $K_{W,v,y}$ given by
 \begin{equation}   \label{kerelaction}
     \sigma(a) \colon K_{W,v,y} = K_{W, av, y}.
  \end{equation}
  We then say that $\cH(K)$ is the \textbf{noncommutative Reproducing 
  Kernel Hilbert Space} (nc RKHS) associated with the cp nc kernel 
  $K$.
 \end{enumerate}
 
 \item $K$ has a \textbf{Kolmogorov decomposition}:  there is a Hilbert space 
 $\cX$ equipped with a unital $*$-representation $\sigma \colon \cA 
 \to \cL(\cX)$ together with a nc function $H \colon \Omega \to 
 \cL(\cX, \cE)_{\rm nc}$ so that
 \begin{equation}   \label{Koldecom}
     K(Z,W) (P) = H(Z) ( {\rm id}_{{\mathbb C}^{n \times m}} \otimes \sigma) (P) H(W)^{*}
 \end{equation}
 for all $Z \in \Omega_{n}$, $W \in \Omega_{m}$, $P \in \cA^{n \times 
 m}$.
\end{enumerate}
\end{theorem}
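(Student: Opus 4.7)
The plan is to prove the three-way equivalence by establishing $(3)\Rightarrow(1)$, $(1)\Rightarrow(3)$, and $(3)\Leftrightarrow(2)$. The implication $(3)\Rightarrow(1)$ is a direct verification: once $K$ is presented in the factored form \eqref{Koldecom}, the grading and the ``respects direct sums/intertwinings/similarities'' properties of $K$ descend from the corresponding properties of the nc function $H$, together with the fact that ${\rm id}\otimes\sigma$ commutes with matrix conjugation and with block direct sums. Complete positivity in the form \eqref{kercp''} becomes transparent from the factorization
$$
\bigl[\,K(Z^{(i)},Z^{(j)})(R_{i}^{*} R_{j})\,\bigr]_{i,j=1,\ldots,n} \;=\; X X^{*},
$$
where $X$ is the column-matrix whose $i$-th block entry is $H(Z^{(i)})({\rm id}\otimes\sigma)(R_{i}^{*})$; the $(i,j)$ block of $XX^{*}$ reduces to $K(Z^{(i)},Z^{(j)})(R_{i}^{*}R_{j})$ on using the multiplicativity $({\rm id}\otimes\sigma)(R_{i}^{*})({\rm id}\otimes\sigma)(R_{j})=({\rm id}\otimes\sigma)(R_{i}^{*}R_{j})$.

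For the main implication $(1)\Rightarrow(3)$ I would carry out a Kolmogorov/GNS-type construction. Let $V_{0}$ be the vector space spanned by formal symbols $[W,v,y]$ indexed by $W \in \Omega_{m}$ (any $m\in\mathbb{N}$), $v \in \cA^{1\times m}$, and $y \in \cE^{m}$, modulo the obvious linearity relations in $v$ and $y$. Endow it with the sesquilinear form
$$
\bigl\langle\, [W',v',y'],\, [W,v,y]\,\bigr\rangle \;:=\; \bigl\langle\, K(W,W')(v^{*}v')\, y',\, y\,\bigr\rangle_{\cE^{m}}.
$$
Complete positivity of $K$ in the form \eqref{kercp''} forces this form to be positive semidefinite, so after quotienting by the null space and completing one obtains a Hilbert space $\cX$; let $\pi\colon V_{0}\to\cX$ be the quotient map. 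Define $\sigma\colon\cA\to\cL(\cX)$ on the dense set $\pi(V_{0})$ by $\sigma(a)\pi([W,v,y])=\pi([W,av,y])$, and define $H(W)\in\cL(\cX,\cE)^{m\times m}$ via its adjoint
$$
H(W)^{*}y \;=\; \bigl(\pi([W,e_{1}^{(m)},y]),\,\ldots,\,\pi([W,e_{m}^{(m)},y])\bigr) \,\in\,\cX^{m},
$$
where $e_{j}^{(m)}$ is the $j$-th standard basis vector of $\cA^{m}$. A routine unwinding of definitions produces the Kolmogorov identity \eqref{Koldecom}, and the properties that $\sigma$ is a unital $*$-representation and that $H$ is a bona fide nc function (respecting intertwinings as in \eqref{funcintertwine}) are inherited from the ``respects intertwinings'' property \eqref{kerintertwine} of $K$.

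For $(3)\Leftrightarrow(2)$, one passes between the Kolmogorov data $(\cX,\sigma,H)$ and the RKHS $\cH(K)$ via the map $x\mapsto f_{x}$, where
$$
f_{x}(W)(v^{*}) \;:=\; H(W)\,({\rm id}\otimes\sigma)(v^{*})\, x, \qquad W\in\Omega_{m},\; v\in\cA^{1\times m};
$$
declare $\cH(K)$ to be the image of $\cX$ under this map, equipped with the quotient-norm $\|f\|_{\cH(K)}=\inf\{\|x\|_{\cX}\colon f_{x}=f\}$. A direct calculation identifies the kernel element $K_{W,v,y}$ with $f_{\sigma(v)H(W)^{*}y}$; the reproducing property \eqref{reprod}, the kernel-element formula \eqref{kerel}, and the action \eqref{kerelaction} are then all verified by unwinding \eqref{Koldecom}. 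Conversely, given $\cH(K)$ as in (2), take $\cX:=\cH(K)$, keep $\sigma$ as given, and recover $H$ through the adjoint formula above, now using the kernel elements $K_{W,e_{j}^{(m)},y}$ in place of the $\pi([W,e_{j}^{(m)},y])$.

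The main obstacle lies in the $(1)\Rightarrow(3)$ step. Beyond positive semidefiniteness of the form (which is immediate from cp-ness), two points are genuinely delicate. First, one must show that $\sigma$ descends to the quotient and extends to a \emph{bounded} unital $*$-representation on $\cX$; this is accomplished by invoking \eqref{kercp'''} with $u$ replaced by $au$ and using the $C^{*}$-inequality $a^{*}a\le\|a\|^{2}\mathbf{1}_{\cA}$ to dominate $\langle\sigma(a)\pi([W,u,y]),\sigma(a)\pi([W,u,y])\rangle$ by $\|a\|^{2}\|\pi([W,u,y])\|^{2}$. Second, verifying that $H$ is a genuine nc function---i.e., that \eqref{funcintertwine} holds across different levels of $\cV_{\rm nc}$, not merely within a fixed $\Omega_{n}$---requires carefully translating \eqref{kerintertwine} for $K$ into an identity for the adjoint $H(W)^{*}$ modulo the null space, and commuting the intertwining matrix $\alpha$ past the $\sigma$-action and across $\pi$. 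These are the technical heart of the argument; the remaining verifications are bookkeeping.
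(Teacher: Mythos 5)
Your plan is correct and follows the standard GNS--Kolmogorov route, which is undoubtedly the route taken in the companion paper: the paper itself does not prove Theorem~\ref{T:cpker} but imports it from \cite{BMV1} (Theorem~3.1 there), with Remark~\ref{R:Omega-gen} handling the passage from nc subsets $\Omega$ to arbitrary subsets via Proposition~\ref{P:extcpncker}. The $(3)\Rightarrow(1)$ factorization $[K(Z^{(i)},Z^{(j)})(R_i^*R_j)]=XX^*$, the GNS construction of $(\cX,\sigma,H)$ from the form $\langle[W',v',y'],[W,v,y]\rangle=\langle K(W,W')(v^*v')y',y\rangle$ for $(1)\Rightarrow(3)$, and the dictionary $x\mapsto f_x$ with $f_x(W)(v^*)=H(W)(\mathrm{id}\otimes\sigma)(v^*)x$ for $(3)\Leftrightarrow(2)$ are exactly the natural steps.

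Two small remarks. First, your quotient norm on $\cH(K)$ is unnecessary: since $\pi([W,v,y])=(\mathrm{id}\otimes\sigma)(v)H(W)^*y$ and these vectors are, by construction, total in $\cX$, the map $x\mapsto f_x$ is already injective, so $\cH(K)$ is isometrically a copy of $\cX$. Second, when invoking boundedness of $\sigma$ you should run the $C^*$-inequality through a general finite linear combination $\sum_ic_i\pi([W_i,v_i,y_i])$, not a single generator; the relevant positivity is that of the block operator $[K(W_j,W_i)(v_j^*(\|a\|^2-a^*a)v_i)]$, which is again of the cp form $[K(W_j,W_i)(R_j^*R_i)]$ with $R_i=(\|a\|^2-a^*a)^{1/2}v_i$, so the estimate does go through. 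With these fixes the argument is complete; the remaining verification that $H$ respects intertwinings, which you correctly flag as the technical heart, is the expected translation of \eqref{kerintertwine} into a statement about $H(W)^*$ modulo the null space.
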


\begin{remark}  \label{R:Omega-gen}
Theorem 3.1 in \cite{BMV1} assumes that $\Omega$ is a 
nc subset of $\cV_{\rm nc}$ rather than an arbitrary subset.  
    However, as is explained in Proposition \ref{P:extcpncker} below, 
    one can always extend a nc function/nc kernel/nc cp kernel on 
    $\Omega$ uniquely to a nc function/nc kernel/nc cp kernel 
    respectively on the nc envelope $[\Omega]_{\rm nc}$.  With this 
    fact in hand, one can see that there is no harm done in taking $\Omega$ to be 
    an arbitrary subset of $\cV_{\rm nc}$ in Theorem \ref{T:cpker}.
    \end{remark}

\begin{example}  \label{E:linear}  Suppose that $\cV$,
    $\cV_{0}$ and $\cV_{1}$ are complex vector spaces, and $\varphi \in 
    \cL(\cV_{1}, \cV_{0})$ is a linear operator.  There are two distinct 
    procedures (at least) for relating $\varphi$ to our nc function 
    theory.
    
    \smallskip
    
    \textbf{(a)} Define a function $\bphi \colon \cV_{\rm nc}
    \times \cV_{\rm nc} \to \cL(\cV_{1}, \cV_{0})_{\rm nc}$ by
 \begin{equation}  \label{varphinm}
\bphi(Z,W) = {\rm id}_{{\mathbb C}^{n \times m}} \times \varphi \colon
[ a_{ij} ]_{\sm{ 1 \le i \le n; \\ 1 \le j \le m}} \mapsto 
[\varphi(a_{ij}) ]_{\sm{ 1 \le i \le n; \\ 1 \le j \le m}}
\end{equation}
 for  $Z \in \Omega_{n}$ and $W \in \Omega_{m}$ (so $\bphi(Z,W) \in 
 \cL(\cV_{1}^{n \times m}, \cV_{0}^{n \times m})$ for $Z \in \Omega_{n}$, $W 
 \in \Omega_{m}$).  Thus $\bphi(Z,W) \in \cL(\cV_{1}, \cV_{0})$ 
 depends on $Z,W$ only through the respective sizes $n$ and $m$:  $Z 
 \in \cV^{n \times n}$, $W \in \cV^{m \times m}$.
 Thus we really have
 $$
 \bphi(Z,W) = \varphi^{(n,m)} := {\rm id}_{{\mathbb C}^{n \times m}} 
 \otimes \varphi \colon \cV_{1}^{n \times m} \to \cV_{0}^{n \times m}.
$$
The computation, for $\alpha \in {\mathbb C}^{n \times N}$, $X \in 
\cV^{N \times M}$, $\beta \in {\mathbb C}^{M \times m}$,
\begin{align}
    \alpha \cdot \varphi^{(N,M)}(X) \cdot \beta & =
    [ \alpha_{ij}] \cdot [ \varphi(X_{ij}) ] \cdot [\beta_{ij}] 
    \notag \\
    & = [ \sum_{k,\ell} \alpha_{ik} \varphi(X_{ij}) \beta_{\ell 
    j} ] \notag \\
& = [ \varphi( \sum_{k,\ell} \alpha_{ik} X_{k \ell} 
\beta_{\ell j})] \notag \\
& = \varphi^{(n,m)}( \alpha \cdot X \cdot \beta)
\label{computation}
\end{align}
shows that $\bphi$ enjoys the \textbf{bimodule property}:
\begin{equation}   \label{bimodule1}
    \alpha \cdot \bphi(Z,W)(X)\cdot \beta = \bphi(\widetilde Z, 
    \widetilde W)( \alpha \cdot 
    X \cdot \beta)
\end{equation}
for $\alpha \in {\mathbb C}^{n \times N}$, $X \in \cV_{0}^{N \times 
M}$, $\beta \in {\mathbb C}^{M \times m}$, $Z \in \cV^{N \times N}$, 
$W \in \cV^{M \times M}$, $\widetilde Z \in \cV^{n \times n}$, 
$\widetilde W \in \cV^{m \times m}$.  The ``respects intertwining 
property'' \eqref{kerintertwine} for the kernel $\bphi$ says that 
\eqref{bimodule1} holds whenever $Z,W,\widetilde Z, \widetilde W$ are related via
\begin{equation}   \label{intertwine-bphi}
\alpha Z = \widetilde Z \alpha, \quad \beta^{*} W = \widetilde W 
\beta^{*}.
\end{equation}
We conclude that the bimodule property \eqref{bimodule1} is formally 
stronger than the ``respects intertwining property'' in that, for 
given $\alpha, \beta$, the bimodule property does not require one to 
search for points $Z,\widetilde Z, W, \widetilde W$ which satisfy the 
intertwining conditions \eqref{intertwine-bphi}. In any case, we 
conclude that $\bphi$ so defined is a nc kernel. Let us say that a nc 
kernel of this form is a  \textbf{nc constant kernel}.

In case $\cV_{1}$ and $\cV_{0}$ are operator systems and $\varphi$ 
is a completely positive map in the sense of the operator algebra literature 
(see \cite{ERbook, Paulsen}), the resulting kernel $\bphi$ is furthermore 
a completely positive 
nc kernel; this example is discussed in some detail in \cite[Section 
3.3]{BMV1}, 

\smallskip

\textbf{(b)}  For this construction we suppose that we have given 
only two vector spaces $\cV$ and $\cV_{0}$ and that $\varphi$ is a linear 
map from $\cV$ to $\cV_{0}$. Define a map $L \colon \cV_{\rm nc} \to 
\cV_{0, {\rm nc}}$ by
$$
  L_{\varphi}(Z) = \varphi^{(n,n)}(Z): = [ \varphi(Z_{ij})] \text{ 
  for } Z = [Z_{ij}] \in \cV^{n \times n}
$$
where we use the notation $\varphi^{(n,n)}$ as in Example 
\ref{E:linear} (a) above. From the definition we see that $L$ is 
graded, i.e., $L \colon \cV^{n \times n} \to \cV_{0}^{n \times n}$ 
for all $n \in {\mathbb N}$.  To check the ``respects intertwining'' 
property \eqref{funcintertwine}, we use the bimodule property 
\eqref{computation} to see that, if $\alpha \in {\mathbb C}^{m \times 
n}$, $Z \in \cV^{n \times n}$, $\widetilde Z \in \cV^{m \times m}$ 
are such that $\alpha Z = \widetilde Z \alpha$, then
\begin{align*}
\alpha \cdot L_{\varphi}(Z) & = \alpha \cdot \varphi^{(n \times n)}(Z) =
\varphi^{(m \times n)}( \alpha \cdot Z) \\
& = \varphi^{(m \times n)}( \widetilde Z \cdot \alpha) =
\varphi^{(m \times m)}(\widetilde Z) \cdot \alpha =
L_{\varphi}(\widetilde Z) \cdot \alpha
\end{align*}
and it follows that $L_{\varphi}$ is a nc function.  We shall say 
that a nc function of this form is a \textbf{nc linear map}.  In 
Section \ref{S:relativelyfull} we shall be particularly interested in 
the special case where $\cV_{0} = \cL(\cR, \cS)$, the space of 
bounded linear operators between two Hilbert spaces $\cR$ and $\cS$.
\end{example}

\subsection{Full noncommutative sets}    \label{S:ncdisk}
We shall be interested in the nc set $\Omega \subset \cV_{\rm nc}$ 
on which our noncommutative functions are defined having some additional structure.  
In all these examples we suppose that $\cV_{\rm nc}$ is the 
noncommutative set generated by a vector space $\cV$. 

\begin{definition}   \label{D:fullsubset}
We say that a subset $\Xi$ of $\cV_{\rm nc}$ is a  \textbf{full nc subset} 
of $\cV_{\rm nc}$ if the following conditions hold:
\begin{enumerate}
    \item $\Xi$ is \textbf{closed under direct sums}:  $Z \in \Xi_{n}$, $W \in 
    \Xi_{m}$ $\Rightarrow$ $\sbm{ Z & 0 \\ 0 & W} \in 
    \Xi_{n+m}$.
    
    \item $\Xi$ is \textbf{invariant under left injective 
    intertwinings}:  $Z \in \Xi_{n}$, $\widetilde Z \in \cV^{m \times 
    m}$ such that $\cI \widetilde Z = Z \cI$ for some injective $\cI 
    \in {\mathbb C}^{n \times m}$ (so $n \ge m$) $\Rightarrow$ 
    $\widetilde Z \in \Xi_{m}$.
 \end{enumerate}
\end{definition}

\noindent
An equivalent version of property (2) in Definition 
\ref{D:fullsubset} is that  $\Xi \subset \cV_{\rm nc}$ 
is \textbf{closed under
restriction to invariant subspaces}:  {\em whenever there is 
an invertible $\alpha \in {\mathbb C}^{n \times n}$ and a $Z \in \Xi$ 
of size $n \times n$ such that $\alpha^{-1} Z \alpha = \sbm{ 
\widetilde Z & Z_{12} \\ 0 & Z_{22}}$ with $\widetilde Z$ of size $m 
\times m$ ($m \le n$), then not only is $\sbm{ \widetilde Z & Z_{12} 
\\ 0 & Z_{22}}$ in $\Xi$ but also $\widetilde Z$ is in $\Xi$.}  Here 
we view the ${\mathbb C}$-linear space equal to the span of the first 
$m$ columns of $\alpha$ as an invariant subspace for $Z$ with matrix 
representation $\widetilde Z$ determined by
$$
 Z \left( \alpha \sbm{ I_{m} \\ 0 } \right) = \left( \alpha \sbm{ 
 I_{m} \\ 0 } \right) \widetilde Z.
$$
Note that here $\left( \alpha \sbm{ I_{m} \\0} \right)$ is an $n \times 
m$ matrix over ${\mathbb C}$ while $Z$ and $\widetilde Z$ are 
matrices of respective sizes $n \times n$ and $m \times m$ over $\cV$.
For a more concrete illustrative example, see Example \ref{E:ncpoly} 
below.

We next suppose that we are given two coefficient Hilbert spaces 
$\cS$ and $\cR$ and 
that $Q \colon \Xi \to \cL(\cR, \cS)$ is a nc function.
We associate with any such $Q$  the nc set ${\mathbb D}_{Q} \subset 
\Xi$ defined by
\begin{equation}
{\mathbb D}_{Q} =   \{ Z  \in \Xi  \colon \|Q(Z)\| < 1\}. \label{defDQ}
\end{equation}
Here the norm of $Q(Z)$ is taken in $\cL(\cR, \cS)^{n \times n} \cong 
\cL(\cR^{n}, \cS^{n})$ if 
$Z \in \Xi_{n}$.

The reader is welcome to have in mind the following examples as 
illustrative special cases of the general setup.

\begin{example}   \label{E:ncpoly} 
    We let $\cV$ be the vector space ${\mathbb C}^{d}$ of $d$-tuples 
    of complex numbers with $\Xi = \cV_{\rm nc}: = 
    \amalg_{n=1}^{\infty} ({\mathbb C}^{d})^{n \times n}$.  We 
    identify $({\mathbb C}^{d})^{n \times n}$ ($n \times n$ matrices 
    with entries from ${\mathbb C}^{d}$) with $({\mathbb C}^{n \times 
    n})^{d}$  ($d$-tuples of $n \times n$ complex matrices) and hence 
    we may view $\Xi$ as $\amalg_{n=1}^{\infty} ({\mathbb C}^{n 
    \times n})^{d}$.  Then we write an element $Z \in \Xi_{n}$ (the 
    elements of $\Xi \cap ({\mathbb C}^{n \times n})^{d})$ as a 
    $d$-tuple $Z = (Z_{1}, \dots, Z_{d})$ where each $Z_{i} \in 
    {\mathbb C}^{n \times n}$.  
Then an \textbf{invariant subspace} for a point $Z \in ({\mathbb C}^{n \times 
n})^{d}$ (as in the context of the reformulation of ``invariance under 
left injective intertwinings''  (see the discussion immediately after 
Definition \ref{D:fullsubset} above) amounts 
to a joint invariant subspace for the matrices $Z_{1}, \dots, Z_{n}$ in the classical
sense.

In the context of this example, we may define a notion of \textbf{noncommutative 
matrix polynomial}, by which we mean a formal expression of the form
\begin{equation}   \label{ncpoly}
  Q(z) = \sum_{{\mathfrak a} \in \free} Q_{\mathfrak a} z^{\mathfrak a}
\end{equation}
where the coefficients $Q_{\mathfrak a} \in {\mathbb C}^{s \times r}$ are complex matrices 
with all but finitely many equal to $0$,
as in the discussion above leading up to the statement of Theorem 
\ref{T:BGM2}.
 Such a formal expression 
$Q(z)$ defines a nc function from $({\mathbb C}^{d})_{\rm nc}$ to 
$({\mathbb C}^{s \times r})_{\rm nc}$ if, for $Z = (Z_{1}, \dots, 
Z_{d}) \in ({\mathbb C}^{n \times n})^{d}$ we define
$$
Q(Z) = \sum_{{\mathfrak a} \in \free} Q_{\mathfrak a} \otimes 
Z^{\mathfrak a} \in 
{\mathbb C}^{rn \times sn} \cong ({\mathbb C}^{r \times s})^{n\times n}
$$
as explained in the Introduction.
Then the associated $Q$-disk ${\mathbb D}_{Q}$ consists of all points 
$Z = (Z_{1}, \dots, Z_{d}) \in ({\mathbb C}^{n \times n})^{d}$ such 
that $\| Q(Z) \| < 1$. A set of this form can be thought of as a noncommutative analogue of 
a  semi-algebraic set as defined in real algebraic geometry (see 
e.g.\ \cite{BCR98}).  We mention some particular cases:
\begin{enumerate}
    \item If $Q(z) = \begin{bmatrix} z_{1} & \cdots & z_{d} 
\end{bmatrix}$ (a $1 \times d$ nc polynomial matrix), then the 
associated disk ${\mathbb D}_{Q}$ consists of $d$-tuples $Z = (Z_{1}, 
\dots, Z_{d})$ for which $\| \begin{bmatrix} Z_{1} & \cdots & Z_{d} 
\end{bmatrix}\| < 1$, or equivalently, for which $Z_{1} Z_{1}^{*} + 
\cdots + Z_{d} Z_{d}^{*} \prec I_{n}$.  We refer to this set as the 
\textbf{noncommutative ball}.

\item If $Q(z) = \sbm{ z_{1} & & \\ & \ddots & \\ & & z_{d} }$ (a $d 
\times d$ nc polynomial matrix), then the associated $Q$-disk 
${\mathbb D}_{Q}$ consists of $d$-tuples of $n \times n$ matrices
$Z = (Z_{1}, \dots, Z_{d})$ such that $\left\| \sbm{ Z_{1} & & \\ & 
\ddots & \\ & & Z_{d}} \right\| < 1$, or equivalently, for which 
$Z_{i}^{*} Z_{i} \prec I_{n}$ for each $i= 1, \dots, d$. We refer to 
this set as the \textbf{noncommutative polydisk}.
\end{enumerate}
\end{example}

\begin{example}  \label{E:polyhalfplane}
We next present an infinite-dimensional example. If $\cC$ is a 
$C^{*}$-algebra, let ${\mathbb H}^{+}(\cC)$ and ${\mathbb H}^{-}(\cC)$
be the upper and lower half-planes over $\cC$ given by
\begin{align*}
    & {\mathbb H}^{+}(\cC) = \left\{ a \in \cC \colon {\rm Im}\, a = 
    \frac{a - a^{*}}{2} > 0 \right\}, \\
    & {\mathbb H}^{-}(\cC) = \left\{ a \in \cC \colon {\rm Im}\, a = 
    \frac{a - a^{*}}{2} < 0 \right\}.
\end{align*}
We then define the  upper and lower fully matricial half-planes 
${\mathbb H}^{+}(\cA_{\rm nc})$ and ${\mathbb H}^{-}(\cA_{\rm nc})$ 
over a $C^{*}$-algebra $\cA$ by
$$
  {\mathbb H}^{\pm}(\cA_{\rm nc}) = \amalg_{n=1}^{\infty} {\mathbb 
  H}^{\pm}(\cA^{n \times n}).
$$
If we let the underlying vector space $\cV$ be equal to the 
$C^{*}$-algebra $\cA$, let $\Xi = \{ a \in \cA \colon i 1_{\cA} + 
a \text{ invertible}\}$ and then define a nc function $Q_{\pm}$ from $\Xi$ to 
$\cA_{\rm nc}$ by $Q_{+}(A) = (A + i 1_{\cA^{n \times 
n}})^{-1} (A - i  1_{\cA^{n \times n}})$ and $Q_{-}(A) = 
(A - i 1_{\cA^{n \times n}})^{-1} (A + i 1_{\cA^{n \times n}})$
for $A \in \cA^{n \times n}$, then we recover ${\mathbb H}^{\pm}(\cA_{\rm 
nc})$ as ${\mathbb H}^{\pm}(\cA_{\rm nc}) = {\mathbb D}_{Q_{\pm}}$.
This example comes up in free probability (see \cite{PV}).
\end{example}

\begin{example}  \label{E:comtuples}
Let $\cV = {\mathbb C}^{d}$ and $\cV_{\rm nc} \cong \amalg_{n=1}^{\infty} 
({\mathbb C}^{n \times n})^{d}$ as in Example \ref{E:ncpoly}.
Let $G$ be an open subset of ${\mathbb C}_{d}$ and
define a subset $\Xi$ of $({\mathbb C}^{d})_{\rm nc}$ to consist 
of commutative $d$-tuples of $n \times n$ matrices $Z = (Z_{1}, 
\dots, Z_{d})$ (so each $Z_{i} \in {\mathbb C}^{n \times n}$ and 
$Z_{i}Z_{j} = Z_{j} Z_{i}$ for all pairs of indices $i,j = 1, \dots, 
d$ for $n=1,2,\dots$) such that the joint spectrum $\sigma_{\rm joint}(Z)$ (i.e., the 
set of joint eigenvalues which is the same as the Taylor spectrum of 
$Z$ for this matrix case) is contained in $G$.  Now it is an easy exercise 
to verify that $\Xi$ so defined is a full nc subset of $({\mathbb 
C}^{d})_{\rm nc}$.  Suppose next that $\cR$ and $\cS$ are two 
coefficient Hilbert spaces and that $q$ is a $d$-variable 
holomorphic function on $G$ with values in $\cL(\cR, \cS)$ such that 
$\|q(z) \| < 1$ for all $z \in G$.  As the joint spectrum and the Taylor 
spectrum are the same for commutative matrix tuples, for $Z = (Z_{1}, 
\dots, Z_{d}) \in \Xi_{n}$, we can define $q(Z) \in \cL(\cR^{n}, 
\cS^{n})$ by the Taylor functional calculus or the Martinelli-Vasilescu 
functional calculus (see e.g. \cite{Curto}). If we define the 
associated $q$-disk ${\mathbb D}_{q}$ by
$$
  {\mathbb D}_{q} = \{ Z \in \Xi \colon \| q(Z) \| < 1\},
$$
then by construction we have ${\mathbb D}_{q,1} = \Xi_{q,1}$.
Elementary properties of the Taylor/Martinelli-Vasilescu functional calculus 
$f \mapsto f(Z)$ are that the ``respects direct sums'' and ``respects similarities'' 
properties are satisfied and thus $f$ so defined is a locally bounded nc function on 
${\mathbb D}_{q}$ (locally bounded referring to a neighborhood of a given point $Z^{(0)} = 
(Z_{1}^{(0)}, \dots, Z_{d}^{(0)}) \in {\mathbb D}_{q, n}$ 
in the standard Euclidean topology of  $({\mathbb C}^{n \times n})^{d}$).  
Conversely, any such locally bounded nc function is analytic (see \cite[Theorm 7.4]{KVV-book}), 
and its definition on ${\mathbb D}_{q, n}$ is determined by its 
definition on ${\mathbb D}_{q,1}$ via the 
Taylor/Martinelli-Vasilescu functional calculus. Thus the nc 
function theory can be used to get results for the standard theory of 
several (commuting) complex variables.  We note that domains of the type 
${\mathbb D}^{\infty}_{q}$ (where one uses commutative-operator 
$d$-tuples $Z = (Z_{1}, \dots, Z_{d})$ rather than commutative 
finite-matrix $d$-tuples as here) come up in the definition 
of the commutative Schur-Agler classes in \cite{AT, BB04, MP}.  We 
have more to say on this setup in Section \ref{S:commutative} 
and Remark \ref{R:AKV}.
\end{example}

\subsection{Noncommutative envelopes}
\label{S:fgncset}

We fix a noncommutative function $Q$ on the full nc subset $\Xi$ of 
$\cV_{\rm nc}$ as in Subsection \ref{S:ncdisk} and suppose that 
$\Omega_{0}$ is a subset of ${\mathbb D}_{Q}$.  The following 
three notions of nc subset generated by $\Omega_{0}$ will be useful 
in the sequel.

\begin{definition}  \label{D:gen}
    \begin{enumerate} 
	\item We say that $\Omega$ is the 
	\textbf{noncommutative envelope of $\Omega_{0}$} (or 
	\textbf{nc envelope of $\Omega_{0}$} for short) (notation: 
	$\Omega = [\Omega_{0}]_{\rm nc}$) if $\Omega$ 
	is equal to the smallest subset of $\Xi$ 
	containing $\Omega_{0}$ which is closed under direct sums:
	\begin{equation}   \label{prop1'}
	Z \in \Omega, \, W \in \Omega \, \Rightarrow \begin{bmatrix} Z & 0 \\ 0 & 
	W \end{bmatrix} \in \Omega.
	\end{equation}
Equivalently,
\begin{equation}  
[\Omega_{0}]_{\rm nc} = \bigcap \{ \Omega \subset \Xi \colon \Omega \supset \Omega_{0},\, 
\Omega \text{ satisfies 
\eqref{prop1'}}\}.
 \label{prop1''}
 \end{equation}
 
 We say that the set $[\Omega_{0}]_{\rm nc} \cap {\mathbb D}_{Q}$ is 
 the \textbf{${\mathbb D}_{Q}$-relative nc envelope} of $\Omega_{0}$.
	
	\item  We say that $\Omega$ is the \textbf{noncommutative 
	similarity-invariant envelope of $\Omega_{0}$} (or simply \textbf{nc 
	similarity envelope of $\Omega_{0}$}) (notation: $\Omega = 
	[\Omega_{0}]_{\rm nc, \, sim}$) if $\Omega$ is the 
	smallest subset of $\Xi$ containing $\Omega_{0}$ 
	which is closed under direct sums \eqref{prop1'} and under similarity 
	transforms:
	\begin{equation}   \label{prop2'}
 Z \in \Omega_{n}, \, \alpha \in {\mathbb C}^{n \times n} \text{ 
 invertible }  \Rightarrow \alpha Z \alpha^{-1} \in \Omega_{n}.
\end{equation}
Equivalently,
\begin{equation}  
[\Omega_{0}]_{\rm nc, \, sim} = \bigcap \{ \Omega \subset \Xi \colon \Omega \supset \Omega_{0},\, 
\Omega \text{ satisfies \eqref{prop1'} and \eqref{prop2'}}\}.
 \label{prop2''}
\end{equation}

We say that the set $[\Omega_{0}]_{\rm nc, sim} \cap {\mathbb D}_{Q}$ 
is the \textbf{${\mathbb D}_{Q}$-relative nc similarity envelope} of 
$\Omega_{0}$.

\item We say that $\Omega$ is the \textbf{full nc envelope of 
$\Omega_{0}$} (notation: $\Omega = [\Omega_{0}]_{\rm full}$) if $\Omega$  
is the smallest subset of $\Xi$ containing $\Omega_{0}$ which is closed under direct sums 
\eqref{prop1'} and  under left injective intertwinings
(see Definition \ref{D:fullsubset}).
Equivalently,
\begin{equation}   \label{prop3''}
    [\Omega_{0}]_{\rm full} =  \bigcap \{ \Omega \subset \Xi \colon 
    \Omega \supset \Omega_{0}, \, \Omega \text{ is a full nc subset}  
    \text{ as in Definition \ref{D:fullsubset}} \}.
\end{equation}

We say that the set $[\Omega_{0}]_{\rm full} \cap {\mathbb D}_{Q}$ is 
the \textbf{${\mathbb D}_{Q}$-relative full nc envelope} of $\Omega_{0}$.
\end{enumerate}
\end{definition}

We note that the properties \eqref{prop1'}, \eqref{prop2'}, as well 
as properties (1) and (2) in Definition \ref{D:fullsubset} for 
subsets $\Omega$ of $\Xi$ are invariant under intersection.  Hence it becomes clear that the 
intersection of all supersets of $\Omega_{0}$ satisfying some 
combination of properties \eqref{prop1'}, \eqref{prop2'}, (1)--(2) 
again  satisfies the same combination, and hence the 
three respective envelopes exist and are alternatively characterized 
by \eqref{prop1''}, \eqref{prop2''}, and \eqref{prop3''} respectively.

Alternatively, these envelopes can be described more constructively 
via a \textbf{bottom-up} (rather than \textbf{top-down} as in 
Definition \ref{D:gen}) procedure, as explained in the following 
Proposition.

\begin{proposition}   \label{P:bottom-up}
    Let $\Omega_{0}$ be a subset of a full nc set $\Xi \subset \cS_{\rm nc}$.
    Then:
\begin{enumerate}	
\item  The nc-envelope $[\Omega_{0}]_{\rm nc}$ consists of all 
matrices of the form
$$
    Z = \begin{bmatrix} Z^{(1)} & & \\ & \ddots & \\ & & Z^{(N)} 
\end{bmatrix}
$$
where each $Z^{(j)} \in \Omega_{0}$ where $N = 1,2,\dots$.

\item The nc similarity envelope $[\Omega_{0}]_{\rm nc, sim}$ 
consists of all matrices $Z$ in $\Xi$ such that 
there exists a square invertible complex matrix $\alpha$ so 
that $\alpha Z \alpha^{-1} \in [\Omega_{0}]_{\rm nc}$.

\item The full nc envelope $[\Omega_{0}]_{\rm full}$ consists of all 
$\widetilde Z \in \Xi$ for which there is a $Z \in [\Omega_{0}]_{\rm 
nc}$ and a left injective intertwiner $\cI$ so that 
$\cI \widetilde Z = Z \cI$.
\end{enumerate}
Consequently, we have the chain of containments
\begin{equation}   \label{chain}
    \Omega_{0} \subset [\Omega_{0}]_{\rm nc} \subset 
    [\Omega_{0}]_{\rm nc, sim} \subset [\Omega_{0}]_{\rm full}.
\end{equation}

\end{proposition}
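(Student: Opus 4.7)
For each of the three parts, denote by $S_k$ the set produced by the bottom-up recipe. The plan is to prove $S_k = [\Omega_0]_\bullet$ via two inclusions: (i) check that $S_k \subset \Xi$ and that $S_k$ already satisfies the closure axioms of the corresponding top-down definition, so minimality gives $[\Omega_0]_\bullet \subset S_k$; (ii) note that any $\Omega \supset \Omega_0$ satisfying those closure axioms automatically contains $S_k$, giving the reverse inclusion. The containment $S_k \subset \Xi$ will use exactly the full-nc-set hypotheses on $\Xi$: closure under direct sums for part (1), closure under similarity (the special case of left injective intertwining with $\cI$ square invertible) for part (2), and closure under left injective intertwinings for part (3).

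Part (1) is an essentially direct unwinding: iterating the direct-sum operation starting from one-block diagonals produces exactly the block-diagonal matrices with summands in $\Omega_0$, and conversely any direct-sum-closed superset of $\Omega_0$ must contain all such matrices. For part (2), the two checks needed are direct-sum closure of $S_2$, handled by forming block-diagonal intertwiners $\alpha_1 \oplus \alpha_2$ and reducing to part (1), and similarity closure of $S_2$, which follows from the identity $\beta Z \beta^{-1} = (\beta \alpha^{-1})\, W\, (\beta \alpha^{-1})^{-1}$ whenever $W = \alpha Z \alpha^{-1} \in [\Omega_0]_{\rm nc}$. The converse inclusion uses part (1) to see that any direct-sum and similarity closed superset already contains $[\Omega_0]_{\rm nc}$ together with all of its similarity conjugates.

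Part (3) is the most delicate point, because one must verify that $S_3$ is itself closed under left injective intertwinings. The key computation is compositional: if $\cI \widetilde Z = Z \cI$ with $Z \in [\Omega_0]_{\rm nc}$ and $\cJ \widetilde{\widetilde Z} = \widetilde Z \cJ$ with $\cJ$ injective, then $(\cI\cJ)\,\widetilde{\widetilde Z} = Z\,(\cI\cJ)$, and $\cI\cJ$ is injective as a composite of injections. Direct-sum closure of $S_3$ again reduces to part (1) by forming block-diagonal intertwiners $\cI_1 \oplus \cI_2$. The converse direction follows by observing that any full nc superset of $\Omega_0$ contains $[\Omega_0]_{\rm nc}$ by part (1) and, being closed under left injective intertwining, must contain every $\widetilde Z$ that intertwines into $[\Omega_0]_{\rm nc}$.

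With the three descriptions in hand, the chain \eqref{chain} is immediate from the bottom-up formulas: choosing $N=1$ yields $\Omega_0 \subset [\Omega_0]_{\rm nc}$; choosing $\alpha = I$ yields $[\Omega_0]_{\rm nc} \subset [\Omega_0]_{\rm nc, \, sim}$; and any similarity $\alpha Z \alpha^{-1} = W \in [\Omega_0]_{\rm nc}$ rewrites as the left injective intertwining $\alpha Z = W \alpha$, exhibiting $Z \in [\Omega_0]_{\rm full}$. The main conceptual obstacle, modest as it is, lies in part (3): verifying that stacking a left injective intertwiner on top of an element of $S_3$ again lands in $S_3$ via composition of injections, so that $S_3$ is stable under the very operation defining it.
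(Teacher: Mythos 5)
Your proof is correct and follows essentially the same approach as the paper: show each candidate set is contained in the corresponding envelope because it is generated by the allowed operations, and show the reverse containment by verifying that each candidate set already satisfies the relevant closure axioms (with the key step in part (3) being that compositions of injective intertwiners are again injective intertwiners, which is exactly the paper's argument). The only cosmetic difference is that the paper defers the similarity-envelope closure verification to Proposition A.1 of \cite{KVV-book}, while you carry out the short computation $\beta Z\beta^{-1} = (\beta\alpha^{-1})W(\beta\alpha^{-1})^{-1}$ directly.
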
 

\begin{proof}  Let $[\Omega_{0}]^{o}_{\rm nc}$ be the 
    set of all matrices of the form $Z = \sbm{ Z^{(1)} & & \\ & 
    \ddots & \\ & & Z^{(N)} }$ as in statement (1). Then it is clear 
    that necessarily $[\Omega_{0}]^{o}_{\rm nc} \subset 
    [\Omega_{0}]_{\rm nc}$.  Similarly, if $[\Omega_{0}]^{o}_{\rm nc, 
    sim}$ and $[\Omega_{0}]^{o}_{\rm full}$ are the candidate sets 
    for $[\Omega_{0}]_{\rm nc, sim}$ and $[\Omega_{0}]_{\rm full}$ 
    described in statements (2) and (3) of the Proposition, it is 
    clear that $[\Omega_{0}]^{o}_{\rm nc, sim} \subset 
    [\Omega_{0}]_{\rm nc, sim}$ and that $[\Omega_{0}]^{o}_{\rm full} 
    \subset [\Omega_{0}]_{\rm full}$.  Thus it remains only to verify 
    the reverse containments.
    
    Suppose that $Z = \sbm{ Z^{(1)} & & \\ & \ddots & \\ & & 
    Z^{(N)}}$ where each $Z^{(j)}$ is in $[\Omega_{0}]^{o}_{\rm nc }$ 
    and  hence has in turn a direct-sum decomposition 
    $Z^{(j)} = \sbm{ Z^{(j,1)} & & \\ & \ddots & \\ & & 
    Z^{(j,n_{j})}}$ with each matrix $Z^{(j,\ell)}$ coming from 
    $\Omega_{0}$.  
    Then it is clear that
    $$
      Z = \sbm{ Z^{(1,1)} & & & & & & \\ & \ddots & & & & & \\
       & & Z^{(1,n_{1})} & & & & \\
      & & & \ddots & & & \\ & & & & Z^{(N,1)} & & \\
      & & & & & \ddots & \\  & & & & & & Z^{(N, n_{N})} }
   $$
   is the direct sum of $n_{1} + \cdots + n_{N}$ elements from 
   $\Omega_{0}$. i.e., $Z \in [\Omega_{0}]^{o}_{\rm nc}$.  Thus 
   $[\Omega_{0}]^{o}_{\rm nc}$ is a nc set containing $\Omega_{0}$ 
   and the reverse containment $[\Omega_{0}]_{\rm nc} \subset 
   [\Omega_{0}]^{o}_{\rm nc}$ follows.
   
   The fact that $[\Omega_{0}]^{o}_{\rm nc, sim}$ is a nc set 
   invariant under similarity is the content of Proposition A.1 from 
   \cite{KVV-book} and obviously $[\Omega_{0}]^{o}_{\rm nc, sim}$ 
   contains $\Omega_{0}$.  Hence the reverse containment 
   $[\Omega_{0}]_{\rm nc, sim} \subset [\Omega_{0}]^{o}_{\rm nc, sim}$ follows.
   
   Finally, since $[\Omega_{0}]^{o}_{\rm full}$ contains 
   $\Omega_{0}$, to show the reverse containment 
   $[\Omega_{0}]_{\rm full} \subset [\Omega_{0}]^{o}_{\rm full}$, it 
   suffices to show that $[\Omega_{0}]^{o}_{\rm full}$ is closed 
   under direct sums and left injective intertwinings.  
   
   \smallskip
   
   \noindent
   \textbf{Closure under direct sums:} Suppose that $\widetilde 
   Z^{(1)}$ and $\widetilde Z^{(2)}$  are in $[\Omega_{0}]^{o}_{\rm 
   full}$.  Then by part (1) of the Proposition already proved,
   there is a $Z^{(1)} \in [\Omega_{0}]^{o}_{\rm nc}$ and a $Z^{(2)} 
   \in [\Omega_{0}]^{o}_{\rm nc}$ together with injective 
   $\cI^{(1)}$ and $\cI^{(2)}$ so that $\cI^{(1)} \widetilde Z^{(1)} 
   = Z^{(1)} \cI^{(1)}$ and $\cI^{(2)} \widetilde Z^{(2)} = Z^{(2)} 
   \cI^{(2)}$.  Then
   $\sbm{ \cI^{(1)} & 0 \\ 0 & \cI^{(2)}}$ is also injective and
 $$
 \begin{bmatrix} \cI^{(1)} & 0 \\ 0 & \cI^{(2)} \end{bmatrix}
\begin{bmatrix} \widetilde Z^{(1)} & 0 \\ 0 & \widetilde Z^{(2)} 
\end{bmatrix}
= \begin{bmatrix} Z^{(1) }& 0 \\ 0 & Z^{(2)} \end{bmatrix} 
 \begin{bmatrix} \cI^{(1)} & 0 \\ 0 & \cI^{(2)} \end{bmatrix}
 $$
where $\sbm{ Z^{(1)} & 0 \\ 0 & Z^{(2)} }$ is in 
$[\Omega_{0}]^{o}_{\rm nc}$ since both $Z^{(1)}$ and $Z^{(2)}$ are in 
$[\Omega_{0}]^{o}_{\rm nc}$.   This enables us to conclude that 
$\sbm{ \widetilde Z^{(1)} & 0 \\ 0 & \widetilde Z^{(2)}}$ is in
$[ \Omega]^{o}_{\rm full}$.

\smallskip

\noindent
\textbf{Closure under left injective intertwinings:}
 Suppose that $\cI$ is injective, $Z \in [ \Omega_{0}]^{o}_{\rm 
 full}$ and $\widetilde Z \in \Xi$ satisfies $\cI \widetilde Z = Z 
 \cI$.  We wish to show that $\widetilde Z \in [\Omega_{0}]^{o}_{\rm 
 full}$.  Toward this end, observe first of all that $Z \in 
 [\Omega_{0}]^{o}_{\rm full}$ means that there is an injective 
 $\cI_{0}$ and a $W \in [ \Omega_{0}]^{o}_{\rm nc}$ so that $\cI_{0} 
 Z = W \cI_{0}$.  Then we see that
 $$
  W \cI_{0} \cI = \cI_{0} Z \cI = \cI_{0} \cI \widetilde Z.
 $$
 As $W \in [\Omega_{0}]^{o}_{\rm nc}$ and $\cI_{0} \cI$ is again 
 injective, it follows that $\widetilde Z \in [\Omega]^{o}_{\rm 
 full}$ as wanted.
 
 The chain of containments \eqref{chain} is an immediate consequence 
 of the respective envelope characterizations in parts (1), (2), (3) 
 of the Proposition.
\end{proof}

The key property of  finitely generated nc subsets is given 
by the following lemma.  

\begin{proposition}  \label{P:finitetype}
    Let $\Omega$ be any one of the three ${\mathbb D}_{Q}$-relative envelopes 
    $[\Omega_{0}]_{\rm nc} \cap 
    {\mathbb D}_{Q}$, $[\Omega_{0}]_{\rm nc,\, 
    sim} \cap {\mathbb D}_{Q}$, or $[\Omega_{0}]_{\rm full} \cap 
    {\mathbb D}_{Q}$.
    \begin{enumerate} 
	\item Suppose that $f \in \cT(\Omega; \cE_{\rm nc})$ is such 
	that $f|_{\Omega_{0}} \equiv 0$. Then also $f \equiv 0$.  
	Hence any function $f \in \cT(\Omega; \cE_{\rm nc})$ is 
	uniquely determined by its restriction ${\mathfrak R} f : = 
	f|_{\Omega_{0}}$ to $\Omega_{0}$.
	
	\item  Suppose also that $\Omega_{0}$ is a finite subset of ${\mathbb D}_{Q}$ 
   and that $\cE$ is a finite-dimensional Hilbert space.  Then the vector space
    $\cT(\Omega; \cL(\cE)_{\rm nc})$ of all $\cL(\cE)$-valued nc functions on 
    $\Omega$ has finite dimension:
    $$
      \dim \cT(\Omega; \cL(\cE)_{\rm nc}) < \infty.
    $$
    \end{enumerate}
  \end{proposition}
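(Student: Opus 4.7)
The proof of both parts rests on Proposition \ref{P:bottom-up}, whose bottom-up description lets me trace the vanishing of $f$ from $\Omega_0$ outward through each of the three envelopes by invoking, at the appropriate stage, one of the defining axioms of an nc function. For the nc-envelope case, every $Z \in [\Omega_0]_{\rm nc}$ is a block-diagonal sum $\bigoplus_{j=1}^N Z^{(j)}$ with $Z^{(j)} \in \Omega_0$, and the respects-direct-sums property \eqref{funcdirsum} gives $f(Z) = \bigoplus_j f(Z^{(j)}) = 0$. I would also observe at this point that $[\Omega_0]_{\rm nc} \subset {\mathbb D}_Q$ automatically, since $Q$ is itself an nc function and $\|\bigoplus_j Q(Z^{(j)})\| = \max_j \|Q(Z^{(j)})\| < 1$; so the intersection with ${\mathbb D}_Q$ is inert at this stage.

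For the similarity case $Z \in [\Omega_0]_{\rm nc,\,sim} \cap {\mathbb D}_Q$, Proposition \ref{P:bottom-up}(2) produces an invertible $\alpha$ with $\widetilde Z := \alpha Z \alpha^{-1} \in [\Omega_0]_{\rm nc} \subset {\mathbb D}_Q$, so that both $Z$ and $\widetilde Z$ lie in $\Omega$; the previous step gives $f(\widetilde Z) = 0$, and then \eqref{funcsim} yields $\alpha f(Z) \alpha^{-1} = f(\widetilde Z) = 0$, hence $f(Z) = 0$. For $\widetilde Z \in [\Omega_0]_{\rm full} \cap {\mathbb D}_Q$, Proposition \ref{P:bottom-up}(3) furnishes $Z \in [\Omega_0]_{\rm nc}$ and an injective intertwiner $\cI \in {\mathbb C}^{n \times m}$ with $\cI \widetilde Z = Z \cI$; invoking the respects-intertwinings formulation \eqref{funcintertwine} (available by the equivalence of the two characterizations of nc functions), one obtains $\cI f(\widetilde Z) = f(Z) \cI = 0$. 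Since $\cI$ has full column rank, $\cI^{*} \cI$ is invertible, and left-multiplication by $(\cI^{*} \cI)^{-1} \cI^{*}$ gives $f(\widetilde Z) = 0$. In every case, the restriction map ${\mathfrak R} \colon f \mapsto f|_{\Omega_0}$ is therefore injective on $\cT(\Omega; \cE_{\rm nc})$, which also proves the ``uniquely determined'' conclusion of part (1).

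Part (2) is then an immediate consequence of the injectivity of ${\mathfrak R}$ applied with target $\cL(\cE)$: writing $\Omega_0 = \{Z^{(1)}, \dots, Z^{(k)}\}$ with $Z^{(j)}$ of size $n_j \times n_j$, ${\mathfrak R}$ embeds $\cT(\Omega; \cL(\cE)_{\rm nc})$ into the finite-dimensional vector space $\bigoplus_{j=1}^k \cL(\cE)^{n_j \times n_j}$, yielding the bound $\dim \cT(\Omega; \cL(\cE)_{\rm nc}) \le \sum_{j=1}^k n_j^2 (\dim \cE)^2 < \infty$. No serious obstacle is anticipated; the only point requiring care is the bookkeeping that at each reduction step the intermediate points built by Proposition \ref{P:bottom-up} actually stay inside $\Omega$, i.e., inside ${\mathbb D}_Q$, which is ensured by the automatic inclusion $[\Omega_0]_{\rm nc} \subset {\mathbb D}_Q$ established at the outset together with the hypothesis $\widetilde Z \in {\mathbb D}_Q$ supplied by the relative envelope.
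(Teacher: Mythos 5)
Your proof is correct and follows essentially the same strategy as the paper's: trace the vanishing of $f$ outward from $\Omega_0$ by invoking respects-direct-sums to reach $[\Omega_0]_{\rm nc}$, then respects-intertwinings (or similarities) to reach the larger envelopes, and conclude by observing that the restriction map is an injection into a finite-dimensional space of graded functions. The paper phrases this through a ``zero set'' $\mathfrak{Z} = \{Z \in \Omega : f(Z) = 0\}$, showing it is closed under direct sums and left injective intertwinings and hence contains the full envelope; you instead run through the three envelopes explicitly via Proposition \ref{P:bottom-up}, which is a legitimate reorganization of the same idea. Your explicit remark that $[\Omega_0]_{\rm nc} \subset {\mathbb D}_Q$ automatically (so that the intermediate direct-sum points genuinely lie in $\Omega$) makes the bookkeeping slightly more careful than the paper's terse statement that ``it suffices to consider the case $\Omega = [\Omega_0]_{\rm full}$''; the latter is a bit loose as stated, since the envelopes are nested the wrong way for a direct reduction, but the argument the paper actually runs covers all three cases, exactly as yours does.
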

  
\begin{proof}  From the chain of containments \eqref{chain}, we see 
    that it suffices to consider the case where $\Omega = 
    [\Omega_{0}]_{\rm full}$.
    
    We let ${\mathfrak R}$ be the restriction map ${\mathfrak R} \colon f 
    \mapsto f|_{\Omega_{F}}$ for $f \in \cT(\Omega; \cL(\cE)_{\rm 
    nc})$.  Let $\cF_{\rm gr}(\Omega_{F}, \cL(\cE)_{\rm nc})$ be the linear space of all 
   graded  $\cL(\cE)_{\rm nc}$-valued functions on $\Omega_{F}$.  Note that
    $$
     {\mathfrak R} \colon \cT(\Omega; \cL(\cE)_{\rm nc}) \to \cF_{\rm 
     gr}(\Omega_{F}, \cL(\cE)_{\rm nc}).
    $$
   As ${\mathfrak R}$ is linear, to show that $f$ is 
    uniquely determined by ${\mathfrak R}f$ it suffices to show that 
    ${\mathfrak R}$ is injective, i.e.: ${\mathfrak R}f \equiv 0$ 
    $\Rightarrow$ $f \equiv 0$.
    Suppose therefore that $f \in \cT(\Omega; \cL(\cE)_{\rm nc})$ 
    vanishes on $\Omega_{0}$.  Let ${\mathfrak Z} = \{ Z \in \Omega 
    \colon f(Z) = 0\}$.  By assumption $\Omega_{0} \subset {\mathfrak 
    Z}$.  Since $f$ as a nc function on $\Omega$ respects direct 
    sums, it follows that ${\mathfrak Z}$ is closed under direct 
    sums and hence ${\mathfrak Z} \supset [\Omega_{0}]_{\rm nc}$.
    Suppose next that $\widetilde Z$ is a point in ${\mathbb D}_{Q, m}$ 
    such that there is an injective matrix $\cI \in {\mathbb C}^{n 
    \times m}$ and a $Z \in {\mathfrak Z}_{n}$  so that $\cI \widetilde Z = 
    Z \cI$.  Since $f$ respects intertwinings, it follows that
    $0 = f(Z) \cI = \cI f(\widetilde Z)$.  As $\cI$ is injective, it 
    follows that $f(\widetilde Z) = 0$, i.e., $\widetilde Z \in 
    {\mathfrak Z}$.
     Thus 
    ${\mathfrak Z}$ has the invariance properties required for the inclusion 
    $\Omega: = [\Omega_{0}]_{\rm full} \cap {\mathbb D}_{Q} 
    \subset {\mathfrak Z}$, and hence $f$ vanishes identically on 
    $\Omega$ and statement (1) of Proposition \ref{P:finitetype} 
    follows.
    
    We now suppose that $\Omega_{0}$ is a finite subset and that 
    $\cE$ is a finite-dimensional Hilbert space.  Enumerate 
    the elements of $\Omega_{0}$ as $\Omega_{0} = \{Z^{(1)}, \dots, Z^{(N)}\}$. 
   Observe that $\cF_{\rm gr}(\Omega_{0}, \cL(\cE)_{\rm nc})$ is finite-dimensional. To 
   see this, say $Z^{(i)} \in \Omega_{F, n_{i}}$ so that the value 
   $f(Z^{(i)})$ of a graded function $f$ at $Z^{ (i)}$ is in 
   $\cL(\cE)^{n_{i} \times n_{i}}$.  Let 
   $\{E^{(n_{i})}_{j,k} \colon 1 \le j, k \le n_{i} \cdot \dim \cE \}$
   be a (finite) basis for $\cL(\cE)^{n_{i} \times n_{i}}$.  For $1 
   \le i \le N$ and $1 \le j,k \le n_{i}\cdot \dim \cE$ set
   $$
    f_{i; jk}(Z) = \begin{cases} E^{(n_{i})}_{jk} & \text{if } Z = 
    Z^{(i)}, \\
    0 & \text{otherwise.}
    \end{cases}
    $$
    Then the collection $\{f_{i,jk} \colon 1 \le i \le N,\, 1 \le j,k 
    \le n_{i} \cdot \dim \cE \}$ is a finite basis for the linear 
    space $\cF_{\rm 
    gr}(\Omega_{0}, \cL(\cE)_{\rm nc})$.
    By part (1) we know that ${\mathfrak R}$ is injective.
    Thus ${\mathfrak R}$ is an injective mapping from the linear 
    space $\cT(\Omega; \cL(\cE)_{\rm nc})$ into the 
    finite-dimensional linear space $\cF_{\rm gr}(\Omega_{0}, 
    \cL(\cE)_{\rm nc})$. It now follows from the null-kernel theorem from Linear 
    Algebra that $\dim \cT(\Omega; \cL(\cE)_{\rm nc}) < \infty$.
\end{proof}

\subsection{Noncommutative Zariski closed sets}  \label{S:Zariski}
We include here some material not needed in the sequel, as it may be 
of independent interest.

Suppose that $\Omega$ is a subset of $\Xi$.  We define the nc-Zariski 
closure $\overline \Omega$ of $\Omega$ by
\begin{equation}  \label{Zariski-close}
    \overline{\Omega} = \{ Z \in \Xi \colon f \in \cT(\Xi; {\mathbb 
    C}_{\rm nc}) \text{ with } f|_{\Omega} = 0 \Rightarrow f(Z) = 0\}.
\end{equation}
We say that $\Omega$ is \textbf{nc-Zariski closed} if $\Omega = 
\overline{\Omega}$.  If $\Omega$ is a subset of ${\mathbb D}_{Q}$, we 
say that $\Omega$ is \textbf{${\mathbb D}_{Q}$-relative nc-Zariski 
closed} if $\Omega = \overline{\Omega} \cap {\mathbb D}_{Q}$.

The next result gives a relation between nc-Zariski closure and the 
full nc envelope of a given set $\Omega$. 

\begin{proposition}   \label{P:Zariski-close} For $\Omega$ any subset 
    of $\Xi$, we have the containment
\begin{equation}  \label{gen/Zariski}
	[\Omega]_{\rm full} \subset \overline{\Omega}.
  \end{equation}
\end{proposition}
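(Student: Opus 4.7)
The plan is to show that for any scalar nc function $f$ on $\Xi$ vanishing on $\Omega$, the zero set of $f$ is itself closed under direct sums and left injective intertwinings, hence contains $[\Omega]_{\rm full}$; intersecting over all such $f$ then yields \eqref{gen/Zariski}. This is essentially the same mechanism used in the proof of Proposition \ref{P:finitetype}(1), repurposed in the dual direction.

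More concretely, fix $f \in \cT(\Xi; \C_{\rm nc})$ with $f|_{\Omega} \equiv 0$ and set
$$
\mathfrak{Z}_{f} = \{ Z \in \Xi \colon f(Z) = 0\}.
$$
By hypothesis $\Omega \subset \mathfrak{Z}_{f}$. If $Z,W \in \mathfrak{Z}_{f}$ with $\sbm{Z & 0 \\ 0 & W} \in \Xi$, then since $f$ respects direct sums (see \eqref{funcdirsum}) we get $f(\sbm{Z & 0 \\ 0 & W}) = \sbm{f(Z) & 0 \\ 0 & f(W)} = 0$, so $\mathfrak{Z}_{f}$ is closed under direct sums inside $\Xi$. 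Similarly, if $Z \in \mathfrak{Z}_{f,n}$, $\widetilde Z \in \Xi_{m}$, and $\cI \in \C^{n \times m}$ is injective with $\cI \widetilde Z = Z \cI$, then because $f$ respects intertwinings (see \eqref{funcintertwine}, applied with the roles of the two points swapped) we obtain $\cI\, f(\widetilde Z) = f(Z)\, \cI = 0$; injectivity of $\cI$ forces $f(\widetilde Z) = 0$, i.e., $\widetilde Z \in \mathfrak{Z}_{f}$.

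Thus $\mathfrak{Z}_{f}$ is a subset of $\Xi$ containing $\Omega$ that is closed under both direct sums and left injective intertwinings. Since $[\Omega]_{\rm full}$ is, by the characterization \eqref{prop3''}, the smallest such set, we conclude $[\Omega]_{\rm full} \subset \mathfrak{Z}_{f}$. As $f \in \cT(\Xi; \C_{\rm nc})$ with $f|_{\Omega} \equiv 0$ was arbitrary, intersecting over all such $f$ and invoking the definition \eqref{Zariski-close} of $\overline{\Omega}$ yields $[\Omega]_{\rm full} \subset \overline{\Omega}$.

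There is no real obstacle here: the argument is a direct unwinding of definitions, and the crucial point is simply that the two defining closure properties of a full nc subset are exactly the properties automatically enjoyed by the zero set of any scalar nc function on $\Xi$. The only mild subtlety is orientation of the intertwiner, which is harmless since the ``respects intertwinings'' axiom \eqref{funcintertwine} applies to rectangular matrices of either shape once the sizes of $Z$ and $\widetilde Z$ are assigned appropriately.
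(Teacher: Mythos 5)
Your proof is correct and follows essentially the same route as the paper's: the paper simply cites Proposition \ref{P:finitetype}(1) to conclude that any $f \in \cT(\Xi;\C_{\rm nc})$ vanishing on $\Omega$ vanishes on $[\Omega]_{\rm full}$, whereas you unpack that proposition by verifying directly that the zero set $\mathfrak{Z}_f$ is closed under direct sums and left injective intertwinings. Your version is self-contained and also sidesteps a small imprecision in the paper, since Proposition \ref{P:finitetype} is technically stated only for the ${\mathbb D}_Q$-relative envelopes rather than for $[\Omega]_{\rm full}$ itself.
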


\begin{proof} To show that $[\Omega]_{\rm  full} \subset 
    \overline{\Omega}$, it suffices to show:
    \begin{equation}   \label{toshow}
f \in \cT(\Xi; {\mathbb C}),\, f|_{\Omega} = 0,\,
Z \in [\Omega]_{\rm  full} \Rightarrow f(Z) = 0.
\end{equation}
Note that $f \in \cT(\Xi; {\mathbb C}) \Rightarrow f|_{[\Omega]_{\rm 
full}} \in \cT([\Omega]_{\rm  full}; {\mathbb C})$. Hence the 
implication \eqref{toshow} follows directly from Proposition 
\ref{P:finitetype}.
\end{proof}
  
  A natural question is whether the containment \eqref{gen/Zariski} 
  is in fact an equality.   For the case where $\cV = {\mathbb C}$, 
  $\Xi = \cV_{\rm nc} = {\mathbb C}_{\rm nc}$
(e.g. if $Q$ is a single-variable nc scalar polynomial 
$Q$), and $\Omega$ is taken to be a finite subset, the answer is in the affirmative.

\begin{proposition}  \label{P:Zariski-close-d=1}
   Suppose that $\Omega_{F}$ is a finite subset of ${\mathbb D}_{Q}$ 
   where $Q \in \cT({\mathbb C}_{\rm nc}; {\mathbb C}_{\rm nc})$ is a 
   nc scalar-valued function on ${\mathbb C}_{\rm nc}$ (e.g., a nc 
   single-variable polynomial).  Then we have the equality:
   \begin{equation}   \label{Zclosure-d=1}
       [\Omega_{F}]_{\rm  full} = \overline{\Omega_{F}}.
   \end{equation}
\end{proposition}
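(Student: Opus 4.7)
The containment $[\Omega_F]_{\rm full} \subset \overline{\Omega_F}$ is already given by Proposition \ref{P:Zariski-close}, so the plan is to produce, for each $\widetilde Z \in \Xi \setminus [\Omega_F]_{\rm full}$, an explicit nc scalar function on ${\mathbb C}_{\rm nc}$ that vanishes on $\Omega_F$ but is nonzero at $\widetilde Z$. Since a single-variable polynomial $p(z)$ determines a nc function ${\mathbb C}_{\rm nc}\to {\mathbb C}_{\rm nc}$ via the ordinary matrix functional calculus $A\mapsto p(A)$ (which is plainly graded and compatible with direct sums and similarities), a polynomial witness suffices. The natural candidate is
\[
f(z) \;=\; {\rm lcm}\bigl(m_{Z^{(1)}},\ldots,m_{Z^{(N)}}\bigr),
\]
the least common multiple of the minimal polynomials of the elements of $\Omega_F=\{Z^{(1)},\ldots,Z^{(N)}\}$; by construction $f(Z^{(j)})=0$ for every $j$.

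The core of the proof is the equivalence
\[
\widetilde Z \in [\Omega_F]_{\rm full} \iff m_{\widetilde Z} \mid f \iff f(\widetilde Z)=0.
\]
For the forward direction, Proposition \ref{P:bottom-up}(3) supplies $Z\in[\Omega_F]_{\rm nc}$ and an injective $\cI$ with $\cI\widetilde Z = Z\cI$; inductively $\cI\,p(\widetilde Z)=p(Z)\,\cI$ for every polynomial $p$, and with $p=m_Z$ the injectivity of $\cI$ forces $m_Z(\widetilde Z)=0$, giving $m_{\widetilde Z}\mid m_Z$, which divides $f$ since $m_Z$ is the lcm of the minimal polynomials of the summands of $Z$. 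For the reverse direction I will work with the Jordan canonical form. The divisibility $m_{\widetilde Z}\mid f$ means that each eigenvalue $\mu$ of $\widetilde Z$ already appears in some $Z^{(j)}$ and that the largest Jordan block of $\widetilde Z$ at $\mu$ has size at most $s(\mu):=\max_j\{\text{largest Jordan block of } Z^{(j)} \text{ at } \mu\}$. For each Jordan block $J_{t_k}(\mu_k)$ in the Jordan form of $\widetilde Z$ I will pick an index $j_k$ whose $Z^{(j_k)}$ contains a Jordan block of size at least $t_k$ at $\mu_k$, and within that single block select the $t_k$-dimensional invariant subspace spanned by the first $t_k$ vectors of its Jordan chain, on which the restriction is exactly $J_{t_k}(\mu_k)$. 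Taking $Z:=\bigoplus_k Z^{(j_k)}\in[\Omega_F]_{\rm nc}$ and letting $\cI_0$ be the componentwise injection into these invariant subspaces gives $\cI_0\,\widetilde Z_J = Z\,\cI_0$ where $\widetilde Z_J$ is the Jordan form of $\widetilde Z$; postcomposing $\cI_0$ with the change-of-basis matrix that conjugates $\widetilde Z$ to $\widetilde Z_J$ produces the required injective intertwiner $\cI$ with $\cI\widetilde Z = Z\cI$.

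Combining the two implications with Proposition \ref{P:Zariski-close} closes the argument: if $\widetilde Z \notin [\Omega_F]_{\rm full}$, then $f(\widetilde Z)\neq 0$, exhibiting $\widetilde Z \notin \overline{\Omega_F}$. The main technical obstacle is the reverse direction of the displayed equivalence, which translates the algebraic divisibility condition $m_{\widetilde Z}\mid f$ into a concrete injective intertwiner pulled back from a direct sum of elements of $\Omega_F$; this is a finite piece of classical linear algebra but does require careful bookkeeping of Jordan blocks across the possibly multiple eigenvalues shared between $\widetilde Z$ and the various $Z^{(j)}$.
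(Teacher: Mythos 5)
Your proof is correct and follows essentially the same route as the paper's: both reduce to the linear-algebra fact that $\widetilde Z \in [\Omega_F]_{\rm full}$ if and only if every eigenvalue of $\widetilde Z$ already appears among the $Z^{(j)}$ with a Jordan chain at least as long --- equivalently, $m_{\widetilde Z}$ divides ${\rm lcm}(m_{Z^{(1)}},\dots,m_{Z^{(N)}})$ --- and then produce a single-variable polynomial witness (your lcm $f$; the paper instead solves a Hermite interpolation problem for a $p_0$) that vanishes on $\Omega_F$ but not at a given $\widetilde Z$ outside $[\Omega_F]_{\rm full}$. The one place you are noticeably more careful than the paper is the reverse implication $m_{\widetilde Z}\mid f \Rightarrow \widetilde Z\in[\Omega_F]_{\rm full}$, where you build the injective intertwiner block by block from the Jordan form of $\widetilde Z$ into a direct sum $\bigoplus_k Z^{(j_k)}$; the paper asserts this step (``it follows that $Z$ is not similar to\dots'') without spelling out the invariant-subspace embedding.
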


\begin{proof}  The containment $[\Omega_{F}]_{\rm  full} \subset \overline{\Omega_{F}}$ 
    follows from  Proposition \ref{P:Zariski-close} (without the 
    extra assumption that $\cV = {\mathbb C}$).
    
    For the reverse containment, we prove the contrapositive: {\em  if $Z 
    \in {\mathbb D}_{Q}$ is not in $[\Omega_{F}]_{\rm  full}$, then $Z$ is not in 
    $\overline{\Omega_{F}}$.}  Toward this end, it suffices to 
    produce a nc polynomial $p_{0}$ such that $p_{0}|_{\Omega_{F}} = 0$ but $p_{0}(Z) \ne 0$. 
     We first note the formula for the 
    evaluation of a single-variable polynomial $p$ on an $n \times n$
    Jordan cell:
    \begin{equation}   \label{polyfunccalc}
    p \left( \sbm{ \lambda & 1 & & & \\ & \lambda & 1 & & \\ & & 
    \ddots & \ddots & \\ & & & \ddots & 1 \\ & & & & \lambda}\right)
    = \sbm{ p(\lambda) & p'(\lambda) & \cdots & \cdots & 
    \frac{1}{(n-1)!}p^{(n-1)}(\lambda) 
    \\ & p(\lambda) & p'(\lambda) & \cdots & \frac{1}{(n-2)!} p^{(n-2)}(\lambda) \\
    & & \ddots & \ddots & \vdots \\ & & & \ddots & p'(\lambda) \\ & & & & p(\lambda)}
    \end{equation}
 Consequently, $p$ vanishing on $\Omega_{F}$ is characterized by the 
 condition:
 \begin{equation} \label{polyvanish}
     p^{(k)}(\lambda) = 0 \text{ for } 0 \le k < n_{\lambda}
     \text{ for all } \lambda \in \sigma(W) \text{ for all } W  \in 
     \Omega_{F}
 \end{equation}
 where $n_{\lambda}$ is the maximum length of a Jordan chain for 
 eigenvalue $\lambda \in \sigma(W)$ for some matrix $W \in \Omega_{F}$.
 Since $Z$ is not in $[\Omega_{F}]_{\rm full}$, it follows that 
 $Z$ is not similar to a matrix of the form $\sbm{ \widetilde 
 Z_{1} & X_{12} \\ 0 & X_{22}}$ with $\widetilde Z_{1}$ equal to a 
 direct sum of matrices in $\Omega_{F}$.  It follows that either (i) $Z$ has an 
 eigenvalue $\lambda_{0}$ distinct from the eigenvalues of all 
 matrices in $\Omega_{F}$, or (ii) all eigenvalues of $Z$ occur as 
 an eigenvalue of a matrix in $\Omega_{F}$ but there is at least one such 
 eigenvalue $\lambda_{0}$ for which the length of the Jordan chain 
 for the eigenvalue $\lambda_{0}$ for the matrix $Z$ is larger than the 
 length of the Jordan chain with eigenvalue $\lambda_{0}$ for any 
 matrix $W \in \Omega_{F}$.  In case (i), let $p_{0}$ be any polynomial 
 satisfying \eqref{polyvanish} but with $p_{0}(\lambda_{0}) \ne 0$.  In 
 case (ii), let $p_{0}$ be any polynomial satisfying 
 \eqref{polyvanish}  but with $p_{0}^{(n_{\lambda_{0}})}(\lambda_{0}) \ne 
 0$.  In either case this is a simple Hermite interpolation problem. 
 As a consequence of the functional calculus formula 
 \eqref{polyfunccalc}, we see that the construction yields a 
 polynomial $p_{0}$ with $p_{0}(Z) \ne 0$ while $p_{0}|_{\Omega} = 0$
 as needed.
 \end{proof}
 
 \begin{remark} \label{R:rightsur} In addition to the left injective 
     intertwinings introduced in Definition \ref{D:fullsubset}, there 
     is a symmetric notion of right surjective intertwinings:
     we say that a subset $\Omega$ of $\cV_{\rm nc}$ is invariant 
     under \textbf{right surjective intertwinings} if:  $\widetilde Z \in 
     \Xi_{n}$ such that there is a $Z \in \Omega_{m}$ together with 
     a surjective $\cI' \in {\mathbb C}^{n \times m}$  such that 
     $\widetilde Z \cI' = \cI' Z$, then $\widetilde Z \in \Omega_{n}$.  It is easily 
     checked that the Zariski closure $\overline{\Omega}$ of any 
     subset $\Omega$ is not only invariant under left injective 
     intertwinings but also under right surjective intertwinings. 
     If it is the case that the Zariski closure always equals the    
     full nc envelope $[\Omega]_{\rm full}$, then it would follow 
     that the full nc envelope $[\Omega]_{\rm  full}$ is in fact 
     also invariant under right surjective intertwinings.  Whether 
     this is the case in general, we leave as an open question.
  \end{remark}

\subsection{Tests for complete positivity of nc kernels}  
\label{S:testncker} 

An interesting fact is that, at least in some special cases, 
there is finite test for complete positivity of a nc kernel with 
domain equal to a finitely generated nc set.

\begin{proposition}   \label{P:fgncset}
   Let $\Omega$ be the full nc envelope $[\Omega_{F}]_{{\rm full}}$ of
the finite subset $\Omega_{F} = \{Z^{(1)}, \dots, Z^{(N)}\}$ of 
$\Xi$. Suppose that $K \colon \Omega \times \Omega \to \cL(\cA, \cL(\cE))_{\rm nc}$ is a 
    nc kernel.  Then $K$ is a cp nc kernel if and only if
    \begin{equation*} 
    K\left( \bigoplus_{i=1}^{N} Z^{(i)}, \bigoplus_{i=1}^{N} Z^{(i)}
\right)
    \end{equation*}
    is a completely positive map. 
    \end{proposition}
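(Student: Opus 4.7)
The forward implication is immediate from the definition, since $\bigoplus_{i=1}^{N} Z^{(i)} \in \Omega_{N'}$ (with $N' = \sum_{i} n_{i}$, where $n_{i}$ is the size of $Z^{(i)}$). For the reverse implication, my plan is to reduce the general case to the hypothesis in two steps, using the bottom-up description of $[\Omega_{F}]_{\rm full}$ from Proposition \ref{P:bottom-up}. Since $\Omega = [\Omega_{F}]_{\rm full}$ is a nc set, the simplified characterization \eqref{kercp} of complete positivity applies, so it suffices to show $K(Z,Z)(P) \succeq 0$ whenever $P \succeq 0$ in $\cA^{n \times n}$ and $Z \in \Omega_{n}$.

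\textbf{Step 1 (positivity at arbitrary nc-envelope points).} Let $Z_{\rm big} = \bigoplus_{i=1}^{N} Z^{(i)}$ and consider an arbitrary $Z' \in [\Omega_{F}]_{\rm nc}$, say $Z' = \bigoplus_{j=1}^{M} Z^{(i_{j})}$ with possible repetitions. I would take the $M$-fold direct sum $Z_{\rm big}^{(M)} = \bigoplus_{k=1}^{M} Z_{\rm big}$ and observe, via the respects-direct-sums property \eqref{kerdirsum} of $K$, that the map $K(Z_{\rm big}^{(M)}, Z_{\rm big}^{(M)})$ acts block-entrywise as $K(Z_{\rm big}, Z_{\rm big})$ on each block of the argument; consequently it coincides with the $M$-fold amplification $K(Z_{\rm big}, Z_{\rm big})^{(M)}$, which is positive by the hypothesis that $K(Z_{\rm big}, Z_{\rm big})$ is \emph{completely} positive. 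This is the key point where complete positivity (not mere positivity) of the hypothesis is essential. Next, define an injective block-partial-isometry $\cI \in {\mathbb C}^{(M \sum_{i} n_{i}) \times (\sum_{j} n_{i_{j}})}$ that embeds the $j$-th block of $Z'$ into the $(j, i_{j})$-slot of $Z_{\rm big}^{(M)}$; this $\cI$ realizes a left injective intertwining $\cI Z' = Z_{\rm big}^{(M)} \cI$. Applying the respects-intertwinings property \eqref{kerintertwine} with $\alpha = \beta = \cI$ yields
\[
 \cI \, K(Z',Z')(P) \, \cI^{*} = K(Z_{\rm big}^{(M)}, Z_{\rm big}^{(M)})(\cI P \cI^{*})
\]
for $P \in \cA^{(\sum_{j} n_{i_{j}}) \times (\sum_{j} n_{i_{j}})}$. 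If $P \succeq 0$, then $\cI P \cI^{*} \succeq 0$, so the right-hand side is $\succeq 0$ by positivity of $K(Z_{\rm big}^{(M)}, Z_{\rm big}^{(M)})$. Since $\cI$ is injective, $\cI^{*}$ is surjective, and a standard argument ($\langle K(Z',Z')(P) \cI^{*} w, \cI^{*} w \rangle \ge 0$ for all $w$ forces $K(Z',Z')(P) \succeq 0$) gives positivity of $K(Z',Z')$.

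\textbf{Step 2 (passage to the full nc envelope).} For a general $Z \in [\Omega_{F}]_{\rm full}$, Proposition \ref{P:bottom-up}(3) furnishes some $Z' \in [\Omega_{F}]_{\rm nc}$ and an injective intertwiner $\cJ$ with $\cJ Z = Z' \cJ$. Applying \eqref{kerintertwine} a second time yields $\cJ \, K(Z,Z)(P) \, \cJ^{*} = K(Z',Z')(\cJ P \cJ^{*})$, which is $\succeq 0$ by Step 1 whenever $P \succeq 0$. Injectivity of $\cJ$ then forces $K(Z,Z)(P) \succeq 0$, completing the verification of \eqref{kercp}.

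The main obstacle I would expect is correctly setting up the intertwining $\cI$ in Step 1 so that both the complete-positivity hypothesis (via the expanded $Z_{\rm big}^{(M)}$) and the respects-intertwinings property combine cleanly; once this is in place the injectivity-cancellation step is routine. The proof makes transparent why one needs the hypothesis at the level of complete positivity rather than mere positivity: arbitrary direct sums with repetitions (needed because $[\Omega_{F}]_{\rm full}$ contains far more than $\Omega_{F}$) are accommodated precisely by passing to amplifications of $K(Z_{\rm big}, Z_{\rm big})$.
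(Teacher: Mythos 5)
Your proof is correct and takes essentially the same route as the paper's: both propagate positivity from $K(\bigoplus_i Z^{(i)}, \bigoplus_i Z^{(i)})$ first to arbitrary points of $[\Omega_F]_{\rm nc}$ via amplifications together with an injective block intertwining, and then to the full nc envelope via the bottom-up description of Proposition \ref{P:bottom-up} and a second injective intertwining followed by the injectivity-cancellation argument. Your up-front reduction through the simplified criterion \eqref{kercp} (valid since $[\Omega_F]_{\rm full}$ is a nc set), so that only positivity rather than complete positivity of each $K(Z,Z)$ needs to be checked, is a modest streamlining but does not change the substance.
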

    
    \begin{proof}
    Necessity follows from the definition of a cp nc kernel. 
    
    For sufficiency, proceed as follows. 
    Let $n_{i}$ denote the size of $Z^{(i)}$ (so $Z^{(i)} \in 
    \cV^{n_{i} \times n_{i}}$).  For $1 \le i_{0} \le N$,
    let $E^{(i_{0})}$  be the 
    $(\sum_{i=1}^{N} n_{i}) \times n_{i_{0}}$ matrix of column-block structure 
    with $i$-th block column having size $n_{i} \times 
    n_{i_{0}}$ such that the $i_{0}$-block is equal to the  $i_{0} \times i_{0}$ 
    identity matrix $I_{i_{0}}$ and all other blocks equal to $0$:
    $$
       E^{(i_{0})} = \sbm{ 0 \\ \vdots \\ 0 \\ I_{i_{0}} \\ 0 \\ 
       \vdots \\ 0 }.
    $$
    From the intertwining relation $\left( \bigoplus_{i=1}^{N} 
    Z^{(i)} \right) E^{(i_{0})} = E^{(i_{0})} Z^{(i_{0})}$ and the 
    ``respects intertwinings'' property \eqref{kerintertwine} we see 
    that
    $$
    E^{(i_{0})*} K\left( \bigoplus_{i=1}^{N} Z^{(i)}, 
    \bigoplus_{i=1}^{N} Z^{(i)} \right) \left( \left[ P_{ij} \right] 
    \right) E^{(i_{0})} = K(Z^{(i_{0})}, Z^{(i_{0})} )(P_{i_{0} i_{0}}).
    $$
    We conclude that the map $K\left( \bigoplus_{i=1}^{N} Z^{(i)}, 
    \bigoplus_{i=1}^{N} Z^{(i)} \right)$ being positive implies that 
    the map $K(Z^{(i_{0})}, Z^{(i_{0})})$ is a positive map for each 
    $i_{0}$, $1 \le i_{0} \le N$.  A similar argument gives that 
    $K\left( \bigoplus_{i=1}^{N} Z^{(i)}, \bigoplus_{i=1}^{N} Z^{(i)} \right)$ 
    being completely positive implies that $K(Z^{(i_{0})}, 
    Z^{(i_{0})})$  is completely positive.  More generally, 
    $K\left( \bigoplus_{i=1}^{N} Z^{(i)}, 
    \bigoplus_{i=1}^{N} Z^{(i)} \right)$ being completely positive 
    implies that $K\left( \bigoplus_{1}^{M}\left(\bigoplus_{i=1}^{N} 
    Z^{(i)}\right ),  \bigoplus_{1}^{M} \left(\bigoplus_{i=1}^{N} 
    Z^{(i)} \right) \right)$ is completely positive for any $M \in 
    {\mathbb N}$.  Invoking a variant of the intertwining argument 
    once again, we see that $K( \bigoplus_{j=1}^{L} Z^{(i_{j})},
    \bigoplus_{j=1}^{L} Z^{(i_{j})})$ is a completely positive map, 
    where here $\{Z^{i_{1}}, \dots, Z^{i_{L}}\}$ is any subcollection of the 
  set of points $\{Z^{(1)}, \dots, Z^{(N)} \}$ with each allowed to 
  be repeated any number of times up to $M$ times. We conclude that 
  $K(Z,Z)$ is completely positive for any $Z$ in the nc envelope 
  $(\Omega_{F})_{\rm nc}$  of $\Omega_{F}$.  
  
  It remains to check that $K(\widetilde Z, \widetilde Z)$ is completely positive for any 
  $\widetilde Z \in [\Omega_{F}]_{\rm full}$. Suppose that such a 
  $\widetilde Z$ is in $\Omega_{n}$. 
  By definition there is an injective matrix $\cI \in {\mathbb C}^{n 
  \times m}$ and a $Z \in [\Omega_{F}]_{\rm nc}$ of size $m \times m$ 
   so that $\cI \widetilde Z = Z \cI$. Use the ``respects 
  intertwinings'' property \eqref{kerintertwine} of the nc kernel $K$ 
  to see that
  $$
    \cI K(\widetilde Z, \widetilde Z)(P) \cI^{*} = K(Z,Z)(\cI P \cI^{*}) 
    \succeq 0
  $$
  for any $P \succeq 0$ in $\cA^{m \times m}$.  As $\cI$ is 
  injective, we conclude that $K(\widetilde Z, \widetilde Z)(P) 
  \succeq 0$, i.e., $K(\widetilde Z, \widetilde Z)$ is a positive map.
  A similar  argument shows that in fact  $K(\widetilde Z, \widetilde Z)$ 
  is completely positive.
  \end{proof}
    
    \begin{corollary}   Suppose that $\Omega_{F} = \{Z^{(1)}, \dots,
Z^{(N)}\} $ is a finite subset of $\C_{\rm nc}$ and the nc set
$\overline{\Omega_F}$ is its $\C_{\rm nc}$-relative nc-Zariski
closure. 
    Suppose that $K 
    \colon \overline{\Omega_F} \times \overline{\Omega_F} \to
\cL(\cA, \cL(\cE))_{\rm nc}$ is a 
    nc kernel.  Then $K$ is a cp kernel if and only if
    $$
    K\left( \bigoplus_{i=1}^{N} Z^{(i)}, \bigoplus_{i=1}^{N} Z^{(i)}
\right)
    $$
    is a completely positive map.  
\end{corollary}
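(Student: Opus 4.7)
The plan is to observe that this corollary is a direct consequence of combining Proposition \ref{P:fgncset} with Proposition \ref{P:Zariski-close-d=1}. The latter is precisely the fact that, in the one-variable commutative setting $\cV = {\mathbb C}$ (so that $\Xi = {\mathbb C}_{\rm nc}$ and $Q$ may be taken to be a nc scalar function such as the identity polynomial $Q(z) = z$ scaled appropriately, so the containment holds), the full nc envelope of a finite set $\Omega_{F}$ coincides with its nc-Zariski closure: $[\Omega_{F}]_{\rm full} = \overline{\Omega_{F}}$.

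With that identification in hand, the domain of $K$ is exactly the nc set $[\Omega_{F}]_{\rm full}$. So I would first invoke Proposition \ref{P:Zariski-close-d=1} to rewrite $\overline{\Omega_{F}} = [\Omega_{F}]_{\rm full}$, turning the given nc kernel $K$ on $\overline{\Omega_{F}} \times \overline{\Omega_{F}}$ into a nc kernel on $[\Omega_{F}]_{\rm full} \times [\Omega_{F}]_{\rm full}$. Then I would directly apply Proposition \ref{P:fgncset} (with $\Omega = [\Omega_{F}]_{\rm full}$) to conclude that $K$ is cp if and only if the single map $K\bigl(\bigoplus_{i=1}^{N} Z^{(i)}, \bigoplus_{i=1}^{N} Z^{(i)}\bigr)$ is completely positive.

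There is no real obstacle here, since both constituent results are already proved: the work has been front-loaded into Propositions \ref{P:fgncset} and \ref{P:Zariski-close-d=1}. The only subtlety to flag is that Proposition \ref{P:Zariski-close-d=1} is stated for a nc scalar-valued function $Q$ on ${\mathbb C}_{\rm nc}$, and one should note that the equality $[\Omega_{F}]_{\rm full} = \overline{\Omega_{F}}$ used here does not require a choice of $Q$ (i.e., the ambient domain may simply be taken to be all of ${\mathbb C}_{\rm nc} = \Xi$); the statement of Proposition \ref{P:Zariski-close-d=1} implicitly covers this by allowing $Q$ to be arbitrary, and in particular trivial. So the corollary reduces to a one-line invocation of these two propositions.
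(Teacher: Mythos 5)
Your proposal is correct and matches the paper's own (one-line) proof: identify $\overline{\Omega_F}$ with $[\Omega_F]_{\rm full}$ via Proposition \ref{P:Zariski-close-d=1}, then invoke Proposition \ref{P:fgncset}. Your extra remark about the role of $Q$ (taking the ambient domain to be all of ${\mathbb C}_{\rm nc}$) is a reasonable clarification but not a departure from the paper's argument.
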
	

\begin{proof}
The result follows from applying Proposition
\ref{P:Zariski-close-d=1} to Proposition \ref{P:fgncset}
\end{proof}

\begin{corollary} Let $\Omega$ and $K$ be as in  Proposition
\ref{P:fgncset} and consider the special case where $\cA=\mathbb C$.
Then $K$ is a cp nc kernel if and only if 
\begin{equation} \label{choi}
K\left( \bigoplus_{j=1}^{N'} \bigoplus_{i=1}^{N}
Z^{(i)},\bigoplus_{j=1}^{N'} \bigoplus_{i=1}^{N} Z^{(i)} \right)
\left( \mathfrak C_{N'} \right)
\end{equation}
is positive where $N'$ is set equal to the level of $\Omega$ containing
$\bigoplus_{i=1}^{N} Z^{(i)}$ (i.e., $\bigoplus_{i=1}^{N} Z^{(i)} \in 
    \Omega_{N'}$) and where ${\mathfrak C}_{N'}$ (the \textbf{Choi 
    matrix} at level $N'$) is the $(N')^{2} \times (N')^{2}$ matrix
written 
    out as a  block $N' \times N'$ matrix with $N' \times N'$ matrix 
    entries given by 
    $$  
    {\mathfrak C}_{N'} = [ E^{N'}_{i,j}]_{i,j=1, \dots, N'}
    $$
    where $E^{N'}_{i,j}$ is the $N' \times N'$ matrix with
$(i,j)$-entry 
    equal to $1$ and all other entries equal to $0$.
  \end{corollary}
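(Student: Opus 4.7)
The plan is to reduce the assertion to Choi's theorem via Proposition \ref{P:fgncset}. Setting $W = \bigoplus_{i=1}^{N} Z^{(i)} \in \Omega_{N'}$, Proposition \ref{P:fgncset} tells us that $K$ is a cp nc kernel if and only if the single map $\Phi := K(W,W)$ is completely positive. In the present setting $\cA = \bbC$, so $\Phi$ is a linear map
\[
\Phi \colon \bbC^{N' \times N'} \to \cL(\cE)^{N' \times N'},
\]
and Choi's theorem (for maps into $\cL(\cE)^{N' \times N'}$, which is identified with a matrix algebra when $\dim\cE < \infty$ and handled by the standard $C^{*}$-algebra version otherwise) asserts that $\Phi$ is completely positive precisely when its \emph{Choi matrix}
\[
\bigl[\,\Phi(E^{N'}_{i,j})\,\bigr]_{i,j=1}^{N'} \;=\; \bigl[\,K(W,W)(E^{N'}_{i,j})\,\bigr]_{i,j=1}^{N'}
\]
is positive in $\cL(\cE)^{(N')^{2} \times (N')^{2}}$.

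Next I would rewrite this Choi matrix using the ``respects direct sums'' property \eqref{kerdirsum} of the nc kernel $K$. Iterating \eqref{kerdirsum} $N'$ times gives
\[
K\!\left(\bigoplus_{j=1}^{N'} W,\; \bigoplus_{j=1}^{N'} W\right)\!\bigl([P_{ij}]_{i,j=1}^{N'}\bigr) \;=\; \bigl[K(W,W)(P_{ij})\bigr]_{i,j=1}^{N'}
\]
for any block matrix $[P_{ij}]$ with scalar entries $P_{ij} \in \bbC^{N' \times N'}$. Taking $P_{ij} = E^{N'}_{i,j}$, the right-hand side is exactly the Choi matrix just considered, while the left-hand side is precisely the expression \eqref{choi} in the statement. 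Combining with Choi's theorem, we conclude that $\Phi$ is completely positive if and only if \eqref{choi} is positive, which via Proposition \ref{P:fgncset} is equivalent to $K$ being a cp nc kernel.

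The only genuinely nontrivial ingredient is Choi's theorem itself; the rest is a matter of correctly identifying the Choi matrix of $K(W,W)$ with the kernel value on the ``doubled up'' argument via the direct-sum axiom. I would expect the main bookkeeping step to be verifying that $\bigoplus_{j=1}^{N'} W \in \Omega$ (which follows from $\Omega$ being closed under direct sums since $\Omega = [\Omega_{F}]_{\rm full}$) and that the identification of $[K(W,W)(E^{N'}_{i,j})]$ with the kernel value on $\bigoplus_{j} W$ at $\mathfrak{C}_{N'}$ is literally \eqref{kerdirsum} read in block form. No cone-separation or lurking-isometry machinery is needed here.
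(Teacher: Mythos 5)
Your proposal is correct and takes essentially the same route as the paper: reduce to the single point $W = \bigoplus_{i=1}^{N} Z^{(i)}$ via Proposition \ref{P:fgncset}, apply Choi's theorem (cited in the paper as \cite[Theorem 3.14]{Paulsen}) to characterize complete positivity of $K(W,W)$ by positivity of its Choi matrix, and use the ``respects direct sums'' property \eqref{kerdirsum} to identify that Choi matrix with the expression \eqref{choi}. The only presentational difference is that the paper splits the argument into separate necessity (cp plus positivity of $\mathfrak C_{N'}$) and sufficiency steps rather than running the iff chain in one pass.
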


\begin{proof} Necessity follows from the definition of a cp nc kernel
and the fact that $\mathfrak C_{N'}$ is a positive map.

For sufficiency, we assume that \eqref{choi} is positive. Since an nc
kernel respects direct sums, we have that 
$$K\left( \bigoplus_{j=1}^{N'} \bigoplus_{i=1}^{N}
Z^{(i)},\bigoplus_{j=1}^{N'} \bigoplus_{i=1}^{N} Z^{(i)} \right)
\left( \mathfrak C_{N'} \right) = \left[ K\left( \bigoplus_{i=1}^{N}
Z^{(i)}, \bigoplus_{i=1}^{N} Z^{(i)} \right) \left(E^{N'}_{i,j}
\right) \right]_{i,j=1, \dots, N'}
$$
By \cite[Theorem 3.14]{Paulsen}, the map $K\left( \bigoplus_{i=1}^{N}
Z^{(i)}, \bigoplus_{i=1}^{N} Z^{(i)} \right)$ is completely positive,
and the result follows from Proposition \ref{P:fgncset}.
\end{proof}

\subsection{Extensions of noncommutative functions and kernels}  \label{S:ncgendom}

 The following result clarifies the relation between nc functions, nc 
 kernels, and cp nc kernels defined on a subset $\Omega$ versus 
 defined on one of the envelopes $[\Omega]_{\rm nc}$, $[\Omega]_{\rm nc, sim}$, 
 $[\Omega]_{\rm full}$ of $\Omega$. 

\begin{proposition}   \label{P:extcpncker}  Suppose that $\Omega$ is 
    a subset (not necessarily a nc subset) of $\cV_{\rm nc}$.
    \begin{enumerate}  
	\item 
	Any nc function $f \colon \Omega \to 
	\cV_{0, {\rm nc}}$ extends uniquely to a nc function 
	$\widetilde f$ on the nc envelope $[\Omega]_{\rm nc}$ and  on the 
	nc similarity envelope $[\Omega]_{\rm nc, sim}$ but not 
	necessarily on the full nc envelope $[\Omega]_{\rm full}$.
	
	\item Any nc kernel $K \colon \Omega \to 
	\cL(\cV_{1}, \cV_{0})_{\rm nc}$ extends uniquely to 
	a nc kernel $\widetilde K$ on the nc envelope  $[\Omega]_{\rm nc}$ 
	and on the nc similarity envelope 
	$[\Omega]_{\rm nc, sim}$, but not necessarily on the full 
	envelope $[\Omega]_{\rm full}$. 
	
	\item Any cp nc kernel $K \colon \Omega \times \Omega 
    \to \cL(\cV_{1}, \cV_{0})_{\rm nc}$ extends uniquely to 
    a cp nc kernel on the nc envelope  $[\Omega]_{\rm nc}$ and on the nc 
    similarity envelope $[\Omega]_{\rm nc, sim}$, but not necessarily 
    on the full nc envelope $[\Omega]_{\rm full}$. 
\end{enumerate}
\end{proposition}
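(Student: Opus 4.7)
The plan is to carry out explicit constructions for the nc envelope and nc similarity envelope cases, and to identify the obstruction preventing extension to the full nc envelope.

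Using the bottom-up characterization in Proposition~\ref{P:bottom-up}(1), each $Z \in [\Omega]_{\rm nc}$ has a decomposition $Z = Z^{(1)} \oplus \cdots \oplus Z^{(N)}$ with every $Z^{(j)} \in \Omega$. The direct-sum axioms \eqref{funcdirsum} and \eqref{kerdirsum} force the candidate extensions
\[
\widetilde f(Z) := f(Z^{(1)}) \oplus \cdots \oplus f(Z^{(N)}), \qquad
\widetilde K(Z, W)(P) := \bigl[K(Z^{(j)}, W^{(i)})(P_{ji})\bigr],
\]
for $W = W^{(1)} \oplus \cdots \oplus W^{(M)}$ with $P$ partitioned accordingly. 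The technical core is well-definedness: given two such decompositions of a single $Z$, a common refinement yields coordinate embeddings $\iota^{(j)}$ that serve as intertwinings $\iota^{(j)} Z = Z^{(j)} \iota^{(j)}$ within $\Omega$, and applying \eqref{funcintertwine} (resp.\ \eqref{kerintertwine}) block-by-block forces both candidate values to agree. Verifying that $\widetilde f$ and $\widetilde K$ themselves respect intertwinings on $[\Omega]_{\rm nc}$ is then a routine block-matrix computation using the corresponding axiom on $\Omega$. Complete positivity transfers directly: for $Z = \bigoplus_j Z^{(j)}$ and $P \succeq 0$, the matrix $\widetilde K(Z, Z)(P) = [K(Z^{(j)}, Z^{(i)})(P_{ji})]$ is exactly what the positivity condition \eqref{kercp'} requires for the tuple $(Z^{(j)})_j$ drawn from $\Omega$.

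Extension from $[\Omega]_{\rm nc}$ to $[\Omega]_{\rm nc,sim}$ uses Proposition~\ref{P:bottom-up}(2): every $Z \in [\Omega]_{\rm nc,sim}$ equals $\alpha Z_0 \alpha^{-1}$ for some $Z_0 \in [\Omega]_{\rm nc}$ and invertible $\alpha$, so one sets $\widetilde f(Z) := \alpha \widetilde f(Z_0) \alpha^{-1}$ (and proceeds analogously for kernels via \eqref{kersim}). Well-definedness is immediate because $\widetilde f$ and $\widetilde K$ already respect intertwinings on $[\Omega]_{\rm nc}$ from the first step, and similarity is an invertible intertwining. Complete positivity is preserved because both $P \mapsto \alpha^{-1} P (\alpha^{-1})^{*}$ and its inverse preserve positivity.

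The main obstacle, and the source of the ``not necessarily'' clauses for $[\Omega]_{\rm full}$, is that a point $\widetilde Z \in [\Omega]_{\rm full}$ arising via an injective intertwining $\cI \widetilde Z = Z \cI$ with $Z \in [\Omega]_{\rm nc}$ imposes on any prospective extension the equation $\cI \widetilde f(\widetilde Z) = \widetilde f(Z) \cI$, which need not be solvable: the column range of $\widetilde f(Z) \cI$ need not lie in the column range of $\cI$. Concretely, take $\cV = \C^{2}$ and $\Omega = \{(A, B)\}$ a singleton with $A = \sbm{a_1 & 1 \\ 0 & a_2}$, $B = \sbm{b_1 & 0 \\ 0 & b_2}$, and $a_1 \neq a_2$, $b_1 \neq b_2$. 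The only self-intertwinings of $(A, B)$ are scalars, so the nc-function axioms place no constraint on $f((A, B)) \in \C^{2 \times 2}$; yet $(a_1, b_1) \in [\Omega]_{\rm full}$ via the common eigenvector $\cI = \sbm{1 \\ 0}$, and the forced equation $\cI \widetilde f((a_1, b_1)) = \widetilde f((A, B)) \cI$ demands that the $(2, 1)$-entry of $f((A, B))$ vanish. Choosing $f((A, B))$ with nonzero $(2, 1)$-entry then yields a valid nc function on $\Omega$ with no extension to $[\Omega]_{\rm full}$. Analogous single-point constructions at the nc-kernel and cp-nc-kernel levels furnish the counterexamples in parts (2) and (3).
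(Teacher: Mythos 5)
Your plan tracks the paper's proof closely, and the overall strategy is the same: force the value on $[\Omega]_{\rm nc}$ by the direct-sum axiom, extend to $[\Omega]_{\rm nc,\,sim}$ by conjugation, and disprove extendability to $[\Omega]_{\rm full}$ with a singleton $\Omega=\{Z^{(0)}\}$ of size $2$ whose joint commutant is trivial (so \eqref{funcintertwine} places no constraint on $f(Z^{(0)})$) but for which restriction to a one-dimensional invariant subspace forces the $(2,1)$-entry of $f(Z^{(0)})$ to vanish. The only presentational difference of note is that the paper disposes of the positive assertions in part (1) by citing Propositions A.1 and A.3 of \cite{KVV-book} and saves its explicit formulas \eqref{tildeK}, \eqref{tildeK-2} for the kernel case, while you reconstruct the function extension from scratch and are briefer on kernels; for part (3) the paper gets the kernel-level counterexample concretely by feeding the non-extendable nc function $H$ into $K(Z,W)(P)=H(Z)(P\otimes I_{\cE})H(W)^{*}$, which is the precise version of your closing sentence. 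One technical slip to fix in your well-definedness step: the intertwining $\iota^{(j)}Z=Z^{(j)}\iota^{(j)}$ is not ``within $\Omega$'', since $Z$ typically lies only in $[\Omega]_{\rm nc}$, so \eqref{funcintertwine} as a hypothesis on $f\colon\Omega\to\cV_{0,{\rm nc}}$ cannot be applied to that pair. Instead, given two decompositions $Z=\bigoplus_j Z^{(j)}=\bigoplus_i W^{(i)}$ with all blocks in $\Omega$, form for each $(i,j)$ the $0$-$1$ matrix $\alpha_{ij}$ obtained by composing the coordinate injection of the $j$-th block of the first partition with the coordinate projection onto the $i$-th block of the second; then $\alpha_{ij}Z^{(j)}=W^{(i)}\alpha_{ij}$ with $Z^{(j)},W^{(i)}\in\Omega$, so $\alpha_{ij}f(Z^{(j)})=f(W^{(i)})\alpha_{ij}$, and these relations, as $i$ and $j$ vary, pin down every entry of the two block-diagonal candidates (including the required zeros off the blocks) and force them to coincide. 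The same repair applies verbatim to the kernel case using \eqref{kerintertwine}.
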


\begin{proof}[Proof of (1).] The positive assertions in item (1) essentially follow 
    from Pro\-positions A.1 and A.3 in \cite{KVV-book}.  
    
    To show that 
    the assertion can fail for the full nc envelope, consider the 
    following example.  We take $\cV = {\mathbb C}^{2}$ so $\cV^{n 
    \times n}$ can be identified with pairs $(Z_{1}, Z_{2})$ of $n 
    \times n$ matrices over ${\mathbb C}$.  Take $\Omega $ to be the  
    singleton set $\Omega = \{Z^{(0)} =  (Z^{(0)}_{1}, Z^{(0)}_{2}) \}$ where 
    we set
    $$
      Z^{0}_{1} = \begin{bmatrix} 0 & 1 \\ 0 & 0 \end{bmatrix}, \quad Z^{(0)}_{2} = 
      \begin{bmatrix} 0 & 0 \\ 0 & 1 \end{bmatrix}.
    $$
    The only constraint on $\Lambda_{0} \in {\mathbb C}^{2 \times 2}$ 
    required for the function $f \colon \Omega \to {\mathbb C}^{2 \times 
    2}$ defined by $f(Z^{(0)}) = \Lambda_{0}$ to be a nc function is 
    that the value $\Lambda_{0}$ be in the double commutant of 
    $Z^{(0)}$, i.e.:  $\alpha \in {\mathbb C}^{2 \times 2}$   such that 
    $\alpha Z^{0}_{1} = Z^{0}_{1} \alpha$ and $ \alpha Z^{0}_{2} = Z^{0}_{2} \alpha$ 
    $\Rightarrow$ $\alpha \Lambda_{0} = \Lambda_{0} \alpha$.  The commutant of 
    $Z_{1}^{(0)}$ consists of Toeplitz matrices $\{ \sbm{ a & b \\ 0 & a } 
    \colon a,b \in {\mathbb C}\}$ while the commutant of 
    $Z^{(0)}_{2}$ consists of diagonal matrices $\{ \sbm{ d_{1} &  0 
    \\ 0 & d_{2}} \colon d_{1}, d_{2} \in {\mathbb C}\}$.  The 
    intersection of these two commutants consists of scalar multiples 
    of the identity.  Hence the value $\Lambda_{0}$ is 
    unconstrained:  the function $f$ given by $f(Z^{(0)}) = 
    \Lambda_{0}$ is a nc function on $\Omega$ for any $\Lambda_{0} 
    \in {\mathbb C}^{2 \times 2}$.  If $f$ extends to a nc function 
    on the full nc envelope, then in particular $f( Z^{(0)})$ must 
    have the form
    $$
    \Lambda_{0} =  f(Z^{(0)}_{1}, Z^{(0)}_{2}) = \begin{bmatrix} f( 0,0) & * \\ 0 
      & f(0,1) \end{bmatrix}.
    $$
    Choosing $\Lambda_{0} = \sbm{ 0 & 0 \\ 1 & 0 }$ for example then 
    leads to a contradiction.  For additional information concerning 
    existence and construction of nc-function extensions to full nc 
    envelopes, we refer to Subsection \ref{S:Steindom} below.
    \end{proof}
    
    \begin{proof}[Proof of (2).]  Given a nc kernel $K$ on $\Omega \times \Omega$, 
we may define an extension $\widetilde K$ to the nc 
envelope $\Omega_{\rm nc}$ of $\Omega$ by 
\begin{equation}   \label{tildeK}
\widetilde K\left( \sbm{ Z^{(1)} & & \\ & \ddots & \\ & & Z^{(N)}},
 \sbm{ W^{(1)} & & \\ & \ddots & \\ & & W^{(M)}}\right)([P_{ij}]) =
 [ K(Z^{(i)}, W^{(j)})(P_{ij}) ]_{\begin{smallmatrix} i = 1, \dots, N; 
 \\ j = 1, \dots, M \end{smallmatrix}}
\end{equation}
for any $Z^{(1)}, \dots, Z^{(N)}, W^{(1)}, \dots, W^{(M)} \in \Omega$.  If it 
happens that $\sbm{ Z^{(1)} & & \\ & \ddots & \\ & & Z^{(N)}}$ 
and  $\sbm{ W^{(1)} & & \\ & \ddots & \\ & & W^{(M)}}$ are already in 
$\Omega$, then the formula \eqref{tildeK} is consistent with how $K$ 
is already defined by the localized ``respects direct sums'' condition.
Then one can check that $\widetilde K$ is a nc kernel on $[\Omega]_{\rm nc} \times 
[\Omega]_{\rm nc}$ which when restricted to $\Omega \times \Omega$ 
 agrees with $K$.  If $\widetilde Z$, $\widetilde W$ are in the nc 
 similarity envelope $[\Omega]_{\rm nc, sim}$, then there is an 
 invertible matrix $\alpha = \begin{bmatrix} \alpha_{1} & \cdots & 
 \alpha_{N} \end{bmatrix}$ over ${\mathbb C}$ and points $Z^{(1)}, \dots, 
Z^{(N)} \in \Omega$ as well as an invertible matrix $\beta = 
\begin{bmatrix} \beta_{1} & \cdots & \beta_{M} \end{bmatrix}$ over ${\mathbb 
    C}$ and  points $W^{(1)}, \dots, W^{(M)}$ in $\Omega$ so that
  $$
 \alpha^{-1} \widetilde Z \alpha = \begin{bmatrix} Z^{(1)} & & \\ & \ddots & \\ 
 & & Z^{(N)} \end{bmatrix}, \quad
 \beta^{-1} \widetilde W \beta = \begin{bmatrix} W^{(1)} & & \\ & \ddots & \\
& &  W^{(M)} \end{bmatrix}.
$$
We then define 
\begin{equation}  \label{tildeK-2}
 \widetilde K(\widetilde Z, \widetilde W)(P) =  \alpha\, \left[ K(Z^{(i)}, 
 W^{(j)})  ( [\alpha^{-1} P \beta^{-1 *} ]_{ij})\right]_{\begin{smallmatrix} i=1, \dots, N; \\
   j = 1, \dots, M \end{smallmatrix}} \, \beta^{*}.
\end{equation}
We leave it to the reader to verify that $\widetilde K$ given by 
\eqref{tildeK-2} is a well-defined nc kernel on $[\Omega]_{\rm nc, 
sim} \times [\Omega]_{\rm nc, sim}$ which extends $K$.

The result concerning lack of extension in general of a nc kernel on 
$\Omega \times \Omega$ to a nc kernel on $\Omega_{\rm nc, full} 
\times \Omega_{\rm nc, full}$ will 
follow from the result for the case of cp nc kernels discussed in 
the next part.
\end{proof}

\begin{proof}[Proof of (3).]   To verify item (3), it suffices to show that the construction of the 
nc-kernel extension in part (2) leads to a cp kernel on $[\Omega]_{\rm nc} \times [\Omega]_{\rm nc}$ 
and on $[\Omega]_{\rm nc, sim} \times [\Omega]_{\rm nc, sim}$ where we now assume that the original 
kernel $K$ was cp on $\Omega \times \Omega$. The fact that the kernel $\widetilde K$ given by 
\eqref{tildeK} is cp on $[\Omega]_{\rm nc} \times [\Omega]_{\rm nc}$ 
amounts to the definition of a cp kernel for the setting where $\Omega$ 
is not necessarily a nc set.  Similarly, one can verify by inspection that 
$\widetilde K$ given by \eqref{tildeK-2} on  $[\Omega]_{\rm nc, sim} \times 
[\Omega]_{\rm nc, sim}$ is cp if the original $K$ is 
cp.
 
 Finally, if we choose $H \colon  \Omega \to \cL(\cE)_{\rm nc}$ to be a nc function on 
 $\Omega$ which fails to have a nc-function extension to $[\Omega]_{\rm full}$ 
 (as in the example in the proof of part (1) above), then 
 $K(Z,W)(P) : = H(Z) (P \otimes I_{\cE}) H(W)^{*}$ is a cp nc kernel 
 on $\Omega$ which fails to have a nc-kernel (much less a cp 
 nc-kernel) extension to $[\Omega]_{\rm full}$.  This completes the 
 proof of Proposition \ref{P:extcpncker}.
\end{proof}
 
 In summary, given a nc kernel on a set of the form $\Omega \times \Omega$ with 
 $\Omega$ not necessarily a nc set, by Proposition \ref{P:extcpncker}
 we can always consider its extension to the nc envelope $[\Omega]_{\rm 
 nc} \times [\Omega]_{\rm nc}$ or to the nc similarity envelope of 
 $[\Omega]_{\rm nc, sim} \times [\Omega]_{\rm nc, sim}$ but not 
 necessarily to the full nc envelope $[\Omega]_{\rm full} \times 
 [\Omega]_{\rm full}$.  However, 
 when there is an extension to the full nc envelope $[\Omega]_{\rm full}$, 
 the extension necessarily is unique.  Similar remarks hold 
 for nc functions defined on a set $\Omega$ which is not necessarily a 
 nc set.     One trivial situation  when nc extension to the full nc 
 envelope is possible is when the nc function has the form $\bphi$ as 
 in Example \ref{E:linear} (b) on $\Omega$ or the nc kernel has the 
 form $\bphi$ as in \ref{E:linear} (a) on $\Omega \times \Omega$.
 
 \subsection{Internal tensor product of $C^{*}$-correspondences} 
 \label{S:tensorprod}
 We shall need a certain special case of a general $C^{*}$-correspondence internal 
 tensor product  and related constructions (see \cite{MS1998, RW}).  
 The following theorem 
 summarizes these results in the form needed for the sequel.
 
 \begin{theorem}   \label{T:tensorprod}  Suppose that $\cX$ is a 
     Hilbert space, $\cE$ and $\cF$ are two auxiliary Hilbert spaces 
     and $\cX$ is equipped with a $*$-representation $\pi$ of 
     $\cL(\cE)$:
 $$
    \pi \colon \cL(\cE) \to \cL(\cX).
 $$
 Then the following statements hold:
 \begin{enumerate}
     \item 
 Define an inner product $\langle \cdot, \cdot \rangle_{0}$ on the 
 algebraic tensor product $\cL(\cE, \cF) \otimes_{\rm alg} \cX$ by
 \begin{equation}   \label{0innerprod}
 \langle T \otimes x, S \otimes x' \rangle_{0} = \langle \pi(S^{*}T) 
 x, x' \rangle_{\cX}
 \end{equation}
 for $x,x' \in \cX$, $T,S \in \cL(\cE, \cF)$.  Then $\langle \cdot, 
 \cdot \rangle_{0}$ is positive semidefinite. Modding out by elements 
 of zero self inner-product gives rise to a pre-Hilbert space 
 $(\cL(\cE, \cF) \otimes \cX)_{0}$.  We 
 let $\cL(\cE, \cF) \otimes_{\pi} \cX$ denote the Hilbert-space completion 
 of  $(\cL(\cE, \cF) \otimes \cX)_{0}$ in the $\langle, \cdot, \cdot 
 \rangle_{0}$ inner product.  This Hilbert 
 space completion satisfies the balancing law
 \begin{equation}   \label{balance}
   T E \otimes  x = T \otimes \pi(E) x
 \end{equation}
 for $T \in \cL(\cE, \cF)$, $E \in \cL(\cE)$ and $x \in \cX$.
 
 \item 
 Furthermore, an operator $T \in \cL(\cE, \cF)$ induces an operator 
 $L_{T}$ mapping $\cX$ into  $\cL(\cE, \cF) \otimes_{\pi} \cX$ given by
 \begin{equation}   \label{LT}
     L_{T} \colon x \mapsto T \otimes x
  \end{equation}
  with adjoint action on elementary tensors given by
  \begin{equation}   \label{LT*}
      L_{T}^{*} \colon S \otimes x \mapsto  \pi(T^{*}S) x,
  \end{equation}
  such that
  $$
    \| L_{T} \|_{\cL(\cX, \cL(\cE, \cF) \otimes_{\pi} \cX)} \le \| 
    T\|_{\cL(\cE, \cF)}
  $$
  with equality in case $\pi$ is a faithful representation.
  \end{enumerate}
  \end{theorem}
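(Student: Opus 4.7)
The plan is to treat this as the standard interior tensor product construction for $C^{*}$-correspondences, adapted to the concrete setting where $\cL(\cE,\cF)$ plays the role of a right Hilbert module over $\cL(\cE)$ with inner product $\langle T,S\rangle_{\cL(\cE)}=T^{*}S$. The main work is the verification that $\langle\cdot,\cdot\rangle_{0}$ in \eqref{0innerprod} is positive semidefinite; once that is in place, everything else is a routine computation.

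First I would establish positive semidefiniteness. Given a finite sum $\xi=\sum_{i=1}^{n}T_{i}\otimes x_{i}$, I would form the row operator $V=\bbm{T_{1}&\cdots&T_{n}}\colon\cE^{n}\to\cF$ and observe that $V^{*}V=[T_{j}^{*}T_{i}]_{i,j=1}^{n}$ is a positive element of $\cL(\cE)^{n\times n}\cong\cL(\cE^{n})$. Applying the inflated $*$-representation $\pi^{(n)}={\rm id}_{{\mathbb C}^{n\times n}}\otimes\pi$ (which is a $*$-homomorphism, hence positivity-preserving) yields the positive operator $[\pi(T_{j}^{*}T_{i})]$ on $\cX^{n}$, and the sesquilinear form evaluates to $\langle[\pi(T_{j}^{*}T_{i})](x_{i}),(x_{j})\rangle_{\cX^{n}}\ge 0$. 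The Cauchy-Schwarz inequality (which holds for any positive semidefinite sesquilinear form) then implies that $\{\xi\colon\langle\xi,\xi\rangle_{0}=0\}$ is a linear subspace, so quotienting and completing produces the Hilbert space $\cL(\cE,\cF)\otimes_{\pi}\cX$.

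Next I would verify the balancing law \eqref{balance} by showing $\|TE\otimes x-T\otimes\pi(E)x\|_{0}^{2}=0$. Expanding and using \eqref{0innerprod} together with the homomorphism property of $\pi$, each of the three resulting terms
$$\langle\pi(E^{*}T^{*}TE)x,x\rangle,\quad \langle\pi(T^{*}TE)x,\pi(E)x\rangle,\quad\langle\pi(T^{*}T)\pi(E)x,\pi(E)x\rangle$$
reduces to $\langle\pi(E^{*}T^{*}TE)x,x\rangle$, and the alternating sum vanishes. For part~(2), I would define $L_{T}$ on elementary tensors by $L_{T}x=T\otimes x$ and check boundedness via $\|L_{T}x\|^{2}=\langle\pi(T^{*}T)x,x\rangle\le\|\pi(T^{*}T)\|\,\|x\|^{2}\le\|T\|^{2}\|x\|^{2}$, with equality when $\pi$ is faithful (since then $\|\pi(T^{*}T)\|=\|T^{*}T\|=\|T\|^{2}$). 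The adjoint formula \eqref{LT*} is then read off from the identity $\langle L_{T}^{*}(S\otimes x),y\rangle_{\cX}=\langle S\otimes x,T\otimes y\rangle_{0}=\langle\pi(T^{*}S)x,y\rangle_{\cX}$ for arbitrary $y\in\cX$.

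No step is genuinely difficult in this argument; the only mild subtlety is keeping straight that $\cL(\cE,\cF)$ is \emph{not} itself a $C^{*}$-algebra, so one must verify that the relevant products land in the appropriate spaces (specifically, $T^{*}S\in\cL(\cE)$ whenever $T,S\in\cL(\cE,\cF)$, which is what allows $\pi(T^{*}S)$ to make sense). Once this bookkeeping is settled, the main obstacle — positive semidefiniteness of \eqref{0innerprod} — is handled cleanly by the matrix-amplification trick described above, and the remaining assertions follow by direct manipulation.
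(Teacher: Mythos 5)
Your proof is correct and follows essentially the same approach as the paper. The one difference is organizational: the paper defers the positive-semidefiniteness of \eqref{0innerprod} to \cite[Proposition 3.16]{RW} and to its later proof of Theorem \ref{T:nctensorprod}, whereas you supply the matrix-amplification argument (forming $V=\bbm{T_1 & \cdots & T_n}$, noting $V^*V\succeq 0$, and applying the inflated $*$-homomorphism $\pi^{(n)}$) in-place --- but that is precisely the computation the paper carries out in Theorem \ref{T:nctensorprod}, so the content is the same.
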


  \begin{proof}
      The construction of the space $\cL(\cE, \cF) 
     \otimes_{\pi} \cX$ is a special case of a more general 
     construction called the {\em inner tensor product} for 
     $C^{*}$-correspondences (also called imprimitivity bimodules).
     For a  proof of the positive-semidefiniteness of the inner 
     product \eqref{0innerprod} in this more general setting,
     we refer to Proposition 3.16 in \cite{RW}.  In any case we shall 
     do a different more general version of this computation in the proof of 
     Theorem \ref{T:nctensorprod} below.
     
     As a nice exercise we go ahead here with verifying explicitly the balance law \eqref{balance}:
  \begin{align*}
   &   \langle TE \otimes x - T \otimes \pi(E) x, TE \otimes x - T 
      \otimes \pi(E) x \rangle_{0}  \\
  & \quad =  \langle  \pi(E^{*}T^{*}T E) x, x \rangle_{\cX} - 
  \langle \pi(E^{*} T^{*} T) \pi(E) x, x \rangle_{\cX} \\
  & \quad \quad \quad \quad - \langle \pi(T^{*}T E) x, \pi(E) x 
  \rangle_{\cX}+ \langle \pi(T^{*}T) \pi(E) x, \pi(E) x \rangle_{\cX} 
  \\
  & \quad = 0.
  \end{align*}
This completes the proof of statement (1).
  
  To verify the properties of $L_{T}$, first note that
  \begin{align*}
\| L_{T} x \|^{2}_{\cL(\cE, \cF) \otimes_{\pi} \cX} & = \| T 
      \otimes x \|^{2}_{\cL(\cE, \cF) \otimes_{\pi} \cX}   
       = \langle \pi(T^{*}T) x, x \rangle_{\cX} \\ 
&  \le \| \pi((T^{*}T)^{1/2}) \|^{2} \| x \|^{2} \le \| 
 (T^{*}T)^{1/2}\|^{2} \| x \|^{2} = \| T \|^{2} \|x \|^{2}
\end{align*}
with equality throughout in case $\pi$ is faithful.  Thus $L_{T}$ is 
well defined with $\| L_{T} \| \le \| T \|$ and with equality in case 
$\pi$ is faithful.  Finally note that
$$
\langle L_{T}^{*} (S \otimes x), x' \rangle_{\cX}  = \langle S \otimes 
x, T \otimes x' \rangle_{\cL(\cE, \cF) \otimes _{\pi} \cX}
= \langle \pi(T^{*}S) x, x' \rangle_{\cX}
$$
and the formula \eqref{LT*} follows.  This completes the proof of 
statement (2).
 \end{proof}
 
 We shall actually need the following extension of Theorem 
 \ref{T:tensorprod} to our nc setting.

  \begin{theorem}  \label{T:nctensorprod}  Suppose that $\cX$, $\cE$, 
      $\cF$ are Hilbert spaces with $\cX$ equipped with a 
      $*$-representation $\pi \colon \cL(\cE) \to \cL(\cX)$ as in 
      Theorem \ref{T:tensorprod}.Then the following statements hold:
      \begin{enumerate}
\item Fix a positive integer $k \in {\mathbb N}$ and define an inner product $\langle 
\cdot , \cdot \rangle_{0}$ on the disjoint union of algebraic tensor product 
      spaces
      $$
      (\cL(\cE, \cF^{k}) \otimes_{\rm alg} \cX)_{\rm nc} : =
      \amalg_{n=1}^{\infty} \cL(\cE^{n} \otimes \cF^{k}) \otimes_{\rm 
      alg} \cX^{n}
      $$
 by
 \begin{equation}  \label{nc0innerprod}
 \langle T \otimes x, T' \otimes x' \rangle_{0} = 
 \langle ({\rm id}_{{\mathbb C}^{n' \times n}} \otimes \pi) 
 (T^{\prime *} T) x, x' \rangle_{\cX^{n'}}
 \end{equation}
 for $T \in \cL(\cE^{n}, \cF^{k})$, $T'  \in  \cL(\cE^{n'}, 
 \cF^{k})$,  $x \in \cX^{n}$, $x' \in \cX^{n'}$.  Then the 
 inner-product $\langle \cdot, \cdot \rangle_{0}$ is 
 positive-semidefinite on $(\cL(\cE, \cF^{k}) \otimes_{\rm alg} 
 \cX)_{\rm nc}$.  
 
 We let $(\cL(\cE, \cF^{k}) \otimes_{\pi} 
 \cX)_{\rm nc}$ denote the Hilbertian completion of
 $(\cL(\cE, \cF^{k}) \otimes_{\rm alg} \cX)_{\rm nc}$ in the 
 $0$-inner product (the completion of 
 the positive-definite inner product obtained by identifying 
 elements of self inner-product equal to zero with the zero element 
 of the space).  Then elements of   $(\cL(\cE, \cF^{k}) \otimes_{\rm alg} \cX)_{\rm nc}$
 satisfy the balancing law
 \begin{equation}   \label{ncbalance}
     TS \otimes x = T \otimes ({\rm id}_{{\mathbb C}^{n \times n'}} 
     \otimes \pi)(S) x
 \end{equation}
 for $T \in \cL(\cE^{n}, \cF^{k})$, $S \in \cL(\cE^{n'}, \cE^{n})$, 
 $x \in \cX^{n'}$.
 
 \item Suppose that $T$ is an operator in $\cL(\cE^{n}, \cF^{k})$.  
 Then
 \begin{equation}  \label{ncLT}
    L_{T} \colon x \mapsto T \otimes x
 \end{equation}
 defines a bounded linear operator from $\cX_{\rm nc}$ into $(\cL(\cE, 
 \cF^{k}) \otimes_{\pi} \cX)_{\rm nc}$ with adjoint action on 
 elementary 
 tensors given by
 \begin{equation}   \label{ncLT*}
 L_{T}^{*} \colon T' \otimes x' \mapsto 
 ({\rm id}_{{\mathbb C}^{n \times n'}} \otimes \pi) (T^{*} T') x'
 \end{equation}
 for $T' \in \cL(\cE^{n'}, \cF^{k})$, $x' \in \cX^{n'}$.
 
 \item   The order-$k$ tensor product space $(\cL(\cE, 
 \cF^{k}) \otimes_{\pi} \cX)_{\rm nc}$  can be 
 identified with the $k$-fold orthogonal direct sum of the order-1 
 tensor product space $(\cL(\cE, \cF) \otimes_{\pi} \cX)_{\rm nc}$
 via an identification map 
  $$
    \iota_{k} \colon  (\cL(\cE, \cF^{k}) \otimes_{\pi} \cX)_{\rm nc}
    \mapsto \bigoplus_{1}^{k} (\cL(\cE, \cF) 
    \otimes_{\pi} \cX)_{\rm nc} 
 $$
 with action on elementary tensors given as follows.  For $T$ an 
 operator in $\cL(\cE^{n}, \cF^{k})$, we may decompose $T$ as a block-column 
 operator matrix $T = \sbm{ T_{1} \\ \vdots \\ T_{k}}$ where each 
 $T_{i} \in \cL(\cE^{n}, \cF)$ for $i=1, \dots, k$.  Then for $T 
 \in \cL(\cE^{n}, \cF^{k})$ and $x \in \cX^{n}$ so that $T \otimes x$ 
 is an elementary tensor in 
 $(\cL(\cE, \cF^{k}) \otimes_{\pi} \cX)_{\rm nc}$, we define
 \begin{equation}   \label{idk}
     \iota_{k} \colon  T \otimes x = \begin{bmatrix} T_{1} \\ \vdots 
     \\ T_{k} \end{bmatrix} \otimes x \mapsto 
     \begin{bmatrix} T_{1} \otimes x \\ 
     \vdots \\ T_{k} \otimes x \end{bmatrix} 
  \in  (\cL(\cE, \cF) \otimes_{\pi} \cX)_{\rm nc})^{k}
 \end{equation}
 \end{enumerate}
 \end{theorem}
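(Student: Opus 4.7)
The plan is to establish parts (1), (2), (3) in sequence, relying throughout on the fact that a $*$-representation of $\cL(\cE)$ is completely positive and multiplicative, so its amplifications ${\rm id}_{\mathbb{C}^{n \times m}} \otimes \pi$ preserve positivity and $*$-algebraic structure. This generalizes the scalar computations in Theorem \ref{T:tensorprod}, with the new ingredient being a row-vector consolidation trick to handle elementary tensors living at different levels.

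For part (1), to verify positive semidefiniteness of \eqref{nc0innerprod}, I would fix a finite collection $\{T_i \otimes x_i\}_{i=1}^N$ with $T_i \in \cL(\cE^{n_i}, \cF^k)$ and $x_i \in \cX^{n_i}$, and consolidate by forming the row operator $\mathbf{T} = [\,T_1 \; \cdots \; T_N\,] \in \cL(\cE^{N_{\rm tot}}, \cF^k)$ with $N_{\rm tot} = n_1 + \cdots + n_N$, together with the stacked vector $\mathbf{x} = {\rm col}(x_1, \ldots, x_N) \in \cX^{N_{\rm tot}}$. The product $\mathbf{T}^* \mathbf{T} \in \cL(\cE^{N_{\rm tot}})$ is a positive block operator whose $(j,i)$-block equals $T_j^* T_i$. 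Because $\pi$ is a $*$-representation and hence completely positive, the amplification ${\rm id}_{\mathbb{C}^{N_{\rm tot} \times N_{\rm tot}}} \otimes \pi$ preserves positivity, so $({\rm id} \otimes \pi)(\mathbf{T}^* \mathbf{T}) \succeq 0$ in $\cL(\cX^{N_{\rm tot}})$. Evaluating against $\mathbf{x}$ and reading off the block-structured inner product recovers $\sum_{i,j} \langle T_i \otimes x_i, T_j \otimes x_j \rangle_0 \ge 0$. The balancing law \eqref{ncbalance} is then obtained by expanding $\|TS \otimes x - T \otimes ({\rm id} \otimes \pi)(S) x\|_0^2$ into four terms and observing that each of the three nonvanishing contributions simplifies to $\langle ({\rm id} \otimes \pi)(S^* T^* T S) x, x \rangle$, with signs arranged so that the total vanishes; the simplification uses that ${\rm id} \otimes \pi$ is multiplicative and $*$-preserving on products where the intermediate matrix sizes match up.

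For part (2), boundedness of $L_T$ follows at once from $\|L_T x\|_0^2 = \langle ({\rm id} \otimes \pi)(T^*T) x, x \rangle \le \|T^*T\|_{\cL(\cE^n)} \|x\|^2 \le \|T\|^2 \|x\|^2$, since amplifications of a $*$-representation are contractive. The adjoint formula \eqref{ncLT*} is then a direct bracket calculation: for $T' \otimes x' \in \cL(\cE^{n'}, \cF^k) \otimes \cX^{n'}$ and $x \in \cX^n$ we compute $\langle L_T^*(T' \otimes x'), x \rangle_{\cX^n} = \langle T' \otimes x', T \otimes x \rangle_0 = \langle ({\rm id}_{\mathbb{C}^{n \times n'}} \otimes \pi)(T^* T') x', x \rangle_{\cX^n}$. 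For part (3), decomposing $T = {\rm col}(T_1, \ldots, T_k)$ gives $T^*T = \sum_{i=1}^k T_i^* T_i$, so $\|T \otimes x\|_0^2 = \sum_{i=1}^k \|T_i \otimes x\|_0^2 = \|\iota_k(T \otimes x)\|^2$, establishing that $\iota_k$ of \eqref{idk} is isometric on elementary tensors and hence extends by linearity and continuity to a well-defined isometric embedding on the completion. For surjectivity, one notes that any $(0, \ldots, T_i' \otimes x', \ldots, 0)$ with a single nonzero slot (where $T_i' \in \cL(\cE^{n'}, \cF)$, $x' \in \cX^{n'}$) is the image of the elementary tensor ${\rm col}(0, \ldots, T_i', \ldots, 0) \otimes x' \in \cL(\cE^{n'}, \cF^k) \otimes_\pi \cX^{n'}$, and finite linear combinations of such elements are dense in the target $k$-fold direct sum; since $\iota_k$ is a closed isometry it is therefore a unitary identification.

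The step I expect to be the main obstacle is the careful tracking of matrix dimensions at the heart of part (1): different elementary tensors live at different levels $n_i$ of the noncommutative space, and it is the consolidation into a single positive block operator $\mathbf{T}^*\mathbf{T}$ of size $N_{\rm tot} \times N_{\rm tot}$ that converts the verification of positive semidefiniteness into a one-line application of the complete positivity of $\pi$. One must check explicitly that the block-entry identification of $({\rm id}_{\mathbb{C}^{N_{\rm tot} \times N_{\rm tot}}} \otimes \pi)(\mathbf{T}^* \mathbf{T})$ agrees with the pointwise values prescribed by \eqref{nc0innerprod} on each pair of levels $(n_i, n_j)$; once this bookkeeping is settled, the remaining verifications in parts (2) and (3) are routine manipulations that exploit only that $\pi$ is multiplicative, $*$-preserving, and contractive.
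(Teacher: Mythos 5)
Your proposal follows essentially the same route as the paper's proof. For part (1) the paper factors the positive block operator $[T_i^*T_j] = A^*A$ and uses multiplicativity of the amplified representation to exhibit the inner product as $\|({\rm id}\otimes\pi)(A)\mathbf{x}\|^2$; you instead invoke outright that $*$-representations are completely positive. These are the same argument at different levels of abstraction, and both are correct. Your treatment of the balancing law and of part (2) matches the paper's.

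One small imprecision in part (3): you argue that $\iota_k$ is isometric by checking that it \emph{preserves norms} of individual elementary tensors $T\otimes x$ and then extend ``by linearity and continuity.'' Norm preservation on a generating set does not by itself license a linear isometric extension; what one needs is that $\iota_k$ \emph{preserves inner products between pairs} of elementary tensors, i.e.\ $\langle\iota_k(T\otimes x),\iota_k(T'\otimes x')\rangle = \langle T\otimes x, T'\otimes x'\rangle$. Fortunately this follows from the identical computation $T'^*T = \sum_i T_i'^*T_i$ applied to an arbitrary pair (this is what the paper verifies), so the fix is immediate, but the stated argument as written does not quite establish what is needed. Your density/surjectivity argument via single-slot tensors coincides with the paper's.
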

 
 \begin{proof}
     To prove that the $0$-inner product \eqref{nc0innerprod} is 
     positive semidefinite, choose any positive integers $n_{j}$ ($j 
     = 1, \dots, n$) along with operators $T_{j} \in \cL(\cE^{n_{j}} \otimes 
     \cF_{k})$ and vectors $x_{j} \in \cX^{n_{j}}$, and compute
  \begin{align*}
   &   \langle \sum_{j=1}^{n} T_{j} \otimes x_{j}, \, \sum_{i=1}^{n} 
      T_{i} \otimes x_{i} \rangle_{0} =  \sum_{i,j=1}^{n} \langle ({\rm 
      id}_{{\mathbb C}^{n_{i} \times n_{j}}} \otimes \pi)(T_{i}^{*} 
      T_{j}) x_{j}, x_{i} \rangle_{\cX^{n_{i}}}  \\
 & \quad = \left\langle ({\rm id}_{{\mathbb C}^{N \times N}} \otimes 
 \pi) \left( [T_{i}^{*} T_{j}] \right)
 \begin{bmatrix} x_{1} \\ \vdots \\ x_{n} \end{bmatrix},
      \begin{bmatrix} x_{1} \\ \vdots \\ x_{n} \end{bmatrix} \right 
	  \rangle_{\cX^{N}}
\end{align*}
where $N = \sum_{i=1}^{n} n_{i}$.   The block $n \times n$ matrix 
$[T_{i}^{*} T_{j}] = \sbm{ T_{1}^{*} \\ \vdots \\ T_{n}^{*} } \sbm{ 
T_{1} & \cdots & T_{n} }$ is a positive element of $\cL(\cE^{N})$ and 
hence can be factored in the form
$$
   [ T_{i}^{*} T_{j} ] = A^{*} A
$$
where $A \in \cL(\cE^{N})$.  Hence the preceding calculation can be 
continued as
\begin{align*}
   &  \left\langle ({\rm id}_{{\mathbb C}^{N \times N}} \otimes 
 \pi) \left( [T_{i}^{*} T_{j}] \right)
 \begin{bmatrix} x_{1} \\ \vdots \\ x_{n} \end{bmatrix},
      \begin{bmatrix} x_{1} \\ \vdots \\ x_{n} \end{bmatrix} 
\right  \rangle_{\cX^{N}} = \langle ({\rm id}_{{\mathbb C}^{N \times 
	  N}} \otimes \pi) (A^{*} A) x, x \rangle_{\cX^{N}} \\
	  & \quad \| ({\rm id}_{{\mathbb C}^{N \times N}} \otimes 
	  \pi)(A) x \|^{2} \ge 0.
\end{align*}

The verification of the balancing law \eqref{ncbalance} proceeds as 
in the verification of \eqref{balance} in the proof of Theorem 
\ref{T:tensorprod}.  This completes the proof of statement (1).

To verify the formula \eqref{ncLT*}, observe that, for $x \in 
\cX^{n}$,
\begin{align*}
\langle L_{T}^{*} (T' \otimes x'), x \rangle_{\cX^{n}} & =
\langle T' \otimes x', T \otimes x \rangle_{(\cL(\cE, \cF^{k}) 
\otimes_{\pi} \cX)_{\rm nc}}  \\
& = \langle ({\rm id}_{{\mathbb C}^{n \times n'}} \otimes \pi) 
(T^{*}T') x', x \rangle_{\cX^{n}}.
\end{align*}
This completes the proof of statement (2).

To verify that the map $\iota_{k}$ given by \eqref{idk} is an 
isometry, we compute, for $T_{i} \in \cL(\cE^{n}, \cF)$, $x
\in \cX^{n}$, $T'_{i} \in \cL(\cE^{n'}, \cF)$, $x' \in 
\cX^{n'}$ for $i = 1, \dots, k$,
\begin{align*}
  & \left\langle \begin{bmatrix} T_{1} \\ \vdots \\ T_{k} 
\end{bmatrix} \otimes x, \begin{bmatrix} T'_{1} \\ \vdots \\ T'_{k} 
\end{bmatrix} \otimes x' \right\rangle_{(\cL(\cX, \cF^{k}) \otimes 
\cX)_{\rm nc}} = \sum_{i=1}^{k} \langle ({\rm id}_{{\mathbb C}^{n' 
\times n}} \otimes \pi)(T^{\prime *}_{i} T_{i}) x, x' 
\rangle_{\cX^{n'}} \\
& \quad = \left\langle \begin{bmatrix} T_{1} \otimes x \\ \vdots \\ 
T_{k} \otimes x \end{bmatrix}, \, \begin{bmatrix} T'_{1} \otimes x' 
\\ \vdots \\ T'_{k} \otimes x' \end{bmatrix} \right \rangle_{((\cL(\cE, 
\cF) \otimes \cX)_{\rm nc})^{k}}.
\end{align*}
We conclude that $\iota_{k}$ can be extended via linearity to an isometry 
from the space
$$
 \cD = {\rm span} \left\{ \sbm{ T_{1}  \\  \vdots  \\  
T_{k}}\otimes x \colon T_{i} \in 
    \cL(\cE^{n}, \cF), x \in \cX^{n}, n \in {\mathbb N} 
    \right\} \subset (\cL(\cE, \cF^{k}) \otimes_{\pi} \cX)_{\rm nc}
$$
onto the space
$$
\cR = {\rm span} 
\left\{ \sbm{ T_{1} \otimes x \\ \vdots \\ T_{k} \otimes x} \colon
T_{i} \in \cL(\cE^{n}, \cF), x \in \cX^{n}, n \in {\mathbb N} 
\right\} \subset ((\cL(\cE, \cF) \otimes_{\pi} \cX)_{\rm nc})^{k}.
$$
It now suffices to verify that the spaces $\cD$ and $\cR$ are dense 
in their respective ambient spaces $(\cL(\cE, \cF^{k}) \otimes_{\pi}  
\cX)_{\rm nc}$ and $\oplus_{i=1}^{k} (\cL(\cE, \cF) 
\otimes_{\pi}  \cX)_{\rm nc}$.   The fact that $\cD$ is dense is 
clear since a generic elementary tensor is in $\cD$.  As for 
$\cR$, we note that a more general tensor $\sbm{ T_{1} \otimes x_{1} 
\\ \vdots \\ T_{k} \otimes x_{k}}$ can be obtained as a linear 
combination of generic elements of $\cR$:
$$
\begin{bmatrix} T_{1} \otimes x_{1} \\ \vdots \\ T_{k} \otimes x_{k} 
    \end{bmatrix} =
 \begin{bmatrix} T_{1} \otimes x_{1} \\ 0 \otimes x_{1} \\ \vdots \\
     0 \otimes x_{1} \end{bmatrix} + \begin{bmatrix} 0 \otimes x_{2} 
     \\ T_{2} \otimes x_{2} \\ \vdots \\ 0 \otimes x_{2} 
 \end{bmatrix} + \cdots + \begin{bmatrix} 0 \otimes x_{k} \\ \vdots 
 \\ 0 \otimes x_{k} \\ T_{k} \otimes x_{k} \end{bmatrix}.
 $$
\end{proof}
   
In case the representation $\pi$ is a multiple of the identity 
representation, we have the following simplification of the space 
$(\cL(\cE, \cF) \otimes_{\pi} \cX)_{\rm nc}$ and of the operators 
$L_{T}$ and $L_{T}^{*}$.

\begin{theorem}   \label{T:ncfindim}
  Suppose that $\cX$, $\cE$, $\cF$ and $\pi$  are as in Theorem \ref{T:tensorprod}
  with $\pi \colon \cL(\cE) \to \cL(\cX)$ a multiple of the identity 
  representation, i.e., there is a Hilbert space $\cX_{0}$ so that 
  $\cX$ has the form $\cX = \cE \otimes \cX_{0}$ and
  $$
     \pi(E) = E \otimes I_{\cX_{0}} \text{ for } E \in \cL(\cE).
  $$
  Then the space $\cL(\cE, \cF) \otimes_{\pi} (\cE \otimes \cX_{0})$ 
  can be identified with $\cF \otimes \cX_{0}$ with identification map 
  on elementary tensors given by
  \begin{equation}   \label{identification}
  \iota \colon T \otimes (e \otimes x_{0}) \mapsto Te \otimes x_{0}
  \end{equation} 
  for $T \in \cL(\cE, \cF)$, $e \in \cE$, $x_{0} \in \cX_{0}$.  
  Moreover 
  \begin{equation} \label{opidentification} 
 \iota  \cdot L_{T} =  T \otimes I_{\cX_{0}}. 
 \end{equation}
 \end{theorem}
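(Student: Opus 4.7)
The plan is to construct $\iota$ directly on elementary tensors via the prescribed formula, verify that it is well-defined by checking compatibility with the balancing law \eqref{balance}, show it is a densely defined isometry from the pre-Hilbert space onto $\cF \otimes \cX_0$, and then extend to the Hilbert space completion. The final operator identity \eqref{opidentification} will then drop out by chasing an elementary tensor through the definitions.

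First I would define $\iota$ on the algebraic tensor product $\cL(\cE, \cF) \otimes_{\rm alg}(\cE \otimes \cX_0)$ by extending \eqref{identification} linearly. To verify that $\iota$ descends to the quotient modulo the null space of $\langle\cdot,\cdot\rangle_0$, the key check is the balancing law \eqref{balance}: for $T \in \cL(\cE, \cF)$, $E \in \cL(\cE)$, $e \in \cE$, $x_0 \in \cX_0$, one has
\[
\iota\bigl(TE \otimes (e \otimes x_0)\bigr) = (TE)e \otimes x_0 = T(Ee)\otimes x_0 = \iota\bigl(T \otimes \pi(E)(e \otimes x_0)\bigr),
\]
using that $\pi(E)(e \otimes x_0) = Ee \otimes x_0$. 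Thus the map is consistent with the identifications forced by \eqref{balance}.

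Next I would establish that $\iota$ is isometric by computing, for $T, T' \in \cL(\cE,\cF)$, $e, e' \in \cE$, $x_0, x_0' \in \cX_0$,
\[
\langle T \otimes (e \otimes x_0), T' \otimes (e' \otimes x_0') \rangle_0 = \langle \pi(T^{\prime *} T)(e \otimes x_0), e' \otimes x_0' \rangle_{\cE \otimes \cX_0} = \langle T^{\prime *}Te, e' \rangle \langle x_0, x_0' \rangle,
\]
which equals $\langle Te \otimes x_0, T'e' \otimes x_0' \rangle_{\cF \otimes \cX_0}$. For density of the range, given any $f \otimes x_0 \in \cF \otimes \cX_0$, pick any unit vector $e_0 \in \cE$ and any $T \in \cL(\cE, \cF)$ with $T e_0 = f$; then $\iota(T \otimes (e_0 \otimes x_0)) = f \otimes x_0$, and the span of such simple tensors is dense in $\cF \otimes \cX_0$. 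Isometry plus dense range extends $\iota$ uniquely to a unitary identification on the completion $\cL(\cE,\cF) \otimes_\pi (\cE \otimes \cX_0)$.

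Finally, for \eqref{opidentification}, apply $\iota \circ L_T$ to an elementary tensor $e \otimes x_0 \in \cE \otimes \cX_0$:
\[
(\iota \circ L_T)(e \otimes x_0) = \iota(T \otimes (e \otimes x_0)) = Te \otimes x_0 = (T \otimes I_{\cX_0})(e \otimes x_0),
\]
and linearity and continuity finish the identification. The only point that requires genuine care is the well-definedness check in the first paragraph; everything else is routine bookkeeping with the inner product formula \eqref{0innerprod} specialized to $\pi(E) = E \otimes I_{\cX_0}$.
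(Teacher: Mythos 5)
Your proof is correct and follows essentially the same route as the paper: verify that the inner-product computation $\langle T \otimes (e \otimes x_0), T' \otimes (e' \otimes x_0')\rangle_0 = \langle Te \otimes x_0, T'e' \otimes x_0'\rangle_{\cF \otimes \cX_0}$ makes $\iota$ an isometry of the spans of elementary tensors, check density of domain and range, and read off \eqref{opidentification} on elementary tensors. The only slight redundancy is the separate balancing-law check; once the isometry computation is in hand, well-definedness on the quotient by null vectors is automatic, which is why the paper omits it.
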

 
 \begin{proof}
     For $T,T' \in \cL(\cE, \cF)$ and $e \otimes x_{0}, e' \otimes 
     x'_{0} \in \cE \otimes \cX_{0}$, we compute
 \begin{align*}
  &   \langle T \otimes (e \otimes x_{0}), T' \otimes (e' \otimes 
  x_{0}')
  \rangle_{(\cL(\cE, \cF) 
     \otimes_{\pi} (\cE \otimes \cX_{0}))_{\rm nc}} \\
 & \quad =
 \langle (T^{\prime *} T \otimes I_{\cX_{0}}) (e \otimes x_{0}), (e' 
 \otimes x'_{0}) \rangle_{\cE \otimes \cX_{0}}  \\
& \quad = \langle T e \otimes x_{0}, T' e' \otimes x'_{0} 
\rangle_{\cF \otimes \cX_{0}}.
\end{align*}
We conclude that the map $\iota$ given by \eqref{identification} 
defines an isometry from
$$
 \cD = {\rm span}_{T,e,x_{0}} \{ T \otimes (e \otimes x_{0})\} 
 \subset (\cL(\cE, \cF) \otimes_{\pi} (\cE \otimes \cX_{0}))_{\rm nc}
$$
onto
$$
\cR = {\rm span}_{T,e,x_{0}} \{ Te \otimes x_{0} \} \subset \cF 
\otimes \cX_{0}.
$$
It is now a matter of checking that $\cD$ and $\cR$ each is dense in 
its respective ambient space to see that \eqref{identification} 
establishes a unitary identification between $(\cL(\cE, \cF) 
\otimes_{\pi}  (\cE \otimes \cX_{0}))_{\rm nc}$ and $\cF \otimes 
\cX_{0}$.  

Finally, the  identity \eqref{opidentification} follows immediately from the definitions \eqref{LT} 
and  \eqref{identification}.
 \end{proof}
 
 \begin{remark}   \label{R:findim}  It is well known that  in case 
     $\cE$ is finite-dimensional, then any representation $\pi \colon 
     \cL(\cE) \to \cL(\cX)$ has the special form
$$
  \cX = \cE \otimes \cX_{0}, \quad \pi(E) = E \otimes I_{\cX_{0}}
$$
assumed in Theorem \ref{T:ncfindim}  (see e.g.\ \cite[Corollary 1 
page 20]{Arv76},
\end{remark}

 \section{Schur-Agler class interpolation theorems}  \label{S:SAint}
 We fix a full nc subset $\Xi$ of nc envelope $\cV_{\rm nc}$ of a 
 vector space $\cV$ and we suppose that $Q \in \cT(\Xi; \cL(\cR, 
 \cS)_{\rm nc})$ is a nc function from $\Xi$ to $\cL(\cR, \cS)_{\rm 
 nc}$ as in Subsection \ref{S:ncdisk}.  Then we define the nc disk 
 ${\mathbb D}_{Q}$ as in \eqref{defDQ}.
 Let $\cU$ and $\cY$ be two additional coefficient Hilbert spaces. 
We  define the  \textbf{nc Schur-Agler class} $\cSA_{Q}(\cU, \cY)$ by
\begin{equation}  \label{ncSchurAgler} 
\cSA_{Q}(\cU, \cY) = \{ S \in \cT({\mathbb D}_{Q}; \cL(\cU, \cY)_{\rm 
nc}) \colon \| S(Z) \| \le 1 \text{ for all } Z \in {\mathbb D}_{Q} \}.
\end{equation}
We note that the special case where $\Xi = ({\mathbb C}^{d})_{\rm nc}$, $\cR = 
{\mathbb C}^{r}$, $\cS = {\mathbb C}^{s}$ and $Q(z)$ is linear 
amounts to the setting of \cite{BGM2} discussed in the Introduction, 
where one can work with a globally defined power series representation 
for the Schur-Agler-class function $S$. 

We consider the following left-tangential interpolation problem for 
the class $\mathcal{SA}_{Q}(\cU, \cY)$.  We suppose that $\cE$ is 
another coefficient Hilbert space and that we are given 
nc functions $a \in \cT(\Omega; \cL(\cY, \cE)_{\rm nc})$ and  
$b \in \cT(\Omega; \cL(\cU, \cE)_{\rm nc})$.
We consider the following problem:

\smallskip

\noindent
\textbf{Left-Tangential Interpolation Problem:}  Given
a set of points $\Omega \subset {\mathbb D}_{Q}$ and nc functions
 $a \in \cT(\Omega'; \cL(\cY, \cE)_{\rm nc})$ and 
$b \in \cT(\Omega'; \cL(\cU, \cE)_{\rm nc})$  where we set
\begin{equation}   \label{Omega'}
 \Omega' = [\Omega]_{\rm  full} \cap {\mathbb D}_{Q}
 \end{equation}
 is the ${\mathbb D}_{Q}$-relative full nc envelope of $\Omega$, 
find $S$ in the Schur-Agler class $\mathcal{SA}_{Q}(\cU, \cY)$ such that
\begin{equation}  \label{int}
   a(Z) S(Z) = b(Z) \text{ for all } Z \in \Omega.
\end{equation}

\smallskip

We are now ready to state the main result concerning interpolation 
in the Schur-Agler class $\mathcal{SA}_{Q}(\cU, \cY)$. 
We note that statement (3) in the following theorem uses the 
notations and results from Theorem \ref{T:nctensorprod}.

\begin{theorem}  \label{T:ncInt} Suppose that $Q$ and ${\mathbb 
    D}_{Q}$ are as in \eqref{defDQ} and that we are given a data set
    (a set of points $\Omega \subset {\mathbb D}_{Q}$ and nc 
 functions $a \in \cT(\Omega'; \cL(\cY, \cE)_{\rm nc})$ and 
 $b \in \cT(\Omega'; \cL(\cU, \cE)_{\rm nc})$ with $\Omega'$ as  in \eqref{Omega'}) 
  for a Left-Tangential Interpolation Problem as above.
 Then the following conditions are equivalent.
 \begin{enumerate}
     \item[(1)] The Left-Tangential Interpolation Problem has a solution, 
     i.e., there exists an $S \colon {\mathbb D}_{Q} 
     \to \cL(\cU, \cY)_{\rm nc}$ in the nc Schur-Agler class 
     $\cSA_{Q}(\cU, \cY)$ satisfying the left-tangential 
     interpolation condition \eqref{int} on $\Omega$.
     
\item[(1$^{\prime}$)] The inequality
   \begin{equation} \label{ineq}
       a(Z)  a(Z)^{*} -  b(Z)  b(Z)^{*} \succeq 0
   \end{equation}
   holds for each $Z \in \Omega'$.

 \item[(2)] The pair $(a,b)$ has a left-tangential nc Agler decomposition over $\Omega$, 
   i.e., there exists a cp nc kernel 
    $\Gamma \colon \Omega \times \Omega \to 
    \cL(\cL(\cS), \cL(\cE))_{\rm nc}$ (as defined in Subsection 
    \ref{S:ncgendom}) so that
    \begin{align}    
& a(Z)( P \otimes I_{\cY}) a(W)^{*} - b(Z) (P \otimes I_{\cU})) 
b(W)^{*} \notag \\
& \quad = 
 \Gamma(Z,W) \left( (P \otimes I_{\cS}) - Q(Z) (P \otimes I_{\cR}) Q(W)^{*} 
 \right)
 \label{Aglerdecom}	
 \end{align}
 for all $Z \in \Omega_{n}$, $W \in \Omega_{m}$, $P \in {\mathbb 
C}^{n \times m}$.

\item[(3)] There exists an auxiliary Hilbert space $\cX$ equipped 
with a unitary $*$-representation $\pi \colon \cL(\cS) \mapsto 
\cL(\cX)$ and a contractive 
(even unitary) colligation matrix 
\begin{equation}   \label{bU}
\bU = \begin{bmatrix} A & B \\ C & D \end{bmatrix} \colon 
\begin{bmatrix}  \cX \\ 
\cU \end{bmatrix} \to \begin{bmatrix}  (\cL(\cS, \cR) \otimes_{\pi} 
\cX)_{\rm nc} \\ \cY \end{bmatrix}
\end{equation}
so that the function  $S(Z)$ defined by
\begin{equation}   \label{transfuncreal}
    S(Z) = D^{(n)} + C^{(n)} (I - L_{Q(Z)^{*}}^{*} 
    A^{(n)})^{-1} L_{Q(Z)^{*}}^{*} B^{(n)}
\end{equation}
for $Z \in \Omega_{n}$ (where here we use the notation \eqref{ncLT*} with $T 
= Q(Z)^{*}$)
where
\begin{equation}   \label{coln}
\begin{bmatrix} A^{(n)} & B^{(n)} \\ C^{(n)} & D^{(n)} \end{bmatrix}
= \begin{bmatrix} I_{n} \otimes A & I_{n} \otimes B \\ I_{n} \otimes 
C & I_{n} \otimes D \end{bmatrix} \colon \begin{bmatrix} 
\cX^{n} \\ \cU^{n} \end{bmatrix} \to \begin{bmatrix} 
\left((\cL(\cS,\cR) \otimes \cX)_{\rm nc} \right)^{n} \\ \cY^{n} \end{bmatrix}.
\end{equation}
satisfies the Left-Tangential Interpolation condition \eqref{int} on 
$\Omega$.
\end{enumerate}
Moreover, the implications (2) $\Rightarrow$ (3) and (3) 
$\Rightarrow$ (1) hold under the 
weaker assumption that $a$ and $b$ are only graded (rather than nc) 
functions defined only from $\Omega$ to $\cL(\cY, \cE)_{\rm nc}$ and $\cL(\cU, \cE)_{\rm nc}$ 
respectively.
\end{theorem}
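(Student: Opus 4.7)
The plan is to follow the cyclic chain $(1) \Rightarrow (1') \Rightarrow (2) \Rightarrow (3) \Rightarrow (1)$ outlined in the introduction. The skeleton is the Agler-Hellinger template, adjusted for three features of the nc setting: the localized hypothesis $(1')$ in place of a global von Neumann inequality (cf.\ \cite{AMcC-Pick}), the tensor-product space $(\cL(\cS,\cR) \otimes_\pi \cX)_{\rm nc}$ of Theorem \ref{T:nctensorprod} appearing in the realization, and the passage between $\Omega$ and its full nc envelope $\Omega'$.

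The two soft implications come first. For $(1) \Rightarrow (1')$, the function $Z \mapsto a(Z)S(Z) - b(Z)$ is a nc function on $\Omega'$ vanishing on $\Omega$, hence vanishes on all of $\Omega'$ by the uniqueness part of Proposition \ref{P:finitetype}(1); the inequality \eqref{ineq} then follows immediately from $\|S(Z)\| \le 1$. For $(3) \Rightarrow (1)$, since statement $(3)$ already incorporates the interpolation condition, it remains only to check membership in $\cSA_Q(\cU,\cY)$. The identity $L_{Q(Z)^{*}}^{*} L_{Q(Z)^{*}} = ({\rm id} \otimes \pi)(Q(Z)Q(Z)^{*})$ from Theorem \ref{T:nctensorprod}(2) converts the contractivity of $\bU$ into the standard Schur-complement identity
\[
I - S(Z) S(Z)^{*} = C^{(n)}(I - L_{Q(Z)^{*}}^{*} A^{(n)})^{-1}\bigl(I - L_{Q(Z)^{*}}^{*} L_{Q(Z)^{*}}\bigr)(I - A^{(n)*} L_{Q(Z)^{*}})^{-1} C^{(n)*},
\]
which is positive whenever $\|Q(Z)\| < 1$.

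The heart of the argument is $(1') \Rightarrow (2)$, via cone separation in the localized form of \cite{AMcC-Pick}. First reduce to a finite subset $\Omega_F \subset \Omega$, so that by Proposition \ref{P:finitetype}(2) the ambient space of nc kernels on $\Omega_F$ becomes finite-dimensional, and form the closed convex cone $\cC$ generated by the Agler blocks $(Z,W,P) \mapsto \Gamma(Z,W)\bigl((P \otimes I_\cS) - Q(Z)(P \otimes I_\cR) Q(W)^{*}\bigr)$ as $\Gamma$ ranges over cp nc kernels on $\Omega_F$. If the target kernel $K(Z,W)(P) := a(Z)(P \otimes I_\cY) a(W)^{*} - b(Z)(P \otimes I_\cU) b(W)^{*}$ lies outside $\cC$, Hahn-Banach yields a linear functional $\Lambda$ that is nonnegative on $\cC$ with $\Lambda(K) < 0$; a GNS-type construction applied to $\Lambda$ then assembles a Hilbert space, a representation, and a concrete point $Z_{0}$ with $\|Q(Z_{0})\| < 1$ at which $a(Z_{0}) a(Z_{0})^{*} - b(Z_{0}) b(Z_{0})^{*}$ fails to be positive, contradicting $(1')$. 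The main technical obstacle is precisely this GNS step: the reconstructed point must land in $\Omega'$ rather than in some larger set, and verifying this requires exploiting the bottom-up description of $[\Omega]_{\rm full}$ from Proposition \ref{P:bottom-up} together with the invariance built into the cone $\cC$. A Tychonoff compactness/weak-$*$ argument in the parameter space of cp kernels then extends the decomposition from $\Omega_F$ to arbitrary $\Omega$.

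Finally, $(2) \Rightarrow (3)$ is a lurking-isometry argument. Apply Theorem \ref{T:cpker}(3) to the cp nc kernel $\Gamma$ to obtain a $*$-representation $\sigma \colon \cL(\cS) \to \cL(\cX)$ and an nc function $H \colon \Omega \to \cL(\cX, \cE)_{\rm nc}$ with $\Gamma(Z,W)(P) = H(Z)({\rm id} \otimes \sigma)(P) H(W)^{*}$. Rewriting \eqref{Aglerdecom} using the balancing law \eqref{ncbalance} to pull $Q(Z)$ and $Q(W)^{*}$ out through the operators $L_{Q(Z)}$, $L_{Q(W)}^{*}$ of Theorem \ref{T:nctensorprod} produces an equality of Gram matrices encoded by the pairs $(a(W)^{*}, L_{Q(W)}^{*} H(W)^{*})$ and $(b(W)^{*}, H(W)^{*})$, from which one reads off a partial isometry. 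Extending to a unitary $\bU$ of the form \eqref{bU} (enlarging $\cX$ as needed) and the standard Schur-complement manipulation yields the realization \eqref{transfuncreal}; by construction this realization satisfies $a(Z)S(Z) = b(Z)$ on $\Omega$. The final addendum of the theorem is transparent since the arguments in $(2) \Rightarrow (3) \Rightarrow (1)$ never invoke the ``respects intertwinings'' axiom for $a,b$.
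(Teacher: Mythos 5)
Your cyclic structure $(1)\Rightarrow(1')\Rightarrow(2)\Rightarrow(3)\Rightarrow(1)$, the soft implications, and the lurking-isometry step $(2)\Rightarrow(3)$ all match the paper closely, and your more explicit Schur-complement computation for $(3)\Rightarrow(1)$ is a legitimate substitute for the paper's appeal to the general feedback-loading principle.

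However, there is a genuine gap in your sketch of $(1')\Rightarrow(2)$. You form the cone $\cC$ of Agler blocks $\Gamma(Z,W)\bigl((P\otimes I_\cS)-Q(Z)(P\otimes I_\cR)Q(W)^{*}\bigr)$, separate by Hahn-Banach, and then build a Hilbert space and a point from the separating functional $\mathbb{L}$. But nonnegativity of $\mathrm{Re}\,\mathbb{L}$ on $\cC$ alone only yields, for the right-multiplication operator $M_Q^r$ on the GNS space, the estimate $\|M_Q^r\|\le 1$ --- not strict inequality --- because the only information you can extract is $\mathbb{L}(D_{\boldsymbol f,\boldsymbol f}-D_{\boldsymbol f Q,\boldsymbol f Q})\ge 0$. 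Moreover, with $\mathbb{L}$ nonnegative only on $\cC$ there is no a priori bound on $M_a^r$, $M_b^r$, $M_{\chi_k}^r$, so these need not even pass to well-defined operators on the quotient Hilbert space $\cH_{\mathbb{L}}$. Consequently the point $[M^r_\chi]$ you extract is not guaranteed to land in $\mathbb{D}_Q$; you can place it in $[\Omega]_{\rm full}$ via the bottom-up description and respects-intertwinings, as you say, but you cannot certify $\|Q([M^r_\chi])\|<1$, which is exactly where $(1')$ must be invoked. The paper's remedy (Lemma \ref{L:Crho}) is to replace $\cC$ by the enlarged cone $\cC_\rho$ whose defining decomposition includes, in addition to the Agler block with $\frac{1}{\rho^2}Q(Z)(\cdot)Q(W)^*$, three further nonnegative blocks built from $a$, $b$, $\chi$ with coefficient $(1-\rho)^2$; this is what forces $\|M_Q^r\|\le\rho<1$ and $\|M_a^r\|,\|M_b^r\|,\|M_\chi^r\|\le (1-\rho)^{-1}$, and a limiting argument (Lemma \ref{L:Crho}(2)) passes from $\cC_\rho$ to $\cC$ as $\rho\to 1$. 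Without this (or an equivalent device), the separation argument as you describe it stalls, and your proposed resolution --- ``the bottom-up description of $[\Omega]_{\rm full}$ together with the invariance built into the cone $\cC$'' --- does not supply the missing norm bounds.
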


We take up the proof of Theorems \ref{T:ncInt} in 
Section \ref{S:proofs}. We explore now  
various immediate corollaries which fall out as special cases.

\subsection{Agler decomposition and transfer-function realization for 
nc Schur-Agler class} \label{S:decom&real}
     If we look at the implication (1) 
     $\Leftrightarrow$ (3)  in Theorem \ref{T:ncInt} for the special 
     case where $\cE = \cY$, $\Omega = {\mathbb D}_{Q}$, $a(Z) = 
     I_{\cY^{n}}$ for $Z \in {\mathbb D}_{Q,n}$ and we set $S(Z) = 
     b(Z)$, we arrive at the following result.

\begin{corollary}  \label{C:AMcC-global}
    Let $Q$ and ${\mathbb D}_{Q}$ are as in 
    \eqref{ncpoly} and \eqref{defDQ} and $S$ is a graded function 
    from ${\mathbb D}_{Q}$ into $\cL(\cU, \cY)_{\rm nc}$.  Then the 
    following are equivalent:
 \begin{enumerate}
     \item $S \in \mathcal{SA}_{Q}(\cU, \cY)$, i.e. $S$ is a 
     contractive nc function on ${\mathbb D}_{Q}$.
     
  \item $S$ has a nc Agler decomposition over $\Omega$, i.e., there 
  exists a cp nc kernel $\Gamma \colon \Omega \times \Omega \to 
  \cL(\cS, \cL(\cY))_{\rm nc}$ so that
  \begin{align}   
    &  P \otimes I_{\cY} - S(Z) (P \otimes I_{\cU}) S(W)^{*} \\
    & \quad =\Gamma(Z,W)\left( (P \otimes I_{\cS}) - Q(Z) ( P \otimes 
    I_{\cR} ) Q(W)^{*} \right).
      \label{Aglerdecom-a=I}
      \end{align}
  \item $S$ has a transfer-function realization as in 
     \eqref{transfuncreal}.
 \end{enumerate}
\end{corollary}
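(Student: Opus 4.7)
The plan is to derive this corollary as a direct specialization of Theorem \ref{T:ncInt}. First I would set $\cE = \cY$, take $\Omega = {\mathbb D}_{Q}$, let $a(Z) = I_{\cY^{n}}$ for $Z \in {\mathbb D}_{Q,n}$, and identify the role of $b$ in the theorem with the given function $S$. Because $\Omega = {\mathbb D}_{Q}$ already, the relative full nc envelope $\Omega' = [\Omega]_{\rm full} \cap {\mathbb D}_{Q}$ reduces to ${\mathbb D}_{Q}$, so all three conditions in the theorem are now to be checked over the whole of ${\mathbb D}_{Q}$, and the left-tangential interpolation condition \eqref{int} degenerates into the tautology $I_{\cY^{n}} S(Z) = S(Z)$.

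Next I would match the specialized statements against the three conditions of the corollary. Condition (1) of Theorem \ref{T:ncInt} (the existence of a Schur-Agler interpolant) becomes exactly the assertion $S \in \cSA_{Q}(\cU, \cY)$. The Agler decomposition \eqref{Aglerdecom} of condition (2) collapses directly onto \eqref{Aglerdecom-a=I}, since $a(Z)(P \otimes I_{\cY}) a(W)^{*} = P \otimes I_{\cY}$ under the choice $a \equiv I$, while the $b$-term becomes $S(Z)(P \otimes I_{\cU}) S(W)^{*}$. Condition (3) of the theorem transcribes verbatim into condition (3) of the corollary.

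The only point requiring care is bookkeeping of the hypotheses in the various directions. For (1) $\Rightarrow$ (2), Theorem \ref{T:ncInt} requires $a$ and $b$ to be nc functions on $\Omega'$; the constant $a \equiv I$ is trivially an nc function, and $S \in \cSA_{Q}(\cU, \cY)$ is by definition an nc function on ${\mathbb D}_{Q}$, so the hypothesis is met. For the directions (2) $\Rightarrow$ (3) and (3) $\Rightarrow$ (1), the last sentence of Theorem \ref{T:ncInt} explicitly permits $a, b$ to be merely graded rather than nc, so the corollary's assumption that $S$ is only graded at the outset is sufficient; after the fact one recovers that $S$ is in fact a nc function, either because it lies in the Schur-Agler class or directly from the transfer-function formula \eqref{transfuncreal} (which manifestly respects direct sums and similarities once $\bU$ is fixed).

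Since all the substantive content is already packaged inside Theorem \ref{T:ncInt}, there is no genuine obstacle here; the only step demanding attention is the verification that the specialized data $(a,b) = (I, S)$ satisfies the appropriate hypothesis in each implication, which is what I have outlined above.
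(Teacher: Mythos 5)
Your proposal is correct and follows the same route as the paper, which derives the corollary by specializing Theorem \ref{T:ncInt} to $\cE = \cY$, $\Omega = {\mathbb D}_{Q}$, $a(Z) = I_{\cY^{n}}$, $b(Z) = S(Z)$. Your bookkeeping of the graded-versus-nc hypotheses in each implication — using that $a \equiv I$ is trivially nc and $S \in \cSA_Q(\cU,\cY)$ is nc for the direction $(1) \Rightarrow (2)$, while the weakened graded hypothesis in the final sentence of Theorem \ref{T:ncInt} covers $(2) \Rightarrow (3) \Rightarrow (1)$ — is exactly the point the paper leaves implicit, and you are right that the nc-function property of $S$ is then recovered a posteriori.
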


  It has been observed by 
    Agler-McCarthy (see \cite{AMcC-global}) that Corollary 
    \ref{C:AMcC-global} can be used to prove nc analogues of the 
    Oka-Weil approximation theory for holomorphic functions in 
    several complex variables.  We present here a somewhat more 
    general setting for this type of result.  
    
    We let $\Xi$ be a  full nc subset of
    $\cV_{\rm nc}$ for some vector space 
    $\cV$.   We suppose that $\bA$ is an algebra of nc functions 
    contained in $\cT(\Xi; {\mathbb C}_{\rm nc})$.
    If $Q = [ q_{ij} ]$ is a finite matrix with entries 
    $q_{ij}$ in the algebra $\bA$, we define a subset ${\mathbb D}_{Q} 
    \subset \Xi$ as in \eqref{defDQ}:  
    \begin{equation}   \label{basic}
    {\mathbb D}_{Q} = \{ Z \in \Xi \colon \| Q(Z) \| < 1\}.
    \end{equation}
    A set of the form \eqref{basic} is said to be a \textbf{basic 
    $\bA$-free open set}.   Note that the intersection of two basic
    $\bA$-free open sets  is again a basic $\bA$-free open set since
    ${\mathbb D}_{Q_{1}} \cap {\mathbb D}_{Q_{2}}  = {\mathbb D}_{Q_{1} \oplus Q_{2}}$.
    Hence one can define a topology on $\Xi$, hereby called the
    \textbf{$\bA$-free topology}, by declaring that the basic $\bA$-free 
    open sets form a basis for this topology. 
 We can now state our version of a nc Oka-Weil theorem.

\begin{theorem} \label{T:OW} \textbf{(nc Oka-Weil Theorem)}
 With notation and definitions as above, 
suppose that $\Omega$ is an $\bA$-free open subset of $\Xi$ and 
suppose that $f$ is a nc function in
$\cT(\Omega; {\mathbb C}_{\rm nc})$ which is 
$\bA$-free locally bounded on $\Omega$ (i.e., for each $Z \in \Omega$, 
there is a $\bA$-free open subset $U$ of $\Omega$ containing $Z$
on which $f$ is uniformly bounded ($\| f(Z) \| \le M$ for all $Z \in 
U$ for some $M < \infty$).  Suppose that $K$ is a nc subset of $\Omega$ which is compact in 
the $\bA$-free topology.  Then there exists a sequence 
$\{q_{N}\}_{N=1}^{\infty}$ of 
functions from $\bA$ such that $q_{N}$ converges to $f$ uniformly on $K$.
\end{theorem}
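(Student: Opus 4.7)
My plan is to reduce the approximation problem to the nc transfer-function realization of Corollary \ref{C:AMcC-global} on a single basic $\bA$-free open set containing $K$, and then read off the polynomial approximants in $\bA$ from the associated Neumann series. The first task is thus to produce a finite matrix $Q = [q_{ij}]$ with entries in $\bA$ such that
$$
K \subset {\mathbb D}_Q \subset \Omega, \qquad \sup_{Z \in K} \|Q(Z)\| \le r < 1,
$$
and in addition $f$ is uniformly bounded on ${\mathbb D}_Q$. By hypothesis, each point of $K$ has an $\bA$-free open neighborhood inside $\Omega$ on which $f$ is bounded; $\bA$-free compactness of $K$ reduces this to a finite cover of $K$ by basic open sets ${\mathbb D}_{Q_1}, \ldots, {\mathbb D}_{Q_m} \subset \Omega$ on each of which $f$ is bounded. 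The nontrivial point is to amalgamate these into a single defining function $Q$; this uses direct-sum invariance of $K$ as a nc set together with the direct-sum-respecting property of each $q_{ij}$ as an nc function. After a mild rescaling $Q \mapsto Q/r'$ for some $r' \in (r,1)$, one secures the strict bound $\sup_K \|Q\| < 1$.

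Granted such a $Q$, normalize $f$ by a positive constant so that $|f(Z)| \le 1$ on all of ${\mathbb D}_Q$. Then $f \in \cSA_Q({\mathbb C},{\mathbb C})$, so by Corollary \ref{C:AMcC-global} there exist a Hilbert space $\cX$, a unital $*$-representation $\pi \colon \cL(\cS) \to \cL(\cX)$, and a unitary colligation $\bU = \sbm{A & B \\ C & D}$ as in \eqref{bU} with
$$
f(Z) = D^{(n)} + C^{(n)} \bigl(I - L_{Q(Z)^{*}}^{*} A^{(n)}\bigr)^{-1} L_{Q(Z)^{*}}^{*} B^{(n)}, \qquad Z \in {\mathbb D}_{Q,n}.
$$
From Theorem \ref{T:nctensorprod}(2) we have $\|L_{Q(Z)^{*}}^{*}\| \le \|Q(Z)\|$, so on $K$ the operator norm of $L_{Q(Z)^{*}}^{*} A^{(n)}$ is bounded by $r < 1$. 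Hence the Neumann expansion
$$
\bigl(I - L_{Q(Z)^{*}}^{*} A^{(n)}\bigr)^{-1} = \sum_{k=0}^{\infty} \bigl(L_{Q(Z)^{*}}^{*} A^{(n)}\bigr)^{k}
$$
converges geometrically, uniformly in $Z \in K$, and the partial sums
$$
f_N(Z) := D^{(n)} + \sum_{k=0}^{N-1} C^{(n)} \bigl(L_{Q(Z)^{*}}^{*} A^{(n)}\bigr)^{k} L_{Q(Z)^{*}}^{*} B^{(n)}
$$
approach $f$ uniformly on $K$ with error $O(r^N)$.

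It remains to identify each $f_N$ as an element of $\bA$. By formula \eqref{ncLT*} in Theorem \ref{T:nctensorprod}, the operator $L_{Q(Z)^{*}}^{*}$ depends holomorphically on $Q(Z)$ (not on $Q(Z)^{*}$ separately); unwinding the tensor-product composition, each term $C^{(n)} (L_{Q(Z)^{*}}^{*} A^{(n)})^{k} L_{Q(Z)^{*}}^{*} B^{(n)}$ is a $(k{+}1)$-fold multilinear expression in $Q(Z)$ contracted with the fixed operators $A,B,C$, and so is a polynomial in the matrix entries $q_{ij}(Z)$ with scalar coefficients determined by $\bU$ and $\pi$. Since each $q_{ij} \in \bA$ and $\bA$ is an algebra, $f_N \in \bA$, giving the desired approximating sequence. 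The hard part will be the amalgamation in the first paragraph: replacing the finite cover by a single defining function $Q$ plays the role of the $\bA$-convexity hypothesis in the classical Oka--Weil theorem, and it is there that the nc direct-sum structure of $K$ must be exploited; once this step is in place, the Schur-Agler realization handles the remainder of the proof cleanly.
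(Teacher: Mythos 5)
Your overall outline matches the paper's: cover $K$ by finitely many basic $\bA$-free sets on which $f$ is bounded, reduce to one defining matrix $Q_0$ with $K \subset {\mathbb D}_{Q_0}$ and $\sup_K\|Q_0\| < 1$, realize $f/M$ via Corollary \ref{C:AMcC-global}, expand the Neumann series, and observe the partial sums lie in $\bA$. However, there is a genuine gap at precisely the step you flag as ``the hard part'' and defer, and moreover the mental model you describe there is not quite right. You propose to \emph{amalgamate} $Q_1,\dots,Q_m$ into a single $Q$ with $K \subset {\mathbb D}_Q \subset \Omega$; but basic $\bA$-free open sets behave like intersections under direct sums (${\mathbb D}_{Q_1 \oplus Q_2} = {\mathbb D}_{Q_1} \cap {\mathbb D}_{Q_2}$), and there is no operation on the $Q_j$ producing a basic set that covers the union $\bigcup_j {\mathbb D}_{Q_j}$. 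The paper's argument is not an amalgamation at all: it is a pigeonhole/contradiction argument showing that \emph{one} of the $Q_j$ from the finite cover already contains $K$. Concretely, set $r_j := \max_{W \in K}\|Q_j(W)\|$; if $\min_j r_j \ge 1$, pick for each $j$ a point $W^{(j)} \in K$ with $\|Q_j(W^{(j)})\| \ge 1$, form $W^{(0)} = \bigoplus_j W^{(j)}$, which lies in $K$ by the nc-set hypothesis on $K$, and observe that $\|Q_j(W^{(0)})\| \ge 1$ for every $j$ (the norm of a direct sum dominates each block), so $W^{(0)}$ is in none of the ${\mathbb D}_{Q_j}$, contradicting $K \subset \bigcup_j {\mathbb D}_{Q_j}$. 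That contradiction yields a single $j_0$ with $r := r_{j_0} < 1$, and $Q_0 := Q_{Z^{(j_0)}}$ then has $K \subset {\mathbb D}_{tQ_0} \subset {\mathbb D}_{Q_0} \subset \Omega$ for any $t \in (1, 1/r)$. Without this argument (or something equivalent) your proposal is missing the essential content of the nc Oka--Weil theorem; the ``direct-sum-respecting property of each $q_{ij}$'' that you cite plays no role here, and the role of the nc structure of $K$ is to furnish the single bad test point $W^{(0)}$, not to build a new $Q$.

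There is a smaller loose end in the last paragraph: to see cleanly that each partial sum lies in $\bA$, you should invoke the fact that $Q_0$ is a \emph{finite} matrix over $\bA$, so $\cR = {\mathbb C}^{r}$ and $\cS = {\mathbb C}^{s}$ are finite-dimensional, and then by Remark \ref{R:findim} and Theorem \ref{T:ncfindim} the representation $\pi$ is a multiple of the identity and the realization \eqref{transfuncreal} collapses to the form \eqref{findimreal} with $Q_0(Z) \otimes I_{\cX_0}$ in place of $L_{Q_0(Z)^{*}}^{*}$. In that form each partial sum is manifestly a polynomial in the entries $q_{ij}(Z)$ with complex coefficients, hence in $\bA$; your appeal to ``unwinding the tensor-product composition'' is doing this silently and should be made explicit, since in the general (infinite-dimensional $\cS$) setting the operator $L_{Q(Z)^{*}}^{*}$ is defined through $\pi$ and it is not immediate that the resulting expression is a polynomial in the $q_{ij}$.
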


\begin{proof}
Given a point $Z$ in $K$, there is an $\bA$-matrix $Q_{Z}$ with 
associated  $\bA$-free basic open set ${\mathbb D}_{Q_{Z}}$ such that $Z 
\in {\mathbb D}_{Q_{Z}} \subset \Omega$ and $f|_{{\mathbb 
D}_{Q_{Z}}}$ is bounded.  Then
$$
  {\mathfrak U} = \{ {\mathbb D}_{Q_{Z}} \colon Z \in K \}
$$
is an open cover of $K$.  As $K$ is compact in the $\bA$-free 
topology, there is a finite subcover:
$$
  K \subset {\mathbb D}_{Q_{Z^{(1)}}} \cup \cdots \cup {\mathbb 
  D}_{Q_{Z^{(N)}}}
$$
for some finitely many points $Z^{(1)}, \dots, Z^{(N)}$ in $K$.  We 
claim that
\begin{equation}   \label{claim-OW}
    \min_{j=1, \dots, N} \max_{W \in K}\{ \| Q_{Z^{(j)}}(W) \| \} < 1.
\end{equation}
If \eqref{claim-OW} fails to hold, then for each $j=1, \dots, N$ 
there is a $W^{(j)} \in K$ with
\begin{equation}  \label{Qjineq}
    \| Q_{Z^{(j)}}(W^{(j)}) \| \ge 1.
\end{equation}
Set $W^{(0)} = \sbm{ W^{(1)} & & \\ & \ddots & \\ & & W^{(N)}}$.  As 
we are assuming that $K$ is a nc set, it follows that $W^{(0)} \in 
K$.  As a consequence of \eqref{Qjineq} we also have $\| 
Q_{Z^{(j)}}(W^{(0)}) \| \ge 1$ for all $j$ implying that $W^{(0)}$ 
fails to be in $\cup_{j=1}^{N} {\mathbb D}_{Q^{(j)}}$ giving us the 
contradiction that $W^{(0)}$ is not in $K$ after all. We conclude 
that indeed the claim \eqref{claim-OW} must hold.

We therefore can find an index $j_{0}$ so that
$$
  r: = \max_{W \in K} \| Q_{Z^{(j_{0})}} (W) \|  < 1.
$$
To simplify notation, let us write simply $Q_{0}$ rather than 
$Q_{Z^{(j_{0})}}$.  Choose $t$ with $1 < t < 1/r$.  In this way we 
arrive at a single $\bA$-matrix $Q_{0}$ so that $K \subset {\mathbb 
D}_{t  Q_{0}}$.

By construction, the nc function $f$ is bounded on 
${\mathbb D}_{Q_{0}}$, i.e., there is an $M < \infty$ so that
$$
  \frac{1}{M} \cdot f \in \mathcal{SA}_{Q_{0}}({\mathbb C}_{\rm nc}).
$$
Hence, by Theorem \ref{T:ncInt} (specifically Corollary 
\ref{C:AMcC-global}), $f$ has a $Q_{0}$-realization of the form in 
\eqref{transfuncreal} valid on all of ${\mathbb D}_{Q_{0}}$. By 
construction $Q_{0}$ is a finite matrix over $\bA$, say of size $s 
\times r$, so we may take the spaces $\cR$ and $\cS$ in Theorem 
\ref{T:ncInt} to be 
$$
  \cR = {\mathbb C}^{r}, \quad \cS = {\mathbb C}^{s}.
$$
As a consequence of Remark \ref{R:findim} and Theorem 
\ref{T:ncfindim},  the realization formula \eqref{transfuncreal} for 
$f(Z)$ assumes the simpler form
\begin{equation}  \label{findimreal}
\frac{1}{M} \cdot  f(Z) = D^{(n)} + C^{(n)} (I -( Q_{0}(Z) \otimes I_{\cX_{0}}) 
  A^{(n)})^{-1} ( Q_{0}(Z) \otimes I_{\cX_{0}}) B^{(n)}
\end{equation}
where the contractive (or even unitary) colligation matrix $\sbm{A & 
B \\ C & D}$ has the form
$$
  \begin{bmatrix}  A^{(n)} & B^{(n)} \\ C^{(n)} & D^{(n)} \end{bmatrix} \colon
      \begin{bmatrix} ({\mathbb C}^{s} \otimes \cX_{0})^{n} \\ {\mathbb C}^{n} 
      \end{bmatrix}  \mapsto \begin{bmatrix} ({\mathbb C}^{r} \otimes 
      \cX_{0})^{n} \\ {\mathbb C}^{n} \end{bmatrix}
$$
(where we assume $Z \in {\mathbb D}_{Q_{0},n}$).
As $\| Q_{0}(Z) \| < 1$ for $Z \in {\mathbb D}_{Q_{0}}$,  we may 
expand out the representation \eqref{findimreal} for $f(Z)$ as an 
infinite series
\begin{equation}   \label{findimrealpartial}
\frac{1}{M} \cdot  f(Z) = 
D^{(n)} + \sum_{j=0}^{\infty} C^{(n)} \left( ( Q_{0}(Z) \otimes 
  I_{\cX_{0}}) A^{(n)} \right)^{j}  ( Q_{0}(Z) \otimes I_{\cX_{0}}) 
  B^{(n)}.
\end{equation}
As $K \subset {\mathbb D}_{t Q_{0}}$, we have that
$ \| Q_{0}(Z) \| \le \frac{1}{t} < 1$ for all $Z \in K$, and hence 
the above series converges uniformly on $K$.  As the matrix entries 
of $Q_{0}$ are all in the algebra $\bA$, it follows that each partial sum 
$q_{N}$
of the infinite series \eqref{findimrealpartial} is in $\bA$.  
We conclude that $f$ is the uniform limit of a sequence of functions 
$\{M \cdot q_{N}\}$ from $\bA$ as wanted, and Theorem \ref{T:OW} follows.
\end{proof}

We note that the special case of Theorem \ref{T:OW} where one takes $\Xi = 
({\mathbb C}^{d})_{\rm nc}$ and $\bA$ equal to the algebra of nc scalar polynomials
as in Example \ref{E:ncpoly}  amounts to the nc 
Oka-Weil theorem of Agler-McCarthy \cite[Theorem 9.7]{AMcC-global}.

\subsection{Relatively full nc set of interpolation nodes} \label{S:relativelyfull} 
The following result is just a reformulation of the 
   equivalence (1) $\Leftrightarrow$ (1$^{\prime}$) in Theorem 
   \ref{T:ncInt} for the case where $a(Z) = I_{n}$ (for $Z \in 
   \Omega_{n}$), $b(Z) = S_{0}(Z)$, and $\Omega = \Omega'$.

   \begin{corollary}  \label{C:relfull}  
  Suppose that $Q$ and ${\mathbb D}_{Q}$ are as in Theorem 
      \ref{T:ncInt} and that $\Omega \subset {\mathbb D}_{Q}$ is a 
      relatively full nc subset of ${\mathbb D}_{Q}$, i.e., 
 $$
     \Omega = [\Omega]_{\rm full} \cap {\mathbb D}_{Q}.
 $$
 Then any nc function $S_{0}$ on $\Omega$ can be extended to a nc 
 function $S$ on ${\mathbb D}_{Q}$ without increasing norm, i.e., 
 given a nc function $S_{0} \colon \Omega \to \cL(\cU, \cY)_{\rm 
 nc}$, there exists a nc function $S \colon {\mathbb D}_{Q} \to 
 \cL(\cU, \cY)_{\rm nc}$ such that
$$
  S|_{\Omega} = S_{0} \text{ and } \sup_{Z \in {\mathbb D}_{Q}} \| 
  S(Z) \| = \sup_{Z \in \Omega} \| S_{0}(Z) \|.
$$
\end{corollary}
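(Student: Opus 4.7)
The plan is to derive this as a direct specialization of the equivalence $(1) \Leftrightarrow (1^{\prime})$ in Theorem \ref{T:ncInt}, after a normalization. Set $M = \sup_{Z \in \Omega} \|S_0(Z)\|$. If $M = 0$ there is nothing to prove (take $S \equiv 0$), so assume $M > 0$ and consider the rescaled function $\widetilde S_0 = \frac{1}{M} S_0 \colon \Omega \to \cL(\cU, \cY)_{\rm nc}$, which is still a nc function on $\Omega$ satisfying $\|\widetilde S_0(Z)\| \le 1$ for all $Z \in \Omega$.

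Next I would set up the interpolation data for Theorem \ref{T:ncInt}: take $\cE = \cY$, $a(Z) = I_{\cY^n}$ for $Z \in \Omega_n$, and $b(Z) = \widetilde S_0(Z)$. The key observation is the relatively full hypothesis $\Omega = [\Omega]_{\rm full} \cap {\mathbb D}_{Q}$, which says precisely that $\Omega' = \Omega$ in the notation of \eqref{Omega'}. Thus $a$ and $b$ are automatically defined as nc functions on all of $\Omega'$, so the data set is admissible for Theorem \ref{T:ncInt}, and condition $(1^{\prime})$ reduces to
\[
  I_{\cY^n} - \widetilde S_0(Z) \widetilde S_0(Z)^* \succeq 0 \quad \text{for all } Z \in \Omega,
\]
which holds trivially because $\|\widetilde S_0(Z)\| \le 1$ on $\Omega$.

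Applying the implication $(1^{\prime}) \Rightarrow (1)$ from Theorem \ref{T:ncInt}, there exists $\widetilde S \in \cSA_Q(\cU, \cY)$ with $a(Z) \widetilde S(Z) = b(Z)$ on $\Omega$, i.e., $\widetilde S|_{\Omega} = \widetilde S_0$. Setting $S = M \cdot \widetilde S$ produces a nc function on all of ${\mathbb D}_{Q}$ extending $S_0$, and by construction $\|S(Z)\| \le M$ for every $Z \in {\mathbb D}_{Q}$. The reverse inequality $\sup_{{\mathbb D}_Q} \|S\| \ge \sup_\Omega \|S_0\| = M$ is immediate from $S|_\Omega = S_0$, which gives the required equality of suprema.

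There is no real obstacle here — the argument is essentially bookkeeping once one recognizes that the relatively full hypothesis kills the gap between $\Omega$ and $\Omega'$ that is the main subtlety in the general formulation of Theorem \ref{T:ncInt}. The substantive work was done in proving $(1^{\prime}) \Rightarrow (2) \Rightarrow (3) \Rightarrow (1)$ of that theorem; this corollary just packages the $a = I$, $b = S_0$, $\Omega = \Omega'$ case.
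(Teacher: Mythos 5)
Your proposal is correct and takes essentially the same approach as the paper: the paper's own argument is the one-line observation that this corollary is ``a reformulation of the equivalence (1)~$\Leftrightarrow$~(1$^{\prime}$) in Theorem~\ref{T:ncInt} for the case $a(Z)=I_n$, $b(Z)=S_0(Z)$, $\Omega=\Omega'$,'' which is exactly what you spell out (with the additional, correct, normalization step dividing by $M$ to put $b$ into the form handled by the theorem).
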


This corollary motivates defining a nc subset $\cD$ of a full nc 
subset $\Xi$ to be a \textbf{(norm-preserving) nc interpolation domain} 
if $\cD$ has the same property as ${\mathbb D}_{Q}$ in the above 
corollary, namely:  {\em if $\Omega$ is a relatively full nc subset 
of $\cD$, i.e., if
$$
  \Omega = [\Omega]_{\rm full} \cap \cD,
$$
and if $S_{0} \colon \Omega \to \cL(\cU, \cY)_{\rm nc}$ is any nc 
function on $\Omega$, then there is a nc function $S \colon \cD \to 
\cL(\cU, \cY)_{\rm nc}$ such that}
$$
  S|_{\Omega} = S_{0} \text{ and } \sup_{Z \in \cD} \| S(Z) \| = 
  \sup_{Z \in \Omega} \| S_{0}(Z) \|.
$$
The content of Corollary \ref{C:relfull} is that any $Q$-disk ${\mathbb D}_{Q}$ 
is a nc interpolation domain.  We note that one can think of sets of 
the form ${\mathbb D}_{Q}$ as the nc version of analytic polyhedra 
as occurring in several-complex-variable theory which in turn are 
closely connected with domains of holomorphy (see e.g.\ 
\cite{Hormander}). As a first challenge we pose the 
following problem: {\em Find an intrinsic characterization of sets 
of the nc analytic polyhedra ${\mathbb D}_{Q}$, or more generally, 
interpolation domains.}

We mention several rather obvious necessary conditions for a domain 
$\cD \subset \cV_{\rm nc}$ to be a nc analytic polyhedron ${\mathbb 
D}_{Q}$ for some $Q$, namely:
\begin{enumerate}
\item  $\cD$ is a  nc set.
\item $\cD$ is $\Xi$-relatively locally closed with respect to similarity in the 
following sense: if $Z$ is an element of $\cD_{n}$, then there is a number $\delta > 0$ 
so that, whenever $\alpha$ is an invertible $n \times n$ matrix such 
that  $\| \alpha^{\pm 1} - I_{n}\| < \delta$, then $\alpha Z 
\alpha^{-1}$ is also in $\cD_{n}$.
\item $\cD$ is closed under restriction to invariant subspaces in the 
following strict sense:  whenever $Z \in \cD_{n}$, $\widetilde Z \in 
\Xi_{m}$ and $\alpha$ is an isometric $n \times m$ matrix (so $\alpha^{*} 
\alpha = I_{m}$) such that $\alpha \widetilde Z = Z \alpha$, then $\widetilde 
Z \in \cD$.
\end{enumerate}

While characterizing nc analytic polyhedra ${\mathbb D}_{Q}$ may be 
difficult, as we shall now show the case where $Q = L_{\varphi}$ is a nc linear 
map as in Example \ref{E:linear} (b) turns out to 
be tractable.   Helton, Klep, and McCullough in \cite{HKMcC2} introduced 
such nc domains ${\mathbb D}_{L}$ in the finite-dimensional setting, and studied various aspects of 
the associated nc function theory; following these authors, we shall refer to any such 
set ${\mathbb D}_{L}$ as a \textbf{pencil ball}. Our goal is to characterize intrinsically 
which nc subsets $\cD$ of $\cV_{\rm nc}$ can have the form of a 
pencil ball ${\mathbb D}_{L}$.  

Suppose first that $\cD = {\mathbb D}_{L_{\varphi}}$ for a nc linear 
map $L_{\varphi} 
\colon \cV^{n \times m} \to \cL(\cV_{1}^{n \times m}, \cV_{0}^{n 
\times m})$ as in Example \ref{E:linear} (a), where we assume that 
$\varphi$ is a linear map from the vector space $\cV$ into the 
operator space $\cL(\cR, \cS)$ of bounded linear operators between 
two Hilbert spaces $\cR$ and $\cS$. To simplify the notation, we 
write simply $L$ rather than $L_{\varphi}$.  Define a seminorm $\| \cdot \|_{n}$ 
on $\cV^{n \times n}$ by
$$
  \| Z \|_{n} = \| L(Z) \|_{\cL(\cR^{n}, \cS^{n})}.
$$
Using the bimodule property \eqref{computation} of $L$, it is easy to check that this 
system of norms $\{ \| \cdot \|\}_{n}$ satisfies the Ruan axioms:
\begin{enumerate}
    \item $\left\| \sbm{ Z & 0 \\ 0 & W} \right\|_{n} = \max \{ | Z 
    \|, \| W \| \}$ for $Z \in \cV^{n \times n}$, $W \in \cV^{m 
    \times m}$
    \item $\| \alpha \cdot Z \cdot \beta\| \le \| \alpha \| \| Z 
    \|_{n} \| \beta \|$ for $\alpha \in {\mathbb C}^{m \times n}$, $Z 
    \in \cV^{n \times n}$, $\beta \in {\mathbb C}^{n \times m}$.
\end{enumerate}
Indeed, these follow easily from the following properties of $L$:
\begin{enumerate}
    \item[(1$^{\prime}$)] $L\left( \sbm{ Z & 0 \\ 0 &  W } \right) = 
    \sbm{ L(Z) & 0 \\ 0 & L(W) }$ (i.e., $L$ respects direct sums),
    \item[(2$^{\prime}$)] $L(\alpha \cdot Z \cdot \beta) = \alpha 
    \cdot L(Z) \cdot \beta$ (the bimodule property \eqref{computation}).
 \end{enumerate}
 Given a nc subset $\cD$, if we can construct a system of norms (or 
 more generally just seminorms) $\| \cdot \|_{n}$ satisfying the Ruan 
 axioms so that the $n$-th level $\cD_{n}$ is the unit ball of 
 $\|  \cdot \|_{n}$
 $$
  \cD_{n} = \{ Z \in \cV^{n \times n} \colon \| Z \|_{n} < 1 \},
 $$
 then by Ruan's Theorem \cite[Theorem 2.3.5]{ERbook}, there is a completely isometric isomorphism 
 $\varphi$ from $\cV$ into a subspace of $\cL(\cH)$ for some Hilbert 
 space $\cH$  which we can without loss of generality take to have the form 
 $\cL(\cR, \cS)$; thus  $\| Z \|_{n} = \| L(Z) \|_{\cL(\cR^{n}, \cS^{n})}$) (where 
 $L(Z)  = \varphi^{(n \times n)}(Z) :=
 ({\rm id}_{{\mathbb C}^{n \times n}} \otimes \varphi)(Z)$ for 
 $Z \in \cV^{n \times n}$) (or only 
 a completely coisometric coisomorphism in the seminorm case), and it 
 then follows that $\cD = {\mathbb D}_{L}$.  If we fix a level $n$, 
 it is well known which sets $\cD_{n}$ can be the open unit ball (at 
 least up to boundary points) for some seminorm $\| \cdot \|_{n}$, i.e., so that
 there is a seminorm $\| \cdot \|_{n}$ so that
 $$
   \{ Z \in \cD_{n} \colon \| Z \|_{n} < 1 \} \subset \cD_{n} \subset 
   \{ Z \in \cD_{n} \colon \| Z \|_{n} \le 1\}
 $$
 (see \cite[Theorem  1.35]{Rudin}): namely, $\cD_{n}$ should be (i) 
 \textbf{convex} ($Z^{(1)}, Z^{(2)} \in \cD_{n}$, $0 < \lambda < 1$ 
 $\Rightarrow$ $\lambda Z^{(1)} + (1 - \lambda) Z^{(2)} \in 
 \cD_{n}$), (ii) \textbf{balanced} ($Z \in \cD_{n}$, $\lambda \in 
 {\mathbb C}$ with $|\lambda| \le 1$ $\Rightarrow$ $\lambda Z \in 
 \cD_{n}$), and (iii) \textbf{absorbing} ($Z \in \cV^{n \times n}$ 
 $\Rightarrow$  $\exists$ $t > 0 t$ in ${\mathbb R}$ so that $\frac{1}{t} 
 Z \in \cD_{n}$). It remains to understand what additional properties 
 are needed to get the system of norms $\{ \| \cdot \|_{n}\}$ so 
 constructed to also satisfy the Ruan axioms.
 
 There is a notion of convexity for this nc setting which has already 
 been introduced and used in a number of applications in the 
 literature (see e.g.\ \cite{EW} and the references there): {\em a nc 
 subset $\cD$ of $\cV_{\rm nc}$ is said to be \textbf{matrix-convex} 
 if}
 $$
 Z \in \cD_{n},\, \alpha \in {\mathbb C}^{n \times m} \text{ with } \alpha^{*} 
 \alpha = I_{m} \Rightarrow \alpha^{*} \cdot Z  \cdot \alpha \in 
 \cD_{m}.
 $$
 We shall need a nc extension of {\em balanced} defined as follows: 
 {\em a nc subset $\cD$ of $\cV_{\rm nc}$ is said to be 
 \textbf{matrix-balanced} if}
 $$
  Z \in \cD_{n}, \, Œ\alpha \in {\mathbb C}^{m \times n} \text{ with } \| 
 \alpha \| \le 1, \,
  \beta \in {\mathbb C}^{n \times m} \text{ with } \| \beta \| \le 1
 \Rightarrow \alpha \cdot Z \cdot \beta \in \cD_{m}.
 $$
 Unlike as in the non-quantized setting, $\cD$ being matrix-balanced trivially 
 implies that $\cD$ is matrix-convex.
 
 We can now state our characterization of which nc sets $\cD \subset 
 \cV_{\rm nc}$ have the form ${\mathbb D}_{L}$ of a pencil ball, at 
 least up to boundary points.

\begin{proposition}   \label{P:pencilball}  Given a nc set $\cD 
    \subset \cV_{\rm nc}$, for $Z \in \cV^{n \times n}$ define $\| 
    \cdot \|_{n}$ as the Minkowski functional associated with the set 
    $\cD_{n} \subset \cV^{n \times n}$:
    $$
    \| Z \|_{n} = \inf\{ t > 0 \colon t^{-1} Z \in \cD_{n} \}
    \text{ for } Z \in \cD_{n}.
    $$
    Then the following are equivalent:
    \begin{enumerate}
	\item
	$\{ \| \cdot \|_{n}\}$ is a system of norms satisfying the 
    Ruan axioms, and hence, from the preceding discussion, there is 
    nc linear map $L \colon \cV_{\rm nc} \to \cL(\cR, \cS)_{\rm nc}$ 
    as in Example \ref{E:linear} (a) so that $\cD = {\mathbb D}_{L}$ 
    up to boundary, i.e.,
   $$
   \{ Z \in \cV^{n \times n} \colon \| L(Z) \| < 1\} \subset \cD_{n} \subset   
 \{ Z \in \cV^{n \times n} \colon \| L(Z) \| \le  1\}.
   $$
   \item $\cD$ is a matrix-balanced nc subset of $\cV_{\rm 
   nc}$ such that each $\cD_{n}$ is absorbing.
   \end{enumerate}
 \end{proposition}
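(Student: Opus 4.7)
The plan is to prove the two implications separately, with the bulk of the work in the direction (2) $\Rightarrow$ (1); the other direction is essentially a repackaging of the Ruan axioms once $L$ has been produced.

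For (1) $\Rightarrow$ (2), I would start from the nc linear map $L = L_{\varphi}$ supplied by condition (1) and read off the two properties directly. Absorbing is automatic once each $\|\cdot\|_n$ is a seminorm with open unit ball contained in $\cD_n$, since for any $Z \in \cV^{n \times n}$ one has $\|t^{-1} Z\|_n < 1$ for $t$ sufficiently large. For matrix-balanced, given $Z \in \cD_n$ and contractions $\alpha \in {\mathbb C}^{m \times n}$, $\beta \in {\mathbb C}^{n \times m}$, the bimodule property (2$^{\prime}$) of $L$ combined with Ruan axiom (R2) gives
\[
\|L(\alpha Z \beta)\| = \|\alpha \cdot L(Z) \cdot \beta\| \le \|\alpha\|\,\|L(Z)\|\,\|\beta\| \le \|L(Z)\| < 1,
\]
so $\alpha Z \beta \in {\mathbb D}_L \subset \cD$.

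For (2) $\Rightarrow$ (1), the plan is to verify in turn that each Minkowski functional $\|\cdot\|_n$ is a seminorm and that the system satisfies the Ruan axioms, then invoke Ruan's theorem. First, absorbing makes $\|Z\|_n$ finite. Applying the matrix-balanced hypothesis with a scalar $\alpha = \lambda/|\lambda| \in {\mathbb C}$ and $\beta = 1$ shows that $\cD_n$ is circled, from which the homogeneity $\|\lambda Z\|_n = |\lambda|\,\|Z\|_n$ follows in the usual way. Matrix-balanced trivially implies matrix-convex (take $\alpha,\beta$ to be appropriate contractive scalar combinations applied to a direct sum), which yields convexity of each $\cD_n$ and hence subadditivity of the Minkowski functional. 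Thus each $\|\cdot\|_n$ is a seminorm.

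Next I would verify the Ruan axioms directly. For (R2), if $\|\alpha\|, \|\beta\| > 0$ and $t > \|Z\|_n$, then $t^{-1} Z \in \cD_n$, and matrix-balanced applied to the contractions $\alpha/\|\alpha\|$ and $\beta/\|\beta\|$ yields $(\|\alpha\|\|\beta\| t)^{-1} \alpha Z \beta \in \cD_m$; letting $t \downarrow \|Z\|_n$ gives $\|\alpha Z \beta\|_m \le \|\alpha\|\|Z\|_n\|\beta\|$. For (R1), the inequality $\|Z \oplus W\|_{n+m} \le \max\{\|Z\|_n, \|W\|_m\}$ follows because $\cD$ is closed under direct sums: if $t^{-1} Z \in \cD_n$ and $t^{-1} W \in \cD_m$, then $t^{-1}(Z \oplus W) \in \cD_{n+m}$. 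The reverse inequality comes from compressing by the isometries $\bigl[\begin{smallmatrix} I_n & 0 \end{smallmatrix}\bigr]$ and $\bigl[\begin{smallmatrix} 0 & I_m \end{smallmatrix}\bigr]$, using matrix-balanced. Once the Ruan axioms hold, the theorem of Ruan cited in \cite[Theorem 2.3.5]{ERbook} produces a Hilbert space $\cH$ (which we rewrite as $\cL(\cR,\cS)$) and a complete isometry (or complete coisometry in the seminorm case) $\varphi \colon \cV \to \cL(\cR, \cS)$ with $\|Z\|_n = \|(\mathrm{id}_{{\mathbb C}^{n \times n}} \otimes \varphi)(Z)\| = \|L_{\varphi}(Z)\|$. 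The equality $\cD = {\mathbb D}_{L_{\varphi}}$ up to boundary is then exactly the statement that $\cD_n$ sits between the open and closed unit balls of $\|\cdot\|_n$, which holds by the general Minkowski-functional characterization in \cite[Theorem 1.35]{Rudin}.

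The main obstacle I anticipate is purely bookkeeping around the ``up to boundary'' ambiguity: the hypothesis ``matrix-balanced'' applies to points already known to lie in $\cD_n$, whereas the Minkowski functional argument naturally produces points whose norm is only $\le 1$; so one needs to be careful to interpret Ruan's theorem as yielding $\cD$ equal to ${\mathbb D}_L$ only up to boundary, which is precisely the conclusion of (1).
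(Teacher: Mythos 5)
Your proof follows essentially the same route as the paper's: (1) $\Rightarrow$ (2) reads absorbing and matrix-balanced off the Ruan axioms (equivalently, off the bimodule property of $L$), and (2) $\Rightarrow$ (1) verifies the two Ruan axioms from the matrix-balanced and nc-set hypotheses before invoking Ruan's theorem and the usual Minkowski-functional sandwich. One small slip worth noting: in the (1) $\Rightarrow$ (2) direction you write $\|L(Z)\| < 1$ for $Z \in \cD_n$, but condition (1) only gives $\|L(Z)\| \le 1$, so the estimate yields $\|L(\alpha Z \beta)\| \le 1$ rather than $< 1$, which places $\alpha Z \beta$ in the closed unit ball rather than in ${\mathbb D}_{L}$ or $\cD_m$. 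This is exactly the ``up to boundary'' ambiguity you flag in your final paragraph, and the paper's proof carries the same imprecision at the corresponding step (it deduces $\|\alpha Z \beta\|_m \le 1$ and then writes ``i.e., $\alpha Z \beta \in \cD_m$'').
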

 
 \begin{proof} Suppose that $\{ \| \cdot \|_{n}\}$ satisfies the Ruan 
     axioms.  Then each $\| \cdot \|_{n}$ is absorbing since $\| \cdot 
     \|_{n}$ is a (finite-valued) seminorm on $\cD_{n}$.  By the 
     first Ruan axiom $\left\| \sbm{ Z & 0 \\ 0 & W } \right\| = 
     \max \{ \| Z \|, \| W \| \}$, we see that $\left\| \sbm{ Z & 0 
     \\ 0 & W} \right\| < 1$ $\Leftrightarrow$ both $\| Z\|_{n} < 1$ 
     and $\| W \|_{m} < 1$.  We conclude that $\cD$ is a nc set.  By 
     the second Ruan axiom $\| \alpha \cdot Z \cdot \beta \|_{m} = \| 
     \alpha \| \| Z \|_{n} \| \beta \|$,  we conclude that
 $$
 \| \alpha \| \le 1,\, \| \beta \| \le 1, \, \| Z \|_{n} \le 1 
 \Rightarrow \| \alpha \cdot Z \cdot \beta \|_{m} \le 1,
 $$
 i.e., $\alpha \cdot Z \cdot \beta \in \cD_{m}$ if $\| \alpha \| \le 
 1$, $\| \beta \| \le 1$, $Z \in \cD_{n}$.  Hence $\cD$ is 
 matrix-balanced.
 
 Conversely, assume that each $\cD_{n}$ is absorbing and that $\cD$ is 
 a matrix-balanced nc set. Define $\| \cdot \|_{n}$ as the Minkowski 
 functional of $\cD_{n}$:
 $$
 \| Z \|_{n} = \inf \{ t > 0 \colon \frac{1}{t} Z \in \cD_{n}\}
 \text{ for } Z \in \cV^{n \times n}.
 $$
 Then $\| Z \|_{n} < \infty$ since $\cD_{n}$ is absorbing.  Since 
 $\cD$ being matrix-balanced implies that each $\cD_{n}$ is also 
 convex and balanced, we see that $\| \cdot \|_{n}$ is a seminorm on 
 $\cD_{n}$ such that
 $$
 \{ Z \in \cV^{n \times n} \colon \| Z \|_{n} < 1\} \subset \cD_{n} 
 \subset \{ Z \in \cV^{n \times n} \colon \| Z \|_{n} \le 1\}.
 $$
 Since $\cD$ is a nc set, we see that
 $$
   Z \in \cD_{n}, W \in \cD_{m} \Rightarrow \sbm{ Z & 0 \\ 0 & W } 
   \in \cD_{n+m}.
 $$
 Since $\cD$ is also matrix-balanced, we see that
 \begin{align*}
 \begin{bmatrix} Z & 0 \\ 0 & W \end{bmatrix} \in \cD_{n+m} 
     \Rightarrow &
 \begin{bmatrix} I_{n} & 0 \end{bmatrix} \begin{bmatrix} Z & 0 \\ 0 & 
     W \end{bmatrix} \begin{bmatrix}  I_{n} \\ 0 \end{bmatrix} = Z \in 
 \cD_{n} \text{ and } \\
 & 
  \begin{bmatrix} 0 & I_{m} \end{bmatrix} \begin{bmatrix} Z & 0 \\ 0 
      & W \end{bmatrix}  \begin{bmatrix} 0 \\ I_{m} \end{bmatrix} = W \in 
 \cD_{m}
  \end{align*}
 Thus we have
 $$
 \begin{bmatrix} Z & 0 \\ 0 & W \end{bmatrix} \in \cD_{n+m} 
     \Leftrightarrow Z \in \cD_{n} \text{ and } W \in \cD_{m}.
 $$
 From this property we deduce that the first Ruan axiom holds: 
 $\left\| \sbm{ Z & 0 \\ 0 & W} \right\| = \max \{ \| Z \|, \, \| W 
 \| \}$.
 
 Finally, we use the matrix-balanced property of $\cD$ to deduce
 \begin{align*}
    &  \| \alpha Z \beta \|_{m}  = \inf\{t>0 \colon t^{-1} \alpha Z 
      \beta  \in \cD_{m} \}  \\
    &  \quad   =  \| \alpha \| \cdot \inf \{ t> 0  \colon \frac{\alpha}{\| \alpha \|}
    \cdot (t^{-1} Z ) \cdot  \frac{\beta}{\| \beta \|} \in \cD_{m} \} 
    \cdot \| \beta \| \\
    & \quad  \le \| \alpha \| \cdot \inf\{ t > 0 \colon t^{-1} Z \in 
    \cD_{n} \} \cdot \| \beta \| \text{ (since $\cD$ is 
    matrix-balanced)} \\
    & \quad = \| \alpha \| \cdot \|Z\|_{n}  \cdot \| \beta \|
 \end{align*}
 and the second Ruan axiom is verified.  
 \end{proof}

\subsection{The nc corona theorem}
If we look at the equivalence (1) $\Leftrightarrow$ 
(1$^{\prime}$) in Theorem \ref{T:ncInt} for the special case where $\Omega = {\mathbb D}_{Q}$,
we arrive at the following.

\begin{corollary}   \label{C:AMcC-global'}
     {\em Suppose that we are given nc functions $a \in 
     \cT({\mathbb D}_{Q}; \cL(\cY, \cE)_{\rm nc})$ and 
     $b \in \cT({\mathbb D}_{Q}; \cL(\cU, \cE)_{\rm nc})$.  Then the following are equivalent:
     \begin{enumerate}
	 \item $a(Z) a(Z)^{*}  - b(Z) b(Z)^{*} \succeq 0$ for all $Z 
	 \in {\mathbb D}_{Q}$.
	 
	 \item There exists a Schur-Agler class function $S \in 
	 \mathcal{SA}_{Q}(\cU, \cY)$ so that $a(Z) S(Z) = b(Z)$ for 
	 all $Z \in {\mathbb D}_{Q}$.
\end{enumerate}}
\end{corollary}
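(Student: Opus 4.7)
The plan is to deduce the corollary directly from the equivalence (1) $\Leftrightarrow$ (1$^{\prime}$) of Theorem \ref{T:ncInt} by specializing the interpolation set $\Omega$ to all of ${\mathbb D}_Q$. First I would verify that taking $\Omega = {\mathbb D}_Q$ produces $\Omega' = [\Omega]_{\rm full} \cap {\mathbb D}_Q = {\mathbb D}_Q$: indeed ${\mathbb D}_Q \subseteq [{\mathbb D}_Q]_{\rm full}$ holds automatically (every set sits inside its full nc envelope, as recorded in the chain \eqref{chain} of Proposition \ref{P:bottom-up}), and intersecting back with ${\mathbb D}_Q$ returns ${\mathbb D}_Q$. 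Consequently the data $(a,b)$ given on ${\mathbb D}_Q$ in the corollary is precisely the data on $\Omega'$ required by the theorem, and no extension is needed.

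Next I would match the two statements of the corollary to the corresponding conditions of Theorem \ref{T:ncInt}. Condition (1$^{\prime}$) specialized to $\Omega' = {\mathbb D}_Q$ reads exactly as statement (1) of the corollary, while condition (1) of the theorem specialized to $\Omega = {\mathbb D}_Q$ reads exactly as statement (2) of the corollary. The easy direction (2) $\Rightarrow$ (1) of the corollary is in fact elementary and independent of the full machinery: from $a(Z) S(Z) = b(Z)$ with $\|S(Z)\| \le 1$ one writes
\[
a(Z) a(Z)^{*} - b(Z) b(Z)^{*} \;=\; a(Z)\bigl( I - S(Z) S(Z)^{*} \bigr) a(Z)^{*} \;\succeq\; 0,
\]
with the $n$-th level interpretation $\|S(Z)\| \le 1$ for $Z \in ({\mathbb D}_Q)_n$ giving $I - S(Z)S(Z)^* \succeq 0$.

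For the nontrivial direction (1) $\Rightarrow$ (2) there is nothing further to prove beyond invoking the implication (1$^{\prime}$) $\Rightarrow$ (1) of Theorem \ref{T:ncInt}, whose proof is the subject of Section \ref{S:proofs}. All the real work is bundled into Theorem \ref{T:ncInt} itself, namely the cone-separation and lurking-isometry arguments that manufacture a Schur-Agler interpolant from a pointwise positivity hypothesis; at the level of this corollary the only verification required is the harmless one that ${\mathbb D}_Q$ is its own ${\mathbb D}_Q$-relative full nc envelope, so there is no genuine obstacle to surmount here.
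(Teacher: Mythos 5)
Your proof is correct and takes essentially the same route as the paper: the paper also obtains this corollary by specializing the equivalence (1) $\Leftrightarrow$ (1$^{\prime}$) of Theorem \ref{T:ncInt} to the case $\Omega = {\mathbb D}_{Q}$. Your explicit check that $[{\mathbb D}_Q]_{\rm full} \cap {\mathbb D}_Q = {\mathbb D}_Q$ and your direct two-line proof of the easy direction are welcome details that the paper leaves implicit.
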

  
  We recall that the Carleson corona theorem (see \cite{Carleson}) 
 asserts than an $N$-tuple of bounded holomorphic functions on the unit 
 disk is not contained in a proper ideal if and only if the functions 
 are jointly bounded below by a positive constant.  A special case 
 of Corollary \ref{C:AMcC-global'} yields a free version of this 
 result.

\begin{corollary}  \label{C:Carleson}  Let $Q$ and ${\mathbb D}_{Q}$ 
     be as in \eqref{defDQ} and suppose that we are given
   $N$  scalar nc functions on ${\mathbb D}_{Q}$:  $\psi_{1}, \dots, \psi_{N}$ in 
   $\cT({\mathbb D}_{Q}; {\mathbb C}_{\rm nc})$. Assume that the family $\{ 
   \psi_{i} \colon i=1, \dots, N\}$ is uniformly bounded below in the 
   sense that there exist an $\epsilon > 0$ so that
   $$
        \sum_{i=1}^{N} \psi_{i}(Z) \psi_{i}(Z)^{*}  \succeq 
	\epsilon^{2} I_{{\mathbb C}^{n}}
$$
for all $Z \in {\mathbb D}_{Q, n}$ for all $n \in {\mathbb N}$.
Then there exist uniformly bounded nc functions $\phi_{1}, \dots \phi_{N}$ in 
$\cT({\mathbb D}_{Q}; {\mathbb C}_{\rm nc})$ so that the corona 
identity
$$  
\sum_{i=1}^{N} \psi_{i}(Z) \phi_{i}(Z) = I_{{\mathbb C}^{n}} \text{ 
for all } Z \in {\mathbb D}_{Q,n} \text{ for all } n \in {\mathbb N}
$$
holds.  In fact one can choose $\{ \phi_{i} \colon i=1, \dots, N\}$ so that
$$
 \sum_{i=1}^{N}  \phi_{i}(Z)^{*} \phi_{i}(Z) \preceq 
 (1/ \epsilon^{2}) I_{{\mathbb C}^{n}}
$$
for all $Z \in {\mathbb D}_{Q,n}$ for all $n \in {\mathbb N}$.
\end{corollary}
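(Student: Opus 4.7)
The plan is to frame the corona hypothesis as a left-tangential interpolation problem on $\Omega = {\mathbb D}_{Q}$ and then invoke Corollary \ref{C:AMcC-global'}. I would choose coefficient Hilbert spaces $\cE = {\mathbb C} = \cU$ and $\cY = {\mathbb C}^{N}$, and define nc functions
\[
a(Z) := \tfrac{1}{\epsilon}\bbm{\psi_{1}(Z) & \cdots & \psi_{N}(Z)}, \qquad b(Z) := I_{{\mathbb C}^{n}} \text{ for } Z \in {\mathbb D}_{Q,n},
\]
viewing $a(Z)$ as an element of $\cL({\mathbb C}^{Nn}, {\mathbb C}^{n}) \cong \cL(\cY,\cE)^{n \times n}$ and $b(Z) \in \cL({\mathbb C}^{n},{\mathbb C}^{n}) \cong \cL(\cU,\cE)^{n\times n}$. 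The nc function property of $a$ is inherited from the scalar nc functions $\psi_{i}$ by horizontal concatenation, and $b$ is manifestly a nc function.

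With this setup the hypothesis $\sum_{i}\psi_{i}\psi_{i}^{*} \succeq \epsilon^{2} I$ reads
\[
a(Z) a(Z)^{*} - b(Z) b(Z)^{*} = \tfrac{1}{\epsilon^{2}} \sum_{i=1}^{N} \psi_{i}(Z) \psi_{i}(Z)^{*} - I_{{\mathbb C}^{n}} \succeq 0,
\]
which is precisely condition (1) of Corollary \ref{C:AMcC-global'}. The corollary then furnishes a Schur-Agler-class function $S \in \cSA_{Q}({\mathbb C}, {\mathbb C}^{N})$ satisfying $a(Z) S(Z) = b(Z) = I_{{\mathbb C}^{n}}$ on all of ${\mathbb D}_{Q}$. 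Under the natural identification $\cL({\mathbb C}, {\mathbb C}^{N})^{n \times n} \cong ({\mathbb C}^{n \times n})^{N}$, I would decompose
\[
S(Z) = \bbm{S_{1}(Z) \\ \vdots \\ S_{N}(Z)}
\]
with each $S_{i} \in \cT({\mathbb D}_{Q}; {\mathbb C}_{\rm nc})$; because the componentwise decomposition commutes with direct sums, similarities, and intertwinings at every level $n$, each block $S_{i}$ inherits the nc function property from $S$.

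Finally, set $\phi_{i}(Z) := S_{i}(Z)/\epsilon$. The interpolation identity $a(Z) S(Z) = I_{{\mathbb C}^{n}}$ unpacks to the corona identity
\[
\sum_{i=1}^{N} \psi_{i}(Z) \phi_{i}(Z) = I_{{\mathbb C}^{n}},
\]
while the Schur-Agler-class contractivity $\|S(Z)\| \le 1$, i.e.\ $S(Z)^{*} S(Z) = \sum_{i=1}^{N} S_{i}(Z)^{*} S_{i}(Z) \preceq I_{{\mathbb C}^{n}}$, yields the desired norm bound $\sum_{i=1}^{N} \phi_{i}(Z)^{*} \phi_{i}(Z) \preceq (1/\epsilon^{2}) I_{{\mathbb C}^{n}}$. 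There is no substantive obstacle here: all the heavy lifting is already contained in Theorem \ref{T:ncInt} and its Corollary \ref{C:AMcC-global'}, and the only care required is in packaging the scalar corona data $(\psi_{1}, \dots, \psi_{N})$ correctly as an operator-valued tangential interpolation problem.
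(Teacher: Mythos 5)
Your proposal is correct and is essentially the same argument as the paper's: both cast the corona problem as the special case of Corollary~\ref{C:AMcC-global'} with $a(Z)$ the row $\begin{bmatrix}\psi_1(Z)&\cdots&\psi_N(Z)\end{bmatrix}$ (up to a normalizing factor of $\epsilon$), $b(Z)$ a scalar multiple of the identity, and $S(Z)$ the column of $\phi_i$'s. The only difference is where you place the $\epsilon$ (you absorb $1/\epsilon$ into $a$, the paper absorbs $\epsilon$ into $b$ and $S$), which is an immaterial rescaling.
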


 \begin{proof}  This result amounts to the special case of Corollary 
     \ref{C:AMcC-global'} where $a(Z) = \begin{bmatrix} \psi_{1}(Z) & 
     \cdots & \psi_{N}(Z) \end{bmatrix}$ and $b(Z) =  \epsilon I_{{\mathbb 
     C}^{n}}$ for $Z \in {\mathbb D}_{Q,n}$ and one seeks to solve 
     for $S$ of the form $S(Z) =  \epsilon \sbm{ \phi_{1}(Z) \\ \vdots \\ 
     \phi_{N}(Z) }$.
 \end{proof}
    
    We note that Corollary \ref{C:AMcC-global'} with
$\Xi$ specialized to $\Xi = {\mathbb C}^{d}_{\rm nc}$ as in Example 
\ref{E:ncpoly} amounts to (a corrected version of) 
Theorem 8.1 in \cite{AMcC-global} (where the special case giving the nc Carleson 
  corona theorem is also noted).  By considering the special case $\cE = \cY$, $a(Z) = 
  I_{\cY^{n}}$ for $Z \in {\mathbb D}_{Q,n}$, and hence $S(Z) = b(Z)$, 
  one can see that it is not enough to assume only that 
  $b(Z) = S(Z)$ is  graded as in \cite{AMcC-global} since it is 
  easy to write down contractive graded functions which are not nc 
  functions.  As indicated in the statement of Theorem \ref{T:ncInt}, the 
  implication (2) $\Rightarrow$ (3) does hold under the weaker 
  assumption that $a$ and $b$ are only graded (not necessarily nc) 
  functions defined only on the subset $\Omega$ where the 
  interpolation conditions are specified.

  \subsection{Finite set of interpolation nodes} \label{S:finiteset}
  We next focus on the special case of Theorem \ref{T:ncInt} where the 
 set of interpolation nodes $\Omega$ is a finite set.  As already
 observed by Agler and McCarthy in \cite{AMcC-Pick}, 
 taking a singleton set $\{ Z^{(0)}\}0$ as the set of interpolation nodes 
 is equivalent to taking a finite set $\{ Z^{(1)}, \dots, Z^{(N)}\}$ 
 since one can use the nc function structure to get an equivalent 
 problem with the singleton set $\{ Z^{(0)}\}$ with $Z^{(0)} =
\sbm{ Z^{(1)} & & \\ & \ddots & \\ & & Z^{(N)}}$.  Hence we focus on 
the setting where the interpolation node set is a singleton.

\begin{corollary}  \label{C:AMcC-Pick}
 Let $Q$ and ${\mathbb D}_{Q}$ be as in Theorem \ref{T:ncInt} and suppose that $Z^{(0)}$ is 
    one particular point in ${\mathbb D}_{Q,n}$ and $\Lambda_{0}$ is a 
    particular operator in $\cL(\cU \otimes {\mathbb C}^{n}, \cY 
    \otimes {\mathbb C}^{n})$.  Then the following are equivalent:
    \begin{enumerate}
	\item There exists a function $S$ in the Schur-Agler class 
	$\mathcal{SA}_{Q}(\cU, \cY)$ so that $S(Z^{(0)}) = 
	\Lambda_{0}$.
	
	\item There exists a nc function $S_{\rm full}$ on the 
 ${\mathbb D}_{Q}$-relative  full nc envelope $\{Z^{(0)}\}_{\rm nc,full} 
 \cap {\mathbb D}_{Q}$ of the singleton set $\{Z^{(0)}\}$ such that $S_{\rm 
	full}(Z^{(0)}) = \Lambda_{0}$ and
	$\|S_{\rm full}(Z)\| \le 1$ for all $Z \in 
	\{Z^{(0)}\}_{\rm nc,full} \cap {\mathbb D}_{Q}$.
    \end{enumerate}
\end{corollary}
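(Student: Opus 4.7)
The plan is to read off the corollary as the one-point specialization of Theorem \ref{T:ncInt}. Concretely, I take the interpolation node set to be the singleton $\Omega = \{Z^{(0)}\}$, so that the ${\mathbb D}_{Q}$-relative full nc envelope $\Omega' = [\{Z^{(0)}\}]_{\rm full} \cap {\mathbb D}_{Q}$ appearing in the hypothesis of Theorem \ref{T:ncInt} is exactly the set $\{Z^{(0)}\}_{\rm nc,full} \cap {\mathbb D}_{Q}$ that features in (2) (this agreement of the two notations is immediate from Definition \ref{D:gen}). For the tag spaces I choose $\cE = \cY$ with $a(Z) = I_{\cY^{m}}$ for $Z \in \Omega'_{m}$, which is tautologically a nc function from $\Omega'$ to $\cL(\cY,\cY)_{\rm nc}$. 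The only remaining piece of interpolation data, namely $b$, will be supplied differently in each direction.

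For (1) $\Rightarrow$ (2), given $S \in \mathcal{SA}_{Q}(\cU,\cY)$ with $S(Z^{(0)}) = \Lambda_{0}$, I set $S_{\rm full} := S|_{\Omega'}$. Restrictions of nc functions remain nc functions by direct inspection of the axioms (A1)--(A3) (closure properties of $\Omega'$ are inherited from the defining properties of the full nc envelope), $S_{\rm full}(Z^{(0)}) = \Lambda_{0}$ holds by construction, and the contractivity $\|S_{\rm full}(Z)\| \le 1$ on $\Omega'$ is just the restriction of the Schur-Agler contractivity of $S$ on ${\mathbb D}_{Q}$. This direction requires no machinery at all.

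For the substantive direction (2) $\Rightarrow$ (1), I take $b := S_{\rm full}$, viewed as a nc function from $\Omega'$ into $\cL(\cU,\cY)_{\rm nc}$. With these choices, the localized condition (1$^{\prime}$) of Theorem \ref{T:ncInt} becomes
$$
a(Z)a(Z)^{*} - b(Z)b(Z)^{*} = I_{\cY^{m}} - S_{\rm full}(Z)S_{\rm full}(Z)^{*} \succeq 0 \quad \text{for all } Z \in \Omega'_{m},
$$
which is precisely the contractivity hypothesis $\|S_{\rm full}(Z)\| \le 1$ assumed in (2). Invoking the implication (1$^{\prime}$) $\Rightarrow$ (1) of Theorem \ref{T:ncInt} then produces an $S \in \mathcal{SA}_{Q}(\cU,\cY)$ satisfying the single-point interpolation condition $a(Z^{(0)}) S(Z^{(0)}) = b(Z^{(0)})$, that is, $S(Z^{(0)}) = S_{\rm full}(Z^{(0)}) = \Lambda_{0}$.

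There is essentially no obstacle beyond bookkeeping; the whole point of stating the corollary after Theorem \ref{T:ncInt} is that once one recognizes the correct packaging of the data ($\Omega$ a singleton, $a \equiv I$, $b = S_{\rm full}$), the nontrivial content sits inside the already-proved implication (1$^{\prime}$) $\Rightarrow$ (1) of the main theorem. In particular, there is no need to construct the extension $S$ of $S_{\rm full}$ from $\Omega'$ to ${\mathbb D}_{Q}$ by hand, as this is exactly the content the main theorem delivers.
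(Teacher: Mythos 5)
Your proof is correct and takes essentially the same approach as the paper: both read the corollary as the $\Omega = \{Z^{(0)}\}$, $a \equiv I$, $b = S_{\rm full}$ specialization of the equivalence (1) $\Leftrightarrow$ (1$^{\prime}$) in Theorem \ref{T:ncInt}. Your write-up is if anything slightly more careful than the paper's one-liner, since you make explicit that the theorem's data requires $b$ to be a nc function on all of $\Omega'$ rather than merely a value at $Z^{(0)}$, and that $S_{\rm full}$ is precisely the object that supplies this.
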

   
 \begin{proof}   This amounts to the equivalence (1) 
     $\Leftrightarrow$ (1$^{\prime}$) in Theorem \ref{T:ncInt} for the special 
     case where $\Omega$ is the singleton set $\Omega = \{Z^{(0)}\}$ 
     with $a(Z^{(0)}) = I_{n}$  and  $b(Z^{(0)}) =  \Lambda_{0}$.
 \end{proof}
 
 We note that Corollary \ref{C:AMcC-Pick} implies Theorem 1.3 
 in \cite{AMcC-Pick}, apart from the added content in 
 \cite{AMcC-Pick} that, in the case where 
 $\Xi = ({\mathbb C}^{d})_{\rm nc}$ and $Q$ are taken as in Example 
 \ref{E:ncpoly}, then one can take $S_{\rm full}$ in statement (2) to be a 
 nc polynomial. 
 The formulation in \cite{AMcC-Pick} is in terms of the nc-Zariski 
 closure $\overline{\{Z^{(0)}\}}$ rather than in terms of the full nc 
 envelope $\{ Z^{0}\}_{\rm nc,full}$.  However a consequence of the 
 containment \eqref{gen/Zariski} is that the Agler-McCarthy 
 hypothesis with nc-Zariski closure implies the hypothesis here with 
 full nc envelope.  Of course, whenever it is the case that the 
 containment \eqref{gen/Zariski} is actually an equality (as is the 
 case when $\Xi = {\mathbb C}_{\rm nc}$ by Proposition 
 \ref{P:Zariski-close-d=1}), then the hypothesis here and the 
 hypothesis in Theorem 1.3 from \cite{AMcC-Pick} are the same.

 \subsection{Commutative Schur-Agler class} \label{S:commutative}  
 As our next illustrative special case, we indicate how the commutative results of 
     \cite{AT, BB04, MP} follow from the general theory for the nc 
     case. Specifically, the
     Agler-decomposition as well as
     transfer-function realization and interpolation (at least for 
     the left-tangential case) results of \cite{AT, BB04, MP} for the 
     commutative Schur-Agler class determined by a matrix polynomial 
     $q$ in $d$ (commuting) complex variables (or more generally an 
     operator-valued holomorphic function on ${\mathbb C}^{d}$ as in 
     \cite{MP}) follow as a corollary of Theorem \ref{T:ncInt}.  
     Indeed, an application of
 Corollary \ref{C:AMcC-global} to the special case where $\Xi$ and 
 $Q=q$ are as in Example \ref{E:comtuples} gives all these results 
 for the now commutative Schur-Agler class $\mathcal{SA}_{q}(\cU, \cY)$ 
     (the specialization of the general noncommutative theory to the 
     commutative setup of Example \ref{E:comtuples}).  One can then 
     use the observations  made in the discussion of \ref{E:comtuples} 
     that the Taylor/Martinelli-Vasilescu functional calculus extends a 
     holomorphic function defined on ${\mathbb D}_{q,1}$ (i.e., $d$ 
     scalar arguments) to a nc function (i.e., a function respecting 
     intertwinings) defined on ${\mathbb D}_{q}$.  Putting all this together, 
     we see that the formally noncommutative Schur-Agler class 
     $\mathcal{SA}_{q}(\cU, \cY)$ for this special case is the same 
     as the commutative Schur-Agler class $\cC \mathcal{SA}_{q}(\cU, 
     \cY)$ as defined in \cite{AT, BB04, MP}, and the results of 
     \cite{AT, BB04, MP} on Agler decomposition, transfer-function 
     realization and interpolation follow as a special case of 
     Theorem \ref{T:ncInt}. 
 We give further discussion of this setting in Remark \ref{R:AKV} below.

\subsection{Unenhanced Agler decompositions}  \label{S:unenhanced}
    In Theorem \ref{T:ncInt} suppose that the  $\Omega$ is a nc 
    subset which is open 
    in the finite topology of $\cV_{\rm nc}$ (see 
    \cite[page 83]{KVV-book}), i.e., that the intersection of $\Omega_{n}$ 
    with any finite-dimensional subspace $\cO$ of $\cV^{n \times n}$ 
    is open in the Euclidean topology of $\cO$ for each $n = 1, 2, 
    \dots$, and impose the standing assumption in Theorem 
    \ref{T:ncInt} that $a$ and $b$ are nc functions on the full nc 
    envelope of $\Omega$.  Then statement (2) in the Theorem 
    \ref{T:ncInt} can be weakened to the requirement that the Agler 
    decomposition \eqref{Aglerdecom} holds only for $Z,W$ both in 
    $\Omega_{n}$ and $P = I_{n}$ for each $n=1,2,\dots$, i.e., the 
    Agler decomposition \eqref{Aglerdecom} can be weakened to the 
    ``unenhanced'' form
    \begin{equation}  \label{unenhanced}
a(Z) a(W)^{*} - b(Z) b(W)^{*} = \Gamma(Z,W)(I - Q(Z) Q(W)^{*})
\end{equation}
for $Z, W \in \Omega_{n}$ for $n=1,2,\dots$. 
To see 
    this, let $Z,W$ be any two points in $\Omega_{n}$.  Since 
    $\Omega$ is now assumed to be finitely open, there is an 
    $\epsilon > 0$ so that
whenever $ \alpha$ and $\beta$ are invertible $n \times n$ matrices with
$\| \alpha^{\pm 1} - I_{n} \| < \epsilon$ and $\| \beta^{\pm 1} - 
I_{n}\| < \epsilon$, then it follows that both $\widetilde Z : = \alpha Z 
\alpha^{-1}$  and  $\widetilde W: = \beta W \beta^{-1}$ are in $\Omega_{n}$.
Consequently, for $P = \alpha^{-1} \beta^{-1*}$ we have
\begin{align*}
    & a(Z) P a(W)^{*} - b(Z) P  b(W)^{*} \\
    & =
  a(\alpha^{-1} \widetilde Z \alpha) \alpha^{-1} \beta^{-1*} a((\beta^{-1} \widetilde W \beta)^{*} - 
  b(\alpha^{-1} \widetilde Z \alpha) \alpha^{-1} \beta^{-1*} b(\beta^{-1} \widetilde W \beta)^{*} \\
    & =\alpha^{-1} a(\widetilde Z) \alpha \cdot \alpha^{-1} \beta^{-1*} \cdot  
    \beta^{*} a(\widetilde W)^{*} \beta^{-1*} \\
    & \quad \quad \quad \quad  -
   \alpha^{-1} b(\widetilde Z) \alpha  \cdot \alpha^{-1} \beta^{-1*} 
     \cdot \beta^{*} b(\widetilde W)^{*} \beta^{-1*}  \\ 
  & \quad \quad \quad \quad \text{ (since $a$ and $b$ respect  intertwinings)} \\
  & = \alpha^{-1} \left( a(\widetilde Z) a(\widetilde W)^{*} - 
  b(\widetilde Z) b(\widetilde W)^{*} \right) \beta^{-1*} \\
    &  = \alpha^{-1} \, \Gamma(\widetilde Z, \widetilde W)
    \left(I_{\cS^{n}} - Q(\widetilde Z) Q(\widetilde W)^{*} \right) \, 
    \beta^{-1*}  \text{ (by \eqref{unenhanced})} \\
    &  = \Gamma(Z,W) \left( \alpha^{-1} \beta^{-1*} - \alpha^{-1} Q(\widetilde 
    Z) Q(\widetilde W)^{*} \beta^{-1*} \right) \\
    & \quad \quad \quad \text{ (since $\Gamma$ 
    respects intertwinings)} \\
    &  = \Gamma(Z,W) \left( \alpha^{-1} \beta^{-1*} - Q(Z) \alpha^{-1} \beta^{-1*} 
    Q(W)^{*}\right) \\ 
    & \quad \quad \quad \text{ (since $Q$ respects intertwinings) } \\
    & \quad =  \Gamma(Z,W)(P - Q(Z) P Q(W)^{*}).
    \end{align*}
    Hence, for given $Z,W \in \Omega_{n}$, \eqref{Aglerdecom} holds 
    for all $P \in {\mathbb C}^{n \times n}$ in an open set around 
    $I_{n}$.  But both sides of \eqref{Aglerdecom} are holomorphic in 
    the entries of $P$. Hence by the uniqueness of analytic 
    continuation off an open set it follows that \eqref{Aglerdecom} 
    holds for all $P \in {\mathbb C}^{n \times n}$.
    
    For $Z \in \Omega_{n}$ and $W \in \Omega_{m}$ with possibly $n 
    \ne m$, apply the preceding result with $\sbm{Z & 0 \\ 0 & W } 
    \in (\Omega)_{{\rm nc}, n+m}$ in place of $Z$ and $W$ and with 
    $\sbm{0 & P \\ 0 & 0} \in {\mathbb C}^{(n+m) \times (n+m)}$ in 
    place of $P$.  Then the resulting identity
    \begin{align*}
    &	a\left( \sbm{Z & 0 \\ 0 & W} \right) \sbm{ 0 & P \\ 
	0 & 0 } a\left( \sbm{ Z & 0 \\ 0 & W} \right)^{*}
	- 	b\left( \sbm{Z & 0 \\ 0 & W} \right) \sbm{0 & P \\ 
	0 & 0 } b\left( \sbm{ Z & 0 \\ 0 & W} \right)^{*} \\
	& \quad = \Gamma \left( \sbm{ Z & 0 \\ 0 & W}, \sbm{ Z & 0 \\ 
	0 & W} \right) \left( \sbm{ 0 & P \\ 0 & 0} - Q\left( \sbm{ Z 
	& 0 \\ 0 & W} \right) \sbm{ 0 & P \\ 0 & 0} Q\left( \sbm{ Z & 
	0 \\ 0 & W} \right)^{*} \right)
\end{align*}
combined with the ``respects direct sums'' property of $a,b,\Gamma, Q$ 
leads to the identity
$$
\begin{bmatrix} 0 & a(Z) P a(W)^{*} - b(Z) P a(W)^{*} \\ 0 & 0 
\end{bmatrix} = \begin{bmatrix} 0 & \Gamma(Z,W)(P - Q(Z) P Q(W)^{*}) 
\\ 0 & 0\end{bmatrix},
$$
and hence the identity \eqref{Aglerdecom} holds for $Z \in 
\Omega_{n}$, $W \in \Omega_{m}$, $P \in {\mathbb C}^{n \times m}$ 
with $n \ne m$ as well.

\section{Proofs of Schur-Agler class interpolation theorems}  \label{S:proofs}

We shall prove (1) $\Rightarrow$ (1$^{\prime}$) $\Rightarrow$ (2) $\Rightarrow$ 
(3) $\Rightarrow$ (1) in Theorem \ref{T:ncInt}.

\begin{proof}[Proof of (1) $\Rightarrow$ (1$^{\prime}$) in Theorem 
    \ref{T:ncInt}:]  Suppose that the left-tangential interpolation 
    condition \eqref{int} holds on $\Omega$ for a Schur-Agler class 
    function $S \in \mathcal{SA}_{Q}(\cU, \cY)$. It is easily checked 
    that the pointwise product $a \cdot S$ is again a nc function 
    whenever each of $a$ and $S$ is a nc function.  By the uniqueness 
    of a nc-function extension from $\Omega$ to $\Omega' = \Omega_{\rm nc, full} 
    \cap {\mathbb D}_{Q}$ (see Proposition \ref{P:finitetype}),
   the identity $a(Z) S(Z) = b(Z)$ holding on 
    $\Omega$ implies that it continues to hold on $\Omega' = \Omega_{\rm nc, full} 
    \cap {\mathbb D}_{Q}$.
    By assumption, $S(Z)$ is contractive for all $Z \in {\mathbb 
    D}_{Q}$, and hence in particular for all $Z \in 
    \Omega_{\rm nc, full}$.  We conclude that
    $$
    a(Z)  a(Z)^{*} -  b(Z) b(Z)^{*} = a(Z)(I - S(Z) S(Z)^{*}) a(Z)^{*}
    \succeq 0 \text{ for all } Z \in \Omega_{\rm full} \cap {\mathbb D}_{Q}
    $$
    and statement (2) of the theorem follows.
    \end{proof}
  
    \begin{proof}[Proof of (1$^{\prime}$) $\Rightarrow$ (2) in Theorem 
    \ref{T:ncInt};]
 We subdivide the proof into two cases.
    
    \smallskip
\noindent
\textbf{Case 1: $\Omega$ and $\dim \cE$ finite.}
For this case we assume that both $\Omega$ and $\dim \cE$ are finite.
 We first need a few additional preliminaries.
 
 \smallskip
 
\noindent
\textbf{The finite point set $\Omega$.}
Recall the underlying framework from Subsection \ref{S:ncdisk}.  We 
are given a vector space $\cV$, a full nc subset $\Xi \subset 
\cV_{\rm nc}$, a nc function $Q$ from $\Xi$ to $\cL(\cR, \cS)_{\rm 
nc}$ with associated nc $Q$-disk ${\mathbb D}_{Q} \subset \Xi$.  For 
the present Case 1, we are assuming that $\Omega$ is a finite subset 
of ${\mathbb D}_{Q}$.  Therefore the subspace $\cV^{0}$ of $\cV$ spanned by all 
the matrix entries of elements $Z$ of $\Omega$ is finite dimensional, 
say $\dim \cV^{0} = d$.  For this Case 1 part of the proof, it is 
only vectors in $\cV^{0}$ which come up, so without loss of 
generality we assume that $\cV = \cV^{0}$.  By choosing a basis we 
identify $\cV$ with ${\mathbb C}^{d}$, and thus each point $Z \in 
\Omega_{n}$ is identified with an element of $({\mathbb C}^{d})^{n 
\times n} \cong ({\mathbb C}^{n \times n})^{d}$.  For $Z \in 
\Omega_{n}$, we therefore view $Z$ as a $d$-tuple $Z = (Z_{1}, \dots, 
Z_{d})$ of complex $n \times n$ matrices ($Z_{k} \in {\mathbb C}^{n 
\times n}$ for $k=1, \dots, d$).  

We  next define nc functions $\chi_{k}$ defined on all of $({\mathbb 
C}^{d})_{\rm nc}$ with values in 
${\mathbb C}_{\rm nc}$ by 
$$
  \chi_{k}(Z) = Z_{k} \text{ if } Z = (Z_{1}, \dots, Z_{d}).
$$
We stack these into a block row matrix to define a nc function 
from $\Xi \subset ({\mathbb C}^{d})_{\rm nc}$ into 
$\cL({\mathbb C}^{d}, {\mathbb C})_{\rm nc}$ by
$$
  \chi(Z) = \begin{bmatrix} \chi_{1}(Z) & \cdots & \chi_{d}(Z) 
\end{bmatrix} = \begin{bmatrix} Z_{1} &  \cdots & Z_{d} \end{bmatrix}.
$$
We view each such $Z_{k}$ as an operator $Z_{k}^{r}$ acting on row vectors via right multiplication:
thus
$$
Z_{k}^{r} \colon x^{*} \mapsto x^{*} Z_{k} \text{ for } x^{*} \in 
{\mathbb C}^{1 \times n}.
$$
Thus, for $Z \in ({\mathbb C}^{n \times n})^{d}$, we identify 
$\chi(Z)$ with the operator $\chi(Z)^{r}$ acting from ${\mathbb C}^{1 
\times n}$ to  ${\mathbb C}^{1 \times nd}$ by
$$
 \chi(Z)^{r} \colon x^{*} \mapsto \begin{bmatrix} x^{*} Z_{1} & 
 \cdots & x^{*} Z_{d} \end{bmatrix} \text{ for } x^{*} \in {\mathbb 
 C}^{1 \times n}.
 $$

\smallskip

\noindent
\textbf{The linear space ${\mathfrak X}$ and its 
cone $\cC$.}
We let ${\mathfrak X}$ be the linear space of all nc kernels $K \in 
\widetilde \cT^{1}(\Omega; \cL(\cE)_{\rm nc}, {\mathbb C}_{\rm nc})$ with norm given by
$$
   \| K \|_{{\mathfrak X}} = \max \{ \| K(Z,W) \| \colon Z, W \in \Omega\}.
$$
We define a subset $\cC$ of ${\mathfrak X}$ by
\begin{align}
    \cC = & \{K \in {\mathfrak X} \colon \exists \text{ a 
    cp nc kernel } \Gamma \in \widetilde 
    \cT^{1}(\Omega; \cL(\cE)_{\rm nc}, \cL(\cS)_{\rm nc}) \text{ so 
    that } \notag \\
    & K(Z,W)(P) = \Gamma(Z,W) \left( P \otimes I_{\cS} - Q(Z) (P 
    \otimes I_{\cR}) Q(W)^{*}\right)  \label{Aglerdecom'} \\
    & \text{for all } Z \in \Omega_{n}, \,
    W \in \Omega_{m}, \, P \in {\mathbb C}^{n \times m} 
    \}. \notag
\end{align}
Key properties of $\cC$ are given by the following lemma.

\begin{lemma}  \label{L:cone}
    \begin{enumerate}
	\item The subset $\cC$ is a 
  closed cone in ${\mathfrak X}$. 
  
  \item For $f \in \cT(\Omega; \cE_{\rm nc})$, define 
  $D_{f,f} \in \widetilde \cT^{1}(\Omega; 
  \cL(\cE)_{\rm nc}, {\mathbb C}_{\rm nc})$ by
  \begin{equation} \label{Deltaff}
    D_{f,f}(Z,W)(P) = f(Z) P f(W)^{*}.
  \end{equation}
  Then $D_{f,f} \in \cC$. 
  \end{enumerate}
 \end{lemma}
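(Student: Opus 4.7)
For part (1), the fact that $\cC$ is closed under addition and multiplication by non-negative scalars is inherited from the analogous closure properties of the cone of cp nc kernels: if $K_1$ and $K_2$ are certified by cp nc kernels $\Gamma_1$ and $\Gamma_2$, then $K_1 + K_2$ is certified by $\Gamma_1 + \Gamma_2$ (again cp), and $\lambda K_1$ by $\lambda \Gamma_1$ for $\lambda \ge 0$. For closedness, finiteness of $\Omega$ and $\dim \cE$ is essential: since $\Omega$ is finite one has $r := \max_{Z \in \Omega}\|Q(Z)\| < 1$, so $I - Q(Z)Q(Z)^{*} \succeq (1 - r^2) I$ uniformly. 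Given $K_n \to K$ in ${\mathfrak X}$ with each $K_n \in \cC$ certified by $\Gamma_n$, I would evaluate the defining identity at $(Z,Z)$ with $P = I_n$ to get $K_n(Z,Z)(I_n) = \Gamma_n(Z,Z)(I - Q(Z)Q(Z)^*)$; using that the norm of a cp map is attained at the identity then yields $\|\Gamma_n(Z,Z)(I)\| \le \|K_n\|/(1-r^2)$, with off-diagonal blocks controlled by a Schwarz-type inequality for cp kernels. A Bolzano--Weierstrass argument in the finite-dimensional ambient space of nc kernels on $\Omega \times \Omega$ then extracts a subsequential limit $\Gamma$ which is again cp and certifies $K \in \cC$.

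For part (2), given the arbitrary nc function $f \in \cT(\Omega; \cE_{\rm nc})$, the plan is to construct the certifying cp nc kernel $\Gamma$ via its Kolmogorov decomposition (Theorem~\ref{T:cpker}(3)): choose a $*$-representation $\sigma \colon \cL(\cS) \to \cL(\cX)$ on an auxiliary Hilbert space $\cX$ together with an nc function $H \colon \Omega \to \cL(\cX, \cE)_{\rm nc}$, and set $\Gamma(Z,W)(X) = H(Z) (\mathrm{id} \otimes \sigma)(X) H(W)^*$. Expanding the target identity gives
$$f(Z) P f(W)^* = H(Z)(P \otimes I_\cX) H(W)^* - H(Z)(\mathrm{id} \otimes \sigma)\bigl(Q(Z)(P \otimes I_\cR) Q(W)^*\bigr) H(W)^*,$$
which suggests building $H(Z)$ out of $f(Z)$ and a ``$Q$-resolvent''. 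The tensor-product framework of Theorem~\ref{T:nctensorprod} supplies the operators $L_{Q(Z)^*}$, through which the $\cL(\cS,\cR)$-action of $Q(Z)^*$ is intertwined with the representation $\sigma$. Because $\|Q(Z)\| < 1$ uniformly on the finite set $\Omega$, the relevant geometric series converge, and $H$ is then defined in a lurking-isometry style as a resolvent expression combining $f$ with the $L_{Q(Z)^*}$.

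The main obstacle is twofold: first, verifying that the constructed $H$ is a genuine nc function (respecting direct sums, intertwinings and similarities on $\Omega$) rather than merely being graded; and second, checking that the Kolmogorov identity is satisfied for arbitrary block sizes $n,m$ and matrix coefficients $P$, not just the diagonal case. The balance law~\eqref{ncbalance} from Theorem~\ref{T:nctensorprod} is the essential tool here: it ensures that the action of $L_{Q(Z)^*}$ intertwines with $\sigma$ in a way compatible with the nc-functional nature of $Q$, so iteration of the geometric series propagates intertwining properties from $f$ and $Q$ through to $H$. When $\sigma$ can be taken as a multiple of the identity representation on $\cL(\cS)$ (automatic in the finite-dimensional setting by Remark~\ref{R:findim}), Theorem~\ref{T:ncfindim} further simplifies the entire bookkeeping to a concrete finite tensor-product computation.
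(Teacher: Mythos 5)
Your overall plan is right in spirit, but there is a genuine gap that affects both parts: you implicitly assume $\cS$ is finite-dimensional, and it is not. The standing hypotheses in Case 1 are that $\Omega$ is finite and $\dim\cE < \infty$; the Hilbert space $\cS$ (the target space of the nc function $Q$) is arbitrary.

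In part (1), after obtaining the uniform bound on $\|\Gamma_N(Z,W)\|$, you propose to ``extract a subsequential limit'' by Bolzano--Weierstrass ``in the finite-dimensional ambient space of nc kernels on $\Omega \times \Omega$.'' But $\Gamma_N(Z,W)$ lives in $\cL(\cL(\cS^{m},\cS^{n}), \cL(\cE^{m}, \cE^{n}))$, which is infinite-dimensional whenever $\cS$ is. The paper instead uses the fact that $\cL(\cX, \cZ^{*})$ is a dual space (with $\cZ$ the trace-class operators on $\cE$), invokes Banach--Alaoglu to get a weak-$*$ (BW-topology) convergent subnet, and then checks that the cp-nc-kernel properties and the Agler decomposition pass to the pointwise weak-$*$ limit. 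Your appeal to a ``Schwarz-type inequality'' for the off-diagonal terms is also not what the paper does --- it forms the direct sum $\sbm{Z & 0 \\ 0 & W}$ in the nc envelope, applies the diagonal estimate there, and then reads off $\Gamma_N(Z,W)$ as an off-diagonal block --- though your alternative could probably be made to work; the compactness argument is the essential missing piece.

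In part (2), your plan to build $H$ out of $f$ and a $Q$-resolvent via the $L_{Q(Z)^{*}}$ operators is a reasonable opening, but it stalls because your final simplifying step invokes Remark~\ref{R:findim} and Theorem~\ref{T:ncfindim} to reduce $\sigma$ to a multiple of the identity representation of $\cL(\cS)$ --- and those results only apply when the algebra being represented is $\cL$ of a \emph{finite-dimensional} space, which $\cS$ need not be. The paper's actual construction avoids this issue: it fixes a unit-norm functional $\ell$ on $\cS$, sets $Q_{0}(Z) = (\ell \otimes I_{n})Q(Z)$ so that $Q_{0}$ has scalar target, builds $H(Z) = f(Z)\,\mathrm{row}_{k\ge 0}[(L_{Q_{0}(Z)^{*}})^{(k)*}]$ as a row over the Fock space $\mathbb{F}(\cR) = \bigoplus_{k\ge 0}\cR^{\otimes k}$, uses the representation $\pi(X) = (\ell X \ell^{*})I_{\mathbb{F}(\cR)}$, and verifies the Agler decomposition by a telescoping sum. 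Without this (or an equivalent) explicit device, the claim that ``iteration of the geometric series propagates intertwining properties'' is not yet a proof.
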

 
 \begin{proof}[Proof of Lemma \ref{L:cone} part (1)]
One easily verifies from the definitions that $\tau K \in \cC$ 
whenever $K \in \cC$ and $\tau >0$ and that $K_{1} + K_{2} \in \cC$ whenever 
$K_{1}$ and $K_{2}$ are in $\cC$, i.e., $\cC$ is a cone.  

It remains to show that $\cC$ is closed in the norm topology 
of ${\mathfrak X}$.   Toward this end suppose that $\{K_{N} \colon N \in {\mathbb N}\}$ 
is a sequence of elements of $\cC$ such that $\| K - K_{N} \|_{\mathfrak X} \to 0$ as 
$N \to \infty$ for some  $K \in {\mathfrak X}$.   By definition, for each $N$ there is 
a cp nc kernel $\Gamma_{N} \in \widetilde \cT^{1}(\Omega; \cL(\cE)_{\rm nc}, 
     \cL(\cS)_{\rm nc})$ so that 
     $$
     K_{N}(Z,W)(P) = \Gamma_{N}(Z,W)( (P \otimes I_{\cS}) - Q(Z) (P 
     \otimes I_{\cR}) Q(W)^{*})
     $$
     for all $Z \in \Omega_{n}$, $W \in  
     \Omega_{m}$, $P 
     \in {\mathbb C}^{n \times m}$ for all $m,n \in {\mathbb N}$.
     The goal is to produce a cp nc kernel $\Gamma$ in $\widetilde 
     \cT^{1}(\Omega; \cL(\cE)_{\rm nc}, \cL(\cS)_{\rm nc})$
     so that $K$ can be expressed in the form \eqref{Aglerdecom'}.
     
 Define a number $\rho_{0}$ by
 \begin{equation}   \label{rho0}
   \rho_{0} = \max \{ \| Q(Z) \| \colon Z \in \Omega\}.
 \end{equation}
 As $\Omega$ is a finite subset of ${\mathbb D}_{Q}$, we see that 
 $\rho_{0} < 1$.  Since $\Gamma_{N}(Z,Z)$ is a positive map for each $N$ 
 and each $Z \in \Omega$,  we get the estimate
  \begin{align*}
     K_{N}(Z,Z)(I_{n}) &  = \Gamma_{N}(Z,Z)\left(I_{\cS^{n}} - Q(Z) 
     Q(Z)^{*}\right) \notag \\
   & \ge (1 - \rho_{0}^{2}) \Gamma_{N}(Z,Z)(I_{\cS^{n}}). 
 \end{align*}
 Consequently, we get
 \begin{equation} \label{est1}
  \|\Gamma_{N}(Z,Z)(I_{\cS^{n}}) \|_{\cL(\cE^{n})} \le \frac{1}{1 - \rho_{0}^{2}} 
   \|K_{N}(Z,Z)\|_{\cL(\cL({\mathbb C}^{n}), \cL(\cE^{n}))}.
 \end{equation}
 As a consequence of  $\| K_{N} - K \|_{\mathfrak X} \to 0$ as $N \to 
 \infty$, it follows that in particular $\| K_{N}(Z,Z) - 
 K(Z,Z)\|_{\cL(\cL({\mathbb C}^{n}), \cL(\cY^{n}))} \to 0$ as $N \to \infty$.  Thus
 $\| K_{N}(Z,Z)\|$ is uniformly bounded in $\cL(\cL({\mathbb C}^{n}), \cL(\cY^{n}))$.  
 As a consequence of \eqref{est1} we then see that  
 $\|\Gamma_{N}(Z,Z)(I_{\cS^{n}}) \|_{\cL(\cE^{n})}$ is uniformly 
 bounded in $N=1,2,\dots$.
 Since $\Gamma_{N}(Z,Z)$ is completely positive, we have
 $$
 \| \Gamma_{N}(Z,Z)\|_{\cL(\cL(\cS^{n}), \cL(\cE^{n}))} = \| 
 \Gamma_{N}(Z,Z)(I_{\cS^{n}})\|_{\cL(\cE^{n})}.
 $$
 Hence $\| \Gamma_{N}(Z,Z)\|_{\cL(\cL(\cS^{n}), \cL(\cE^{n}))}$ is uniformly bounded with 
 respect to $N \in {\mathbb N}$ for $Z \in \Omega$. Moreover, since 
 $\Gamma_{N}$ is a cp nc kernel, $\Gamma_{N}(Z,Z)$ is completely 
 positive as a map from $\cL(\cS)^{n \times n}$ to $\cL(\cE)^{n 
 \times n}$ (where $Z \in \Omega_{n}$).
 
 Let now $Z$ and $W$ be two points in $\Omega$.  Then $\sbm{ Z & 0 \\ 
 0 & W} $ is a point in the nc envelope $\Omega_{\rm nc}$ of $\Omega$ 
 and $\Gamma$.  By Proposition \ref{P:extcpncker}, both $K$ and 
 $\Gamma$ can be extended as nc and cp nc kernels respectively to 
 $\Omega_{\rm nc}$, and hence to the finite set $\Omega \cup \left\{ 
 \sbm{ Z & 0 \\ 0 & W} \right\}$.  Then the estimate \eqref{est1} and 
 the analysis there with $\sbm{ Z & 0 \\ 0 & W}$ in place of $Z$,
 we see that
 $\left\| \Gamma_{N}\left( \sbm{ Z & 0 \\ 0 & W} \right) \right\|$ is 
 uniformly bounded in $N$.  Let us note that
 $$
  \Gamma_{N} \left( \sbm{Z & 0 \\ 0 & W }, 
  \sbm{Z & 0 \\ 0 & W } \right) 
      \left( \sbm{ 0 & P \\ 0 & 0 } \right) =
      \sbm{ 0 & \Gamma_{N}(Z,W)(P) \\ 0 & 0 }.
   $$
   We conclude that $\| \Gamma_{N}(Z,W) \|$ is uniformly bounded in 
   norm with respect to $N$ for each $Z$ and $W$ in $\Omega$.
   
  Note that $\Gamma_{N}(Z,W) \in \cL(\cL(\cS^{m}, \cS^{n}), 
  \cL(\cE^{m}, \cE^{n}))$ if $Z \in \Omega_{n}$ and $W \in 
  \Omega_{m}$. A key point at this stage is that the Banach space 
  $\cL(\cL(\cS^{m}, \cS^{n}), \cL(\cE^{m}, \cE^{n}))$ has a predual 
  $\cL(\cL(\cS^{m}, \cS^{n}), \cL(\cE^{m}, \cE^{n}))_{*}$ such that 
  on bounded sets the weak-$*$ topology is the same as the pointwise 
  weak-$*$ topology: a bounded net $\{\Phi_{\lambda}\}$ converges to  $\Phi$ means that
  $\Phi_{\lambda}(T) \to \Phi(T)$ in the ultraweak (or weak-$*$) topology 
  of $\cL(\cE^{m}, \cE^{n})$ for each fixed $T \in \cL(\cS^{m}, 
  \cS^{n})$ and the topology on the whole space is defined to be the 
  strongest topology which agrees with this topology on bounded 
  subsets.  Since the weak and weak-$*$ topologies agree on bounded 
  subsets, this topology is sometimes also called the {\em 
  BW-topology} (for ``bounded-weak topology'').  
  In fact there is a more general result:  {\em if $\cX$ and 
  $\cZ$ are Banach spaces, then the space $\cL(\cX, \cZ^{*})$ is 
  isometrically isomorphic to the dual of the Banach projective tensor-product space 
  $\cX \widehat \otimes \cZ$; moreover a bounded net $\Phi_{\lambda}$ in
  $\cL(\cX, \cZ^{*})$ converges to $\Phi \in \cL(\cX, \cZ^{*})$ in the associated
  weak-$*$ topology if and only  if the ${\mathbb C}$-valued net 
  $\left(\Phi_{\lambda}(x)\right)({\mathfrak z})$ converges to 
  $\left(\Phi(x)\right)({\mathfrak z})$ 
  for each fixed $x \in \cX$ and ${\mathfrak z} \in \cZ$} (see \cite[Corollary 2 page 230]{DU} 
  as well as \cite[pages 84--85]{Paulsen} and \cite[Section IV.2]{Takesaki}). 
  We apply this result with $\cX = \cL(\cS^{m}, \cS^{n})$ and 
  $\cZ^{*} = \cL(\cE^{m}, \cE^{n})$ (so 
  $\cZ$ can be taken to be the trace-class operators 
  $\cC_{1}(\cE^{n}, \cE^{m})$ from $\cE^{n}$ to $\cE^{m}$).  Note 
  that for our application with $\dim \cE < \infty$ this result is actually more elementary 
  than the general case as described above.
  
  In  any case,  by the Banach-Alaogl\u u Theorem \cite[page 68]{Rudin}, 
  norm-closed and bounded subsets of $\cL(\cL(\cS^{m}, \cS^{n}), \cL(\cE^{m}, \cE^{n}))$
  are compact in the weak-$*$ topology. Since we established above 
  that $\{\Gamma_{N}(Z,W)\}$ is uniformly bounded in norm as $N \to 
  \infty$ for each of the finitely many $Z,W \in \Omega$,  it follows 
  that we can find a subnet $\{\Gamma_{\lambda}\}$ of the sequence $\{ \Gamma_{N}\}$ so 
 that $\Gamma_{\lambda}(Z,W)$ converges weak-$*$ to an element 
 $\Gamma(Z,W)$ in  $\cL(\cL(\cS^{m}, \cS^{n}), \cL(\cE^{m}, 
 \cE^{n}))$.  We need to check that $\Gamma$ so defined is a 
 cp nc kernel on $\Omega$, i.e., we must check that the limiting 
 $\Gamma$ satisfies \eqref{kerintertwine} and \eqref{kercp'''}  given that each 
 $\Gamma_{\lambda}$ does, or, given that 
 \begin{align*}
   & Z \in \Omega_{n},\, \widetilde Z \in \Omega_{\widetilde n},\, \alpha \in 
    {\mathbb C}^{\widetilde n \times n} \text{ such that } \alpha Z = \widetilde Z 
    \alpha,  \notag \\
 & W \in \Omega_{m},\, \widetilde W \in \Omega_{\widetilde m}, \, \beta \in {\mathbb 
 C}^{\widetilde m \times m} \text{ such that } \beta W = \widetilde W \beta, 
 \quad    P \in \cL(\cS)^{n \times m},
 \end{align*}
 we must show that
 \begin{align}  
  &   \alpha \Gamma_{\lambda}(Z,W)(P) \beta^{*} = \Gamma_{\lambda}(\widetilde 
     Z, \widetilde W) (\alpha P \beta^{*}) \text{ for all } \lambda   \notag \\
  & \quad \Rightarrow
      \alpha \Gamma(Z,W)(P) \beta^{*} = \Gamma(\widetilde 
     Z, \widetilde W) (\alpha P \beta^{*})
      \label{intertwine-check}
 \end{align}
as well as
\begin{align} 
  &  \sum_{i,j=1}^{n} V_{i}^{*} \Gamma_{\lambda}(Z^{(i)}, Z^{(j)})(R_{i}^{*} R_{j}) V_{j}
    \succeq 0  
    \Rightarrow  \sum_{i,j=1}^{n} V_{i}^{*} \Gamma(Z^{(i)}, Z^{(j)})(R_{i}^{*} R_{j}) V_{j}
    \succeq 0. 
    \label{kercp'''-check}
\end{align}

We now use the fact that weak-$*$ convergence on bounded sets is the 
same as pointwise weak-$*$ convergence as explained above. 
To verify \eqref{intertwine-check}, we fix a trace-class operator $X$ from 
$\cE^{\widetilde n}$ to $\cE^{\widetilde m}$. From the 
assumption in \eqref{intertwine-check} we then get 
$$
{\rm tr} \left( \alpha \Gamma_{\lambda}(Z,W)(P) \beta^{*} X \right) =
{\rm tr} \left( \Gamma_{\lambda}(\widetilde Z, \widetilde W)(\alpha P 
\beta^{*}) X \right) \text{ for all } \lambda.
$$
Since $\Gamma_{\lambda}(Z,W) \to \Gamma(Z,W)$ in the pointwise 
weak-$*$ topology for each fixed $Z,W \in \Omega$, we may take the limit 
with respect to the net $\lambda$ in this last expression to arrive at
$$
{\rm tr} \left( \alpha \Gamma(Z,W)(P) \beta^{*} X \right) =
{\rm tr} \left( \Gamma(\widetilde Z, \widetilde W)(\alpha P 
\beta^{*}) X \right).
$$
Since $X \in \cC_{1}(\cE^{\widetilde n}, \cE^{\widetilde m})$ is arbitrary, 
we may peel $X$ and the trace off to arrive at the desired conclusion 
in \eqref{intertwine-check}.

To verify \eqref{kercp'''-check}, we let $X$ be an arbitrary positive 
semidefinite trace-class operator in $\cL(\cE)$.  Then the hypothesis 
in \eqref{kercp'''-check} gives us
$$
{\rm tr}\left( \left(\sum_{i,j=1}^{n} V_{i}^{*} \Gamma_{\lambda}(Z^{(i)}, 
Z^{(j)})(R_{i}^{*} R_{j}) V_{j} \right) X \right) \ge 0.
$$
Again using the pointwise weak-$*$ convergence of 
$\Gamma_{\lambda}(Z,W)$ to $\Gamma(Z,W)$ for each $Z,W \in 
\Omega$, we may take the limit of this last expression to get
$$
{\rm tr}\left( \left(\sum_{i,j=1}^{n} V_{i}^{*} \Gamma(Z^{(i)}, 
Z^{(j)})(R_{i}^{*} R_{j}) V_{j} \right) X \right) \ge 0.
$$
As $X$ is an arbitrary positive semidefinite trace-class operator on 
$\cE$, we arrive at the conclusion of \eqref{kercp'''-check} as 
required.

It remains only to check that the kernel $\Gamma$ so constructed 
provides an Agler decomposition  \eqref{Aglerdecom'} for the limit 
kernel $K$.  Since $\{K_{N}(Z,W)\}$ 
is converging to $K(Z,W)$ in $\cL(\cL({\mathbb C}^{m}, {\mathbb C}^{n}),  
\cL(\cE^{m}, \cE^{n}))$-norm 
(where $Z \in \Omega_{n}$, $W \in \Omega_{m}$), it follows that 
the subnet $\{K_{\lambda}(Z,W)\}$ converges weak-$*$ (and hence 
pointwise weak-$*$ as well) to $K(Z,W)$.  This 
together with the 
pointwise weak-$*$ convergence of $\Gamma_{\lambda}$ and the fact 
that $\Gamma_{\lambda}$ provides an Agler decomposition for 
$K_{\lambda}$ for each $\lambda$ leads to the conclusion that indeed 
$\Gamma$ provides an Agler decomposition \eqref{Aglerdecom'}.  This completes the proof of part 
(1) of Lemma  \ref{L:cone}.
 \end{proof}
 
 \begin{proof}[Proof of Lemma \ref{L:cone} part (2)]
  Let $f \in \cT(\Omega; \cE_{\rm nc})$ and suppose $Z \in 
\Omega_{n}$, $W \in \Omega_{m}$, $P \in {\mathbb C}^{n \times m}$.  We must 
     produce a cp nc kernel $\Gamma$ lying in the class $\cT^{1}(\Omega; 
     \cL(\cE)_{\rm nc}, \cL(\cS)_{\rm nc})$ so that
 \begin{equation}  \label{posAglerdecom}
     f(Z) P f(W)^{*} = \Gamma(Z,W) \left(P \otimes I_{\cS} - Q(Z) (P \otimes 
     I_{\cR}) Q(W)^{*}\right).
 \end{equation}
 
 Toward this goal, we need to introduce some auxiliary operators and 
 spaces.  Let $\cR^{\otimes 2} : = \cR \otimes \cR$ be the 
 Hilbert-space tensor product of $\cR$ with itself and inductively 
 set $\cR^{ \otimes k} = \cR^{\otimes (k-1)} \otimes \cR$.  The {\em Fock 
 space} associated with $\cR$ is then defined to be the 
 Hilbert-space orthogonal direct sum
 $$
  {\mathbb F}(\cR) = \bigoplus_{k=0}^{\infty} \cR^{\otimes k}
 $$
 where we let $\cR^{\otimes 0}$ be the space of scalars ${\mathbb C}$.
 Fix a  linear functional $\ell$ on $\cS$ of unit norm.  For $Z \in \Omega_{n}$ define 
 $Q_{0}(Z) \in \cL(\cR^{n}, {\mathbb C}^{n})$ by
 \begin{equation}   \label{defQ0}
   Q_{0}(Z) = (\ell \otimes I_{n}) Q(Z).
 \end{equation}
  One can check that $Q_{0}$ is a nc function, i.e., $Q_{0} 
 \in \cT^{1}(\Omega; \cL(\cR, {\mathbb C})_{\rm nc})$.  For $W \in 
 \Omega_{m}$, the operator $Q_{0}(W)^{*} \in \cL({\mathbb C}^{m}, 
 \cR^{m})$.  We identify $\cR^{m}$ with the tensor product space 
 ${\mathbb C}^{m} \otimes \cR$ and then define an operator 
 $L_{Q_{0}(W)^{*}} \in \cL(\cR^{m}, \cR^{m} \otimes \cR)$ on an 
 elementary tensor 
 $c \otimes r$ ($c \in {\mathbb C}^{m}$ and $r \in \cR$) by 
 \begin{equation}  \label{LQ0W} 
 L_{Q_{0}(W)^{*}} \colon c \otimes r \mapsto Q_{0}(W)^{*}c \otimes r.
 \end{equation}
  We note the identifications
 $$
  \cR^{m} \otimes \cR \cong ({\mathbb C}^{m} \otimes \cR) \otimes \cR
  \cong {\mathbb C}^{m} \otimes (\cR \otimes \cR) \cong 
  (\cR \otimes \cR)^{m}.
 $$
 Then $L_{Q_{0}(W)^{*}}$, being of the form $Q_{0}(W)^{*} \otimes 
 I_{\cR}$, extends to a bounded operator from ${\mathbb C}^{m} 
 \otimes \cR \cong \cR^{m}$ into $(\cR \otimes \cR)^{m}$ with 
 $\|L_{Q_{0}(W)^{*}}\| = \| Q_{0}(W)^{*}\|$.  
 
 We next define the generalized power  
 $(L_{Q_{0}(W)^{*}})^{(k)} \colon {\mathbb C}^{m} \to (\cR^{\otimes 
 (k+1)})^{m}$ by
 $$
 (L_{Q_{0}(W)^{*}})^{(k)} =
 (Q_{0}(W)^{*} \otimes I_{\cR^{\otimes (k-1)}})
 \cdots (Q_{0}(W)^{*} \otimes I_{\cR}) Q_{0}(W)^{*}.
 $$
 Note that $\|  (L_{Q_{0}(W)^{*}})^{(k)} \| \le \| Q_{0}(W)^{*} 
 \|^{k}$ where $\| Q_{0}(W)^{*} \| < 1$ and hence
 \begin{equation}   \label{summable}
 \sum_{k=0}^{\infty} \|  (L_{Q_{0}(W)^{*}})^{(k)} \| < \infty.
 \end{equation}
 Then the adjoint  $ (L_{Q_{0}(W)^{*}})^{(k)*}$ maps $(\cR^{\otimes 
 k})^{m}$ into ${\mathbb C}^{m}$.
 
 For $Z \in \Omega_{n}$, we define $H(Z) \colon {\mathbb F}(\cR)^{n} 
 \to \cE^{n}$ by
 $$
 H(Z) = f(Z) \, \rm{row}_{k \ge 0}[ (L_{Q_{0}(Z)^{*}})^{(k)*}]
 $$
 Note that $H(Z)$ is bounded as an operator from ${\mathbb 
 F}(\cR)^{n}$ to $\cE^{n}$ due to the validity of the summability 
 condition \eqref{summable}.   From the fact that both $f$ and 
 $Q_{0}$ are nc functions,  one can check that $H$ so defined is a nc 
 function ($H \in \cT(\Omega; \cL({\mathbb F}(\cR), \cE)_{\rm nc})$). 
 For $W \in \Omega_{m}$,  then $H(W)^{*}$ is given by
 $$
 H(W)^{*} = {\rm col}_{k \ge 0}[ (L_{Q_{0}(W)^{*}})^{(k)}] f(W)^{*} 
 \colon \cE^{m} \to {\mathbb F}(\cR)^{m}.
 $$
 Define a representation from $\cL(\cS)$ to $\cL({\mathbb F}(\cR))$ by
 $$
  \pi(X) = (\ell X \ell^{*})I_{{\mathbb F}(\cR)}.
  $$
  Define $\Gamma$ from $\Omega \times \Omega$ to $\cL(\cL(\cS), 
  \cL(\cY))_{\rm nc}$ by
  \begin{equation}   \label{GammaKol}
  \Gamma(Z,W)(X) = H(Z) ({\rm id}_{{\mathbb C}^{n \times m}} \otimes 
  \pi(X)) H(W)^{*}.
  \end{equation}
  Then $\Gamma$ is a cp nc kernel ($\Gamma \in \widetilde 
  \cT^{1}(\Omega; \cL(\cE)_{\rm nc}, \cL(\cS)_{\rm nc})$ since \eqref{GammaKol} exhibits 
  a Kolmogorov decomposition \eqref{Koldecom} for $\Gamma$.  It 
  remains to check that $\Gamma$ provides an Agler decomposition 
  \eqref{posAglerdecom} for $D_{f,f}(Z,W)(P) = f(Z) P f(W)^{*}$.  To 
  this end we compute
  \begin{align*}
    &  \Gamma(Z,W)\left(P \otimes I_{\cS} - Q(Z) (P \otimes I_{\cR}) 
      Q(W)^{*}\right)  \\
      & = f(Z)\left( \sum_{k=0}^{\infty} \left(L_{Q_{0}(Z)^{*}} \right)^{(k)*} 
      \left( P \otimes I_{\cR^{\otimes k}} \right) 
      \left(L_{Q_{0}(W)^{*}} \right)^{(k)} \right)f(W)^{*} \\
  & \quad - f(Z)\left( \sum_{k=0}^{\infty} \left(L_{Q_{0}(Z)^{*}} \right)^{(k)*} 
     Q(Z) \left( P \otimes I_{\cR^{\otimes k}} \right) Q(W)^{*}
      \left(L_{Q_{0}(W)^{*}} \right)^{(k)} \right)f(W)^{*} \\
  & = f(Z)\left( \sum_{k=0}^{\infty} \left(L_{Q_{0}(Z)^{*}} \right)^{(k)*} 
      \left( P \otimes I_{\cR^{\otimes k}} \right) 
      \left(L_{Q_{0}(W)^{*}} \right)^{(k)} \right)f(W)^{*} \\
      &  - f(Z)\left( \sum_{k=0}^{\infty} \left(L_{Q_{0}(Z)^{*}} \right)^{(k)*} 
(Q(Z) \otimes I_{\cR^{\otimes k}}) \cdot  \right.\\
& \quad \quad \quad \quad \cdot \left. \left( P \otimes 
I_{\cR^{\otimes k+1}} \right) (Q(W)^{*} \otimes I_{\cR^{\otimes k}})
     \left(L_{Q_{0}(W)^{*}} \right)^{(k)} \right)f(W)^{*} \\
 & =  f(Z)\left( \sum_{k=0}^{\infty} \left(L_{Q_{0}(Z)^{*}} \right)^{(k)*} 
      \left( P \otimes I_{\cR^{\otimes k}} \right) 
      \left(L_{Q_{0}(W)^{*}} \right)^{(k)} \right)f(W)^{*} \\      
 & \quad
 -f(Z) \left( \sum_{k=1}^{\infty} \left(L_{Q_{0}(Z)^{*}} \right)^{(k)*} 
      \left( P \otimes I_{\cR^{\otimes k}} \right) 
      \left(L_{Q_{0}(W)^{*}} \right)^{(k)} \right)f(W)^{*}  \\
      & \quad = f(Z) P f(W)^{*}
   \end{align*}
   as wanted.
 \end{proof}

 \textbf{The cone $\cC_{\rho}$ for $\rho_{0} < \rho < 1$.}
 We shall actually need the following adjustment of the cone $\cC$.  
Let $\rho_{0}$ be defined as in \eqref{rho0} and let $\rho$ be any positive real number 
with $\rho_{0} < \rho < 1$.  

Define a subset $\cC_{\rho}$ of ${\mathfrak X}$ 
to consist of all kernels $K \in {\mathfrak X}$ such that 
there exists cp nc kernels 
\begin{align*}
& \Gamma_{0} \in \widetilde \cT^{1}(\Omega; \cL(\cE)_{\rm nc}, 
 \cL(\cS)_{\rm nc}), \quad \Gamma_{1} \text{ and } \Gamma_{2} \in 
 \widetilde \cT^{1}(\Omega; \cL(\cE)_{\rm nc}, \cL(\cE)_{\rm nc}),  \\
&    \Gamma_{3} \in \widetilde \cT^{1}(\Omega; \cL(\cE)_{\rm nc}, 
 {\mathbb C}_{\rm nc})
\end{align*}
 which induce a $\rho$-refined Agler decomposition for $K$:
 \begin{align} 
   K(Z,W)(P) =  &
 \Gamma_{0}(Z,W)\left( P \otimes I_{\cS}  - \frac{1}{\rho^{2}} Q(Z)(P 
 \otimes I_{\cR}) 
 Q(W)^{*}\right) \notag \\
 & \quad + \Gamma_{1}(Z,W) \left(P \otimes I_{\cE}  - (1 - \rho)^{2} 
 a(Z)(P \otimes I_{\cY})  a(W)^{*}\right)  \notag \\
& \quad  + \Gamma_{2}(Z,W)  \left(P \otimes I_{\cE}  - (1 - \rho)^{2} 
 b(Z)(P \otimes I_{\cU})  b(W)^{*}\right)  \notag \\
& \quad + \Gamma_{3}(Z,W)  \left(P   - (1 - \rho)^{2} 
 \chi(Z)(P \otimes I_{d})  \chi(W)^{*}\right) 
 \label{rhoAglerdecom'} 
 \end{align}
 
 Salient properties of the subset $\cC_{\rho}$ are summarized in the 
 next lemma.
 
 \begin{lemma}  \label{L:Crho}  Let $\rho_{0} < \rho < 1$ with 
     $\rho_{0}$ as in \eqref{rho0}.  Then
     the subset $ \cC_{\rho}$ of ${\mathfrak X}$ 
     has the following properties:
     \begin{enumerate}
\item  $\cC_{\rho}$ is a closed cone in ${\mathfrak X}$.

\item Suppose that $K \in {\mathfrak X}$ has the property that 
$K$ is in $\cC_{\rho}$ for 
all $\rho$ sufficiently close to $1$ with $ \rho_{0} < \rho < 1$. Then  $K \in 
\cC$.

\item  The positive kernels $D_{f,f}$ \eqref{Deltaff} are in 
$\cC_{\rho}$ for all $\rho$ with $\rho_{0} < \rho < 1$.
\end{enumerate}
\end{lemma}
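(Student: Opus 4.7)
Part (1). The cone property is immediate: sums and nonnegative scalings of cp nc kernels in each of the three ambient classes $\widetilde \cT^1(\Omega; \cL(\cE)_{\rm nc}, \cL(\cS)_{\rm nc})$, $\widetilde \cT^1(\Omega; \cL(\cE)_{\rm nc}, \cL(\cE)_{\rm nc})$, and $\widetilde \cT^1(\Omega; \cL(\cE)_{\rm nc}, {\mathbb C}_{\rm nc})$ are again cp nc, and the right-hand side of \eqref{rhoAglerdecom'} depends linearly on the tuple $(\Gamma_0,\Gamma_1,\Gamma_2,\Gamma_3)$. For closedness I would mirror the weak-$*$ compactness argument of Lemma \ref{L:cone}(1). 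Given $K_N \to K$ in $\mathfrak X$ with decompositions $(\Gamma_0^{(N)},\ldots,\Gamma_3^{(N)})$, evaluate \eqref{rhoAglerdecom'} at $Z = W \in \Omega_n$ with $P = I_n$: the result exhibits $K_N(Z,Z)(I_n)$ as a sum of four positive semidefinite terms, each involving $\Gamma_i^{(N)}(Z,Z)$ applied to a particular input. Because $\rho > \rho_0 \ge \|Q(Z)\|$, and because for $\rho$ close enough to $1$ one has $(1-\rho)\max(M_a,M_b,M_\chi) < 1$ (with $M_a, M_b, M_\chi$ the finite suprema of $\|a\|,\|b\|,\|\chi\|$ on $\Omega$), each of the inputs
$$I_{\cS^n} - \rho^{-2}Q(Z)Q(Z)^*,\ I_{\cE^n} - (1-\rho)^2 a(Z)a(Z)^*,\ I_{\cE^n} - (1-\rho)^2 b(Z)b(Z)^*,\ I_n - (1-\rho)^2 \chi(Z)\chi(Z)^*$$
is a strictly positive multiple of identity. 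This forces uniform norm bounds on $\Gamma_i^{(N)}(Z,Z)(I)$, and hence, via the $\sbm{Z & 0 \\ 0 & W}$-intertwining trick from Lemma \ref{L:cone}(1), on $\Gamma_i^{(N)}(Z,W)$ for all $Z,W \in \Omega$. Weak-$*$ compactness in the predual duality of \cite{DU,Paulsen,Takesaki} extracts a subnet along which each $\Gamma_i^{(N_\lambda)} \to \Gamma_i$ pointwise weak-$*$; the same trace-pairing argument used in Lemma \ref{L:cone}(1) shows that each limit $\Gamma_i$ remains a cp nc kernel, and passing to the limit in \eqref{rhoAglerdecom'} exhibits $K \in \cC_\rho$.

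Part (3). The clean trick is to reuse the Fock-space Kolmogorov construction from the proof of Lemma \ref{L:cone}(2), but with $Q$ replaced throughout by $Q^{(\rho)}(Z) := \rho^{-1} Q(Z)$. Since $\|Q^{(\rho)}(Z)\| \le \rho^{-1}\rho_0 < 1$ for all $Z \in \Omega$, the scaled tower $L_{Q_0^{(\rho)}(W)^*}^{(k)} = \rho^{-k} L_{Q_0(W)^*}^{(k)}$ (with $Q_0^{(\rho)} = \rho^{-1} Q_0$) continues to satisfy the summability analogue of \eqref{summable}. The identical telescoping computation then produces a cp nc kernel $\Gamma_0^{(\rho)} \in \widetilde\cT^1(\Omega;\cL(\cE)_{\rm nc},\cL(\cS)_{\rm nc})$ with
$$\Gamma_0^{(\rho)}(Z,W)\bigl(P \otimes I_\cS - Q^{(\rho)}(Z)(P \otimes I_\cR) Q^{(\rho)}(W)^*\bigr) = f(Z) P f(W)^*.$$
Since $Q^{(\rho)}(Z)(P \otimes I_\cR) Q^{(\rho)}(W)^* = \rho^{-2} Q(Z)(P \otimes I_\cR) Q(W)^*$, taking $\Gamma_1 = \Gamma_2 = \Gamma_3 = 0$ gives the $\rho$-refined decomposition \eqref{rhoAglerdecom'} for $D_{f,f}$, so $D_{f,f} \in \cC_\rho$.

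Part (2). Assume $K \in \cC_\rho$ for all $\rho$ in some left neighborhood $(1-\epsilon,1)$ of $1$, with decompositions $(\Gamma_0^{(\rho)},\ldots,\Gamma_3^{(\rho)})$. The same coercivity bounds as in Part (1) hold uniformly in $\rho$ once $\epsilon$ is small enough that all four coercivity constants are bounded below by a positive constant independent of $\rho$; this yields uniform norm bounds on the $\Gamma_i^{(\rho)}$ as $\rho \to 1^-$. Extracting a subnet $\rho_\lambda \to 1^-$ with $\Gamma_i^{(\rho_\lambda)} \to \Gamma_i^{(\infty)}$ pointwise weak-$*$ (each limit again a cp nc kernel) and passing to the limit in \eqref{rhoAglerdecom'}, using $(1-\rho)^2 \to 0$ and $\rho^{-2} \to 1$, gives
$$K(Z,W)(P) = \Gamma_0^{(\infty)}(Z,W)\bigl(P \otimes I_\cS - Q(Z)(P \otimes I_\cR) Q(W)^*\bigr) + R(Z,W)(P),$$
where $R(Z,W)(P) := \Gamma_1^{(\infty)}(Z,W)(P \otimes I_\cE) + \Gamma_2^{(\infty)}(Z,W)(P \otimes I_\cE) + \Gamma_3^{(\infty)}(Z,W)(P)$. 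Each summand is a cp nc kernel applied to a positive input, so $R$ is itself a cp nc kernel in $\mathfrak X$. By Theorem \ref{T:cpker}, $R$ has a Kolmogorov decomposition $R(Z,W)(P) = H(Z)(P \otimes I_\cX) H(W)^*$ for some Hilbert space $\cX$ and nc function $H \colon \Omega \to \cL(\cX,\cE)_{\rm nc}$. Truncating $H$ along a nested exhausting sequence of finite-dimensional subspaces $\cX_r \uparrow \cX$ produces approximations $R_r$ that are finite sums of kernels of the form $D_{f_k,f_k}$; each $R_r \in \cC$ by Lemma \ref{L:cone}(2) and the cone property, and $R_r \to R$ in the norm of $\mathfrak X$ because each output space $\cL(\cE^m,\cE^n)$ is finite-dimensional (so strong and norm convergence coincide). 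Closedness of $\cC$ (Lemma \ref{L:cone}(1)) then puts $R \in \cC$, and the cone property gives $K \in \cC$. The main obstacle across all three parts is the weak-$*$ compactness bookkeeping simultaneously for four cp nc kernels and verifying that the subnet limits remain cp nc, handled by the same trace-pairing argument as in Lemma \ref{L:cone}(1); a secondary subtlety in Part (2) is identifying the residual $R$ as an element of $\cC$, resolved via Kolmogorov decomposition, finite-rank truncation, and closedness of $\cC$.
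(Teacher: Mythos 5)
Your proof is correct and follows the paper's overall strategy (coercivity estimates, weak-$*$ compactness, subnet extraction, passage to the limit), with the same Fock-space rescaling trick $Q \rightsquigarrow \rho^{-1}Q$ for Part (3). The only genuine departure is your treatment of the residual term $R$ at the end of Part (2). The paper asserts that the last three terms of the limiting decomposition, being cp nc kernels, ``have a standard Agler decomposition by Lemma \ref{L:cone} part (3)'' --- a reference that does not exist (Lemma \ref{L:cone} has only two parts); the intended citation is presumably to a generalization of part (2) from kernels of the form $D_{f,f}$ to arbitrary cp nc kernels, which is left implicit. You fill this gap explicitly: take a Kolmogorov decomposition $R(Z,W)(P) = H(Z)(P\otimes I_\cX)H(W)^*$ via Theorem \ref{T:cpker} (noting that $\sigma$ is trivial since the inner algebra is $\mathbb C$), expand along a finite-dimensional exhaustion of $\cX$ to write $R$ as a norm limit of finite sums $\sum_k D_{f_k,f_k}$, apply Lemma \ref{L:cone}(2) to each summand, and finish with closedness of $\cC$ from Lemma \ref{L:cone}(1). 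This is more self-contained than the paper's version and could in fact be streamlined further: since $\Omega$ is finite and $\dim\cE<\infty$ in this Case 1 setting, the span of $\bigcup_{W\in\Omega}\operatorname{ran} H(W)^*$ is already finite-dimensional, so $\cX$ can be taken finite-dimensional from the outset, making the truncation unnecessary. One small stylistic slip: the exponent in your coercivity discussion should be $(1-\rho)^2$ rather than $(1-\rho)$, but the intended meaning is clear and matches the paper's estimate \eqref{preest}.
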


\begin{proof}[Proof of Lemma  \ref{L:Crho} part 1:]
    That $\cC_{\rho}$ 
    is invariant under positive rescalings and taking of sums is elementary; we conclude that 
    indeed $\cC_{\rho}$  is a cone.
    
    That $\cC_{\rho}$ is closed in ${\mathfrak 
    X}$ once $\rho < 1$ is chosen sufficiently close to 1 can be proved 
    in much the same way used to show that $\cC$ is closed 
    (part (1) of Lemma \ref{L:cone}). By choosing $\rho < 1$ 
    sufficiently close to 1, we can guarantee that
    \begin{align}
&	I_{\cS^{n}} - \frac{1}{\rho^{2}} Q(Z) Q(Z)^{*} \succeq 
	\epsilon_{0}^{2} I_{\cS^{n}}, \notag \\
& I_{\cE^{n}} - (1 - \rho)^{2} a(Z) a(Z)^{*} \succeq \epsilon_{0}^{2} 
I_{\cE^{n}}, \notag  \\
&  I_{\cE^{n}} - (1 - \rho)^{2} b(Z) b(Z)^{*} \succeq \epsilon_{0}^{2} 
I_{\cE^{n}}, \notag \\
& I_{n} - (1 - \rho)^{2} \chi(Z) \chi(Z)^{*} \succeq \epsilon_{0}^{2} 
I_{n} 
\label{preest}
\end{align}
for all of the finitely many points $Z \in \Omega$ (where $n = n_{Z}$ 
is chosen so that $Z \in \Omega_{n}$).  
We then see that 
\begin{align}
 &  K_{N}(Z,Z)(I_{n})  = 
    \Gamma_{N,0}(Z,Z)(I_{\cS^{n}} - (1/\rho^{2}) Q(Z) Q(Z)^{*})   
    \notag\\
 & \quad  \quad \quad \quad  
 + \Gamma_{N,1}(Z,Z) (I_{\cE^{n}} - (1 - \rho)^{2} a(Z) a(Z)^{*})  
 \notag\\
 & \quad  \quad \quad \quad
 + \Gamma_{N,2}(Z,Z)  (I_{\cE^{n}} - (1 - \rho)^{2} b(Z) b(Z)^{*}) \\
& \quad \quad \quad \quad + \Gamma_{N,3}(Z,Z) (I_{n} - (1 - \rho)^{2} \chi(Z) \chi(Z)^{*}) 
\succeq \notag \\
 & \epsilon_{0}^{2} \left( \Gamma_{N,0}(Z,Z)(I_{\cS^{n}})
 + \Gamma_{N,1}(Z,Z)(I_{\cE^{n}}) + \Gamma_{N,2}(Z,Z)(I_{\cE^{n}}) + 
 \Gamma_{N,3}(Z,Z)(I_{n}) \right)
 \label{est1'}
 \end{align}
 and hence each of the quantities 
 \begin{align}
& \| \Gamma_{N,0}(Z,Z)(I_{\cS^{n}})\|_{\cL(\cE^{n})}, \quad 
   \| \Gamma_{N,1}(Z,Z)(I_{\cE^{n}})\|_{\cL(\cE^{n})}, \notag \\
 & \| \Gamma_{N,2}(Z,Z)(I_{\cE^{n}})\|_{\cL(\cE^{n})}, \quad 
     \| \Gamma_{N,3}(Z,Z)(I_{n})\|_{\cL(\cE^{n})} 
     \label{kernels}
 \end{align}
 is bounded above by  $\frac{1}{\epsilon_{0}^{2}} \| K_{N}(Z,Z) (I_{n}) 
 \|_{\cL(\cE)}$.  As this last quantity is uniformly bounded with 
 respect to $N$, each of the cp nc kernels in the list is uniformly bounded in 
the appropriate operator norm and hence each sequence 
$\{\Gamma_{N,k}\}$  has a weak-$*$ convergent subnet $\{ 
\Gamma_{\alpha, k}\}$ converging to some cp nc kernel $\Gamma_{k}$  
($k = 0,1,2,3$).  The fact that the foursome $\{ \Gamma_{N,k} \colon 
k=0,1,2,3\}$ provides a $\rho$-refined Agler decomposition 
\eqref{rhoAglerdecom'} for $K_{N}$ implies that the limiting foursome 
$\{ \Gamma_{k} \colon k=0,1,2,3\}$ provides a $\rho$-refined Agler 
decomposition for the limiting kernel $K$ now proceeds as in the 
proof of Lemma \ref{L:cone} part (1), and hence the limiting kernel 
$K$ still has a $\rho$-refined Agler decomposition as wanted.
\end{proof}
    
 \begin{proof}[Proof of Lemma  \ref{L:Crho} part 2:]
 Suppose that $K \in {\mathfrak X}$ 
 is in $\cC_{\rho}$ for 
 all $\rho < 1$ subject to $\rho_{00} < \rho < 1$ for some 
 $\rho_{00}$ with $\rho_{0} \le \rho_{00} < 1$.  Hence for each such 
 $\rho$ there are cp nc kernels $\Gamma_{\rho,0}$, $\Gamma_{\rho,1}$, 
 $\Gamma_{\rho,2}$, $\Gamma_{\rho,3}$ so that $K$ has 
 a decomposition as in \eqref{rhoAglerdecom'} (with $\Gamma_{\rho,k}$ 
 in place of $\Gamma_{k}$ for $k = 0,1,2,3$).  
 The estimate \eqref{est1'} 
 is uniform with respect to $\rho$ for $\rho_{0} < \rho <1$ since the 
 estimates \eqref{preest} are uniform in $\rho$ as $\rho$ approaches 
 1 once $\rho$ is sufficiently close to 1.
 By again following 
 the proof of Lemma \ref{L:cone} part (1), we can find a subnet 
 $\rho_{\alpha}$ of an increasing sequence 
 $\{\rho_{N}\}_{N \in {\mathbb N}}$ of positive numbers with 
 limit equal to $1$ so that each
 $\Gamma_{\rho_{\alpha},k}(Z,W)$ converges in the appropriate 
 operator BW-topology to an operator 
 $\Gamma_{k}(Z,W)$ for each $Z \in 
 \Omega_{n}$, $W \in \Omega_{m}$ for each $k=0,1,2,3$.  Since the convergence is with 
 respect to the pointwise weak-$*$ topology, one can check as in the 
 proof of Lemma \ref{L:cone} part (1) that the fact that the kernel 
 $K$ has a $\rho$-refined Agler decomposition provided by the 
 foursome $\{\Gamma_{\rho,0},\, \Gamma_{\rho,1}, \, 
 \Gamma_{\rho,2},\, \Gamma_{\rho,3}\}$ implies, upon taking a limit as 
 $\rho_{\alpha} \to 1$, that $K$ also has the limiting representation
 \begin{align*}
    &  K(Z,W)(P) =   \Gamma_{0}(Z,W)( P \otimes I_{\cS} - Q(Z) (P \otimes 
 I_{\cR}) Q(W)^{*}) \\
 & \quad + \Gamma_{1}(Z,W) (P \otimes I_{\cE}) + 
 \Gamma_{2}(Z,W)(P \otimes I_{\cE}) + \Gamma_{3}(Z,W)(P).
 \end{align*}
 The last three terms on the right side of the equality are all cp nc 
 kernels and hence have a standard Agler decomposition by Lemma 
 \ref{L:cone} part (3) while the first term is in the form of an 
 Agler decomposition.  Thus each term is in $\cC$ and, as $\cC$ is a 
 cone, the sum is again in $\cC$, so $K$ has a standard Agler 
 decomposition as claimed.  This concludes the proof of
 part (2) of Lemma  \ref{L:Crho} follows.
\end{proof}
     
\begin{proof}[Proof of Lemma  \ref{L:Crho} part 3:]
  Apply part (2) of Lemma \ref{L:cone} with 
  $\frac{1}{\rho} Q$ in place of $Q$ to see that there is a cp nc 
  kernel $\Gamma_{0}$ so that
  $$
  D_{f,f}(Z,W)(P) = \Gamma_{0}(Z,Z)\left( I_{\cS} \otimes P - 
  \frac{1}{\rho^{2}} Q(Z) (I_{\cR} \otimes P) Q(W)^{*} \right).
  $$
  Then $D_{f,f}$ has a decomposition  of the form 
  \eqref{rhoAglerdecom'} (with $\Gamma_{1}$, $\Gamma_{2}$ and 
  $\Gamma_{3}$ all taken equal to zero) as required.
 \end{proof}

 \noindent
 \textbf{The cone separation argument.}
 We now have all the preliminaries needed to complete the proof of 
 (1$^{\prime}$) 
 $\Rightarrow$ (2) in Theorem \ref{T:ncInt} for Case 1. In this part 
 we are assuming that $a,b$ are nc functions on $\Omega_{\rm nc, 
 full} \cap {\mathbb D}_{Q}$.
 To show that $a,b$ has an Agler decomposition \eqref{Aglerdecom} on 
 $\Omega$, it  suffices to show that the kernel
  \begin{equation}   \label{Kab}
 K_{a,b}(Z,W)(P): = a(Z) (P \otimes I_{\cY}) a(W)^{*} - b(Z) (P 
 \otimes I_{\cU}) b(W)^{*}
 \end{equation}
 is in the cone $\cC_{\rho}$ for all $\rho$ subject to $\rho_{0} < 
 \rho <1$ for some $\rho_{00} \le \rho_{0}$.
 Since the cone $\cC_{\rho}$ is closed (by Lemma \ref{L:Crho} part 
 (1)), by the contrapositive formulation of the Hahn-Banach separation 
 theorem adapted to the case of 
point/cone pair (see \cite[Theorem 3.4 part (b)]{Rudin}), to show 
that $K_{a,b}$ is in the cone $\cC_{\rho}$ it suffices to show:
{\em if ${\mathbb L}$ is any continuous linear functional on the normed linear 
space of kernels ${\mathfrak X}$  such that 
\begin{equation}   \label{separation}
 {\rm Re}\, {\mathbb L}(K) \ge 0 \text{ for all } K \in \cC_{\rho},
 \end{equation}
 then also}
\begin{equation} \label{goal}
{\rm Re}\, {\mathbb L}(K_{a,b}) \ge 0.
\end{equation}
 
 In general for $K \in {\mathfrak X}$ we define $\widehat K \in 
 {\mathfrak X}$ by
 $$
   \widehat K(Z,W)(P) = \left( K(W, Z)(P^{*}) \right)^{*}.
 $$
 Let us say that the kernel $K \in {\mathfrak X}$ is 
 \textbf{Hermitian} if it is the case that $K = \widehat K$.
 Given a continuous linear functional ${\mathbb L}$ on ${\mathfrak X}$, 
 define ${\mathbb L}_{1} \colon {\mathfrak X} \to {\mathbb C}$ by
 $$
{\mathbb L}_{1}(K) =\frac{1}{2}({\mathbb L}(K) + \overline{{\mathbb L}(\widehat K)}).
 $$
 Note that if $K$ is a Hermitian kernel, then
 $$ 
 {\mathbb L}_{1}(K) = {\rm Re}\, {\mathbb L}(K).
 $$
 In particular, if $K$ is cp, then $K$ is Hermitian and 
 ${\mathbb L}_{1}(K) = {\rm Re} \, {\mathbb L}(K)$.

 Now assume that ${\mathbb L}$ is chosen so that \eqref{separation} 
 holds.
 Let $\cH^{\circ}_{{\mathbb L}}$ be the vector space 
 $\cT(\Omega; \cE_{\rm nc}) $ of all 
 $\cE$-valued nc functions on $\Omega$.   Introduce a 
 sesquilinear form on $\cH^{\circ}_{\mathbb L}$ by
 \begin{equation}   \label{innerprod}
   \langle f, \, g \rangle_{\cH^{\circ}_{\mathbb L}} = 
   {\mathbb L}_{1}(D_{f,g})
 \end{equation}
 where $D_{f,g} \in {\mathfrak X}$ is given by
 $$
   D_{f,g}(Z,W)(P) = f(Z)\, P \, g(W)^{*}
 $$
 for $Z \in \Omega_{n}$, $W \in \Omega_{m}$, 
 $P \in {\mathbb C}^{n\times m}$.
 We have observed in part (3) of Lemma \ref{L:Crho} that cp 
 kernels $D_{f,f}$ are in $\cC_{\rho}$ and hence 
 ${\rm Re}\, {\mathbb L}(D_{f,f}) \ge 0$ by the construction  
 \eqref{separation}.  But for any $f$, $D_{f,f} = \widehat
 D_{f,f}$, so by the above remarks we have $0 \le {\rm Re}\, {\mathbb 
 L}(D_{f,f}) = {\mathbb L}_{1}(D_{f,f})$.  We conclude that 
 the inner product is positive semidefinite.  By modding out by any 
 functions having zero self inner product and considering 
 equivalence classes, we get a finite dimensional Hilbert space which 
 we denote by $\cH_{{\mathbb L}}$.  
 
For $f \in \cH^{\circ}_{\mathbb L}$, we let $[f]$ be the equivalence 
class of $f$ in $\cH_{\mathbb L}$.   Since $\Omega$ is  
 finite and $\cE$ is finite-dimensional, we know by Lemma 
 \ref{P:finitetype} that $\cH^{\circ}_{\mathbb L} = \cT(\Omega; 
 \cE_{\rm nc})$ is finite-dimensional.   Denote by $\cB: = \{f_{1}, 
 \dots, f_{K}\}$ any basis for $\cH^{\circ}_{\mathbb L}$.  Then 
 certainly the collection of equivalence classes $\{ [f_{1}], \cdots, 
 [f_{K}]\}$ is a spanning set for $\cH_{\mathbb L}$.  By standard 
 Linear Algebra, we can choose a subset, denoted after possible reindexing as 
 $\cB': = \{[f_{1}], \dots, [f_{K'}] \}$ for some $K' \le K$, as a basis for 
 $\cH_{\mathbb L}$.  Suppose that $f = \sum_{k=1}^{K'}c_{k} f_{k}$ has 
 zero self-inner product in $\cH^{\circ}_{\mathbb L}$, so
 $$
 \left[\sum_{k=1}^{K'} c_{k}  f_{k} \right] = \sum_{k=1}^{K'} c_{k} [f_{k}] = 0.
 $$
 Since $\cB'$ is linearly independent in $\cH_{\mathbb L}$, it 
 follows that $c_{k} = 0$ for $1 \le k \le K'$, from which it follows 
 that $f = \sum_{k=1}^{K'} c_{k} f_{k} = 0$ in $\cT(\Omega; \cE_{\rm nc})$.
 We conclude that the ${\mathbb L}$-inner product \eqref{innerprod} 
 is in fact positive definite when restricted to
 $\cH^{\circ \prime}_{\mathbb L} : = {\rm span} \{f_{1}, \dots, 
 f_{K'}\}$.  We therefore may view the space $\cH_{\mathbb L}$ as 
 the space of bona fide functions $\cH_{\mathbb L} \cong 
 \cH^{\circ'}_{\mathbb L}$ with inner product given by 
 \eqref{innerprod} and with orthonormal basis given by
 \begin{equation}   \label{HLbasis}
     \cB' = \{ f_{1}, \dots, f_{K'}\}.
 \end{equation}
 With this convention in force, it follows that 
 the point-evaluation maps 
 \begin{equation}  \label{pointeval}
  \ev_{Z} \colon f \mapsto f(Z)
 \end{equation}
 are well-defined as operators from $\cH_{\mathbb L}$ into $\cE^{n}$ 
 for each $Z \in \Omega_{n}$.  As $\cH_{\mathbb L}$ is also 
 finite-dimensional, it follows that each such map $\mathbf{ev}_{Z}$
 is bounded as an operator from $\cH_{\mathbb L}$ into $\cE^{n}$ as 
 well.

 For $\cX$ an arbitrary separable coefficient Hilbert space, define a space 
 $\cH_{{\mathbb L}, \cX}$ to be the space of nc functions 
 $\cT(\Omega ; \cL(\cX, \cE))$ with inner product given by
 \begin{equation} \label{HLXinnerprod}
     \langle \boldf, \bg \rangle_{\cH_{{\mathbb L}, \cX}} = {\mathbb 
     L}_{1}(D_{\boldf,\bg})
  \end{equation}
  where $D_{\boldf,\bg}$ is the kernel in $\widetilde \cT^{1}(\Omega; 
  \cL(\cE)_{\rm nc}, {\mathbb C}_{\rm nc})$ given by
  \begin{equation}   \label{Dfg-ker}
  D_{\boldf,\bg}(Z,W)(P) = \boldf(Z) (P \otimes I_{\cX}) \bg(W)^{*}.
  \end{equation}
  For $x^{*} \in \cX^{*} = \cL(\cX, {\mathbb C})$ and $Z \in \Omega$, 
  define a function $\bx^{*} \in \cT(\Omega; \cL(\cX, {\mathbb C})_{\rm nc})$ as in 
  Example \ref{E:linear} part (b) by
  $$
   \bx^{*}(Z) = {\rm id}_{{\mathbb C}^{n}} \otimes x^{*} 
   \text{ if } Z \in \Omega_{n}.
 $$
 For $f \in \cT(\Omega ; \cL({\mathbb C}, \cE)_{\rm nc})$ and $x^{*} \in 
 \cX^{*}$, it then follows that $Z \mapsto \boldf(Z) := f(Z) \cdot
 \bx^{*}(Z)$, as the pointwise composition of nc functions,
 is itself a nc function in $\cT(\Omega; \cL(\cX, \cE)_{\rm nc})$;
 this is explained in  \cite[Section 4.1]{BMV1} in the context of Schur multipliers
 or can be easily verified directly.  Furthermore, if we have two 
 elements $f, g$ of $\cH_{{\mathbb L}} = \cT(\Omega ; 
 \cL({\mathbb C}, \cE)_{\rm nc})$ and two elements $x^{*},y^{*}$ of $\cX^{*}$ and 
 set $\boldf = f \circ \bx^{*}$ and $\bg = g \circ \by^{*}$ as above, then $\boldf$ and 
 $\bg$ are in $\cH_{{\mathbb L}, \cX}$  with inner product given by
 $$
     \langle \boldf, \bg \rangle_{\cH_{{\mathbb L}, \cX}}  =
     {\mathbb L}_{1}\left( D_{\boldf, \bg} \right)
  $$
 where, for $Z \in \Omega_{n}$ and $W \in \Omega_{m}$, $D_{\boldf, 
 \bg}$ is given by
 \begin{align*}
 D_{\boldf, \bg}(Z,W)(P) & =
 f(Z) \left({\rm id}_{{\mathbb C}^{n}} \otimes x^{*} \right) \left( P 
 \otimes I_{\cX} \right) \left( {\rm id}_{{\mathbb C}^{m}} \otimes y 
 \right) g(W)^{*} \\
 & = f(Z) P g(W)^{*} \otimes x^{*} y \\
 & = (f(Z) P g(W)^{*}) \cdot x^{*}y \text{ (since $x^{*}y \in 
 {\mathbb C}$).}
 \end{align*}
 We conclude that 
 $$
  \langle \boldf, \bg \rangle_{\cH_{{\mathbb L}, \cX}} =
  \langle f, g \rangle_{\cH_{{\mathbb L}}} \cdot \langle x^{*}, y^{*} 
  \rangle_{\cX^{*}}.
$$
It follows that the map
$$ 
\iota \colon f \otimes x^{*} \mapsto f \cdot \bx^{*}
$$
extends to an isometry from the Hilbert-space tensor product 
$\cH_{{\mathbb L}} \otimes \cX^{*}$ into $\cH_{{\mathbb L}, \cX}$.

To see that this isometry is onto, we proceed as follows.
Let $\{x_{\beta} \colon \beta \in {\mathfrak B} \}$ be an orthonormal 
basis for $\cX$ (so ${\mathfrak B} = \{1, \dots, \dim \cX\}$ 
in case $\cX$ is finite-dimensional, and ${\mathfrak B} = {\mathbb 
N}$ (the natural numbers) otherwise), and let $\boldf$ be 
an arbitrary element of $\cH_{{\mathbb L}, \cX}$.  We let 
$\bx_{\beta}$ be the  nc function in $\cT(\Omega, \cL({\mathbb C}, 
\cX)_{\rm nc})$ given by
$$
  \bx_{\beta}(Z) = {\rm id}_{{\mathbb C}^{n}} \otimes x_\beta \text{ if } Z 
  \in \Omega_{n}
$$
(again as in Example \ref{E:linear} part (b)).  Then the pointwise 
composition $f_{\beta}: = \boldf \cdot \bx_{\beta}$ is a nc function in 
$\cH_{{\mathbb L}}$. If we apply the construction of the previous 
paragraph to the pair $f_{\beta}$ and $x_{\beta}$, we get the 
elementary-tensor nc function 
$$
f_{\beta} \cdot \bx_{\beta}^{*} = (\boldf \cdot \bx_{\beta})\cdot \bx_{\beta}^{*}
$$
in $\iota (\cH_{{\mathbb L}} \otimes \cX^{*})$. We claim that
\begin{equation}  \label{claim}
   \boldf = \sum_{\beta \in {\mathfrak B}} (\boldf \cdot \bx_{\beta}) \cdot 
   \bx_{\beta}^{*}
\end{equation}
with convergence of the series in $\cH_{{\mathbb L},\cX}$-norm in 
case $\dim \cX = \infty$.
It then follows that the span of the images of elementary tensors 
$\iota(f \otimes \bx^{*})$ is dense in $\cH_{{\mathbb L}, \cX}$ and 
hence that the map $\iota$ extends to a unitary identification of 
$\cH_{{\mathbb L}} \otimes \cX^{*}$ with $\cH_{{\mathbb L}, \cX}$.

To verify the claim \eqref{claim}, note first that since $\{ 
\bx_{\beta}^{*} \colon \beta \in {\mathfrak B}\}$ is an orthonormal 
set in $\cX^{*}$, it follows that, for any $f,g \in \cH_{{\mathbb 
L}}$, 
$$
  \langle f \cdot \bx_{\beta}^{*},\,  g \cdot \bx_{\beta'}^{*} 
  \rangle_{\cH_{{\mathbb L}, \cX}} = \langle f, g 
  \rangle_{\cH_{{\mathbb L}}} \cdot \delta_{\beta, \beta'}
$$
(where $\delta_{\beta, \beta'}$ is the Kronecker delta).  Hence
$$
\langle (\boldf \cdot \bx_{\beta}) \cdot \bx_{\beta}^{*},\,
(\boldf \cdot \bx_{\beta'}) \cdot \bx_{\beta'}^{*} 
\rangle_{\cH_{{\mathbb L}, \cX}} = \delta_{\beta, \beta'} {\mathbb 
L}_{1}\left( D_{\boldf \cdot \bx_{\beta} ,
\boldf \cdot \bx_{\beta} } 
\right)
$$
where
\begin{align*}
    D_{\boldf \cdot \bx_{\beta} ,\, \boldf\cdot 
    \bx_{\beta} }(Z,W)(P)
& = \boldf(Z) \left( P \otimes x_{\beta} x_{\beta}^{*} \right) 
f(W)^{*} \\
& = D_{\boldf,\,  \boldf \cdot \bx_{\beta} \cdot 
\bx_{\beta}^{*}}(Z,W)(P)
 \end{align*}
 and hence
 $$
 \langle (\boldf \cdot \bx_{\beta}) \cdot \bx_{\beta}^{*}, \,
 (\boldf \cdot \bx_{\beta'}) \cdot \bx_{\beta'}^{*}  
 \rangle_{\cH_{{\mathbb L}, \cX}} = \delta_{\beta, \beta'} \langle 
 \boldf,\, (\boldf \cdot \bx_{\beta}) \cdot \bx_{\beta}^{*} 
 \rangle_{\cH_{{\mathbb L}}, \cX}.
 $$
 Let $K = \dim \cX$ if $\dim \cX$ is finite and $K \in {\mathbb N}$ 
 arbitrary otherwise.  Then it follows that
 $$
 \left\| \boldf - \sum_{\beta = 1}^{K} \boldf \cdot \bx_{\beta} \cdot 
 \bx_{\beta}^{*} \right\|^{2}_{\cH_{{\mathbb L}, \cX}} =
 \| f \|^{2}_{\cH_{{\mathbb L}, \cX}} - \left \langle 
 \boldf,  \, \sum_{\beta = 1}^{K} \boldf \cdot \bx_{\beta} \cdot \bx_{\beta}^{*} 
 \right\rangle_{\cH_{{\mathbb L}, \cX}}.
 $$
 Note next that
$$
     D_{\boldf,\, \sum_{\beta = 1}^{K} \boldf \cdot \bx_{\beta} \cdot 
     \bx_{\beta}^{*}}(Z,W)(P)  =
 \boldf(Z) \left( P \otimes \sum_{\beta = 1}^{K} x_{\beta} 
 x_{\beta}^{*} \right) \boldf(W)^{*}.
 $$
 In case $K = \dim \cX < \infty$, we have $P \otimes \sum_{\beta=1}^{K} 
 x_{\beta} x_{\beta}^{*} = P \otimes I_{\cX}$ and we conclude that 
 $ D_{\boldf,\, \sum_{\beta = 1}^{K} \boldf \cdot \bx_{\beta} \cdot 
     \bx_{\beta}^{*}} = D_{\boldf,\, \boldf}$ from which the claim 
    \eqref{claim} follows as wanted.  In case $\dim \cX = \infty$,
    we use that $\sum_{\beta = 1}^{\infty} x_{\beta} 
 x_{\beta}^{*}$ converges strongly in $\cL(\cX)$ to $I_{\cX}$ from 
 which it follows that 
 $$
 \lim_{K \to \infty} D_{\boldf,\, \sum_{\beta = 1}^{K} \boldf \cdot \bx_{\beta} \cdot 
     \bx_{\beta}^{*}} = D_{\boldf, \, \boldf}
  $$
  with convergence in the BW-topology on ${\mathfrak X}$.  Due to the 
  continuity of ${\mathbb L}_{1}$ with respect to the BW-topology on 
  ${\mathfrak X}$, it then follows that
  $$
   \lim_{K \to \infty} \| \boldf - \sum_{\beta = 1}^{K} (\boldf \cdot \bx_{\beta}) \cdot 
    \bx_{\beta}^{*} \|^{2}_{\cH_{{\mathbb L}, \cX}}  = 0
  $$
  and the claim \eqref{claim} follows in this case as well. In the 
  sequel we shall freely use the resulting identification $\iota$ 
  between $\cH_{{\mathbb L}} \otimes \cX^{*}$ and $\cH_{{\mathbb L}, 
  \cX}$ without explicit mention of the map $\iota$.

 For $Z \in \Omega$, we let $n_{Z}$ denote the natural number $n$ so that 
 $Z \in \Omega_{n}$.  We next define a Hilbert space $\bcH$ by
 $$
 \bcH = \bigoplus_{Z \in \Omega} \cL({\mathbb C}^{n_{z}}, \cE^{n_{z}})
 $$
 Here $\cL({\mathbb C}^{n_{z}}, \cE^{n_{z}})$ is given the 
 Hilbert-Schmidt operator norm and the direct sum is a Hilbert-space 
 direct sum.   More generally, let $\bcH_{\cX}$ be the Hilbert space
 $$
   \bcH_{\cX} = \bigoplus_{Z \in \Omega} \cL(\cX^{n_{z}}, \cE^{n_{z}})
 $$
  As $\cE$ is finite-dimensional, any operator in $\cL(\cX^{n_{z}}, 
  \cE^{n_{z}})$ has finite Hilbert-Schmidt norm and we can again view 
  $\bcH_{\cX}$ as a Hilbert space.  The relation between $\bcH$ and 
  $\bcH_{\cX}$ is analogous to that derived above between 
  $\cH_{{\mathbb L}}$ and $\cH_{{\mathbb L}, \cX}$.  Specifically, if
  $h = \bigoplus_{Z \in \Omega} h_{Z} \in \bcH$ and $x^{*} \in 
  \cX^{*}$, then
  $$
  h \cdot \bx^{*} : = \bigoplus_{Z \in \Omega} h_{Z} ({\rm 
  id}_{{\mathbb C}^{n_{z}}} \otimes x^{*}) \in 
  \bcH_{\cX}.
  $$
  Moreover, given two pairs $h,k \in \bcH$ and $x^{*}, y^{*} \in 
  \cX^{*}$, 
  \begin{align*}
      \langle h \cdot \bx^{*}, k \cdot \by^{*} \rangle_{\bcH_{\cX}} & 
 =  \sum_{Z \in \Omega} {\rm tr}\left(({\rm id}_{{\mathbb C}^{n_{z}}} 
 \otimes y) k_{Z}^{*} h_{Z}({\rm id}_{{\mathbb C}^{n_{z}}} \otimes x^{*})\right) \\
 & = \sum_{Z \in \Omega} {\rm tr}\left( k_{Z}^{*} h_{Z} ({\rm 
 id}_{{\mathbb C}^{n_{z}}} \otimes x^{*} y) \right) \\
& = \left( \sum_{Z \in \Omega} {\rm tr}(k_{Z}^{*} h_{Z}) \right) \cdot 
      x^{*}y \text{ (since $x^{*} y \in {\mathbb C}$)} \\
      & = \langle h, \, k \rangle_{\bcH} \cdot \langle x^{*}, y^{*} 
      \rangle_{\cX^{*}}.
   \end{align*}
  Thus the map $\biota$ defined on elementary tensors by
  $$
  \biota \colon h \otimes x^{*} \mapsto h \cdot \bx^{*}
  $$
  extends to an isometry from the Hilbert-space tensor product $\bcH 
  \otimes \cX^{*}$ into $\bcH_{\cX}$.  To see that $\biota$ is onto, 
  we again work with an orthonormal basis $\{ x_{\beta} \colon \beta 
  \in {\mathfrak B}\}$ for $\cX$.  Given any $\bh = \bigoplus_{Z \in 
  \Omega} \bh_{Z} \in \bcH_{\cX}$, then $h_{\beta} = \bh \cdot 
  \bx_{\beta} : = \bigoplus_{Z \in \Omega} \bh(Z) \left( {\rm 
  id}_{{\mathbb C}^{n_{z}}} \otimes x_{\beta} \right)$ is in $\bcH$.
  We claim that we can recover $\bh$ as
  \begin{equation}   \label{claim2}
      \bh = \sum_{\beta \in {\mathfrak B}} h_{\beta} \cdot 
      \bx_{\beta}^{*} = \sum_{\beta \in {\mathfrak B}} (\bh \cdot 
      \bx_{\beta}) \cdot \bx_{\beta}^{*}
 \end{equation}
 with convergence of the series in the norm of $\bcH_{\cX}$ in the 
 case that $\dim \cX = \infty$.  As a consequence it then follows 
 that the span of images of elementary tensors $\biota(h \otimes 
 x^{*})$ ($h \in \bcH$ and $x^{*} \in \cX^{*}$) is dense in 
 $\bcH_{\cX}$ and hence $\biota $ is onto.  The claim \eqref{claim2} 
 in turn can be seen as an immediate consequence of the identity
 $$
   \sum_{\beta \in {\mathfrak B}} x_{\beta} x_{\beta}^{*} = I_{\cX}
 $$
 (with convergence n the strong topology of $\cL(\cX)$ in case $\dim 
 \cX = \infty$).  We hence identify $\bcH_{\cX}$ with the 
 Hilbert-space tensor product $\bcH \otimes \cX^{*}$ without explicit 
 mention of the identification map $\biota$.
 
 We shall also find it convenient to have a version of these spaces 
 where the target space in the $Z$-slice is simply ${\mathbb C}$ rather than 
 $\cE^{n_{z}}$.  Toward this end, note that we can choose an 
 orthonormal basis $\{ e_{1}, \dots, e_{\dim \cE}\}$ to represent any 
 operator $T \in \cL({\mathbb C}^{n_{z}}, \cE^{n_{z}} \cong {\mathbb 
 C}^{n_{z} \cdot (\dim \cE)})$ as an 
 $k_{Z} \times n_{Z}$ matrix over ${\mathbb C}$,
 where we set 
 \begin{equation}  \label{kZ}
  k_{Z} = (\dim \cE) \cdot n_{Z}
 \end{equation}
  (with $k_{Z} \cdot n_{Z}$ entries) while an operator 
 $\widetilde T \in \cL({\mathbb C}^{k_{z} \cdot n_{z}}, 
 {\mathbb C})$ is represented by a $1 \times 
 k_{Z} \cdot n_{Z}$ matrix over $\cE$ (again with $k_{Z} \cdot n_{Z}$ entries).
 In general, to map $\cL({\mathbb C}^{n}, {\mathbb C}^{k}) \cong 
 {\mathbb C}^{k \times n}$ bijectively to $\cL({\mathbb C}^{k \cdot n 
 }, {\mathbb C}) \cong {\mathbb C}^{1 \times k \cdot n}$, 
 we introduce the operator $\rowvec_{k}$ ($\rowvec$ for {\em row-vectorization})  which 
 reorganizes a $k \times n$ matrix into a $1 \times (k \cdot n)$ matrix 
 as follows:  if $T = [t_{ij}] = \sbm{ T_{1} \\ 
 \vdots \\ T_{k} }$ where $T_{i} = \begin{bmatrix} 
 t_{i,1} & \cdots & t_{i, n} \end{bmatrix}$ is the $i$-th row of 
 $T$ for $1 \le i \le k$, we 
 define 
 $$
 \rowvec_{k} \colon
 \cL({\mathbb C}^{n}, {\mathbb C}^{k}) \cong 
 {\mathbb C}^{k \times n} \to \cL({\mathbb 
 C}^{k \cdot n}, {\mathbb C}) \cong {\mathbb C}^{1 \times k 
 \cdot n}
 $$
 by
$$ \rowvec_{k} \colon T = \begin{bmatrix} T_{1} \\ \vdots \\ 
 T_{k} \end{bmatrix} \mapsto \begin{bmatrix} T_{1} & \cdots & 
 T_{k} \end{bmatrix}.
 $$
 We now introduce yet another Hilbert space $\widetilde \bcH$ by
 \begin{equation}  \label{tbcH}
   \widetilde \bcH = \bigoplus_{Z \in \Omega} \cL({\mathbb 
   C}^{k_{z} \cdot n_{z}}, {\mathbb C}) \cong \bigoplus_{Z \in \Omega} 
   {\mathbb C}^{1 \times k_{z} \cdot n_{z}}
 \end{equation}
 ($k_{Z}$ as in \eqref{kZ})
 and define an identification map $\bi \colon \bcH \to \widetilde 
 \bcH$ by
 $$
   \bi = [{\rm row}_{Z \in \Omega}\,  \rowvec_{k_{z}} ] \colon
   \bigoplus_{Z \in \Omega} h_{Z} \mapsto {\rm row}_{Z \in \Omega} 
  [ \rowvec_{k_{z}}(h_{Z})].
 $$
 For $\cX$ a coefficient Hilbert space as above, we also introduce 
 the companion Hilbert space
 \begin{equation}  \label{tbcHX}
 \widetilde \bcH_{\cX} = \bigoplus_{Z \in \Omega} \cL(\cX^{k_{z} 
 \cdot n_{z}}, {\mathbb C}) \cong \widetilde \cH \otimes \cX^{*}
 \end{equation}
 and extend the identification map $\bi \colon \bcH \to \widetilde 
 \bcH$ to an identification map between $\bcH_{\cX}$ and $\widetilde 
 \bcH_{\cX}$ by
 $$
 \bi_{\cX}  = \bi \otimes I_{\cX^{*}} \colon \bcH_{\cX} \cong \bcH 
 \otimes \cX^{*} \to \widetilde \bcH_{\cX} \cong \widetilde \bcH 
 \otimes \cX^{*}.
 $$
 Explicitly, the operator $\bi_{\cX}$ is constructed as follows.  
 We use an orthonormal basis for $\cE$ to view an element 
 $T \in \cL(\cX^{n_{z}}, \cE^{n_{z}})$  as an operator in 
 $\cL(\cX^{n_{z}}, {\mathbb C}^{k_{z}}) \cong (\cL(X, {\mathbb 
 C})^{k_{z} \times n_{z}})$.  We then apply the operator 
 $\rowvec_{k_{z}}$ to this $k_{Z} \times n_{Z}$ matrix over $\cL(\cX, 
 {\mathbb C})$ to get a $1 \times (k_{Z} \cdot n_{Z}) $ matrix over 
 $\cL(\cX, {\mathbb C})$.  This last matrix in turn can be 
 interpreted as an operator in $\cL(\cX^{k_{z} \times n_{z}}, 
 {\mathbb C})$. Concretely we view the direct sum in \eqref{tbcH} and 
 in \eqref{tbcHX} as a row direct sum; then we view elements of 
 $\widetilde \bcH$ and of $\widetilde \bcH_{\cX}$ as long row vectors:
 \begin{equation}   \label{tbcH-inden}  
  \widetilde \bcH  = {\mathbb C}^{1 \times N}, \quad
    \widetilde \bcH_{\cX} = (\cX^{*})^{1 \times N} \text{ where } 
   N = \sum_{Z \in \Omega} k_{Z}\cdot n_{Z}.
  \end{equation}

 We next introduce operators $\cI \colon \cH_{{\mathbb L}} \to 
 \bcH$ and more generally $\cI_{\cX} \colon \cH_{{\mathbb L}, \cX} 
 \to \bcH_{\cX}$ by
 \begin{align}   
 &  \cI \colon f \mapsto \bigoplus_{Z \in \Omega} f(Z) \text{ for } f 
   \in \cH_{{\mathbb L}} \\
& \cI_{\cX} \colon \boldf  \mapsto  
 \bigoplus_{Z \in \Omega} \boldf(Z) \text{ for } \boldf \in \cH_{{\mathbb L}, 
 \cX}.
 \label{defcI}
 \end{align}
 We first note that both {\em $\cI$ and $\cI_{\cX}$ are injective}; 
 indeed if  
 an element $f \in \cH_{{\mathbb L}}$ has the property that $f(Z) = 
 0$ for all $Z \in \Omega$, then necessarily $f$ is zero as an element of 
 $\cH_{{\mathbb L}}$, and a similar statement applying to an element $\boldf$ of 
 $\cH_{{\mathbb L}, \cX}$.
 Note next that when $\boldf \in \cH_{{\mathbb L}, \cX}$ has the form of an 
 elementary tensor $\boldf = f \cdot \bx^{*} = \iota (f \otimes x^{*})$ 
 for an $f \in \cH_{{\mathbb L}}$ and $x^{*} \in \cX^{*}$, then
 $$
 \cI_{\cX} \colon \iota( f \otimes x^{*}) \mapsto \biota (\cI f 
 \otimes x^{*}).
 $$
 We conclude that when $\iota$ and $\biota$ are used to identify 
 $\cH_{{\mathbb L}, \cX}$ and $\bcH_{\cX}$ with the respective 
 tensor-product spaces $\cH_{{\mathbb L}} \otimes \cX^{*}$ and $\bcH 
 \otimes \cX^{*}$, then the map $\cI_{X}$ assumes the operator 
 elementary-tensor form
 \begin{equation}  \label{optensor}
   \cI_{\cX} = \cI \otimes I_{\cX^{*}}.
 \end{equation}
  It is natural then to also define $\widetilde \cI 
 \colon \cH_{{\mathbb L}} \to \widetilde \bcH$ and $\widetilde 
 \cI_{\cX} \colon \cH_{{\mathbb L}, \cX} \to \widetilde \bcH_{\cX}$ by
 \begin{align*}
   &  \widetilde \cI = \bi \circ \cI \colon f \mapsto {\rm row}_{Z \in 
     \Omega}[ \rowvec_{k_{z}}(f(Z))], \\
   & \widetilde \cI_{\cX} = \widetilde \cI \otimes I_{\cX^{*}} \colon 
   \boldf \mapsto {\rm row}_{Z \in \Omega} 
   [\rowvec_{k_{z}}(\boldf(Z))].
  \end{align*}
   We shall be interested in all these constructions for the particular 
 cases where $\cX = \cR$, $\cX = \cS$, $\cX = \cU$, and $\cX = \cY$.
 Note the the case $\cX = {\mathbb C}$ has already appeared 
 explicitly:
 $\cH_{{\mathbb L}, {\mathbb C}} = \cH_{{\mathbb L}}$,
 $\bcH_{{\mathbb C}} = \bcH$, $\widetilde \bcH_{{\mathbb C}} = 
 \widetilde \bcH$, $\cI_{{\mathbb C}} = \cI$, $\widetilde 
 \cI_{{\mathbb C}} = \widetilde \cI$, etc.

We define right multiplication operators
\begin{align}
  &  M_{Q}^{r} \colon \cH_{{\mathbb L}, \cS}  \to \cH_{{\mathbb 
    L},\cR}, \quad M_{a}^{r} \colon \cH_{{\mathbb L}, \cE} \to 
    \cH_{{\mathbb L}, \cY}, \notag \\
    & M_{b}^{r} \colon \cH_{{\mathbb L}, \cE} \to 
    \cH_{{\mathbb L},\cU}, \quad
 M_{\chi}^{r} \colon \cH_{\mathbb L}  \to {\rm row}_{1 \le i \le d} 
 [\cH_{\mathbb L}] 
 \label{rightmultops}
\end{align}
by
\begin{align*}
& M_{Q}^{r} \colon \boldf(Z) \mapsto \boldf(Z) Q(Z), \quad M_{a}^{r} 
 \colon \boldf(Z) \mapsto \boldf(Z) a(Z), \\
 & M_{b}^{r} \colon \boldf(Z) \mapsto \boldf(Z) b(Z), \quad
 M_{\chi}^{r} \colon f(Z) \mapsto f(Z) \chi(Z).
\end{align*}
From the defining form \eqref{rhoAglerdecom'} for a kernel to be in 
$\cC_{\rho}$ and the fact that ${\rm Re}\, {\mathbb L}$ is 
nonnegative on $\cC_{\rho}$, we read off that
\begin{align}
&    \| M_{Q}^{r}\boldf\|^{2}_{\cH_{{\mathbb L},\cR}} \le \rho^{2} 
    \| \boldf \|^{2}_{\cH_{{\mathbb L},\cS}}, \quad
  \| M_{a}^{r} \boldf \|^{2}_{\cH_{{\mathbb L},\cE}} \le 
 \frac{1}{(1 - \rho)^{2}} \| \boldf \|^{2}_{\cH_{{\mathbb L}, \cY}}, 
 \notag \\
 & \| M_{b}^{r} \boldf\|^{2}_{\cH_{{\mathbb L},\cE}} \le 
  \frac{1}{(1 - \rho)^{2}}\| \boldf \|^{2}_{\cH_{{\mathbb L}, \cU}}, \quad
 \| M_{\chi}^{r}f\|^{2}_{{\rm row}_{1 \le i \le d}\cH_{\mathbb L}}\le 
  \frac{1}{(1 - \rho)^{2}} \| f \|^{2}_{\cH_{\mathbb L}}
  \label{norms}
 \end{align}
 for all $\boldf$ or $f$ in the space $\cH_{{\mathbb L},\cX}$ for the 
 appropriate space $\cX$, and hence each of the 
 operators $M_{Q}^{r}$, $M_{a}^{r}$, $M_{b}^{r}$, 
 $M_{\chi}^{r}$ is well-defined and bounded with $\| M_{Q}^{r} 
 \| \le \rho < 1$ and $\|M_{a}^{r}\|$, $\| M_{b}^{r}\|$, 
 $\| M_{\chi}^{r} \|$ all at most $\frac{1}{1-\rho} < \infty$ for $0 
 < \rho < 1$ with $\rho$ sufficiently close to $1$.
Then we have the following intertwining relations
\begin{align}
    & \cI_{\cR} M^{r}_{Q} = \bQ^{r} \cI_{\cS},  \notag  \\
    & \cI_{\cY} M^{r}_{a} = \ba^{r} \cI_{\cE}, \notag \\
    & \cI_{\cU} M^{r}_{b} = \bb^{r} \cI_{\cE}, \notag \\
    & \cI   M^{r}_{\chi_{k}} = \bchi^{r}_{k}\cI \text{ for } k = 1, 
    \dots, d
 \label{intertwine}
 \end{align}
 where the operators $\bQ^{r}$, $\ba^{r}$, $\bb^{r}$, 
 $\bchi^{r}$ are given by
 \begin{align*}
 & \bQ^{r} = {\rm diag}_{Z \in \Omega}[ Q(Z)] \colon 
  \bigoplus_{Z \in \Omega} \bh_{Z} \mapsto
  \bigoplus_{Z \in \Omega}  \bh_{Z} Q(Z), \\
& \ba^{r} = {\rm diag}_{Z \in \Omega} [a(Z)^{r}] \colon 
\bigoplus_{Z \in \Omega} \bh_{Z} \mapsto \bigoplus_{Z \in \Omega} 
 \bh_{Z} a(Z),  \\
& \bb^{r} = {\rm diag}_{Z \in \Omega} [b(Z)^{r}] \colon 
\bigoplus_{Z \in \Omega} \bh_{Z} \mapsto \bigoplus_{Z \in \Omega} 
\bh_{Z} b(Z),   \\
& \bchi_{k}^{r} = {\rm diag}_{Z \in \Omega} [\chi_{k}(Z)^{r}] \colon 
\bigoplus_{Z \in \Omega} h_{Z} \mapsto \bigoplus_{Z \in \Omega} 
h_{Z} Z_{k} \text{ for } k = 1, \dots, d.
\end{align*}
From the right-multiplier form of all these operators, one can deduce 
the next layer of intertwining relations:
\begin{align}
& \widetilde \cI_{\cR} M^{r}_{Q} = ({\rm diag}\, \bQ)^{r} \widetilde 
\cI_{\cS},   \notag \\
&  \widetilde \cI_{\cY} M^{r}_{a} = ({\rm diag}\, \ba)^{r} \widetilde 
\cI_{\cE}, \notag \\
&  \widetilde \cI_{\cU} M^{r}_{b} = ({\rm diag}\, \bb)^{r} \widetilde 
\cI_{\cE}, \notag \\
&  \widetilde \cI_{\cY} M^{r}_{\chi_{k}} = ({\rm diag}\, \bchi_k)^{r} 
\widetilde \cI. 
   \label{intertwine'}
    \end{align}
Here we view elements of $\widetilde \bcH_{\cX}$ as row matrix of 
length $N$ over $\cX^{*}$ as in \eqref{tbcH-inden}    
of the form 
$\widetilde \bh = {\rm row}_{Z \in \Omega} [ \bh(Z) ]$ where $\bh(Z) 
\in (\cX^{*})^{1 \times k_{z}}$ for $\cX$ equal to any of $\cS, \cR, \cE, 
\cU, \cY,{\mathbb C}$ and then the operators $({\rm diag}\, \bQ)^{r}$,
$({\rm diag}\, \ba)^{r}$, $({\rm diag}\, \bb)^{r}$, $({\rm diag}\, 
\bchi_{k})^{r}$ are given by
\begin{align}
    & ({\rm diag}\, \bQ)^{r} \colon {\rm row}_{Z \in \Omega} [\widetilde 
    \bh(Z)] \mapsto {\rm row}_{Z \in \Omega}[ \widetilde \bh(Z) \cdot 
    \diag Q(Z)], \notag \\
 & ({\rm diag}\, \ba)^{r} \colon {\rm row}_{Z \in \Omega} [\widetilde 
    \bh(Z)] \mapsto {\rm row}_{Z \in \Omega} [\widetilde \bh(Z) \cdot 
    \diag a(Z)], \notag \\
 & ({\rm diag}\, \bb)^{r} \colon {\rm row}_{Z \in \Omega} [\widetilde 
    \bh(Z)] \mapsto {\rm row}_{Z \in \Omega}[ \widetilde \bh(Z) \cdot 
    \diag b(Z)], \notag \\
  & ({\rm diag}\, \bchi_{k})^{r} \colon {\rm row}_{Z \in \Omega}[ \widetilde 
    h(Z)] \mapsto {\rm row}_{Z \in \Omega}[ \widetilde h(Z) \cdot 
    \diag \chi_{k}(Z)].
    \label{matrixops}
\end{align}

To get a matrix representation for $\widetilde \cI$ and $\widetilde 
\cI_{\cX}$, we wish to also represent elements of $\cH_{{\mathbb L}}$ 
and $\cH_{{\mathbb L}, \cX}$ as a space of row vectors.  Toward this 
end, we use the orthonormal basis $\cB' = \{f_{1}, \dots, f_{K'}\}$  
\eqref{HLbasis} for $\cH_{{\mathbb L}}$ to identify $\cH_{{\mathbb 
L}}$ with the space of row vectors
$$
   \cH_{{\mathbb L}} \cong {\mathbb C}^{1 \times K'}.
$$
Then the space $\cH_{{\mathbb L}, \cX} \cong \cH_{{\mathbb L}} 
\otimes \cX^{*}$ can be identified with row vectors over $\cX^{*}$:
$$
  \cH_{{\mathbb L}, \cX} \cong (\cX^{*})^{1 \times K'}.
$$
Let us write $\vec f$ and $\vec \boldf$ for elements of 
$\cH_{{\mathbb L}}$ and $\cH_{{\mathbb L}, \cX}$ when viewed as a 
row matrix in ${\mathbb C}^{1 \times K'}$ (respectively, $(\cX^{*})^{1 
\times K'}$).
With these identifications, the operator $\cI \in \cL(\cH_{{\mathbb 
L}}, \widetilde \bcH)$ is represented as right multiplication by a 
matrix $[\widetilde \cI] \in {\mathbb C}^{K' \times N}$:
$$
  [\widetilde \cI]^{r} \colon \vec f \mapsto \vec f \cdot [\widetilde 
  \cI].
$$

Then $\widetilde \cI \colon \cH_{{\mathbb L}} \to \widetilde \bcH$ has a matrix 
representation $[ \widetilde \cI] \in {\mathbb C}^{K' \times N}$ 
induced by these respective bases.  Moreover the operator 
$M^{r}_{\chi_{k}} \in \cL(\cH_{{\mathbb L}})$ has a matrix 
representation $[M^{r}_{\chi_{k}}] \in {\mathbb C}^{K' \times K'}$ 
induced by the given basis for $\cH_{{\mathbb L}}$ and similarly
the operator $ ({\rm diag}\, \bchi_{k})^{r}$ has a matrix representation $[{\rm diag} \, \bchi_{k}]$ of the 
form ${\rm diag}_{Z \in \Omega} [\diag [\bchi_{k}]]$ induced by the given basis for $\widetilde 
\bcH$.  The last of the intertwining relations \eqref{intertwine'} 
implies the matricial intertwining relation
\begin{equation}   \label{intertwining1}
[ \widetilde \cI]^{r} [M^{r}_{\chi_{k}}]^{r} =[ {\rm diag}\,
\bchi_{k}]^{r}
 [\widetilde \cI]^{r},
\end{equation}
or, directly in terms of the matrices
$$
 [M^{r}_{\chi_{k}}] [ \widetilde \cI] = [\widetilde \cI] [ {\rm 
 diag}\,  \bchi_{k}].
$$
A consequence of the operator elementary-tensor form of $\widetilde 
\cI_{\cX}$ \eqref{optensor} is that the matrix representation for 
$\widetilde \cI_{\cX}$ is given by the same matrix $[\cI_{\cX}]$
(here we use that $\cX^{*}$ as a vector space is a ${\mathbb C}$-module
so multiplication of a matrix over $\cX^{*}$ by a matrix over 
${\mathbb C}$ makes sense as long as the sizes fit):
$$
[\widetilde \cI_{\cX}]^{r} \colon \vec \boldf \mapsto \vec \boldf 
\cdot [\widetilde \cI].
$$

When we identify $\cH_{{\mathbb L}, \cX}$ with $(\cX^{*})^{1 \times 
K'}$, the operators $M_{Q}^{r}$, $M^{r}_{a}$, $M^{r}_{b}$, 
$M^{r}_{\chi_{k}}$ in \eqref{rightmultops} then have representations 
as multiplication on the right by appropriate matrices
\begin{align*}
  &  [M^{r}_{Q}] \in \cL(\cR, \cS)^{K' \times K'}, \quad
    [M^{r}_{a}] \in \cL(\cY, \cE)^{K' \times K'}, \\
 & [M^{r}_{b}] \in \cL(\cU, \cE)^{K' \times K'}, \quad
 [M^{r}_{\chi_{k}} ] \in {\mathbb C}^{K' \times K'}.
\end{align*}

Note that the spaces $\widetilde \bcH_{\cX}$ are already represented 
as spaces of row vectors, namely $(\cX^{*})^{1 \times N}$; the operators on these 
spaces are represented as right multiplication by matrices as in  
\eqref{matrixops}.  Then the intertwining relations 
\eqref{intertwine'} assume the following matricial form:
\begin{align}
    &  [\widetilde \cI]^{r} [ M^{r}_{Q}]^{r} = [{\rm diag}\, \bQ]^{r} 
    [ \widetilde \cI]^{r}, \notag \\
 &  [\widetilde \cI]^{r} [ M^{r}_{a}]^{r} = [{\rm diag}\, \ba ]^{r} 
    [ \widetilde \cI]^{r}, \notag \\
  &  [\widetilde \cI]^{r} [ M^{r}_{b}]^{r} = [ {\rm diag}\, \bb ]^{r} 
    [ \widetilde \cI]^{r}, \notag \\
  & [ \widetilde \cI]^{r} [ M^{r}_{\chi_{k}}]^{r} = [ {\rm diag}\, 
  \bchi_{k}]^{r} [ \widetilde \cI]^{r},
  \label{matricialform}
\end{align}
or, in terms of the matrices themselves,
\begin{align*}
    & [M^{r}_{Q}] [ \widetilde \cI] = [ \widetilde \cI] [{\rm diag}\, 
    \bQ], \\
  & [M^{r}_{a}] [ \widetilde \cI] = [ \widetilde \cI] [{\rm diag}\, 
    \ba], \\
   & [M^{r}_{b}] [ \widetilde \cI] = [ \widetilde \cI] [{\rm diag}\, 
    \bb], \\
  & [M^{r}_{\chi_{k}}] [ \widetilde \cI] = [ \widetilde \cI] [{\rm diag}\, 
    \bchi_{k}].
    \end{align*}

Let us now define a point $\bZ$ in the nc envelope $(\Omega)_{\rm 
nc}$ of $\Omega$ by 
$$
\bZ = {\rm diag}_{Z \in \Omega} [ {\rm diag}_{1, \dots, k_{Z}} [ Z ]].
$$
Then the operator $\bZ^{r}$ of right multiplication by $\bZ$ on the 
row space ${\mathbb C}^{1 \times N}$ is the same as the operator 
$[{\rm diag}\, \bchi]^{r} = \begin{bmatrix} {\rm diag}\, \bchi_{1}^{r} 
& \cdots & {\rm diag}\, \bchi_{d}^{r} \end{bmatrix}$, i.e., 
the matrix $\bchi = \begin{bmatrix} {\rm diag}\, \bchi_{1} & 
\cdots & {\rm diag}\, \bchi_{d}\end{bmatrix}$ is in $(\Omega)_{\rm nc}$.  
As the right multiplication operator $[\widetilde \cI]^{r}$ is injective and 
this is a matrix over ${\mathbb C}$, a consequence of the 
intertwining relation \eqref{intertwining1} is that the matrix
$$
 [M^{r}_{\chi}] = \begin{bmatrix} [M^{r}_{\chi_{1}}] & \cdots &
 [M^{r}_{\chi_{d}}] \end{bmatrix}
$$
is in the full nc envelope $(\Omega)_{\rm nc, full}$.  As $Q$ is a nc 
function on $\Xi \supset \Omega_{\rm full}$, it follows that 
$Q(\bchi)$ and $Q([M^{r}_{\chi}])$ are defined. Since nc functions 
respect intertwinings, a consequence of \eqref{intertwining1} is the 
intertwining relation
\begin{equation}   \label{intertwining2}
[\widetilde \cI]^{r} Q([M_{\chi}^{r}])^{r} = Q([{\rm diag}\, 
\bchi])^{r} [\widetilde \cI]^{r}.
\end{equation}
since nc functions respect direct sums, it follows from the 
definitions that
$$
   Q([{\rm diag}\, \bchi]) = [{\rm diag}\, \bQ].
$$
Substitution of this relation back into \eqref{intertwining2} then 
gives
$$
[\widetilde \cI]^{r} \, Q([M_{\chi}^{r}])^{r} = [{\rm diag}\, \bQ]\,
[\widetilde \cI]^{r}.
$$
Comparison of this with the first of relations \eqref{matricialform} 
then shows that 
$$
 [\widetilde \cI]^{r} \, Q([M_{\chi}^{r}])^{r} =
 [\widetilde \cI]^{r} \, [M_{Q}^{r}]^{r}.
$$
As $[\widetilde \cI]^{r}$ is injective, we conclude that
$$
Q([M_{\chi}^{r}]) = [M_{Q}^{r}].
$$
We have already observed (see \eqref{norms}) that $\| M_{Q}^{r} \| < 
1$.  We conclude that the point $[M_{\chi}^{r}]$ is not only in the 
full nc envelope $(\Omega)_{\rm nc,full}$ of $\Omega$, but also is in the 
${\mathbb D}_{Q}$-relative full nc envelope $\Omega'$ of $\Omega$.

A quite similar analysis identifies the evaluation of the nc functions 
$a$, $b$ at the point $[M_{\chi}^{r}]$:
\begin{equation} \label{Mab-iden}
a([M_{\chi}^{r}]) = [M_{a}^{r}], \quad b([M_{\chi}^{r}]) = 
[M_{b}^{r}].
\end{equation}
As we saw above that $[M_{\chi}^{r}]$ is in $\Omega'$, our standing 
hypothesis tells us that
$$ 
a([M_{\chi^{r}}]) a([M_{\chi^{r}}])^{*} - b([M_{\chi^{r}}]) 
b([M_{\chi^{r}}])^{*} \succeq 0,
$$
From the identities \eqref{Mab-iden}, this translates to
$$
[M_{a}^{r}]  [M_{a}^{r}]^{*} -  [M_{b}^{r}] [M_{b}^{r}]^{*} \succeq  0.
$$
In terms of   matrix right-multiplication  operators, this becomes
$$
[M_{a}^{r}]^{r *}[M_{a}^{r}]^{r} - [M_{b}^{r}]^{r*} [M_{b}^{r}]^{r} 
\succeq 0.
$$
In terms of the original coordinate-free presentation of 
$\cH_{{\mathbb L}, \cE}$, this last expression translates to the 
statement
$$
   \| M_{a}^{r} \boldf \|^{2}_{\cH_{{\mathbb L}, \cY}} - \| M_{b}^{r} 
   \boldf \|^{2}_{\cH_{{\mathbb L}, \cU}} \ge 0
$$
for all $\boldf \in \cH_{{\mathbb L}, \cE}$, or, equivalently,
$$
 {\mathbb L}_{1}\left(D_{\boldf a, \boldf a} - D_{\boldf b, \boldf b} 
 \right) \ge 0 \text{ for all } \boldf \in \cH_{{\mathbb L}, \cE},
 $$
 where
 $$
 \left( D_{\boldf a, \boldf a} - D_{\boldf b, \boldf b} \right) 
 (Z,W)(P) = \boldf(Z) K_{a,b}(Z,W)(P) \boldf(W)^{*}
 $$
 (the kernel $K_{a,b}$ defined as in \eqref{Kab}).  In particular we 
 may choose
 $$
   \boldf(Z) = {\rm id}_{{\mathbb C}^{n}}\otimes I_{\cE} \text{ for } Z \in \Omega_{n}
 $$
 to conclude that ${\mathbb L}_{1}(K_{a,b}) \ge 0$.  We have thus 
 arrived at condition \eqref{goal} as required to complete the proof 
 of Case 1.

\smallskip

\noindent
\textbf{Case 2: Reduction of the case of a general subset 
$\Omega$ and coefficient space $\cE$ to the case of a finite  subset 
$\Omega_{\alpha}$ and finite-dimensional coefficient  Hilbert 
space $\cE_{\alpha}$.} 
The general case can be reduced to the special situation of Case 1 as 
an application of a theorem of Kurosh  that says that the limit of 
an inverse spectrum of nonempty compacta is a nonempty compactum 
(see \cite[Theorem 2.56]{AMcC-book}).  For the sake of completeness 
we go through the proof presented in \cite[pages 73--75]{AP} adapted 
to the special case here.

We assume condition (1$^{\prime}$) in the statement of the theorem, but now 
the point subset $\Omega$ of ${\mathbb D}_{Q}$ is not necessarily 
finite and the coefficient Hilbert space $\cE$ is not 
necessarily finite-dimensional.  

\smallskip
\textbf{The directed set $\fA$.} We let $\fA$ be the set of all pairs 
$(\Omega_{0}, \cE_{0})$ where $\Omega_{0}$ is a finite subset of $\Omega$ 
and $\cE_{0}$ is a finite-dimensional subspace of 
$\cE$.  It will be convenient to use lower case Greek letters, e.g. 
$\alpha$, for a generic element of $\fA$.  If $\alpha = (\Omega_{0}, 
\cE_{0}) \in \fA$, let us write $\alpha_{\Omega} = \Omega_{0}$ and 
$\alpha_{\cE} = \cE_{0}$, so $\alpha = (\alpha_{\Omega}, 
\alpha_{\cE})$.  We define a partial order on $\fA$ as follows:  
given two elements $\alpha$ and $\beta$ of $\fA$, we say that 
$\alpha \preceq \beta$ if both $\beta_{\Omega} \subset 
\alpha_{\Omega}$ (as finite subsets of $\Omega$) and $\alpha_{\cE} 
\subset \beta_{\cE}$ (as finite-dimensional subspaces of $\cE$). Then $\preceq$ 
satisfies reflexivity ($\alpha \preceq \alpha$) and transitivity 
($\alpha \preceq \beta$, $\beta \preceq \gamma$ $\Rightarrow$ $\alpha 
\preceq \gamma$).  

We claim that $\fA$ is a directed set, i.e.:
\begin{itemize}
    \item[Claim:] {\em Given any $\alpha, \beta \in \fA$, one can always find a $\gamma \in 
\fA$ so that both $\alpha \preceq \gamma$ and $\beta \preceq \gamma$}.
\end{itemize}
To verify the Claim, let $\alpha = (\alpha_{\Omega}, \alpha_{\cY})$ and $\beta = 
(\beta_{\Omega}, \beta_{\cY})$ be in $\fA$.  Then choose a finite
set $\gamma_{\Omega}$ so that both $\alpha_{\Omega} \subset 
\gamma_{\Omega}$ and $\beta_{\Omega} \subset \gamma_{\Omega}$. 
  Similarly, let $\gamma_{\cY}$ 
be any finite dimensional subspace containing both $\alpha_{\cY}$ and 
$\beta_{\cY}$.  Then $\gamma = (\gamma_{\Omega}, \gamma_{\cY}) \in 
\fA$ has the property that both $\alpha \preceq \gamma$ and $\beta \preceq \gamma$.

\smallskip

\textbf{The compact set ${\mathbb K}_{\alpha}$ for $\alpha \in \fA$.}
For each $\alpha \in \fA$, we let ${\mathbb K}_{\alpha}$ be the set 
of all cp nc kernels $\Gamma \colon \alpha_{\Omega} \times 
\alpha_{\Omega} \to \cL(\cL(\cS), \cL(\alpha_{\cE}))_{\rm nc}$ 
so that
\begin{align}  
   & (I_{n} \otimes \Pi^{\cE}_{\alpha_{\cE}} \left( a(Z) (P \otimes I_\cY) a(W)^{*} - 
 b(Z) (P \otimes I_{\cU}) b(W)^{*} \right) |_{\alpha_{\cE}^{m}}  
    \notag \\
    & \quad =
    \Gamma(Z,W) \left( (P \otimes I_{\cS}) - Q(Z) (P \otimes I_{\cR}) 
    Q(W)^{*} \right)
     \label{localAgdecom}
 \end{align}
 for all $Z \in \alpha_{\Omega,n}$, $W \in \alpha_{\Omega,m}$ and $P 
 \in {\mathbb C}^{n \times m}$, where $\Pi^{\cE}_{\alpha_{\cE}} \colon \cE 
 \to \alpha_{\cE}$ is the orthogonal projection of $\cE$ onto 
 $\alpha_{\cE}$.  The accomplishment of Case 1 of the proof is 
 to confirm that ${\mathbb K}_{\alpha}$ is nonempty for all $\alpha 
 \in \fA$.  Furthermore, one can use the estimate \eqref{est1} with
 $(I_{n} \otimes \Pi^{\cE}_{\alpha_{\cE}}) \big( a(Z) (P \otimes I_\cY) a(W)^{*} - 
 b(Z) (P \otimes I_{\cU}) b(W)^{*} \big) |_{\alpha_{\cE}^{m}}$  
 in place of $K_{N}(Z,Z)(I)$ to see 
 that each ${\mathbb K}_{\alpha}$ is compact in the norm
 topology of ${\mathfrak X}$.
 
 \smallskip
 
 \textbf{The restriction maps $\pi^{\alpha}_{\beta}$.}
 For $\alpha$ and $\beta$ two elements of $\fA$ with $\beta \preceq 
 \alpha$, we define a map $\pi^{\alpha}_{\beta} \colon {\mathbb 
 K}_{\alpha} \to {\mathbb K}_{\beta}$ as follows:  for $\Gamma \in 
 {\mathbb K}_{\alpha}$, define $\pi^{\alpha}_{\beta} \Gamma \in 
 {\mathbb K}_{\beta}$ by
 $$
\left(\pi^{\alpha}_{\beta} \Gamma\right)(Z,W)(R) =
(I_{n} \otimes \Pi^{\alpha}_{\beta}) \Gamma(Z,W)(R)|_{{\mathbb C}^{m} \otimes 
\beta_{\cY}}
$$
for $Z \in \beta_{\Omega,n}$, $W \in \beta_{\Omega, m}$, $R \in 
\cS^{n \times m}$, where $\Pi^{\alpha}_{\beta} \colon 
\alpha_{\cY} \to \beta_{\cY}$ is the orthogonal projection.  Then it 
is easily verified that the family of maps  $\{ \pi^{\alpha}_{\beta} 
\colon \beta \preceq \alpha\}$ satisfies the following properties:
\begin{align}
   & \pi^{\alpha}_{\alpha} = {\rm id}_{{\mathbb K}_{\alpha}},  
   \label{prop1} \\
  &  \gamma \preceq \beta,\,  \beta \preceq \alpha  \,
    \Rightarrow \, \pi^{\alpha}_{\gamma} = \pi^{\beta}_{\gamma} \circ 
    \pi^{\alpha}_{\beta}.  \label{prop2}
\end{align}

\smallskip

\textbf{The sets ${\mathbb K}$, ${\mathbb K}^{\beta \alpha}$, and 
${\mathbb K}^{\alpha}$.}  We let ${\mathbb K}$ be the 
Cartesian-product space 
$$
   {\mathbb K} = \Pi_{\alpha \in \fA} \, {\mathbb K}_{\alpha}.
$$
We denote an element of ${\mathbb K}$ by $\bGamma = \{ 
\Gamma_{\alpha} \}_{\alpha \in \fA}$ where each $\Gamma_{\alpha} \in 
{\mathbb K}_{\alpha}$.  We endow ${\mathbb K}$
with the standard Cartesian-product topology (the weakest topology 
such that each projection $p_{\alpha} \colon \bGamma \mapsto 
\Gamma_{\alpha} \in {\mathbb K}_{\alpha}$ is continuous).  Since each 
fiber ${\mathbb K}_{\alpha}$ is nonempty and compact by the preceding 
discussion, it is a consequence of Tikhonov's Theorem (and the Axiom 
of Choice) that ${\mathbb K}$ is nonempty and compact.  Given a pair 
of elements $(\beta, \alpha)$ from $\fA$ with $\beta \preceq \alpha$, 
we let ${\mathbb K}^{\beta \alpha}$ be the subset of ${\mathbb K}$ 
consisting of all elements $\bGamma = \{ \Gamma_{\gamma} \}_{\gamma 
\in \fA}$ such that $\pi^{\alpha}_{\beta} \Gamma_{\alpha} = 
\Gamma_{\beta}$.  We can construct an element of ${\mathbb K}^{\beta 
\alpha}$ as follows:  choose any kernel $\Gamma_{\alpha}$ in 
${\mathbb K}_{\alpha}$ and define $\bGamma = \{ 
\Gamma_{\alpha'}\}_{\alpha' \in \fA} \in {\mathbb K}$ by:
$$
 \Gamma_{\alpha'} = \begin{cases} \Gamma_{\alpha} & \text{if } 
 \alpha' = \alpha, \\
\pi^{\alpha}_{\beta} \Gamma_{\alpha} & \text{if } \alpha' = \beta,  \\
   \text{any element of } {\mathbb K}_{\alpha'} & \text{otherwise.}
   \end{cases}
$$
In this way we see that ${\mathbb K}^{\beta \alpha}$ is nonempty for 
any pair of indices $\alpha, \beta \in \fA$ with $\beta \preceq 
\alpha$.  

For $\alpha$ a single index in $\fA$, we define a subset ${\mathbb K}^{\alpha}$
by 
$$
    {\mathbb K}^{\alpha} = \cap_{\beta \in \fA \colon \beta \preceq 
    \alpha} {\mathbb K}^{\beta \alpha}.
$$
A key point is that ${\mathbb K}^{\alpha}$ is always nonempty.  This 
follows by a construction similar to that used to show that ${\mathbb 
K}^{\beta \alpha}$ is nonempty:  choose any $\Gamma_{\alpha} \in 
{\mathbb K}_{\alpha}$ and define $\bGamma = \{ \Gamma_{\alpha'} 
\}_{\alpha' \in \fA}$ by
$$
   \Gamma_{\alpha'} = \begin{cases} 
\pi^{\alpha}_{\alpha'} \Gamma_{\alpha} & \text{if } \alpha' \preceq \alpha,  \\
   \text{any element of } {\mathbb K}_{\alpha'} & \text{otherwise.}
   \end{cases}
$$
Then $\bGamma$ so constructed is in ${\mathbb K}^{\alpha}$.

The next step is to argue that $\cap_{\alpha \in \fA} {\mathbb 
K}^{\alpha}$ is nonempty.  To do this, we argue:
\begin{enumerate}  
    \item Each ${\mathbb K}^{\alpha}$ is a closed subset of the 
    compact space ${\mathbb K}$, and 
    \item the collection of sets $\{ {\mathbb K}^{\alpha} \colon 
    \alpha \in \fA\}$ has the Finite Intersection Property:  given 
    any finitely many $\alpha^{(1)}, \dots, \alpha^{(N)} \in \fA$, 
    then 
 $$
   {\mathbb K}^{\alpha^{(1)}} \cap \cdots \cap {\mathbb 
   K}^{\alpha^{(N)}} \ne \emptyset.
 $$
 \end{enumerate}
It then follows from a standard compactness argument that 
$\bigcap_{\alpha \in \fA} {\mathbb K}^{\alpha} \ne \emptyset$.

To verify (1), we note that each ${\mathbb K}^{\beta \alpha}$ is 
closed by the definition of the Cartesian product topology and the 
fact that each map $\pi^{\alpha}_{\beta} \colon {\mathbb K}_{\alpha} 
\to {\mathbb K}_{\beta}$ is continuous.

To verify (2), suppose that we are given finitely many indices 
$\alpha^{(1)} , \dots, \alpha^{(N)}$ in $\fA$.  We noted above  that
$\fA$ is a directed set; hence  a simple induction argument using 
the transitivity property of $\fA$ enables us to find $\alpha^{(0)} \in \fA$ so 
that $\alpha^{(j)} \preceq \alpha^{(0)}$ for $j = 1, \dots, N$. 
Then we can use property \eqref{prop2} of the maps 
$\pi^{\alpha}_{\beta}$ to see that 
 ${\mathbb K}^{\alpha^{(0)}} \subset 
\cap_{j=1}^{N} {\mathbb K}^{\alpha^{(j)}}$:
if $\beta \preceq \alpha^{(j)}$, then since also $\alpha^{(j)} 
\preceq \alpha^{(0)}$ we have, for $\bGamma = \{ 
\Gamma_{\alpha'}\}_{\alpha' \in \fA}$ in ${\mathbb 
K}^{\alpha^{(0)}}$,
$$
\pi^{\alpha^{(j)}}_{\beta} \Gamma_{\alpha^{(j)}} =
\pi^{\alpha^{(j)}}_{\beta} \pi^{\alpha^{(0)}}_{\alpha^{(j)}} 
\Gamma_{\alpha^{(0)}} = \pi^{\alpha^{(0)}}_{\beta} 
\Gamma_{\alpha^{(0)}} = \Gamma_{\beta}
$$
verifying that $\bGamma \in {\mathbb K}^{\alpha^{(j)}}$ for each $j = 
1, \dots, N$.  As we verified above that 
each ${\mathbb K}^{\alpha}$ is nonempty, in particular 
${\mathbb K}^{\alpha^{(0)}}$ is nonempty and hence $\bigcap_{j=1}^{N} 
{\mathbb K}^{\alpha^{(j)}} \ne \emptyset$.

\smallskip

\textbf{Construction of $\Gamma$ giving an Agler decomposition on all 
of $\Omega$.} 
As a result of the preceding paragraph, we can find an element 
$\bGamma = \{\Gamma_{\alpha'}\}_{\alpha' \in \fA}$ in $\cap_{\alpha 
\in {\mathfrak A}} {\mathbb K}^{\alpha}$, i.e., an element $\bGamma$ of ${\mathbb K}$ 
such that 
\begin{equation}  \label{defprop}
\beta \preceq \alpha \text{ in } {\mathfrak A} \Rightarrow \pi^{\alpha}_{\beta} 
\Gamma_{\alpha} = \Gamma_{\beta}.
\end{equation}
 We use this $\bGamma$ to construct 
a cp nc kernel $\Gamma$ defined on all of $\Omega \times \Omega$ giving the 
desired Agler decomposition \eqref{Aglerdecom} for $K_{a,b}$ on all of 
$\Omega$ as follows.
Given $Z \in \Omega_{n}$, $W \in \Omega_{m}$, $e = \sbm{ e_{1} \\ 
\vdots \\ e_{m}} \in \cE^{m}$, $e' = \sbm{e'_{1} \\ \vdots \\ e'_{n}} 
\in \cE^{n}$ and $R \in \cL(\cS)^{n \times m}$, choose an 
element $\alpha \in \fA$ so that $Z,W \in \alpha_{\Omega}$ and 
$e_{j}, e'_{i} \in \alpha_{\cE}$ for $1 \le j \le m$ and $1 \le i \le 
n$.  We wish to define an operator $\Gamma(Z,W)(R) \in \cL(\cE^{m}, 
\cE^{n})$ so that
\begin{equation}   \label{defGamma}
  \langle \Gamma(Z, W)(R) e,\, e' \rangle_{\cE^{n}} =
  \langle \Gamma_{\alpha}(Z,W)(R) e, \, e' \rangle_{(\alpha_{\cE})^{n}}.
\end{equation}
There are various issues to check.  

First of all, we can always find 
an $\alpha \in \fA$ so that a given $Z \in \Omega_{n}$ and $W \in 
\Omega_{m}$ are in $\alpha_{\Omega}$ and $e_{j} \in \alpha_{\cE}$, 
$e'_{i} \in \alpha_{\cE}$ for all $j=1, \dots, m$ and $i = 1, \dots, 
n$.  Set $\alpha_{\Omega}$ equal to any finite subset 
containing $Z$ and $W$, 
and choose $\alpha_{\cE}$ to be any finite dimensional subspace of 
$\cE$ containing the finite set of vectors $\{e_{j},\,  e'_{i} \colon 1 
\le j \le m; \,  1 \le i \le n\}$.  

Secondly the right-hand side of 
\eqref{defGamma} is independent of the choice of index $\alpha \in 
\fA$.  Indeed, if $\alpha'$ is another such index, choose 
$\alpha^{(0)} \in \fA$ with $\alpha \preceq \alpha^{(0)}$ and $\alpha' 
\preceq \alpha^{(0)}$. Then a simple application of the property 
\eqref{defprop} combined with the property \eqref{prop2} of the 
restriction maps $\pi^{\alpha}_{\beta}$ enables us to see that
\begin{align*}
\langle \Gamma_{\alpha}(Z,W)(R) e, e' \rangle_{(\alpha_{\cE})^{n}}
& = \langle \Gamma_{\alpha^{(0)}}(Z,W)(R) e, e' 
\rangle_{(\alpha^{(0)}_{\cE})^{n}} \\
& = \langle \Gamma_{\alpha'}(Z,W)(R) e, e' \rangle_{(\alpha'_{\cE})^{n}}
\end{align*}
as required.

One can now use standard techniques from Hilbert space theory 
(Principle of Uniform Boundedness and Riesz-Frechet Theorem---see 
\cite{Rudin}) to see 
that the well-defined sesquilinear form given by \eqref{defGamma} 
arises from an operator $\Gamma(Z,W)(R) \in \cL(\cE^{m}, \cE^{n})$.
Since the form is linear in $R$, the operator $\Gamma(Z,W)(R)$ is 
also linear in $R$.  Since cp nc kernel properties involve only 
finitely many points $Z,W$ at a time, it is easily verified that the 
fact that each $\Gamma_{\alpha}$ is a cp nc kernel on 
$\alpha_{\Omega} \times \alpha_{\Omega}$ implies that $\Gamma$ is a cp nc 
kernel on $\Omega \times \Omega$.  Finally, since each 
$\Gamma_{\alpha}$ gives rise to a  local Agler decomposition for 
$K_{a,b}$ on $\alpha_{\Omega}$ compressed to $\alpha_{\cE}$, it is 
easily argued that $\Gamma$ gives rise to a global Agler 
decomposition \eqref{Aglerdecom} on all of $\Omega$.

We have now finally completed the proof of (1$^{\prime}$) $\Rightarrow$ (2) in 
Theorem \ref{T:ncInt}.
 \end{proof}

 \begin{proof}[Proof of (2) $\Rightarrow$ (3) in Theorem \ref{T:ncInt}]
     For this part we assume only that $a,b$ are graded functions 
     from $\Omega$ into $\cL(\cY, \cE)_{\rm nc}$ and $\cL(\cU, 
     \cE)_{\rm nc}$  respectively.
 We assume that we are given an Agler 
decomposition for $K_{a,b}$ as in \eqref{Aglerdecom}.  Let 
$\Gamma(Z,W)(T) = H(Z) \left( ({\rm id}_{{\mathbb C}^{n \times m}} 
\otimes \pi)(T) \right) H(W)^{*}$ be the Kolmogorov decomposition
\eqref{Koldecom} for $\Gamma$.  Then \eqref{Aglerdecom} becomes
\begin{align}
&  a(Z)( P \otimes I_{\cY}) a(W)^{*} - b(Z) ( P \otimes  I_{\cU}) b(W)^{*} 
\notag \\
& \quad = H(Z)
\left[ ({\rm id}_{{\mathbb C}^{n \times m}} \otimes \pi) 
( P \otimes I_{\cS} - Q(Z) (P \otimes I_{\cR}) Q(W)^{*}) \right] H(W)^{*}.
\label{Aglerdecom''}
\end{align}
Since $\pi$ is a unital $*$-representation, we have the simplification
$$
({\rm id}_{{\mathbb C}^{n \times m}} \otimes \pi) 
( P \otimes I_{\cS}) = P \otimes I_{\cX}.
$$
If we rebalance the identity \eqref{Aglerdecom''} to eliminate all 
minus signs, we arrive 
at
\begin{align}  
& H(Z) [ ({\rm id}_{{\mathbb C}^{n \times m}} \otimes \pi) 
( Q(Z) ( P \otimes I_{\cR}) Q(W)^{*}) ]  H(W)^{*} + 
a(Z)( P \otimes I_{\cY}) a(W)^{*} \notag \\
 & \quad = H(Z)  ( P \otimes I_{\cX}) H(W)^{*} + b(Z) (P \otimes 
 I_{\cU}) b(W)^{*}.
\label{balanced}
\end{align}
We next write this as an inner-product identity
\begin{align}
&\langle H(Z) [ ({\rm id}_{{\mathbb C}^{n \times m}} \otimes \pi) 
( Q(Z) \cdot P  \cdot Q(W)^{*}) ]  H(W)^{*}e, e' \rangle + 
\langle a(Z) \cdot P \cdot a(W)^{*}e, e' \rangle \notag \\
 & \quad =\langle  H(Z) \cdot P \cdot H(W)^{*}e, e' \rangle
 +\langle b(Z)  \cdot P \cdot b(W)^{*}e, e' \rangle
\label{balanced'}
\end{align}
where $e$ and $e'$ are arbitrary vectors in $\cE^{m}$ and $\cE^{n}$ 
respectively, and the various inner 
products are in the space $\cE^{n}$.  We also simplify the notation 
by viewing the operator $P \otimes I_{\cR}$ (as well as $P \otimes 
I_{\cY}$ and $P \otimes I_{\cX}$)  as multiplication by the scalar 
matrix $P$ (which makes sense due the vector-space structure of 
$\cR$, $\cY$, and $\cX$).

Suppose next that $P$ has a factorization
$$
  P = \beta^{*} \alpha \text{ where } \alpha \in {\mathbb C}^{k 
  \times m}, \, \beta \in {\mathbb C}^{k \times n}.
$$
Then, by the definition \eqref{nc0innerprod} of the $(\cL(\cS, \cR^{k}) 
\otimes_{\pi} \cX)_{\rm nc}$-inner product (see Theorem 
\ref{T:nctensorprod}),
we may rewrite \eqref{balanced'} as  an alternative inner-product identity: 
\begin{align}
   & \left\langle \begin{bmatrix} \alpha \cdot Q(W)^{*} \otimes H(W)^{*}e \\
    \alpha \cdot a(W)^{*} e \end{bmatrix},
 \begin{bmatrix} \beta \cdot Q(Z)^{*} 
    \otimes H(Z)^{*}e' \\ \beta \cdot a(Z)^{*} e' \end{bmatrix} 
    \right\rangle \notag \\
    & \quad = \left \langle \begin{bmatrix} \alpha \cdot H(W)^{*}e \\ \alpha \cdot 
    b(W)^{*} e \end{bmatrix}, 
    \begin{bmatrix} \beta \cdot H(Z)^{*} e' 
    \\ \beta \cdot b(Z)^{*} e' \end{bmatrix} \right \rangle,
\label{longinnerprod'}
\end{align}
or equivalently (by recalling the definition \eqref{ncLT} of $L_{T}$) as
\begin{align}
& \left\langle \begin{bmatrix}   L_{\alpha \cdot 
Q(W)^{*}}  H(W)^{*} \\  \alpha \cdot 
a(W)^{*} \end{bmatrix} e, \,
\begin{bmatrix}  L_{ \beta \cdot Q(Z)^{*}} H(Z)^{*} \\ 
    \beta \cdot a(Z)^{*} \end{bmatrix} e' \right\rangle \notag \\ 
& \quad = \left\langle \begin{bmatrix} \alpha \cdot H(W)^{*} \\ 
 \alpha \cdot b(W)^{*} \end{bmatrix} e, 
\, \begin{bmatrix}  \beta \cdot H(Z)^{*} \\  \beta \cdot b(Z)^{*} 
\end{bmatrix} e' \right\rangle,
\label{longinnerprod}
\end{align}
where the first inner product is in $(\cL(\cS, \cR^{k}) \otimes_{\pi}
\cX)_{\rm nc}  \oplus \cY^{k}$ and the 
second inner product is in $\cX^{k} \oplus \cU^{k}$.
Furthermore, by part (3) of Theorem \ref{T:nctensorprod}, we can 
identify the space $(\cL(\cS, \cR^{k}) \otimes_{\pi} \cX)_{\rm nc}$ 
with the $k$-fold direct sum space $\left( (\cL(\cS, \cR) \otimes_{\pi} 
\cX)_{\rm nc} \right)^{k}$.

For $\alpha \in {\mathbb C}^{k \times m}$, $W \in \Omega_{m}$, $y \in 
\cY^{m}$, $m,k \in {\mathbb N}$ arbitrary, let us introduce the 
notation
\begin{align*}
    \widehat D^{(k)}_{W, \alpha,e} &  = 
\begin{bmatrix} \alpha \cdot Q(W)^{*} \otimes  H(W)^{*}e \\  
\alpha \cdot a(W)^{*} e\end{bmatrix}  \in
\begin{bmatrix}  \cL(\cS, \cR^{k}) \otimes_{\pi}\cX)_{\rm nc} \\ \cY^{k} 
\end{bmatrix}, \\
 R^{(k)}_{W,\alpha, e} &  = 
\begin{bmatrix} \alpha \cdot H(W)^{*} e \\ \alpha \cdot b(W)^{*}e \end{bmatrix}  
 \in \begin{bmatrix}  \cX^{k} \\ \cU^{k} \end{bmatrix}.
\end{align*}
We wish to use the identification map $\iota_{k}$ of the form 
\eqref{idk} to identify the space $(\cL(\cS, \cR^{k}) 
\otimes_{\pi}\cX)_{\rm nc}$ with the space
$\left( (\cL(\cS, \cR) \otimes_{\pi} \cX)_{\rm nc}\right)^{k}$.  
Toward this end we note that
$$
\iota_{k} \colon \alpha \cdot Q(W)^{*} \otimes H(W)^{*} e \mapsto
\begin{bmatrix} \alpha_{1 \cdot} \cdot Q(W)^{*} \otimes H(W)^{*} e \\ 
    \vdots \\ \alpha_{k \cdot} \cdot Q(W)^{*} \otimes H(W)^{*} e 
\end{bmatrix}
$$
where $\alpha_{i \cdot} = \begin{bmatrix} \alpha_{i1} & \cdots & 
\alpha_{im} \end{bmatrix}$ is the $i$-th row of $\alpha$ for $i = 1, 
\dots, k$.  In place of $\widehat D^{(k)}_{W, \alpha, e}$ we now work 
with
$$
 D^{(k)}_{W, \alpha, e} = \begin{bmatrix} \sbm{ \alpha_{1 \cdot} 
 \cdot Q(W)^{*} \otimes H(W)^{*} e \\ \vdots \\ \alpha_{k \cdot} 
 \cdot Q(W)^{*} \otimes H(W)^{*} e } \\ \alpha \cdot a(W)^{*} e 
\end{bmatrix} \in \begin{bmatrix} \left( (\cL(\cS, \cR) \otimes 
\cX)_{\rm nc} \right)^{k} \\ \cY^{k} \end{bmatrix},
$$
or equivalently (by \eqref{LT})
$$
 D^{(k)}_{W, \alpha, e} = \begin{bmatrix} \sbm{ L_{\alpha_{1 \cdot} 
 \cdot Q(W)^{*}}  H(W)^{*} e \\ \vdots \\ L_{\alpha_{k \cdot} 
 \cdot Q(W)^{*}} H(W)^{*} e } \\ \alpha \cdot a(W)^{*} e 
\end{bmatrix} \in \begin{bmatrix} \left( (\cL(\cS, \cR) \otimes 
\cX)_{\rm nc}  \right)^{k} \\ \cY^{k} \end{bmatrix},
$$
and let $\cD^{(k)}$ and $\cR^{(k)}$ be the corresponding sets
\begin{align*}
& \cD^{(k)} = \{  D^{(k)}_{W,\alpha,e} \colon W \in \Omega_{0,m}, 
\alpha \in {\mathbb C}^{k \times m}, e \in \cE^{m}, \, m \in {\mathbb N}\}, \\
& \cR^{(k)} = \{  R^{(k)}_{W,\alpha,e} \colon W \in \Omega_{0,m}, 
\alpha \in {\mathbb C}^{k \times m}, e \in \cE^{m}, \, m \in {\mathbb N}\}.
\end{align*}
Then the content of \eqref{longinnerprod} is that
\begin{equation}   \label{shortinnerprod}
    \left\langle D^{(k)}_{W, \alpha,e}, D^{(k)}_{Z,\beta, e'} 
    \right\rangle_{\left((\cL(\cS, \cR) \otimes_{\pi} \cX)_{\rm 
    nc}\right)^{k} \oplus \cY^{k}} =
    \left\langle R^{(k)}_{W, \alpha,e},\, R^{(k)}_{Z, \beta, e'} 
    \right\rangle_{ \cX^{k} \oplus \cU^{k}}
\end{equation}
for any pairs of vectors $D^{(k)}_{W, \alpha,e}, D^{(k)}_{Z, \beta,e'} \in \cD^{(k)}$ 
and  $R^{(k)}_{W,\alpha,e}, R^{(k)}_{Z, \beta,e'} \in 
\cR^{(k)}$.  It follows that the mapping $\bV^{(k)} \colon 
D^{(k)}_{W,V^{*},e} \mapsto R^{(k)}_{W,V^{*},e}$ extends by linearity 
and taking of limits to a well-defined isometry from
$\bcD^{(k)}: = \overline{\rm span}\, \cD^{(k)}$ onto $\bcR^{(k)} 
:=\overline{\rm span} \,\cR^{(k)}$.

For the moment we are interested only in the special case $k=1$:  
$\bV^{(1)} \colon \bcD^{(1)} \to \bcR^{(1)}$, where
$$
  \bcD^{(1)} \subset \begin{bmatrix} (\cL(\cS, \cR) \otimes_{\pi} 
  \cX)_{\rm nc} \\ \cY \end{bmatrix}, 
  \quad  \bcR^{(1)} \subset \begin{bmatrix} \cX \\ \cU \end{bmatrix}.
$$
Suppose for the moment that
$\bcD^{(1) \perp} = \sbm{ (\cL(\cS, \cR) \otimes_{\pi} \cX)_{\rm nc} \\ \cY } \ominus \bcD^{(1)}$ 
and $\bcR^{(1) \perp} = \sbm{  \cX \\ \cU} \ominus \bcR^{(1)}$ have the same 
dimension. Then we can extend $\bV^{(1)}$ to a unitary operator 
$$
\bU^{*}  = \begin{bmatrix} A^{*} & C^{*} \\ B^{*} & D^{*} 
\end{bmatrix} \colon \begin{bmatrix} (\cL(\cS, \cR) \otimes_{\pi} 
\cX)_{\rm nc} \\ \cY \end{bmatrix}
\to  \begin{bmatrix} \cX \\ \cU \end{bmatrix}.
 $$
 by letting 
$\bU^{*}|_{\bcD^{(1)}} = \bV^{(1)} \colon \bcD^{(1)} \to \bcR^{(1)}$, 
letting $\bU^{*}|_{\bcD^{(1) \perp}}$ be any unitary transformation of 
$\bcD^{(1) \perp}$ onto $\bcR^{(1) \perp}$, and  then using linearity 
to extend to a unitary map $\bU^{*}$ from all of $(\cL(\cS, \cR) 
\otimes_{\pi} \cX)_{\rm nc} \oplus \cY$ 
onto $\cX \oplus \cU$.
Even if the dimensions do not match, we can always artificially 
add on an infinite-dimensional direct-sum 
component $\cX_{0}$ to $\cX$ and to $\cL(\cS, \cR) \otimes_{\pi} \cX$
to make the two codimensions have common 
dimension infinity, and then apply the construction just sketched 
above.  Alternatively, one can get a contractive extension $\bU^{*}$ 
by simply defining $\bU^{*}|_{\bcD^{(1) \perp}} = 0$ and extending by 
linearity; with this last construction one loses conservation of energy 
but avoids inflation of the state-space dimension.

In any case, we have now arrived at a contractive (possibly even 
unitary) colligation matrix $\bU$ as in \eqref{bU} so that
$$
  \bU^{*} =\left.  \begin{bmatrix} A^{*} & C^{*} \\ B^{*} & D^{*} 
\end{bmatrix} \right|_{\bcD^{(1)}} = \bV^{(1)}.
$$
For $k \in {\mathbb N}$ any positive integer, let $\bU^{(k)} = I_{k} \otimes \bU$ as in 
\eqref{coln} (so $\bU^{(1)} = \bU$). Note that here we use the canonical identifications
\begin{equation}   \label{identifications}
\begin{bmatrix} ((\cL(\cS, \cR) \otimes_{\pi} \cX)_{\rm nc})^{k}\\ \cY^{k}\end{bmatrix} \cong 
    \begin{bmatrix} (\cL(\cS, \cR) \otimes_{\pi} \cX)_{\rm nc} \\ \cY \end{bmatrix}^{k}, \quad
\begin{bmatrix}  \cX^{k} \\  \cU^{k} \end{bmatrix} \cong 
\begin{bmatrix} \cX \\ \cU \end{bmatrix}^{k}
\end{equation}
when convenient; the meaning should be 
clear from the context.
Then it is clear that, for each $k \in {\mathbb N}$,
$\bU^{(k)Œ*}$ is a contractive (or even 
unitary) linear transformation of $((\cL(\cS, \cR) \otimes_{\pi} 
\cX)_{\rm nc})^{k} \oplus \cY^{k}$ to 
$\cX^{k} \oplus \cU^{k}$.  

The next goal is to verify that in addition
$\bU^{(k)*}$ extends $\bV^{(k)}$:
\begin{equation}   \label{claim'}
    \bU^{(k)*}|_{\bcD^{(k)}} = \bV^{(k)}.
\end{equation}
We are given that $\bU^{*} \colon D^{(1)}_{W,v^{*},e} \to 
R^{(1)}_{W, v^{*},e}$ for each $W \in \Omega_{m}$, 
$v \in \cA^{m \times 1}$ and $e \in \cE^{m}$.  We must show that
\begin{equation}  \label{claim''}
I_{k} \otimes \bU^{*} \colon  D^{(k)}_{W, \alpha,e} \to 
R^{(k)}_{W, \alpha,e}
\end{equation}
for each $W \in \Omega_{m}$, $\alpha \in {\mathbb C}^{k \times m}$, 
$e \in \cE^{m}$.  Up to the identifications \eqref{identifications}, note that 
we have the identities
$$
 D^{(k)}_{W, \alpha, e} = \begin{bmatrix} D^{(1)}_{W, \alpha_{1 
    \cdot}, e} \\ \vdots \\ D^{(1)}_{W, \alpha_{k \cdot}, e} 
\end{bmatrix}, \quad
R^{(k)}_{W, \alpha, e} = \begin{bmatrix} R^{(1)}_{W, \alpha_{1 
\cdot}, e} \\ \vdots \\ R^{(1)}_{W, \alpha_{k \cdot}, e} 
\end{bmatrix}.
$$
Then by definition we have
\begin{align*}
   & \bU^{(k)*} = I_{k} \otimes \bU^{*} \colon D^{(k)}_{W, \alpha, e} =
    \begin{bmatrix} D^{(1)}_{W, \alpha_{1 \cdot}, e} \\ \vdots \\ 
	D^{(1)}_{W, \alpha_{k \cdot}, e} \end{bmatrix}  \mapsto
\begin{bmatrix} \bU^{*} D^{(1)}_{W, \alpha_{1 \cdot}, e} \\ \vdots \\ 
    \bU^{*} D^{(1)}_{W, \alpha_{k \cdot}, e} \end{bmatrix} \\
    & \quad  = \begin{bmatrix} R^{(1)}_{W, \alpha_{1 \cdot}, e} \\ 
    \vdots \\ R^{(1)}_{W, \alpha_{k \cdot}, e} \end{bmatrix} = 
    R^{(k)}_{W, \alpha, e}
\end{align*}
and hence \eqref{claim''} as well as \eqref{claim'} follow as wanted.

We now look at \eqref{claim'} for the special case where $k=m$ and 
$\alpha = I_{m}$.  
We therefore have
\begin{equation}   \label{Um*extends}
    \begin{bmatrix} A^{(m)*} & C^{(m)*} \\ B^{(m)*} & D^{(m)*} 
    \end{bmatrix} \begin{bmatrix} L_{Q(W)^{*}} H(W)^{*} e \\  a(W)^{*}e 
\end{bmatrix} = 
    \begin{bmatrix} H(W)^{*}e 
     \\ b(W)^{*} e \end{bmatrix}
\end{equation}
where we use the notation in \eqref{coln}.
  From the first block row of 
\eqref{Um*extends} we get
$$
A^{(m)*} L_{Q(W)^{*}} H(W)^{*} e + C^{(m)*} a(W)^{*}e = H(W)^{*}  e.
$$
We rearrange this to
$$
  (I - A^{(m)*}L_{ Q(W)^{*}}) H(W)^{*} e = C^{(m)*}a(W)^{*} e.
$$
As $\bU^{(m)*}$ is contractive, it follows that $A^{(m)*}$ is 
contractive. As $W \in \Omega_{m} \subset {\mathbb D}_{Q}$, 
it follows that $Q(W)$ is strictly contractive; since $\| 
L_{Q(W)^{*}} \| = \| Q(W)^{*}\|$, we see that $L_{Q(W)^{*}}$ is 
strictly contractive as well.    
Hence $I - A^{(m)*} L_{Q(W)^{*}}$ is invertible.  We may then 
solve for $H(W)^{*}e$ to get
$$
  H(W)^{*} e =  (I - A^{(m)*} L_{Q(W)^{*}})^{-1} C^{(m)*}a(W)^{*} e.
$$
Substituting this back into the second block row of 
\eqref{Um*extends} gives us
$$
b(W)^{*} e = B^{(m)*} L_{Q(W)^{*}}  (I - A^{(m)*}L_{ Q(W)^{*}})^{-1} C^{(m)*} 
a(W)^{*} e + D^{(m)*}a(W)^{*} e,
$$
Upon taking adjoints and replacing $W$ with $Z$, we arrive at $b(Z) = 
a(Z) S(Z)$ with $S(Z)$ given by \eqref{transfuncreal}.
\end{proof}
 
 \begin{proof}[Proof of (3) $\Rightarrow$ (1) in Theorem \ref{T:ncInt}:]  
 Let us now assume that we are given graded functions $a,b$ from 
 $\Omega$ to $\cL(\cY, \cE)_{\rm nc}$ and $\cL(\cU, \cE)_{\rm nc}$ 
 respectively along with the contractive (or even unitary) colligation matrix \eqref{bU} 
 so that $S$ given by \eqref{transfuncreal} satisfies the 
 interpolation conditions \eqref{int}.  A key point is that the formula 
 \eqref{transfuncreal} actually makes sense for any $Z \in {\mathbb  D}_{Q,n}$, 
 not just $Z \in \Omega_{n}$.  From the tensored form of the formula 
\eqref{transfuncreal}, one can use the general results from 
\cite{KVV-book} or check directly that $S$ so defined on all of ${\mathbb D}_{Q}$ is 
indeed a nc function.  Moreover, by general principles concerning feedback loading 
of a passive 2-port by a strictly dissipative 1-port  (see e.g.\ \cite{HZ}), it follows 
that the feedback connection is well-posed with resulting closed-loop input/output 
map $S(Z)$ having norm at most 1:
$$
\| S(Z) \| \le 1 \text{ for any } Z \in {\mathbb D}_{Q}
$$
It follows that $S \in \mathcal{SA}_{Q}(\cU, \cY)$.  This completes 
the proof of (3) $\Rightarrow$ (1) in Theorem \ref{T:ncInt}.
\end{proof}

\begin{remark}  \label{R:weakhy} 
    Let us consider the special case where $\Omega = \{ Z^{(0)}\}$ 
    consists of a single point in ${\mathbb D}_{Q}$ say of size $n 
    \times n$ and where $\cE$ is finite-dimensional.  A close look at the 
    proof of (1$^{\prime}$)  $\Rightarrow$ (2)  (in particular, look at
    \eqref{intertwining1}) 
    in the proof of Theorem \ref{T:ncInt} shows that the hypothesis 
    (1$^{\prime}$) can be weakened to the following:
    \begin{enumerate}
\item[(1$^{\prime}_{0}$)] {\em Let $\Omega'_{0} \subset \Omega'$
(where $\Omega' = [\{ Z^{(0)}\}]_{\rm full} \cap {\mathbb D}_{Q}$) 
consist of those points $\widetilde Z \in \Omega'_{m}$ (where $m$ is any 
natural number between $1$ and $n \cdot \dim \cE$) such that there is an injection 
$\cI \in {\mathbb C}^{k \times m}$ so that 
$$
   \cI \widetilde Z = \left( \bigoplus_{1}^{n \cdot \dim \cE} Z^{(0)} \right) \cI.
$$
Assume that $a \in \cT(\Omega'_{0}; \cL(\cY, \cE))$ and $b \in 
\cT(\Omega'_{0}; \cL(\cU, \cE)$ are such that 
$$
   a(Z) a(Z)^{*} - b(Z) b(Z)^{*} \succeq 0
$$
for all $Z \in \Omega'_{0}$.}
\end{enumerate}
Then it is still the case that (1$^{\prime}_{0}$) $\Rightarrow$ (2) 
$\Rightarrow$ (3) $\Rightarrow$ (1) and we can conclude with only 
(1$^{\prime}_{0}$) as the hypothesis that 
there is a Schur-Agler class solution $S \in \mathcal{SA}_{Q}(\cU, 
\cY)$ of the single-point interpolation condition $a(Z^{(0)}) S(Z^{(0)}) = b(Z^{(0)})$.
 \end{remark}

\begin{remark} \label{R:otherimps} The proof of Theorem \ref{T:ncInt} 
verified the equivalence of the statements (1), (1$^{\prime}$), (2), (3) 
by verifying the implications (1) $\Rightarrow$ (1$^{\prime}$) 
$\Rightarrow$ (2) $\Rightarrow$ (3) $\Rightarrow$ (1).  It is instructive to see 
to what extent some of the 
non-adjacent implications can be seen directly. We mention a couple 
of these possibilities.

\smallskip

\noindent
   \textbf{(3) $\Rightarrow$ (2):} We assume that $a,b$ are nc 
   functions on the ${\mathbb D}_{Q}$-relative full nc envelope 
   $\Omega': = [\Omega]_{\rm full} \cap {\mathbb D}_{Q}$ and that 
   $S(Z)$ is given by the transfer-function realization formula 
   \eqref{transfuncreal} on all of ${\mathbb D}_{Q}$.  Then we claim 
   that the kernel $K_{a,b}$ \eqref{Kab} has the Agler decomposition 
   \eqref{Aglerdecom} with cp nc kernel $\Gamma$ given by
   $$
   \Gamma(Z,W)(R) = H(Z) (R \otimes I_{\cX}) H(W)^{*}
   $$
   with $H$ given by
   $$
     H(Z) =a(Z) C^{(n)} (I - L_{Q(Z)^{*}}^{*} A^{(n)})^{-1} \text{ for } Z \in 
     \Omega_{n}.
   $$
   To verify the claim, we first use the coisometry property of 
   $\sbm{A & B \\ C & D}$ to get
   \begin{equation} \label{coisom1}
   \begin{bmatrix} A^{(n)} & B^{(n)} \\ C^{(n)} & D^{(n)} \end{bmatrix}
       \begin{bmatrix} A^{(n)*}  & C^{(n)*} \\ B^{(n)*} & D^{(n)*} 
	   \end{bmatrix} = \begin{bmatrix} I_{((\cL(\cS, \cR) 
	   \otimes_{\pi}\cX)_{\rm nc})^{n}} & 0 \\ 0 & I_{\cY^{n}} \end{bmatrix}.
\end{equation}
Now let $P \in {\mathbb C}^{n \times n}$.  From the tensor-product 
structure of $\sbm{A^{(n)} & B^{(n)} \\ C^{(n)} & D^{(n)}}$ we get 
the intertwining relation
$$
 \begin{bmatrix} P & 0 \\ 0 & P \end{bmatrix}  \cdot \begin{bmatrix} A^{(n)} & B^{(n)} \\ 
     C^{(n)} & D^{(n)} \end{bmatrix} =
 \begin{bmatrix} A^{(n)} & B^{(n)} \\ 
     C^{(n)} & D^{(n)} \end{bmatrix} 
\cdot\begin{bmatrix} P  & 0 \\ 0 & P  \end{bmatrix}.
$$
Multiplication of the identity \eqref{coisom1} on the left by
$\sbm{  P  & 0 \\ 0 & P }$ then leads  to 
\begin{equation}   \label{isom2}
 \begin{bmatrix} A^{(n)} & B^{(n)} \\ C^{(n)} & D^{(n)} \end{bmatrix}
  \cdot    \begin{bmatrix} P  & 0 \\ 0 & P  \end{bmatrix} \cdot
      \begin{bmatrix} A^{(n)*}  & C^{(n)*} \\ B^{(n)*} & D^{(n)*} 
	   \end{bmatrix}  =
 \begin{bmatrix} P  & 0 \\ 0 & P \end{bmatrix} \cdot
     \begin{bmatrix} I_{(( \cL(\cS, \cR) \otimes_{\pi} \cX)_{\rm 
	 nc})^{n}} 
	 & 0 \\ 0 & I_{\cY^{n}} \end{bmatrix}.
 \end{equation}
 In the sequel to save space we suppress the $\cdot$ in the notation for the 
 multiplication of a scalar matrix times an operator matrix of 
 compatible block size.
  From the identity \eqref{isom2} we get the set of relations
\begin{align*}
  &  A^{(n)} P A^{(n)*} + B^{(n)} P  B^{(n)*} = P \otimes I_{ (\cL(\cS, 
    \cR) \otimes_{\pi} \cX)_{\rm nc}}, \\
 &  A^{(n)} P  C^{(n)*} + B^{(n)}  P D^{(n)*} = 0, \\
& C^{(n)} P  A^{(n)*} + D^{(n)}  P  B^{(n)*} = 0, \\
& C^{(n)}  P  C^{(n)*} + D^{(n)} P \cdot D^{(n)*} = P  \otimes I_{\cY}.
\end{align*}
We make use of these identities to then compute, for $Z,W \in \Omega_{n}$, $P \in {\mathbb C}^{n 
\times n}$,
\begin{align*}
  &  P \otimes I_{\cY}   - S(Z) P  S(W)^{*} =  
P \otimes I_{\cY} - (D^{(n)} + C^{(n)} (I - L_{Q(Z)^{*}}^{*} A^{(n)})^{-1}
L_{Q(Z)^{*}}^{*} B^{(n)})   P   \cdot \\
& \quad \quad \quad \quad \cdot  (D^{(n)*} + B^{(n)*} L_{Q(W)^{*}} (I - A^{(n)*} 
L_{Q(W)^{*}})^{-1} C^{(n)*})\\
& = P \otimes I_{\cY} - D^{(n)}   P   D^{(n)*} - 
C^{(n)} (I - L_{Q(Z)^{*}}^{*} A^{(n)})^{-1} L_{Q(Z)^{*}}^{*} B^{(n)} 
 P  D^{(n)*} \\
& \quad  -D^{(n)} P B^{(n)*}L_{Q(W)^{*}}(I - A^{(n)*} 
P_{Q(W)^{*}})^{-1} C^{(n)*}  \\
& \quad - C^{(n)} (I - L_{Q(Z)^{*}}^{*} A^{(n)})^{-1} L_{Q(Z)^{*}}^{*} B^{(n)} P B^{(n)*} 
L_{Q(W)^{*}}(I - A^{(n)*} L_{Q(W)^{*}})^{-1} C^{(n)*}  \\
& = C^{(n)} P C^{(n)*} + C^{(n)} (I - L_{Q(Z)^{*}}^{*} A^{(n)})^{-1} 
L_{Q(Z)^{*}}^{*} A^{(n)} P C^{(n)*} \\
&   + C^{(n)} P A^{(n)*} L_{Q(W)^{*}} (I - A^{(n)*} L_{Q(W)^{*}})^{-1} 
C^{(n)*} +  \\ 
&   C^{(n)} (I - L_{Q(Z)^{*}}^{*} A^{(n)})^{-1}L_{ Q(Z)^{*}}^{*}( A^{(n)} P A^{(n)*} - P) 
L_{Q(W)^{*}} (I - A^{(n)*}L_{Q(W)^{*}})^{-1} C^{(n)*} \\
& = C^{(n)} (I - L_{Q(Z)^{*}}^{*} A^{(n)})^{-1} X (I - A^{(n)*} L_{Q(W)^{*}})^{-1} C^{(n) *}
\end{align*}
where
\begin{align*}
    X  & = (I - L_{Q(Z)^{*}}^{*} A^{(n)}) P (I - A^{(n)*} L_{Q(W)^{*}}) +
    L_{Q(Z)^{*}}^{*} A^{(n)}  P (I 
    - A^{(n)*} L_{Q(W)^{*}}) \\
    & \quad \quad  + (I - L_{Q(Z)^{*}}^{*} A^{(n)}) P A^{(n)*} L_{Q(W)^{*}}
    + L_{Q(Z)^{*}}^{*} (A^{(n)} P A^{(n)*} - P) L_{Q(W)^{*}}  \\
    & = P \otimes I_{\cX} - L_{Q(Z)^{*}}^{*} P L_{Q(W)^{*}},
\end{align*}
and it follows that 
\begin{equation}  \label{isom3}
  P \otimes I_{\cY} - S(Z) P S(W)^{*} =  H_{0}(Z) (P \otimes I_{\cX} 
  - L_{Q(Z)^{*}}^{*} P L_{Q(W)^{*}})H_{0}(W)^{*}
\end{equation}
with 
$$
  H_{0}(Z) =  C^{(n)} (I - L_{Q(Z)^{*}}^{*} A^{(n)})^{-1}.
$$
As a consequence of the identity \eqref{ncLT*}, we see that
$$
 L_{Q(Z)^{*}}^{*} P L_{Q(W)^{*}} = 
 ({\rm id}_{{\mathbb C}^{n \times n}} \otimes \pi)(Q(Z) P Q(W)^{*})
$$
and hence \eqref{isom3} can be rewritten as
\begin{equation}  \label{isom4}
  P \otimes I_{\cY} - S(Z) P S(W)^{*} =  H_{0}(Z) ({\rm id}_{{\mathbb 
  C}^{n \times n}} \otimes \pi)(P \otimes I_{\cX} 
  - Q(Z) PQ(W)^{*})H_{0}(W)^{*}.
\end{equation}

If the interpolation conditions \eqref{int} hold on $\Omega$, then 
they continue to hold on $\Omega'$ by uniqueness of nc-function 
extensions to full nc envelopes (see Proposition \ref{P:extcpncker}).
If we multiply \eqref{isom4} by $a(Z)$ on the left and by $a(W)^{*}$ 
on the right and use the interpolation condition \eqref{int} we 
arrive at the desired Agler decomposition \eqref{Aglerdecom} for the 
kernel $K_{a,b}$
$$
K_{a,b}(Z,W) = H(Z) ({\rm id}_{{\mathbb C}^{n \times n}} \otimes \pi)
(P \otimes I_{\cS} - Q(Z) (P \otimes I_{\cR}) Q(W)^{*})H(W)^{*}
$$
with $H(Z): = a(Z) H_{0}(Z)$, 
at least for the square case where $Z \in \Omega'_{n}$, $W \in 
\Omega'_{n}$ and $P \in {\mathbb C}^{n \times n}$.  To then arrive at 
the same identity with $Z \in \Omega_{n}$, $W \in \Omega_{m}$, $P \in 
{\mathbb C}^{n \times m}$ with possibly $n \ne m$,  one can apply the 
square case to the situation where $\sbm{ Z & 0 \\ 0 & W} \in 
\Omega'_{n+m}$ replaces both $Z$ and $W$ and $\sbm{ 0 & P \\ 0 & 0}$ 
replaces $P$, and then proceed as is done at the end of Section 
\ref{S:unenhanced}. 

\smallskip 

\noindent
  \textbf{(2) $\Rightarrow$ (1$^{\prime}$):}  If we assume that (2) 
  holds in the strengthened form obtained in the previous paragraph, 
  namely that there is a cp nc kernel $K \colon \Omega' \times 
  \Omega' \to \cL(\cL(\cS), \cL(\cE))_{\rm nc}$ so that 
  \eqref{Aglerdecom} holds for $Z, W \in \Omega'$, then we can obtain 
  condition (1$^{\prime}$) easily by setting $Z = W \in \Omega'$ in 
  \eqref{Aglerdecom} and using that  
    $\| Q(Z) \| < 1$   along with the fact that $\Gamma$ is a cp nc 
    kernel on $\Omega' \times \Omega'$.  It is 
    not at all obvious how to get (1$^{\prime}$) directly from (2) if 
    one only assumes that the Agler decomposition \eqref{Aglerdecom} 
    holds on $\Omega$ rather than on all of $\Omega'$; this is part 
    of the content of (2) $\Rightarrow$ (3) $\Rightarrow$ (1) 
    $\Rightarrow$ (1$^{\prime}$).
  \end{remark}

\begin{remark}  \label{R:AKV}
    Consider the special case described in Example \ref{E:ncpoly}. We 
    assume that the nc function $Q$ is a matrix polynomial (a nc 
    function with values in $\cL(\cR, \cS)_{\rm nc} = \cL({\mathbb 
    C}^{s}, {\mathbb C}^{r})_{\rm nc}$).
   We identify an associated Schur-Agler class 
    function $S$ with its power series representation:
\begin{equation}   \label{series}
S(Z) = \sum_{\fa \in \free} S_{\fa} \otimes Z^{\alpha}.
\end{equation}
Here the coefficients $S_{\alpha}$ are operators from $\cU$ to $\cY$ 
and the point $Z = (Z_{1}, \dots, Z_{d}) \in {\mathbb D}_{Q}$, where 
${\mathbb D}_{Q}$ consists of $d$-tuples of $n \times n$ matrices 
over ${\mathbb C}$  such that $\| Q(Z) \| < 1$. 
   
In the original work of \cite{BGM2} the nc Schur-Agler 
class $\cSA_{Q}(\cU, \cY)$ (for the special case of linear $Q(z) = 
\sum_{k=1}^{d} L_{k} z_{k}$ with coefficient matrices $L_{k}$ 
satisfying some additional conditions which we need not go into here)
was defined to consist of noncommutative functions $S$ such that $\| 
S(Z) \| \le 1$ whenever $Z = (Z_{1}, \dots, Z_{d})$ is a $d$-tuple of 
operators on a fixed separable infinite-dimensional Hilbert space 
$\cK$ such that $\| Q(Z) \| < 1$ (where one can use the assumed nc 
power series representation for $S$ to define the functional calculus 
$Z \mapsto S(Z)$).  In general let us define the class 
$\cSA^{\infty}_{Q}(\cU, \cY)$ to consist of those functions $S \in 
\cSA_{Q}(\cU, \cY)$ such that $\| S(Z) \| \le 1$ whenever 
$Z \in \cL(\cK)^{d}$ with $\| Q(Z) \| < 1$.  Note that a 
priori from the definitions we have the containment 
$\cSA^{\infty}_{Q}(\cU, \cY) \subset \cSA_{Q}(\cU, \cY)$ (identify a 
matrix tuple $Z = (Z_{1}, \dots, Z_{d}) \in ({\mathbb C}^{n \times 
n})^{d}$ with an operator-tuple in $(\cL(\cK))^{d}$ by 
using some fixed orthonormal basis of $\cK$ to embed ${\mathbb 
C}^{n}$ into $\cK$).  We note that 
in the proof of (4) $\Rightarrow$ (1) in Theorem \ref{T:ncInt}, the 
function $S$ given by the realization formula \eqref{transfuncreal} 
is actually in the a priori smaller class $\cSA^{\infty}_{Q}(\cU, \cY)$. 
We hence arrive at the following corollary of the proof of Theorem \ref{T:ncInt}.  
A direct proof of 
this result for the special case where $Q(z) = \sbm{ z_{1} & & \\ & \ddots & \\ & & z_{d}}$ is 
given in \cite[Theorem 6.6]{AKV}.  

\smallskip

\noindent
\textbf{Corollary 4.5.1}    {\sl The classes $\cSA^{\infty}_{Q}(\cU, \cY)$ and 
    $\cSA_{Q}(\cU, \cY)$ coincide.}
    
\smallskip 

It appears that this result is new even for the commutative case. 
More precisely, let $G$ be an open subset of ${\mathbb C}^{d}$,
take the set $\Xi^{\infty}$ to consist of commutative $d$-tuples  $Z  = 
(Z_{1}, \dots, Z_{d})$ of operators on some fixed 
infinite-dimensional separable Hilbert space $\cK$ such that the 
Taylor spectrum $\sigma(Z)$ of $Z$ lies in $G$, let $\cR$ to be a 
collection $\{F_{k} \colon k \in I\}$ of matrix valued functions 
$F_{k}(z) = [f_{k,ij}(z)]$ (where the row and 
column indices for each $F_{k}$ have finite ranges, say $1 \le i \le 
m_{k}$ and $1 \le j \le n_{k}$) such that $\| F_{k}(z) \| < 1$ for $z 
\in G$, and set 
\begin{align*}
 &  q(z) = {\rm diag}_{k \in I} [F_{k}(z) ], \\
& {\mathbb D}^{\infty}_{q} = \{ Z \in \Xi_{\infty} \colon \| q(Z) 
\| < 1\}, \\ 
& \cC {\mathcal SA}^{\infty}_{q}(\cU, \cY) = \{ S \colon 
  {\mathbb D}_{q} \underset{\rm holo} \to \cL(\cU, \cY) \colon \|S(Z) \| \le 1 
  \text{ for } Z \in {\mathbb D}^{\infty}_{q} \},
\end{align*}
where $Z \mapsto S(Z)$ is defined via the Taylor functional calculus.
Then, as is explained in \cite{MP}, restricting now to the scalar 
case $\cU = \cY = {\mathbb C}$, 
$\cC \mathcal{SA}^{\infty}_{q}({\mathbb C})$ is the unit ball of the 
 dual operator algebra $\widetilde \cA_{\cR}$ associated 
with $\cR$ as in \cite{MP}.  Then results of \cite{MP} (using 
operator-algebra techniques quite different from ours) establish the 
commutative Agler-decomposition \eqref{Aglerdecom-a=I} (called {\em 
Agler factorization} in \cite{MP}) for the class
$\cC \mathcal{SA}^{\infty}_{q}({\mathbb C})$.  The results of \cite{MP} 
also establish that the algebra $\widetilde \cA_{\cR}$ is 
\textbf{residually finite-dimensional}, i.e.,  the norm of 
an $n \times n$-matrix $[a_{ij}]$ over $\widetilde A$ is given by
$ \| [a_{ij}] \|_{\widetilde \cA^{n \times n}} = \sup\{\| [ \pi(a_{ij}] 
\| \}$ where the supremum is over all finite-dimensional representations 
$\pi$ of $\cA$.  While this proves that 
\begin{equation}  \label{c-infin=c-fin}
\cC \mathcal{SA}^{\infty}_{q}({\mathbb C}) = \cC \mathcal{SA}_{q}({\mathbb C})
\end{equation}
in many cases, there are examples of 
$\cR$  for which not every finite-dimensional representation $\pi$ is 
of the form $f \mapsto f(Z)$ for a commutative finite-matrix tuple $Z 
= (Z_{1}, \dots, Z_{d}) \in {\mathbb D}_{q}$ (see \cite[Example 
5.5]{MP}).  A question asked in \cite{MP} is whether the identity  
\eqref{c-infin=c-fin} holds in general.  As a result of  Corollary 
4.5.1, we conclude that the answer is indeed yes.
\end{remark}

\section{Multipliers between nc reproducing kernel 
Hilbert spaces}  \label{S:mult}

\subsection{Review of nc reproducing kernel Hilbert spaces and 
contractive multipliers}

Suppose that we are given two cp nc kernels $K'$ and 
$K$, both defined on the Cartesian product ${\mathbb D} \times {\mathbb D}$ of 
a nc set ${\mathbb D} \subset \cV_{\rm nc}$ where $K'$ has values in $\cL(\cA, \cL(\cU))_{\rm nc}$ 
while $K$ has values in $\cL(\cA, \cL(\cY))_{\rm nc}$ (here $\cA$ is a 
$C^{*}$-algebra and $\cU$ and $\cY$ are two 
auxiliary Hilbert spaces).  Suppose next that $S$ is a nc function 
on ${\mathbb D}$ with values in $\cL(\cU, \cY)_{\rm nc}$.  We say that $S$ 
is a \textbf{multiplier} from $\cH(K')$ to $\cH(K)$, 
written as $S \in \cM(K',K)$, if the operator $M_{S}$ given by
$$
  (M_{S} f)(W) = S(W) f(W) \text{ for }  W \in \Omega_{m}
$$
maps $\cH(K')$ boundedly into $\cH(K)$. We can always multiply such 
an $S$ by a positive scalar to arrange that $\| M_{S} \| \le 1$ in 
which case we say that $S$ is a {\em contractive multiplier} and write
$S \in \overline{\cB}\cM(K',K)$.

The following result appears in \cite{BMV1}.

\begin{theorem}  \label{T:contmult}
    Given nc kernels $K'$ and $K$ from ${\mathbb D} \times {\mathbb D}$ 
    to  $\cL(\cA, \cL(\cU))_{\rm nc}$ and $\cL(\cA, 
    \cL(\cY))_{\rm nc}$ respectively and given a nc function 
    $S$ from ${\mathbb D}$ to $\cL(\cU, \cY)_{\rm nc}$, the following are 
    equivalent:
    \begin{enumerate}
	\item $S \in \overline{\cB}\cM(K',K)$.
	
	\item The de Branges-Rovnyak kernel $K^{\dBR}_{S}$ from ${\mathbb D}$ to $\cL(\cA, 
	\cL(\cY))_{\rm nc}$ associated with $S$ given by
\begin{equation}   \label{KS}
  K^{\dBR}_{S}(Z,W)(P) = K(Z,W)(P) - S(Z) \, K'(Z,W)(P)\, S(W)^{*}
\end{equation}
is a cp nc kernel on ${\mathbb D}$.
\end{enumerate}
\end{theorem}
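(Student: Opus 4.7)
The plan is to derive both implications from the action of the adjoint multiplication operator on the nc kernel elements $K_{W,v,y}$ provided by Theorem \ref{T:cpker}(2), together with the reproducing property \eqref{reprod}. First I would observe that $\|M_S\| \le 1$ is equivalent to $I_{\cH(K)} - M_S M_S^{*} \succeq 0$, and that on span of kernel elements this operator inequality can be rewritten entirely in terms of $K$, $K'$, and $S$.

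Concretely, assuming (1) so that $M_S \colon \cH(K') \to \cH(K)$ is bounded, for $g \in \cH(K')$, $W \in {\mathbb D}_m$, $v \in \cA^{1\times m}$, $y \in \cY^m$ I would apply \eqref{reprod} to get
\[
\langle M_S^{*} K_{W,v,y}, g\rangle_{\cH(K')} = \langle K_{W,v,y}, M_S g\rangle_{\cH(K)} = \langle S(W) g(W)(v^{*}), y\rangle_{\cY^m} = \langle g, K'_{W,v,S(W)^{*}y}\rangle_{\cH(K')},
\]
yielding the key identity $M_S^{*} K_{W,v,y} = K'_{W,v,S(W)^{*}y}$. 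Then for any finite family $\{(W^{(i)},v^{(i)},y^{(i)})\}_{i=1}^N$, expanding $\langle (I - M_SM_S^{*})\sum_j K_{W^{(j)},v^{(j)},y^{(j)}}, \sum_i K_{W^{(i)},v^{(i)},y^{(i)}}\rangle$ using the reproducing property for both $K$ and $K'$ gives exactly
\[
\sum_{i,j} \big\langle K^{\dBR}_{S}(W^{(i)},W^{(j)})(v^{(i)*}v^{(j)})\,y^{(j)},\, y^{(i)}\big\rangle_{\cY^{m_i}}.
\]
Since the linear span of the kernel elements $K_{W,v,y}$ is dense in $\cH(K)$ (a standard consequence of the reproducing property), $\|M_S\| \le 1$ is equivalent to non-negativity of the displayed sum for all finite selections, which is precisely the cp nc kernel criterion \eqref{kercp''} for $K^{\dBR}_S$. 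This proves (1) $\Rightarrow$ (2) and suggests the reverse construction.

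For (2) $\Rightarrow$ (1), I would reverse the process: assuming $K^{\dBR}_S$ is a cp nc kernel, define an operator $T$ on the linear span of kernel elements of $\cH(K)$ by $T \colon K_{W,v,y} \mapsto K'_{W,v,S(W)^{*}y}$, extended by linearity. The cp condition on $K^{\dBR}_S$ gives, by the computation above, that for every finite linear combination $h = \sum_i c_i K_{W^{(i)},v^{(i)},y^{(i)}}$ one has $\|Th\|^2_{\cH(K')} \le \|h\|^2_{\cH(K)}$, which simultaneously establishes that $T$ is well-defined (zero input forces zero output) and contractive. Extending $T$ by continuity to all of $\cH(K)$ yields a contraction $T \colon \cH(K) \to \cH(K')$. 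Finally, using \eqref{reprod} once more I would verify that $T^{*} f$ is the pointwise product $S \cdot f$: for any $W, v, y$,
\[
\langle (T^{*}f)(W)(v^{*}), y\rangle_{\cY^m} = \langle T^{*}f, K_{W,v,y}\rangle_{\cH(K)} = \langle f, K'_{W,v,S(W)^{*}y}\rangle_{\cH(K')} = \langle S(W) f(W)(v^{*}), y\rangle_{\cY^m},
\]
so $M_S = T^{*}$ is a bounded contraction from $\cH(K')$ to $\cH(K)$, as desired.

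The main technical hurdle I anticipate is not the algebraic identities, which are direct from the reproducing property, but rather verifying density of the span of kernel elements $\{K_{W,v,y}\}$ in the nc RKHS $\cH(K)$ and the well-definedness step in (2) $\Rightarrow$ (1): one must rule out the possibility that a nontrivial relation among kernel elements in $\cH(K)$ fails to be inherited as a corresponding relation among the image kernel elements in $\cH(K')$. Both points are handled by the same inner-product identity: any null vector in the span of $K$-kernel elements has zero norm, so the contractive estimate forces its image under $T$ to also have zero norm, hence to be zero. With this established, the remaining arguments are bookkeeping, and the equivalence of (1) and (2) follows cleanly.
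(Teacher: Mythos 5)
The paper does not prove Theorem \ref{T:contmult} here; it cites the companion paper \cite{BMV1}, so there is no in-paper argument to compare against. That said, your proof is correct and is the canonical RKHS-multiplier argument: the key identity $M_S^{*}K_{W,v,y}=K'_{W,v,S(W)^{*}y}$ (an immediate consequence of the reproducing property \eqref{reprod}), the expansion of $I-M_SM_S^{*}$ on the span of kernel elements, the density of that span (which follows from the reproducing property exactly as you indicate: orthogonality to all $K_{W,v,y}$ forces $f(W)(v^{*})=0$ for all $W,v$, hence $f=0$), and the reverse direction where the cp estimate simultaneously handles well-definedness and contractivity of the map $T\colon K_{W,v,y}\mapsto K'_{W,v,S(W)^{*}y}$, with $T^{*}=M_S$ identified again via \eqref{reprod}. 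The one place I would tighten the exposition is the precise match between the nonnegativity of
\[
\sum_{i,j}\bigl\langle K^{\dBR}_{S}(W^{(i)},W^{(j)})(v^{(i)*}v^{(j)})\,y^{(j)},\,y^{(i)}\bigr\rangle
\]
over all finite families with $v^{(i)}\in\cA^{1\times m_i}$, $y^{(i)}\in\cY^{m_i}$, and the formal cp-nc-kernel criterion \eqref{kercp'}--\eqref{kercp''}, which in the paper's formulation involves arbitrary $R_i\in\cA^{m_i\times N}$: one should note that because $K^{\dBR}_S$ is already a nc kernel (respects direct sums/intertwinings), testing against row vectors over $\cA$ as you do suffices, since matrix $R_i$ can be absorbed into direct-sum enlargement of the $W^{(i)}$. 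With that remark, the argument is complete and correct.
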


\subsection{Interpolation by contractive multipliers}

 Let us define the \textbf{nc Pick interpolation problem} as follows.  
 
 \smallskip
 
 \noindent
 \textbf{Noncommutative Pick Interpolation Problem:}
  {\sl  Given cp nc kernels $K'$ and $K$ on ${\mathbb D} \times  
  {\mathbb D}$ to respectively $\cL(\cA, \cL(\cU))_{\rm nc}$ and $\cL(\cA, 
  \cL(\cY))_{\rm nc}$, and given a subset 
 $\Omega$ of ${\mathbb D}$ together with a nc function $S_{0}$ from $\Omega$ 
 into $\cL(\cU, \cY)_{\rm nc}$, find a nc function $S$ from all of 
 ${\mathbb D}$ into 
 $\cL(\cU, \cY)_{\rm nc}$ so that 
 \begin{enumerate}  
     \item $S|_{\Omega} = S_{0}$, and
     \item  $S$ is in the 
 contractive multiplier class $\overline{\cB}\cM(K',K)$.
 \end{enumerate}}
 
 \smallskip 
  
  An immediate consequence of Theorem \ref{T:contmult} is the following result 
 giving a necessary condition for solvability of the nc Pick 
 interpolation problem.
 
 \begin{theorem} \label{T:ncP-nec}
     Suppose that $K$, $K'$, $\Omega \subset {\mathbb D}$ and $S_{0}$ 
     form the data set for a solvable nc Pick interpolation problem. Then the 
     associated de Branges-Rovnyak kernel $K^{\dBR}_{S_{0}}$ from 
     $\Omega \times \Omega$ to $\cL(\cA, \cL(\cY))_{\rm nc}$ is a  cp 
     nc kernel on $\Omega$.
 \end{theorem}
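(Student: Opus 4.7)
The plan is to derive this directly from Theorem \ref{T:contmult} by a two-step argument: globalize to $\mathbb{D}$, then restrict to $\Omega$. First I would invoke the hypothesis that the problem is solvable to produce a nc function $S \colon \mathbb{D} \to \cL(\cU,\cY)_{\rm nc}$ in the contractive-multiplier class $\overline{\cB}\cM(K',K)$ with $S|_\Omega = S_0$. Applying Theorem \ref{T:contmult} (specifically the implication (1) $\Rightarrow$ (2)) then tells me that the de Branges-Rovnyak kernel
\[
K^{\dBR}_S(Z,W)(P) = K(Z,W)(P) - S(Z)\, K'(Z,W)(P)\, S(W)^*
\]
is a cp nc kernel on all of $\mathbb{D} \times \mathbb{D}$.

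Next, using the interpolation identity $S|_\Omega = S_0$, I would observe that for $Z, W \in \Omega$ one has $S(Z) = S_0(Z)$ and $S(W) = S_0(W)$, and hence
\[
K^{\dBR}_S(Z,W)(P) = K(Z,W)(P) - S_0(Z)\, K'(Z,W)(P)\, S_0(W)^* = K^{\dBR}_{S_0}(Z,W)(P)
\]
for $Z \in \Omega_n$, $W \in \Omega_m$, $P \in \cA^{n \times m}$. In other words, $K^{\dBR}_{S_0}$ is precisely the restriction of $K^{\dBR}_S$ to $\Omega \times \Omega$.

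Finally I would note that restriction of a cp nc kernel to a subset is again a cp nc kernel: the defining positivity condition \eqref{kercp''} (equivalently \eqref{kercp'''}) quantifies only over finite tuples of points drawn from the domain, so any such tuple of points chosen from $\Omega \subset \mathbb{D}$ witnesses positivity for $K^{\dBR}_{S_0}$ exactly when it does for $K^{\dBR}_S$. The ``respects intertwinings'' and ``graded'' properties likewise pass to arbitrary subsets. Hence $K^{\dBR}_{S_0}$ is a cp nc kernel on $\Omega$, completing the proof.

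There is no real obstacle here; this direction of the Pick problem is the easy direction, being a clean consequence of Theorem \ref{T:contmult} together with the trivial observation that the cp nc kernel property is preserved under restriction to an arbitrary subset of the domain. The substantive work will be in the converse direction (sufficiency), which would presumably invoke the full force of Theorem \ref{T:ncInt} applied to the Left-Tangential Interpolation Problem after suitable reformulation.
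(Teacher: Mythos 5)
Your proof is correct and follows exactly the paper's argument: invoke Theorem~\ref{T:contmult} to get that $K^{\dBR}_S$ is a cp nc kernel on $\mathbb{D}$, then use $S|_\Omega = S_0$ to identify $K^{\dBR}_{S_0}$ with the restriction of $K^{\dBR}_S$ to $\Omega\times\Omega$, and note that restriction preserves the cp nc kernel property. The paper's proof is just a more terse statement of the same reasoning.
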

 
 \begin{proof} By Theorem \ref{T:contmult}.
If $S$ is a solution of the nc Pick interpolation problem, then 
$K^{\dBR}_{S}$ is a cp nc 
kernel on ${\mathbb D}$, whence it follows that its restriction 
     $K^{\dBR}_{S_{0}}$ to the nc subset $\Omega \times \Omega$ 
     is a cp nc kernel on $\Omega$.
 \end{proof}
 
  Let us now restrict our considerations to the case where $K' = 
 I_{\cU} \otimes k$ and $K = I_{\cY} \otimes k$ where $k$ is a scalar 
 cp nc kernel
 $$
 k \colon \Omega \times \Omega \to \cL(\cA, 
 \cL({\mathbb C}))_{\rm nc}.
 $$
Inspired by the commutative case (see 
 \cite{AMcC-book}) we make the following definition.
 
 \begin{definition} \label{D:completePickker} We say that the cp 
    nc scalar kernel $k$ on  ${\mathbb D} \times {\mathbb D}$ is a 
     \textbf{complete Pick nc kernel} if:  
     given any coefficient Hilbert spaces $\cU$ and $\cY$ together with a 
     subset $\Omega$ of ${\mathbb D}$ and a graded function $S_{0}$ from 
     $\Omega$ to $\cL(\cU, \cY)_{\rm nc}$, the necessary 
     condition for existence of a solution of the associated nc Pick 
     interpolation problem given by Theorem \ref{T:ncP-nec} is also 
     sufficient, i.e., {\em the nc Pick interpolation problem 
     has a solution if and only if the associated kernel
\begin{equation}   \label{KS0}
 K_{S_{0}}(Z,W)(P) = 
  k(Z,W)(P) \otimes I_{\cY} - S_{0}(Z) \left( k(Z,W)(P)   \otimes 
  I_{\cU} \right)  S_{0}(W)^{*}
\end{equation}
is a cp kernel on $\Omega$.}
\end{definition}

We now give a general example of a scalar kernel which turns out to 
be a complete Pick nc kernel. Specialize the general situation 
described in Subsection \ref{S:ncdisk} to the case where
 the target space $\cS$ for the nc function $Q$ is taken to be $\cS = 
 {\mathbb C}$.  Let us now use the notation $Q_{0}$ for the nc function on 
$\Xi$ with values in $\cL(\cR, {\mathbb C}_{\rm nc})$ defining the nc 
domain ${\mathbb D}_{Q_{0}}$ as in \eqref{defDQ}. We mention that the $Q_{0}$ 
as in \eqref{defQ0} appearing in the proof of Lemma \ref{L:cone} part (2) above is an 
example of such a $Q_{0}$.  With notation as in the proof of Lemma \ref{L:cone} part (2), 
define a kernel $k_{Q_{0}}$ on ${\mathbb D}_{Q_{0}}$ by
\begin{equation}   \label{kQ0}
    k_{Q_{0}}(Z,W)(P) = \sum_{k=0}^{\infty} (L_{Q_{0}(Z)^{*}})^{(k)*} 
    (P \otimes I_{\cR^{\otimes k}})  (L_{Q_{0}(W)^{*}})^{(k)}.
\end{equation}
The following identities will be quite useful.

\begin{proposition}   \label{P:identities}
    The scalar kernel $k_{Q_{0}}$  \eqref{kQ0} satisfies the following identities:
 \begin{align}
   &  k_{Q_{0}}(Z,W)(P) - Q_{0}(Z) \big( k_{Q_{0}}(Z,W)(P) \otimes 
     I_{\cR} \big)  Q_{0}(W)^{*} = P, \label{id1} \\
 & k_{Q_{0}}(Z,W) \big( P - Q_{0}(Z) (P \otimes I_{\cR}) 
 Q_{0}(W)^{*} \big) = P.  \label{id2}
 \end{align}
 \end{proposition}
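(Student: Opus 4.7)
The plan is to establish both identities by deriving two complementary recursions for the general term
$$T_k(P) := (L_{Q_{0}(Z)^{*}})^{(k)*} (P \otimes I_{\cR^{\otimes k}}) (L_{Q_{0}(W)^{*}})^{(k)}$$
of the series defining $k_{Q_{0}}(Z,W)(P)$. The two identities (id1) and (id2) correspond to the two ways of peeling off a factor of $Q_{0}(W)^{*}$ from the $k$-fold product $(L_{Q_{0}(W)^{*}})^{(k)}$: either the outermost factor $Q_{0}(W)^{*} \otimes I_{\cR^{\otimes(k-1)}}$, or the innermost (and initially applied) factor $Q_{0}(W)^{*}$. Throughout, the summability estimate \eqref{summable} noted in the proof of Lemma \ref{L:cone}(2) ensures that the series converges absolutely in operator norm on ${\mathbb D}_{Q_{0}}$, so that term-by-term manipulations and interchanges with the continuous operations $X \mapsto X \otimes I_{\cR}$, left/right multiplication by $Q_{0}(Z)$ and $Q_{0}(W)^{*}$, and the linear map $k_{Q_{0}}(Z,W)(\cdot)$ are all legitimate.

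For (id1), I would peel $Q_{0}(W)^{*}$ off the inside. The key algebraic observation is the factorization
$$(L_{Q_{0}(W)^{*}})^{(k)} = \big((L_{Q_{0}(W)^{*}})^{(k-1)} \otimes I_{\cR}\big) Q_{0}(W)^{*},$$
obtained by rewriting each factor $Q_{0}(W)^{*} \otimes I_{\cR^{\otimes j}}$ as $(Q_{0}(W)^{*} \otimes I_{\cR^{\otimes(j-1)}}) \otimes I_{\cR}$ and using multiplicativity of $\otimes I_{\cR}$. Substituting this (and its adjoint form on the $Z$-side) into the defining expression for $T_{k}(P)$ and using $P \otimes I_{\cR^{\otimes k}} = (P \otimes I_{\cR^{\otimes(k-1)}}) \otimes I_{\cR}$ together with $(A \otimes I_{\cR})(B \otimes I_{\cR}) = AB \otimes I_{\cR}$, I collapse the inner block to obtain the recursion
$$T_{k}(P) = Q_{0}(Z)\big(T_{k-1}(P) \otimes I_{\cR}\big) Q_{0}(W)^{*}, \qquad k \ge 1.$$
Summing over $k \ge 1$, interchanging the series with $Q_{0}(Z)(\,\cdot \otimes I_{\cR})Q_{0}(W)^{*}$, and using $T_{0}(P) = P$ yields $k_{Q_{0}}(Z,W)(P) - P = Q_{0}(Z)\big(k_{Q_{0}}(Z,W)(P) \otimes I_{\cR}\big) Q_{0}(W)^{*}$, which is (id1).

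For (id2), I would peel $Q_{0}(W)^{*}$ off the outside, using the factorization already appearing in the definition,
$$(L_{Q_{0}(W)^{*}})^{(k)} = (Q_{0}(W)^{*} \otimes I_{\cR^{\otimes(k-1)}}) (L_{Q_{0}(W)^{*}})^{(k-1)},$$
and its $Z$-side adjoint. Substituting into $T_{k}(P)$ and this time absorbing the $Q_{0}(Z)(P \otimes I_{\cR})Q_{0}(W)^{*}$ block into the ``argument'' slot (rather than outside) via the identity $(Q_{0}(Z) \otimes I_{\cR^{\otimes(k-1)}})(P \otimes I_{\cR^{\otimes k}})(Q_{0}(W)^{*} \otimes I_{\cR^{\otimes(k-1)}}) = (Q_{0}(Z)(P \otimes I_{\cR})Q_{0}(W)^{*}) \otimes I_{\cR^{\otimes(k-1)}}$, I obtain the dual recursion
$$T_{k}(P) = T_{k-1}\big(Q_{0}(Z)(P \otimes I_{\cR}) Q_{0}(W)^{*}\big), \qquad k \ge 1.$$
Summing over $k \ge 1$ gives $k_{Q_{0}}(Z,W)(P) - P = k_{Q_{0}}(Z,W)\big(Q_{0}(Z)(P \otimes I_{\cR}) Q_{0}(W)^{*}\big)$, and linearity of $k_{Q_{0}}(Z,W)(\cdot)$ in $P$ rearranges this to (id2).

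The only delicate step, and where I would be most careful, is the bookkeeping of tensor positions and identifications in the factorization $(L_{Q_{0}(W)^{*}})^{(k)} = ((L_{Q_{0}(W)^{*}})^{(k-1)} \otimes I_{\cR})Q_{0}(W)^{*}$ used for (id1), since it requires consistently viewing $(\cR^{\otimes k})^{m}$ as $(\cR^{\otimes(k-1)})^{m} \otimes \cR$ with the freshly added $\cR$-factor in the rightmost position; once that convention is fixed, the computations of both recursions are routine.
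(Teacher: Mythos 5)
Your proposal is correct and reproduces the paper's argument essentially verbatim: the paper proves \eqref{id1} and \eqref{id2} by exactly the inner- and outer-peeling factorizations of $(L_{Q_0(W)^*})^{(k+1)}$ that you describe (namely $(L_{Q_0(W)^*})^{(k+1)} = ((L_{Q_0(W)^*})^{(k)} \otimes I_{\cR})Q_0(W)^*$ and $(L_{Q_0(W)^*})^{(k+1)} = (Q_0(W)^* \otimes I_{\cR^{\otimes k}})(L_{Q_0(W)^*})^{(k)}$ respectively), reindexing the series and absorbing the freed $Q_0(Z)$, $Q_0(W)^*$ factors. The only difference is cosmetic --- you isolate the two one-step recursions for $T_k$ and then sum, while the paper carries out the same telescoping manipulation inline.
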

 
 \begin{proof}  To verify \eqref{id1}, note that
 \begin{align*} & Q_{0}(Z) \left( k_{Q_{0}}(Z,W)(P) \otimes I_{\cR} 
     \right) Q_{0}(W)^{*}   \\
& = \sum_{k=0}^{\infty} Q_{0}(Z) \left( 
     \left[ (L_{Q(Z)^{*}})^{(k)*} (P \otimes I_{\cR^{\otimes k}}) 
     (L_{Q(W)^{*}})^{(k)} \right] \otimes I_{\cR} \right) 
     Q_{0}(W)^{*} \\
 & = \sum_{k=0}^{\infty} Q_{0}(Z) \left( (L_{Q_{0}(Z)^{*}})^{(k)*} 
 \otimes I_{\cR}\right) \left( P \otimes I_{\cR^{\otimes k+1}} 
 \right) \left( (L_{Q_{0}(W)^{*}})^{(k)} \otimes I_{\cR} \right) 
 Q_{0}(W)^{*}  \\
 & = \sum_{k=0}^{\infty} (L_{Q_{0}(Z)^{*}})^{(k+1)*} (P \otimes 
 I_{\cR^{\otimes k+1}}) (L_{Q_{0}(W)^{*}})^{(k+1)}
 = k_{Q_{0}}(Z,W)(P) - P
     \end{align*}
  and \eqref{id1} follows.
  
  Similarly note that
\begin{align*}
 &   k_{Q_{0}}(Z,W)\big( Q_{0}(Z) (P \otimes I_{\cR}) Q_{0}(W)^{*} 
    \big) \\
    & = \sum_{k=0}^{\infty} (L_{Q_{0}(Z)^{*}})^{(k)*} 
 \left( Q_{0}(Z) (P \otimes I_{\cR}) Q_{0}(W)^{*} \otimes 
 I_{\cR^{\otimes k}} \right) (L_{Q_{0}(W)^{*}})^{(k)} = \\
&  \sum_{k=0}^{\infty} (L_{Q_{0}(Z)^{*}})^{(k)*} ( Q_{0}(Z) 
\otimes I_{\cR^{\otimes k}} ) ( P \otimes I_{\cR^{\otimes 
k+1}})( Q_{0}(W)^{*} \otimes I_{\cR^{\otimes k}} ) 
(L_{Q_{0}(W)^{*}})^{(k)}  \\
& = \sum_{k=0}^{\infty} \left( L_{Q_{0}(Z)^{*}} \right)^{(k+1)*} (P 
\otimes I_{\cR^{\otimes k+1}} )  \left( L_{Q_{0}(W)^{*}}\right)^{(k+1)}
= k_{Q_{0}}(Z,W)(P) - P
 \end{align*}
verifying \eqref{id2}
  \end{proof}
  
Since the additional work is minimal, we shall consider the nc Pick 
interpolation problem for a pair of kernels $(k_{Q_{0}} \otimes 
I_{\cU}, k_{Q_{0}} \otimes I_{\cY})$ in a more general 
left-tangential formulation:

\smallskip

\noindent
\textbf{Left-tangential nc Pick Interpolation Problem:} 
{\sl Given a nc function $Q_{0}$ from $\Xi$ into $\cL(\cR, {\mathbb 
C})_{\rm nc}$ defining the nc domain 
${\mathbb D}_{Q_{0}} \subset \Xi$ as above along with 
a subset $\Omega$ of ${\mathbb D}_{Q_{0}}$
together with nc functions $a \in \cT(\Omega'; \cL(\cY, \cE)_{\rm nc})$ and 
$b \in \cT(\Omega'; \cL(\cU, \cE)_{\rm nc})$ for auxiliary Hilbert 
spaces $\cU$, $\cY$, $\cE$, where we set $\Omega'$ equal to the ${\mathbb 
D}_{Q_{0}}$-relative full nc envelope of $\Omega$
\begin{equation}   \label{Omega'0}
\Omega' = \Omega_{\rm nc, full} \cap {\mathbb D}_{Q_{0}},
 \end{equation}
find a function $S$ in the contractive multiplier class 
$\overline{\cB}\cM(k_{Q_{0}} \otimes I_{\cU}, k_{Q_{0}} \otimes 
I_{\cY})$ so that}
\begin{equation} \label{multtanint}
    a(Z) S(Z) = b(Z) \text{ for all } Z \in \Omega.
\end{equation}

\smallskip

\noindent
Note that the special choice $a(Z) = I_{\cE}$ and $b(Z) = S_{0}(Z)$, 
reduces the Left-Tangential nc Pick Interpolation Problem   to 
the nc Pick Interpolation Problem.

By combining Proposition \ref{P:identities} with Theorem 
\ref{T:ncInt}, we arrive at the following solution of the 
Left-Tangential nc Pick Interpolation Problem.

\begin{theorem}  \label{T:multtanint}
    Suppose that $Q_{0}$, $\Omega$, $a$, $b$ are data for a 
    Left-Tangential nc Pick Interpolation Problem as above.  Then the 
    following are equivalent:
    \begin{enumerate}
	\item[(1)] The Left-Tangential nc Pick Interpolation Problem has a 
	solution, i.e., there exists an $S \colon {\mathbb D}_{Q_{0}} 
	\to \cL(\cU, \cY)_{\rm nc}$ in the contractive multiplier 
	class  $\overline{\cB} \cM(k_{0} \otimes I_{\cU}, k_{0}\otimes I_{\cY})$ 
	satisfying the left tangential interpolation conditions \eqref{multtanint}.
	
	\item[(1$^{\prime}$)] The inequality 
\begin{equation}   \label{ineq-mult}
     a(Z)  a(Z)^{*} -  b(Z) b(Z)^{*} \succeq 0
\end{equation}
holds for all $Z \in \Omega'$.

\item[(2)]  The generalized de Branges-Rovnyak kernel $K^{\dBR}_{a,b}$ given by
\begin{align}
&  K^{\dBR}_{a,b}(Z,W)(P)  =\notag \\
& \quad a(Z) \left( k_{Q_{0}}(Z,W)(P) \otimes I_{\cY} 
 \right) a(W)^{*} - b(Z) \left( k_{Q_{0}}(Z,W)(P) \otimes I_{\cU} 
 \right) b(W)^{*}
 \label{KgenSz}
\end{align}
is a cp nc kernel on $\Omega$.

\item[(3)] There exists an auxiliary Hilbert space $\cX$ and a contractive 
(even unitary) colligation matrix 
\begin{equation}   \label{bU-mult}
    \bU = \begin{bmatrix} A & B \\ C & D \end{bmatrix} \colon 
    \begin{bmatrix}   \cX  \\ \cU \end{bmatrix} \to 
 \begin{bmatrix} \cR \otimes \cX \\ \cY \end{bmatrix}
\end{equation} so that the function $S$ defined by
\begin{equation}   \label{transfunc-mult}
    S(Z) = D^{(n)} + C^{(n)} (I - (Q_{0}(Z) \otimes I_{\cX}) 
    A^{(n)})^{-1} (Q_{0}(Z) \otimes I_{\cX}) B^{(n)}
\end{equation}
for $Z \in \Omega_{n}$,
where
$$
\begin{bmatrix} A^{(n)} & B^{(n)} \\ C^{(n)} & D^{(n)} \end{bmatrix}
= \begin{bmatrix} I_{n} \otimes A & I_{n} \otimes B \\ I_{n} \otimes 
C & I_{n} \otimes D \end{bmatrix} \colon \begin{bmatrix} 
\cX^{n} \\ \cU^{n} \end{bmatrix} \to \begin{bmatrix} 
(\cR \otimes \cX )^{n} \\ \cY^{n} \end{bmatrix}.
$$
satisfies the Left-Tangential Interpolation condition \eqref{multtanint} on 
$\Omega$.
\end{enumerate}

Moreover, the implications (2) $\Rightarrow$ (3) and (3) 
$\Rightarrow$ (1) hold under the 
weaker assumption that $a$ and $b$ are only graded (rather than nc) 
functions defined only from 
$\Omega$ to $\cL(\cY, \cE)_{\rm nc}$ and $\cL(\cU, \cE)_{\rm nc}$ 
respectively.
\end{theorem}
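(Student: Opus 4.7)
The plan is to reduce everything to Theorem \ref{T:ncInt} applied with $\cS = {\mathbb C}$ (so the original role of $Q$ is played by the scalar-target nc function $Q_{0}$), leveraging Proposition \ref{P:identities} as the dictionary that translates between Agler decompositions for Schur-Agler class functions and completely positive de Branges--Rovnyak kernels for contractive multipliers.

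The first and central preliminary step is to show that, under the present setup,
\begin{equation}   \label{classident}
   \mathcal{SA}_{Q_{0}}(\cU, \cY) = \overline{\cB} \cM(k_{Q_{0}} \otimes I_{\cU}, k_{Q_{0}} \otimes I_{\cY}).
\end{equation}
For one direction, if $S \in \mathcal{SA}_{Q_{0}}(\cU, \cY)$, then Corollary \ref{C:AMcC-global} provides a cp nc kernel $\Gamma$ with $P \otimes I_{\cY} - S(Z)(P \otimes I_{\cU}) S(W)^{*} = \Gamma(Z,W)(P - Q_{0}(Z)(P \otimes I_{\cR}) Q_{0}(W)^{*})$. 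Substituting $P \mapsto k_{Q_{0}}(Z,W)(P)$ and applying identity \eqref{id1} of Proposition \ref{P:identities} collapses the right-hand side to $\Gamma(Z,W)(P)$, so the de Branges--Rovnyak kernel $K^{\dBR}_{S} = k_{Q_{0}} \otimes I_{\cY} - S(\cdot)(k_{Q_{0}} \otimes I_{\cU})S(\cdot)^{*}$ equals $\Gamma$ and is cp; by Theorem \ref{T:contmult}, $S$ is in the contractive multiplier class. For the converse, if $S$ lies in the multiplier class so that $K^{\dBR}_{S}$ is a cp nc kernel, substitute the argument $P - Q_{0}(Z)(P \otimes I_{\cR}) Q_{0}(W)^{*}$ into $K^{\dBR}_{S}$ and apply identity \eqref{id2} to recover exactly the Agler decomposition of $S$; Corollary \ref{C:AMcC-global} then places $S$ in $\mathcal{SA}_{Q_{0}}(\cU, \cY)$.

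The second step is the analogous translation for the tangential data: the cp-ness of $K^{\dBR}_{a,b}$ in \eqref{KgenSz} is equivalent to condition (2) of Theorem \ref{T:ncInt} applied to $Q_{0}$. Indeed, if condition (2) of Theorem \ref{T:ncInt} holds with cp nc kernel $\Gamma$, substituting $P \mapsto k_{Q_{0}}(Z,W)(P)$ into \eqref{Aglerdecom} and using \eqref{id1} yields $K^{\dBR}_{a,b}(Z,W)(P) = \Gamma(Z,W)(P)$, so $K^{\dBR}_{a,b}$ is cp. Conversely, if $K^{\dBR}_{a,b}$ is cp, defining $\Gamma(Z,W)(P) = K^{\dBR}_{a,b}(Z,W)(P)$ and substituting the argument $P - Q_{0}(Z)(P \otimes I_{\cR})Q_{0}(W)^{*}$, together with identity \eqref{id2}, produces exactly the Agler decomposition \eqref{Aglerdecom} demanded by Theorem \ref{T:ncInt}(2).

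The third step finishes by directly quoting Theorem \ref{T:ncInt} with $Q = Q_{0}$ and $\cS = {\mathbb C}$. Condition (1) of Theorem \ref{T:multtanint} matches condition (1) of Theorem \ref{T:ncInt} via the identification \eqref{classident}; condition (1$^{\prime}$) is identical in both theorems; condition (2) is equivalent as just established; and condition (3), after observing that with $\cS = {\mathbb C}$ the representation $\pi \colon \cL({\mathbb C}) \to \cL(\cX)$ is a multiple of the identity, reduces via Theorem \ref{T:ncfindim} to the concrete tensor-product form $\cR \otimes \cX$ for the state space, with $L_{Q_{0}(Z)^{*}}^{*}$ identified with $Q_{0}(Z) \otimes I_{\cX}$, matching exactly the transfer function \eqref{transfunc-mult}. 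The ``moreover'' clause about weaker graded hypotheses for the implications (2) $\Rightarrow$ (3) $\Rightarrow$ (1) transfers for free from the corresponding clause in Theorem \ref{T:ncInt}.

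The main obstacle I anticipate is purely notational: carefully checking that the tensor-product simplifications of Theorem \ref{T:ncfindim} go through cleanly under the coisometric colligation of Theorem \ref{T:ncInt}, and that the substitution $P \mapsto k_{Q_{0}}(Z,W)(P)$ in the Agler decomposition (whose left-hand side is bilinear in $P$ in a subtle way involving the bimodule structure on $\cA^{n \times m}$) is compatible with the graded/respects-intertwinings properties of all the participating nc kernels. Once these bookkeeping issues are dispatched, the theorem is an immediate corollary of Theorem \ref{T:ncInt} plus Proposition \ref{P:identities}.
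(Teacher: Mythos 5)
Your proposal is correct and follows essentially the same route as the paper's own proof: reduce to Theorem \ref{T:ncInt} with $\cS = \bbC$, use Proposition \ref{P:identities} (identities \eqref{id1} and \eqref{id2}) to translate between Agler decompositions and cp de Branges--Rovnyak kernels, invoke Theorem \ref{T:contmult} for the multiplier characterization, and apply Theorem \ref{T:ncfindim} together with Remark \ref{R:findim} to collapse the abstract $\pi$-tensor state space to the concrete $\cR\otimes\cX$ form. The paper presents the two translation steps in the opposite order (equivalence of conditions (2) first, then of conditions (1)) and inlines the content of Corollary \ref{C:AMcC-global} rather than citing it, but these are cosmetic differences; your ``bookkeeping'' worry about the substitution $P \mapsto k_{Q_{0}}(Z,W)(P)$ is unfounded since with $\cS=\bbC$ this map sends $\bbC^{n\times m}$ to $\bbC^{n\times m}$ and causes no trouble.
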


\begin{proof}  When Theorem \ref{T:ncInt} is specialized to the case 
    $Q = Q_{0}$, it is clear that condition (1$^{\prime}$) in Theorem 
    \ref{T:ncInt} coincides exactly with condition (1$^{\prime}$) in Theorem \ref{T:multtanint}. 
    Furthermore, as a consequence of Remark \ref{T:ncfindim} and 
    Theorem \ref{T:ncfindim}, condition (3) in Theorem \ref{T:ncInt} 
    simplifies exactly to condition (3) in Theorem \ref{T:multtanint} 
    (as also observed in the proof of  Theorem \ref{T:OW} above).
    We shall now show that 
   (i) conditions (2) in Theorem \ref{T:ncInt} (for the case $Q = 
    Q_{0}$) and in Theorem \ref{T:multtanint} coincide and (ii)  
    conditions (1) in  Theorem \ref{T:ncInt} (for the case $Q = 
    Q_{0}$) and in Theorem \ref{T:multtanint} coincide.  Once these 
    correspondences are established, Theorem \ref{T:multtanint} 
    follows as an immediate consequence of Theorem \ref{T:ncInt}.
    
    \smallskip
    
 \noindent
 \textbf{Equivalence of conditions (2).}
    A consequence of the identity \eqref{id2} is that
\begin{align*}
  &  K^{\dBR}_{a,b}(Z,W) \left( P - Q_{0}(Z) (P \otimes I_{\cR}) 
    Q_{0}(W)^{*} \right)\\
    & \quad = a(Z) (P \otimes I_{\cY}) a(W)^{*} - b(Z) 
    (P \otimes I_{\cU}) b(W)^{*} \\
 & \quad = K_{a,b}(Z,W)(P).
\end{align*}
Thus $K^{\dBR}_{a,b}$ being a cp nc kernel implies that $K_{a,b}$ 
has an Agler decomposition \eqref{Aglerdecom}  (with
$\Gamma = K^{\dBR}_{a,b}$ and with $Q_{0}$ in place of $Q$). 
Conversely, suppose that $K_{a,b}$ has an Agler decomposition 
\eqref{Aglerdecom} (with respect to $Q_{0}$ rather than  $Q$).
Then there is a cp nc kernel $\Gamma$ so that
\begin{align}
   & a(Z) (P \otimes I_{\cY}) a(W)^{*} - b(Z) (P \otimes I_{\cU}) 
    b(W)^{*} \notag \\
& \quad = \Gamma(Z,W)\left( P - Q_{0}(Z) (P \otimes 
I_{\cR}) Q_{0}(W)^{*} \right).
\label{Gamma-prop}
\end{align}
Then 
\begin{align*}
 &  K^{\dBR}_{a,b}(Z,W)(P)  \\
& =  a(Z) \left(k_{Q_{0}}(Z,W)(P) \otimes I_{\cY}) \right) a(W)^{*} - 
     b(Z) \left( k_{Q_{0}}(Z,W)(P) \otimes I_{\cU} \right) b(W)^{*} \\ 
 &  =  \Gamma(Z,W) \left( k_{Q_{0}}(Z,W)(P)  -
 Q_{0}(Z) (k_{Q_{0}}(Z,W)(P) \otimes I_{\cR}) Q_{0}(W)^{*} \right) \\
 & \quad \text{(by \eqref{Gamma-prop} with $K_{Q_{0}}(Z,W)(P)$ in 
 place of $P$)} \\
 &  = \Gamma(Z,W)(P) \text{ (by \eqref{id1})}
 \end{align*}
implying that $K^{\dBR}_{a,b} = \Gamma$ is a cp nc kernel. 
The equivalence of the respective conditions (2) now follows.

\smallskip

\noindent
 \textbf{Equivalence of conditions (1):}  Note the interpolation 
 condition \eqref{int} in Theorem \ref{T:ncInt} coincides with the 
 interpolation condition \eqref{multtanint} required in Theorem 
 \ref{T:multtanint}.  To establish the equivalence of the respective 
 conditions (1), it suffices to show that the Schur-Agler class 
 ${\mathcal SA}_{Q_{0}}(\cU, \cY)$ coincides with the contractive 
 multiplier class $\overline{\cB}\cM(k_{Q_{0}} \otimes I_{\cU}, 
 k_{Q_{0}} \otimes I_{\cY})$.

By Theorem \ref{T:contmult}, the nc function $S$ is in the 
contractive multiplier class $\overline{\cB}\cM(k_{Q_{0}} \otimes 
I_{\cU}, k_{Q_{0}} \otimes I_{\cY})$ if and only if the associated 
de Branges-Rovnyak kernel
$$
 K^{\dBR}_{S}(Z,W)(P): =  k_{Q_{0}}(Z,W)(P) \otimes I_{\cY} - S(Z)
  (k_{Q_{0}}(Z,W)(P) \otimes I_{\cU}) S(W)^{*}
$$
is a cp nc kernel on ${\mathbb D}_{Q_{0}}$.  On the other hand, by 
the equivalence of (1) and (2) in Theorem \ref{T:ncInt} for the 
special case $\Omega =  {\mathbb D}_{Q_{0}}$, $\cE = \cY$, $a(Z) = 
I_{\cY^{n}}$ for $Z \in {\mathbb D}_{Q_{0},n}$ and $b(Z) = S(Z)$, we see 
that $S$ is in the Schur-Agler class ${\mathcal SA}_{Q_{0}}(\cU, 
\cY)$ if and only if the kernel $K_{S}$ given by
$$
  K_{S}(Z,W)(P): = P \otimes I_{\cY} - S(Z) (P \otimes I_{\cU}) S(W)^{*}
$$
has an Agler decomposition \eqref{Aglerdecom} (with $Q_{0}$ in place 
of $Q$).  By the calculation in the proof of the equivalence of 
conditions (2) (now specialized to the setting where  
$\Omega =  {\mathbb D}_{Q_{0}}$, $\cE = \cY$, $a(Z) = 
I_{\cY^{n}}$ for $Z \in {\mathbb D}_{Q_{0},n}$ and $b(Z) = S(Z)$)  
these latter two conditions are equivalent.  We conclude that 
${\mathcal SA}_{Q_{0}}(\cU, \cY) = \overline{\cB}\cM(k_{Q_{0}} 
\otimes I_{\cU}, k_{Q_{0}} \otimes I_{\cY})$ as required. 

This completes the proof of Theorem \ref{T:multtanint}.
\end{proof} 

Specializing the equivalence (1) $\Leftrightarrow$ (2) in Theorem 
\ref{T:multtanint} to the case $\cE = \cY$, $a(Z) = I_{\cY^{n}}$ for 
$Z \in \Omega_{n}$ yields the following corollary.

\begin{corollary}   \label{C:Pickker}
    The kernel $k_{Q_{0}}$ given by \eqref{kQ0} is a complete Pick nc 
    kernel.
\end{corollary}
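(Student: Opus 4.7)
The plan is to deduce this corollary directly from Theorem \ref{T:multtanint} by specializing to the ``Pick'' subcase $\cE = \cY$, $a(Z) = I_{\cY^{n}}$ for $Z \in \Omega_{n}$, and $b(Z) = S_{0}(Z)$. Under this choice the left-tangential interpolation condition \eqref{multtanint} becomes $S(Z) = S_{0}(Z)$ on $\Omega$, and the generalized de Branges-Rovnyak kernel $K^{\dBR}_{a,b}$ from \eqref{KgenSz} collapses to exactly the kernel $K_{S_{0}}$ from \eqref{KS0}, since $a(Z)(k_{Q_{0}}(Z,W)(P) \otimes I_{\cY})a(W)^{*} = k_{Q_{0}}(Z,W)(P) \otimes I_{\cY}$.

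First I would note that the contractive multiplier class $\overline{\cB}\cM(k_{Q_{0}} \otimes I_{\cU}, k_{Q_{0}} \otimes I_{\cY})$ appearing in Theorem \ref{T:multtanint} is precisely the target class in Definition \ref{D:completePickker}, with the pair of kernels $K' = k_{Q_{0}} \otimes I_{\cU}$ and $K = k_{Q_{0}} \otimes I_{\cY}$. Thus the equivalence (1) $\Leftrightarrow$ (2) of Theorem \ref{T:multtanint}, applied in the specialization above, yields: a solution $S$ to the nc Pick interpolation problem for $(k_{Q_{0}} \otimes I_{\cU}, k_{Q_{0}} \otimes I_{\cY}, \Omega, S_{0})$ exists if and only if $K_{S_{0}}$ is a cp nc kernel on $\Omega$. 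This is exactly the ``complete Pick'' property required by Definition \ref{D:completePickker}.

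The only point that needs a brief comment is the discrepancy of hypotheses: Definition \ref{D:completePickker} allows $S_{0}$ to be merely a graded function on $\Omega$, whereas Theorem \ref{T:multtanint} nominally requires $a$ and $b$ to be nc functions on the relative full nc envelope $\Omega'$. For the ``if'' direction (cp nc kernel $\Rightarrow$ existence of $S$), this is handled by the ``Moreover'' clause at the end of Theorem \ref{T:multtanint}, which confirms that the implications (2) $\Rightarrow$ (3) $\Rightarrow$ (1) remain valid when $a$ and $b$ are merely graded on $\Omega$. For the ``only if'' direction, if a contractive multiplier $S \in \overline{\cB}\cM(k_{Q_{0}} \otimes I_{\cU}, k_{Q_{0}} \otimes I_{\cY})$ exists with $S|_{\Omega} = S_{0}$, then Theorem \ref{T:contmult} gives that the de Branges-Rovnyak kernel $K^{\dBR}_{S}$ is a cp nc kernel on all of ${\mathbb D}_{Q_{0}}$, and its restriction to $\Omega \times \Omega$ is $K_{S_{0}}$, which is therefore cp on $\Omega$.

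No serious obstacle is expected: this is a clean specialization argument and the entire corollary is bookkeeping on top of the equivalence already established in Theorem \ref{T:multtanint}. The only mild subtlety is tracking the graded-vs-nc distinction in the two directions, which is resolved by invoking the ``Moreover'' clause of Theorem \ref{T:multtanint} in one direction and Theorem \ref{T:contmult} together with restriction in the other.
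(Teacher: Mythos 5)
Your proposal is correct and is essentially the paper's argument: the paper proves Corollary \ref{C:Pickker} precisely by specializing the equivalence (1) $\Leftrightarrow$ (2) of Theorem \ref{T:multtanint} to the case $\cE = \cY$, $a(Z) = I_{\cY^n}$, $b(Z) = S_0(Z)$, under which $K^{\dBR}_{a,b}$ becomes $K_{S_0}$. Your additional remarks on the graded-versus-nc hypotheses (using the ``Moreover'' clause for one direction and Theorem \ref{T:contmult} with restriction for the other) are a careful gloss on a point the paper leaves implicit, but they do not constitute a different route.
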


 \begin{remark}  It is well known (see \cite{Quiggen, AMcC00} as well 
    as \cite[Theorem 7.6]{AMcC-book}) that in the classical case the 
    Szeg\H{o} kernel has a certain universal property with respect to 
    complete Pick kernels.  An open question which we leave for 
    future work is to find the analogue of this result for the 
    noncommutative-function setting.
\end{remark}

Another corollary is the transfer-function realization 
characterization of contractive multipliers in this nc 
Szeg\H{o}-kernel setting. 
 
 \begin{corollary} \label{C:realization}
A graded function $S$ mapping ${\mathbb B}_{nc}^{d}$ to
$\cL(\cU, \cY)_{\rm nc}$
is a nc function which is in the 
contractive-multiplier class $\overline{\cB}\cM(I_{\cU} \otimes 
k_{\dBR}, I_{\cY} \otimes k_{\dBR})$ if and only if $S$ can be 
realized in the transfer-function realization form 
\eqref{transfunc-mult} for $Z \in {\mathbb D}_{Q_{0},n}$.
 \end{corollary}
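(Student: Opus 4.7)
The plan is to derive Corollary \ref{C:realization} as an immediate specialization of Theorem \ref{T:multtanint}, using trivial left-tangential interpolation data spread over the entire domain. First I would identify the setting: the noncommutative ball ${\mathbb B}_{nc}^{d}$ is precisely the domain ${\mathbb D}_{Q_{0}}$ for the linear nc function $Q_{0}(z) = \begin{bmatrix} z_{1} & \cdots & z_{d} \end{bmatrix}$ with values in $\cL({\mathbb C}^{d}, {\mathbb C})_{\rm nc}$, and under this identification the kernel $k_{Q_{0}}$ defined by \eqref{kQ0} coincides with the de Branges-Rovnyak (Drury-Arveson) kernel $k_{\dBR}$, since the Fock-space sum in \eqref{kQ0} becomes the standard free-Szeg\H{o} series over the free monoid $\free$.

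Next I would apply Theorem \ref{T:multtanint} with $\Omega = {\mathbb D}_{Q_{0}}$ (so $\Omega' = \Omega$ trivially, as ${\mathbb D}_{Q_{0}}$ is already a ${\mathbb D}_{Q_{0}}$-relative full nc subset of itself), $\cE = \cY$, $a(Z) = I_{\cY^{n}}$ for $Z \in \Omega_{n}$, and $b(Z) = S(Z)$. The tangential interpolation condition \eqref{multtanint} then reduces to the tautology $S(Z) = S(Z)$, so it imposes no constraint. With these choices, condition (1) of Theorem \ref{T:multtanint} reads exactly that $S$ lies in the contractive multiplier class $\overline{\cB}\cM(k_{\dBR} \otimes I_{\cU}, k_{\dBR} \otimes I_{\cY})$, while condition (3) reads exactly that $S$ admits a transfer-function realization of the form \eqref{transfunc-mult} with $Q_{0}(Z) = \begin{bmatrix} Z_{1} & \cdots & Z_{d} \end{bmatrix}$ and $\cR = {\mathbb C}^{d}$. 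The equivalence (1) $\Leftrightarrow$ (3) of that theorem therefore delivers the corollary.

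The only subtlety, which I do not expect to be a genuine obstacle, concerns the regularity of $S$. In the forward direction ($S$ a contractive multiplier $\Rightarrow$ realization), we are given that $S$ is a nc function, so $b(Z) = S(Z)$ is genuinely a nc function on $\Omega'$ and Theorem \ref{T:multtanint} applies verbatim. In the reverse direction (realization $\Rightarrow$ contractive multiplier), we need to know that the graded function $S$ produced by \eqref{transfunc-mult} is actually a nc function on ${\mathbb D}_{Q_{0}}$; this is exactly what is established at the end of the proof of $(3) \Rightarrow (1)$ in Theorem \ref{T:ncInt}, where the tensor structure of the realization formula together with the functional-calculus conventions from \cite{KVV-book} force $S$ to respect direct sums and similarities. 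Invoking that observation closes the loop and the corollary follows with no additional computation.
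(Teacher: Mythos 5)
Your proposal is correct and follows exactly the same route as the paper's proof: the corollary is read off as the (1) $\Leftrightarrow$ (3) equivalence of Theorem \ref{T:multtanint} specialized to $\Omega = {\mathbb D}_{Q_{0}}$, $\cE = \cY$, $a(Z) = I_{\cY^{n}}$, $b(Z) = S(Z)$, so the interpolation constraint is vacuous. Your extra remarks on the identification $k_{Q_{0}} = k_{\dBR}$ when $Q_{0} = Q_{\rm row}$ and on the regularity of $S$ (graded given by \eqref{transfunc-mult} $\Rightarrow$ genuinely nc, via the observation in the proof of $(3) \Rightarrow (1)$ of Theorem \ref{T:ncInt}) are accurate and usefully spell out details the paper compresses into a one-sentence proof.
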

 
  \begin{proof}  The assertion of this corollary amounts to the 
     equivalence (1) $\Leftrightarrow$ (3) in Theorem 
     \ref{T:multtanint} for the special case where $\Omega = 
     {\mathbb D}_{Q_{0}}$, $\cE = \cY$, $a(Z) = I_{\cY^{n}}$ for $Z 
     \in {\mathbb D}_{Q_{0},n}$, $b(Z) = S(Z)$.
 \end{proof}

\section{Examples and applications of the Schur-Agler class 
interpolation and realization theorems}

In this final section we collect some additional illustrative 
special cases and examples of the general theory. 

\subsection{Single-point interpolation and Stein domination of interpolation node 
by interpolation value}  \label{S:Steindom}
The following proposition amounts to a more explicit formulation of 
Corollary \ref{C:AMcC-Pick}.  To better formulate the result, it is 
convenient to introduce the following definition. For the second 
definition below, just as in Case 1 of the proof of Theorem 
\ref{T:ncInt}, we assume that the ambient vector space $\cV$ for 
the set of points $\Xi$ is ${\mathbb C}^{d}$ (with $d$ chosen 
sufficiently large); then $\cV$ carries an operator-space structure via 
the identification ${\mathbb C}^{d} \cong \cL({\mathbb C}^{d}, 
{\mathbb C})$. Given a point $Z^{(0)} \in {\mathbb D}_{Q,n}$ and an operator $\Lambda_{0} \in 
\cL(\cU, \cY)^{n \times n} \cong \cL(\cU^{n}, \cY^{n})$, let us say that 
\begin{enumerate}
    \item 
    \textbf{$\Lambda_{0}$ 
dominates $Q(Z^{(0)})$ in the sense of Stein}, written as $\Lambda_{0} 
\succeq_{\cS} Q(Z^{(0)})$, if 
\begin{align}
&   0 \preceq P \in {\mathbb C}^{n \times n},
 \,    P \otimes I_{\cS} -  Q(Z^{(0)})  ( P  \otimes I_{\cR}) 
  Q(Z^{(0)})^{*} \succeq 0 \notag \\
 & \quad  \Rightarrow 
 P \otimes I_{\cY} - 
 \Lambda_{0} ( P \otimes I_{\cU})  \Lambda_{0}^{*}  \succeq 0,
 \label{Stein}
\end{align}
and
\item 
\textbf{$\Lambda_{0}$ 
dominates $Q(Z^{(0)})$ in the strict-Stein sense}, written as $\Lambda_{0} 
\succeq_{s\cS} Q(Z^{(0)})$, if, whenever it is the case that there is 
a $\delta > 0$ so that for any $P \in {\mathbb C}^{n \times n}$ with $P 
\succeq 0$ such that the two inequalities
\begin{align}
& P  -   (1 - \delta^{2}) Z^{(0)} ( P \otimes I_{{\mathbb C}^{d}})  Z^{(0)*} \succeq 0,  \notag \\
& (1 - \delta^{2}) P \otimes I_{\cS} -  
 Q(Z^{(0)})( P \otimes I_{\cR})  Q(Z^{(0)})^{*} \succeq 0
 \label{strictStein1}
\end{align}
hold, it follows that 
\begin{equation}   \label{strictStein2}
 P \otimes I_{\cY} - 
 \Lambda_{0} ( P \otimes I_{\cU})  \Lambda_{0}^{*}  \succeq 0.
\end{equation}
\end{enumerate}

Then we have the following refinement of Corollary \ref{C:AMcC-Pick}.

\begin{proposition}  \label{P:singlepoint}
    Let $Q$ and ${\mathbb D}_{Q}$ be as in Theorem \ref{T:ncInt} and 
    suppose that $Z^{(0)} \in {\mathbb D}_{Q,n}$ and $\Lambda_{0} \in 
    \cL(\cU^{n}, \cY^{n})$ for some $n \in {\mathbb N}$.  Consider 
    the following two statements: 
\begin{enumerate}
\item  There exists a function $S$ in the Schur-Agler class 
$\mathcal{SA}_{Q}(\cU, \cY)$ satisfying the interpolation condition
\begin{equation}   \label{int-singlepoint}
S(Z^{(0)}) = \Lambda_{0}.
\end{equation}
\item  For $k = n \cdot \dim \cY$, $\bigoplus_{1}^{k} \Lambda_{0}$ dominates 
$\bigoplus_{1}^{k} Q(Z^{(0)})$ 
in the strict-Stein sense \eqref{strictStein1}--\eqref{strictStein2}. 
\end{enumerate}
Then (1) $\Rightarrow$ (2). If we assume in addition that there exists a nc function $S_{0}$ (not 
necessarily contractive) on $\Omega'_{0}$ (defined as in Remark \ref{R:weakhy} with 
$\Omega = \{ Z^{(0)}\}$)  with $S_{0}(Z^{(0)}) = \Lambda_{0}$, then
(2) $\Rightarrow$ (1). 
\end{proposition}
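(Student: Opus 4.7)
The plan is to derive both implications from the machinery already developed in the proof of Theorem \ref{T:ncInt}, translating between the matricial Stein-type inequalities at $Z^{(0)}$ (with free parameter $P$) and norm inequalities at points $\widetilde Z$ in the set $\Omega'_{0} \subset \Omega'$ introduced in Remark \ref{R:weakhy}.

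For (1) $\Rightarrow$ (2), I set $W^{(0)} = \bigoplus_{1}^{k} Z^{(0)}$ and $M = \bigoplus_{1}^{k} \Lambda_{0} = S(W^{(0)})$, using that nc functions respect direct sums. By Theorem \ref{T:ncInt} the Schur-Agler-class function $S$ admits a transfer-function realization with a contractive colligation $\bU$, and the computation of Remark \ref{R:otherimps} culminating in \eqref{isom4} furnishes the identity
\[
P \otimes I_{\cY} - M (P \otimes I_{\cU}) M^{*} = H_{0}(W^{(0)})\, ({\rm id} \otimes \pi)\bigl(P \otimes I_{\cS} - Q(W^{(0)})(P \otimes I_{\cR}) Q(W^{(0)})^{*}\bigr)\, H_{0}(W^{(0)})^{*}
\]
for every $P \in {\mathbb C}^{nk \times nk}$, with $H_{0}(W^{(0)}) = C^{(nk)}(I - L_{Q(W^{(0)})^{*}}^{*} A^{(nk)})^{-1}$. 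Whenever the second hypothesis of \eqref{strictStein1} holds for a given $(P,\delta)$, the bracketed argument on the right is positive semidefinite, and complete positivity of $({\rm id} \otimes \pi)$ propagates positivity to the left, yielding \eqref{strictStein2}; this establishes strict-Stein domination, with any $\delta > 0$ working uniformly (the first hypothesis in \eqref{strictStein1} is not needed for this direction).

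For (2) $\Rightarrow$ (1), I use strict-Stein domination to verify the weakened hypothesis (1$^{\prime}_{0}$) from Remark \ref{R:weakhy}, after which the chain (1$^{\prime}_{0}$) $\Rightarrow$ (2) $\Rightarrow$ (3) $\Rightarrow$ (1) of Theorem \ref{T:ncInt} (applied with $a \equiv I$, $b = S_{0}$, and $\cE = \cY$) yields the desired Schur-Agler interpolant. Fix $\widetilde Z \in \Omega'_{0,m}$ along with an injective $\cI \in {\mathbb C}^{k \times m}$ satisfying $\cI \widetilde Z = W^{(0)} \cI$, and set $P := \cI \cI^{*}$. The nc-function intertwining identities $(\cI \otimes I_{\cS}) Q(\widetilde Z) = Q(W^{(0)}) (\cI \otimes I_{\cR})$, $\cI \widetilde Z_{j} = W^{(0)}_{j} \cI$ for each coordinate $j$, and $(\cI \otimes I_{\cY}) S_{0}(\widetilde Z) = M (\cI \otimes I_{\cU})$ (the last from $S_{0}$ being a nc function on $\Omega'_{0}$ with $S_{0}(Z^{(0)}) = \Lambda_{0}$), combined with the injectivity of $\cI$, reduce the strict-Stein hypotheses \eqref{strictStein1} at $P = \cI \cI^{*}$ to the norm bounds $\|Q(\widetilde Z)\|^{2} \le 1 - \delta^{2}$ and $\|\chi(\widetilde Z)\|^{2} \le (1-\delta^{2})^{-1}$, and reduce the conclusion \eqref{strictStein2} to $I - S_{0}(\widetilde Z) S_{0}(\widetilde Z)^{*} \succeq 0$. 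Since $\widetilde Z \in {\mathbb D}_{Q}$ forces $\|Q(\widetilde Z)\| < 1$ strictly and the finite matrix $\widetilde Z$ has $\|\chi(\widetilde Z)\| < \infty$, both bounds can be arranged by choosing $\delta > 0$ suitably small (depending on $\widetilde Z$); invoking strict-Stein then delivers $\|S_{0}(\widetilde Z)\| \le 1$, which is exactly (1$^{\prime}_{0}$).

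The main subtlety is the tensor-factor bookkeeping (carefully tracking $I_{\cR}, I_{\cS}, I_{\cU}, I_{\cY}$ through the substitution $P = \cI \cI^{*}$) and reading the strict-Stein definition so that the parameter $\delta$ is permitted to depend on $P$ (equivalently, on $\widetilde Z$); the choice $k = n \cdot \dim \cY$ is precisely what the weakening in Remark \ref{R:weakhy} requires in order to cover every $\widetilde Z \in \Omega'_{0}$, while the assumed nc extension $S_{0}$ provides the values $S_{0}(\widetilde Z)$ needed to formulate \eqref{strictStein2} at each such $\widetilde Z$.
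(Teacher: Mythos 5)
Your proof of (2)~$\Rightarrow$~(1) follows the paper's route essentially line for line: verify the weakened hypothesis (1$^{\prime}_{0}$) of Remark~\ref{R:weakhy} by taking $P = \cI\cI^{*}$ for each $\widetilde Z \in \Omega'_{0}$ with its associated injective intertwiner, use the ``respects intertwinings'' property of $Q$, $\chi$, and $S_{0}$ together with injectivity of $\cI$ to pull the Stein-type inequalities back to $\| Q(\widetilde Z)\| < 1$, $\|\widetilde Z\| < \infty$, and $\|S_{0}(\widetilde Z)\| \le 1$, and choose $\delta$ small (not, as the paper misstates, ``close to 1'') depending on $\widetilde Z$; your explicit observation that the strict-Stein parameter $\delta$ must be read as depending on $P$ is the correct (and necessary) reading. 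Your proof of (1)~$\Rightarrow$~(2), however, is a genuinely different and leaner argument than the paper's. The paper runs the Douglas-lemma machinery in reverse: it factors $P = \cI\cI^{*}$, uses the \emph{first} inequality of \eqref{strictStein1} (Douglas) to manufacture a point $\widetilde Z$ intertwined with $\bigoplus_{1}^{k} Z^{(0)}$ via $\cI$, uses the \emph{second} inequality to conclude $\|Q(\widetilde Z)\| < 1$ so $\widetilde Z \in {\mathbb D}_{Q}$, and then cites $\|S(\widetilde Z)\| \le 1$. You instead invoke the transfer-function realization of $S$ (available from Corollary~\ref{C:AMcC-global} with a unitary colligation) and the de Branges--Rovnyak identity \eqref{isom4} from Remark~\ref{R:otherimps}, observing that complete positivity of ${\rm id} \otimes \pi$ immediately propagates positivity of $P \otimes I_{\cS} - Q(W^{(0)})(P\otimes I_{\cR})Q(W^{(0)})^{*}$ to positivity of $P \otimes I_{\cY} - \Lambda_{0}^{\oplus k}(P\otimes I_{\cU})(\Lambda_{0}^{\oplus k})^{*}$. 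This bypasses the Douglas lemma entirely, never uses the first hypothesis of \eqref{strictStein1}, and in fact establishes the stronger conclusion that $\bigoplus_{1}^{k}\Lambda_{0}$ dominates $\bigoplus_{1}^{k} Q(Z^{(0)})$ in the plain Stein sense \eqref{Stein} (not merely the strict-Stein sense) for general $Q$ -- a strengthening that the paper obtains only in the special case $\cS = {\mathbb C}$ of Proposition~\ref{P:singlepoint0} and does not record for general $Q$. What the paper's Douglas-lemma approach buys is a more ``point-wise'' picture that parallels the converse direction symmetrically, at the cost of needing both hypotheses of \eqref{strictStein1}; your approach buys brevity and a sharper statement.
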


\begin{proof}  
Suppose that the interpolation problem 
\eqref{int-singlepoint} has a Schur-Agler class solution $S \in 
\mathcal{SA}_{Q}(\cU, \cY)$ and that  $P \in {\mathbb 
C}^{k\cdot n \times k \cdot n}$ satisfies the hypotheses 
\eqref{strictStein1} for the strict-Stein dominance condition 
$\bigoplus_{1}^{k} \Lambda_{0} \succeq_{s \cS} \bigoplus_{1}^{k} 
Q(Z^{(0)})$, namely:
 \begin{equation}   \label{ineq0} 
    P  - (1 - \delta^{2})\left( \bigoplus_{1}^{k} Z^{(0)} \right) \,( P 
    \otimes I_{{\mathbb C}^{d}}) \, \left( 
    \bigoplus_{1}^{k} Z^{(0)} \right)^{*} \succeq 0
    \end{equation}
 and
 \begin{equation} \label{ineq1}
    (1 - \delta^{2}) P \otimes I_{\cS} - \left( \bigoplus_{1}^{k} 
	Q(Z^{(0)}) \right) ( P \otimes I_{\cR}) 
\left(	\bigoplus_{1}^{k}Q(Z^{(0)}) \right)^{*}  \succeq 0.
\end{equation}
 Let $m$ be the rank of $P$ and let $P = \cI \cI^{*}$ be a 
 full-rank factorization of $P$ with an injective $\cI$ of size 
 $k \cdot n \times m$. 
 An application of the Douglas lemma (see \cite{Douglas}) applied to the 
 condition \eqref{ineq0} with $\cI \cI^{*}$ substituted for $P$ gives 
 the existence of a $\widetilde Z \in \cV^{m \times m}$ with norm-squared at most 
 $(1 - \delta^{2})^{-1}$ solving the 
 factorization problem
 \begin{equation}   \label{have0}
   \cI  \widetilde Z = \left( \bigoplus_{1}^{k} 
    Z^{(0)} \right) (\cI \otimes I_{{\mathbb C}^{d}}).
  \end{equation}
  By the ``respects intertwinings''  property of the nc function $Q$, 
  we then have
 \begin{equation}   \label{have1}
 ( \cI  \otimes I_{\cS}) Q(\widetilde Z) =  \left( \bigoplus_{1}^{k} 
    Q(Z^{(0)}) \right) (\cI \otimes I_{\cR}).
 \end{equation}
 A similar application of the Douglas lemma applied to the inequality
 \eqref{ineq1} assures us that there is an operator $Y \in 
 \cL(\cR^{m}, \cS^{m})$  with $\| Y \|^{2} \le 1- \delta^{2} < 1$ such that
 \begin{equation}  \label{have2}
   (\cI \otimes I_{\cS})  Y = \left( \bigoplus_{1}^{k}Q(Z^{(0)}) 
   \right) (\cI \otimes I_{\cR}).
 \end{equation}
 As $\cI$ is injective, we see from \eqref{have1} and \eqref{have2} 
 that $Y = Q(\widetilde Z)$.  As $\| Q(\widetilde Z) \| = \| Y \| < 1$, we conclude from 
 \eqref{have0} that $\widetilde Z \in \Omega'_{0} \subset {\mathbb 
 D}_{Q}$.  As $S$ by assumption is in the nc Schur-Agler class 
 $\mathcal{SA}_{Q}(\cU, \cY)$, we conclude that $\| S(\widetilde Z) 
 \| \le 1$.  From the intertwining relation \eqref{have0} and the 
 fact that the nc function $S$ respects direct sums and 
 intertwinings, we see that we must have
 $$
   (\cI \otimes I_{\cY}) S(\widetilde Z) = \left( \bigoplus_{1}^{k} 
   S(Z^{(0)} )\right) (\cI \otimes I_{\cU}).
 $$
 From the fact that $\| S(\widetilde Z) \| \le 1$, i.e., $I_{\cY^{m}} 
 - S(\widetilde Z) S(\widetilde Z)^{*} \succeq 0$, we get
 \begin{align*} & P \otimes I_{\cY} - \left( \bigoplus_{1}^{k} 
     \Lambda_{0} \right) (P \otimes I_{\cU})  \left( \bigoplus_{1}^{k} 
     \Lambda_{0} \right)^{*} \\
     & \quad = \cI \cI^{*} \otimes I_{\cY} - \left( \bigoplus_{1}^{k} 
     S(Z^{(0)}) \right) (\cI \cI^{*} \otimes I_{\cU})  \left(\bigoplus_{1}^{k} 
     S(Z^{(0)}) \right)^{*}  \\
     & \quad = (\cI \otimes I_{\cY}) \left( I_{\cY^{m}} - S(\widetilde Z) S(\widetilde 
     Z)^{*} \right) (\cI^{*} \otimes I_{\cY}) \succeq 0
  \end{align*}
  and hence 
  \begin{equation}   \label{have3}
       P \otimes I_{\cY} - \left( \bigoplus_{1}^{k} 
     \Lambda_{0} \right) (P \otimes I_{\cU})  \left( \bigoplus_{1}^{k} 
     \Lambda_{0} \right)^{*}  \succeq 0,
  \end{equation}
  i.e., the inequality \eqref{strictStein2} holds with 
  $\bigoplus_{1}^{k} \Lambda_{0}$ in place of $\Lambda_{0}$.  This 
  completes the proof that the strict-Stein dominance condition
 $\bigoplus_{1}^{k}  \Lambda_{0} \succeq_{s\cS} \bigoplus_{1}^{k} Q(Z^{(0)})$ holds.

    Conversely, suppose that there is a nc function $S_{0}$ on 
    $\Omega'_{0}$ with $S(Z^{(0)}) = \Lambda_{0}$ and that 
    $\bigoplus_{1}^{k} \Lambda_{0} \succeq_{s\cS} \bigoplus_{1}^{k} Q(Z^{(0)})$. 
    By Remark \ref{R:weakhy} applied to the case 
    $a(Z^{(0)}) = I_{\cY^{n}}$, $b(Z^{(0)}) = \Lambda_{0}$, $\cE =\cY$, 
    to verify that there exists an $S \in \mathcal{SA}_{Q}(\cU, \cY)$ with 
    $S(Z^{(0)}) = \Lambda_{0}$, it suffices to show that the function 
    $S_{0}$ must in fact be contractive.  Let $\widetilde Z$ be a 
    point of $\Omega'_{0}$.  Then there is an injective  $\cI \in 
    {\mathbb C}^{k \cdot n \times m}$
    (for some $m$ with $1 \le m \le k \cdot n$) so that 
     \begin{equation}   \label{intertwine1}
      \cI \widetilde Z = \left(\bigoplus_{1}^{k} Z^{(0)} \right) (\cI 
      \otimes I_{{\mathbb C}^{d}}).
    \end{equation}
    If $\widetilde Z$ has $\| \widetilde Z \|^{2} \le \frac{1}{1- 
    \delta^{2}}$ for some $\delta < 1$, it follows that
    $ I_{{\mathbb C}^{m}}-  (1- \delta^{2}) \widetilde Z \widetilde 
    Z^{*} \succeq 0$. It then follows that
    \begin{align*}
  &  \cI \cI^{*}  -  (1 - \delta^{2}) \left(\bigoplus_{1}^{k} Z^{(0)} \right) 
  (\cI \cI^{*} \otimes I_{{\mathbb C}^{d}}) \left( \bigoplus_{1}^{k}  Z^{(0)} \right)^{*}  \\
    & \quad = \cI \left(  I_{{\mathbb C}^{m}} - (1 - \delta^{2}) \widetilde 
    Z \widetilde Z^{*} \right) \cI^{*} \succeq 0. 
    \end{align*}
    If we set $P = \cI \cI^{*}$, we then see that $P \succeq 0$ in ${\mathbb 
    C}^{k \cdot n \times k \cdot n}$ satisfies \eqref{ineq0}.
   For this $\widetilde Z$ to be in $\Omega'_{0}$, we also require 
    that $\| Q (\widetilde Z) \| < 1$.  Choosing $\delta < 1$ 
    sufficiently close to 1, we may assume that $\| Q(\widetilde Z) 
    \|^{2} < 1 - \delta^{2}$, or $(1 - \delta^{2}) I_{\cS^{m}} - Q(\widetilde Z) 
    Q(\widetilde Z)^{*} \succeq 0$.  Combining this with the 
    intertwining \eqref{intertwine1} then leads to the conclusion
    \begin{align*}
& (1 - \delta^{2})\cI \cI^{*} \otimes I_{\cS} - \left(\bigoplus_{1}^{k} 
	Q(Z^{(0)}) \right) ( \cI \cI^{*} \otimes I_{\cR}) 
\left(	\bigoplus_{1}^{k} Q(Z^{(0)}) \right)^{*}   \\
& \quad = ( \cI \otimes I_{\cS}) \left( (1 - \delta^{2}) I_{\cS^{m}} - Q(\widetilde Z) 
Q(\widetilde Z)^{*} \right) (\cI^{*} \otimes I_{\cS}) \succeq 0
\end{align*}
and consequently $P$ also satisfies \eqref{ineq1}.
As the inequalities \eqref{ineq0}--\eqref{ineq1} amount to the 
assumptions \eqref{strictStein1} in the strict-Stein dominance 
condition $\bigoplus_{1}^{k} \Lambda_{0} \succeq_{s\cS} 
\bigoplus_{1}^{k} Q(Z^{(0)})$, our standing hypothesis that this 
strict-Stein dominance condition holds implies that 
that \eqref{strictStein2} holds (with 
$\bigoplus_{1}^{k} Q(Z^{(0)})$ in place of $Z^{(0)}$ and
$\bigoplus_{1}^{k} \Lambda_{0}$ in place of $\Lambda_{0}$), i.e., that
$$
 \cI \cI^{*} \otimes I_{\cY} - \left( \bigoplus_{1}^{k} \Lambda_{0}\right)
 (\cI \cI^{*} \oplus I_{\cU}) 
\left( \bigoplus_{1}^{k} \Lambda_{0}\right)^{*} \succeq 0.
$$
An application of the Douglas lemma \cite{Douglas} then gives us 
the existence of an operator $Y \in \cL(\cU^{k}, \cY^{k})$ with $\| Y \| \le 1$
solving the factorization problem
\begin{equation} \label{defY}
   (\cI \otimes I_{\cY}) Y = \left(  \bigoplus_{1}^{k} 
   \Lambda_{0}\right)  (\cI \otimes I_{\cU}).
\end{equation}
As $S_{0}$ is a nc function on $\Omega'_{0}$ with $S_{0}(Z^{(0)}) = 
\Lambda_{0}$, from the relation \eqref{intertwine1} we deduce that
\begin{equation} \label{S0intertwine}
(\cI \otimes I_{\cY}) S_{0}(\widetilde Z) =  \left(\bigoplus_{1}^{k} 
   \Lambda_{0}\right)  (\cI \otimes I_{\cU}).
\end{equation}
As $\cI$ is injective, from \eqref{defY} and \eqref{S0intertwine} we 
deduce that $S_{0}(\widetilde Z) = Y$ and hence $\|S_{0}(\widetilde 
Z)\| = \| Y \| \le 1$.  As $\widetilde Z$ was an arbitrary point in 
$\Omega'_{0}$, we see that in fact $S_{0}$ is contractive on 
$\Omega'_{0}$.
It now follows from the content of Remark 
\ref{R:weakhy} that there exists a nc  Schur-Agler class function $S 
\in \mathcal{SA}_{Q}(\cU, \cY)$ satisfying the interpolation 
condition $S(Z^{(0)}) = \Lambda_{0}$.
 \end{proof}

  In the setting of Section \ref{S:mult} where $\cS = {\mathbb C}$ and 
 $Q(z) = Q_{0}(z)$ in the notation there, it is possible to use the 
 complete positivity condition in statement (2) of Theorem 
 \ref{T:multtanint} to get a more definitive version of the result 
 in Proposition \ref{P:singlepoint} for this case. 
 
 \begin{proposition}   \label{P:singlepoint0}  Suppose that $Q_{0}$ 
     is as in Section \ref{S:mult}, $Z^{(0)} \in {\mathbb 
     D}_{Q_{0},n}$ and $\Lambda_{0} \in \cL(\cU, \cY)^{n \times n}$.  
     Then the following conditions are equivalent:
     \begin{enumerate}
	 \item There exists $S$ in the Schur-Agler class 
	 $\mathcal{SA}_{Q_{0}}(\cU, \cY)$, or equivalently in the 
	 contractive multiplier class $\overline{\cB}\cM(k_{Q_{0}} 
	 \otimes I_{\cU}, k_{Q_{0}} \otimes I_{\cY})$, satisfying the 
	 interpolation condition 
\begin{equation}  \label{int-singlepoint'}
    S(Z^{(0)}) = \Lambda_{0}.
\end{equation}

\item $\bigoplus_{1}^{n} \Lambda_{0}$  dominates 
$\bigoplus_{1}^{n} Q_{0}(Z^{(0)})$ in the sense of Stein 
\eqref{Stein}.
\end{enumerate} 
 \end{proposition}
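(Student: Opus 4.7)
The plan is to deduce Proposition \ref{P:singlepoint0} from Theorem \ref{T:multtanint}, specialized to the single-node interpolation data $\Omega = \{Z^{(0)}\}$, $\cE = \cY$, $a(Z^{(0)}) = I_{\cY^n}$, $b(Z^{(0)}) = \Lambda_0$, using the explicit inversion identities for the nc Szegő-type kernel $k_{Q_0}$ supplied by Proposition \ref{P:identities}. With these choices the interpolation condition \eqref{multtanint} reduces to $S(Z^{(0)}) = \Lambda_0$, and Theorem \ref{T:multtanint}(2), combined with the singleton case ($N=1$) of Proposition \ref{P:fgncset}, reformulates condition (1) of our proposition as the assertion that the linear map
\[
\Phi \colon {\mathbb C}^{n\times n} \to \cL(\cY^n), \qquad \Phi(P) = k_{Q_0}(Z^{(0)},Z^{(0)})(P) \otimes I_\cY - \Lambda_0\bigl(k_{Q_0}(Z^{(0)},Z^{(0)})(P)\otimes I_\cU\bigr)\Lambda_0^*
\]
is completely positive. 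For the implication (1) $\Rightarrow$ this reformulation we need no nc-function extension of $b$: Theorem \ref{T:contmult} directly supplies the cp nc kernel property of $K^{\dBR}_S$ on all of ${\mathbb D}_{Q_0}$, and one then restricts to $\{Z^{(0)}\}$ using $S(Z^{(0)}) = \Lambda_0$; the reverse implication works for graded $a$, $b$ by the last sentence of Theorem \ref{T:multtanint}.

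The second step replaces complete positivity of $\Phi$ by its $n$-positivity via Choi's theorem, since the domain ${\mathbb C}^{n\times n}$ has matrix size $n$. Setting $\widetilde Z^{(0)} := \bigoplus_1^n Z^{(0)}$, I use the ``respects direct sums'' property of the nc kernel $k_{Q_0}$, the direct-sum behaviour $Q_0(\widetilde Z^{(0)}) = \bigoplus_1^n Q_0(Z^{(0)})$ of the nc function $Q_0$, and the block structure of $\bigoplus_1^n \Lambda_0$, to write the amplified map explicitly: for $P \in {\mathbb C}^{n^2 \times n^2}$,
\[
(\Phi \otimes \mathrm{id}_{{\mathbb C}^{n\times n}})(P) = k_{Q_0}(\widetilde Z^{(0)},\widetilde Z^{(0)})(P) \otimes I_\cY - \Bigl(\bigoplus_1^n \Lambda_0\Bigr)\bigl(k_{Q_0}(\widetilde Z^{(0)},\widetilde Z^{(0)})(P) \otimes I_\cU\bigr)\Bigl(\bigoplus_1^n \Lambda_0\Bigr)^*.
\]
Thus condition (1) of the proposition is equivalent to positivity of this quantity for every $P \succeq 0$ in ${\mathbb C}^{n^2 \times n^2}$.

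The final step is an algebraic change of variables furnished by Proposition \ref{P:identities}. Writing $\Psi(Q) := Q - \bigl(\bigoplus_1^n Q_0(Z^{(0)})\bigr)(Q \otimes I_\cR)\bigl(\bigoplus_1^n Q_0(Z^{(0)})\bigr)^*$ for $Q \in {\mathbb C}^{n^2 \times n^2}$, the identities \eqref{id1} and \eqref{id2} applied at $\widetilde Z^{(0)}$ assert that $\Psi$ and $k_{Q_0}(\widetilde Z^{(0)},\widetilde Z^{(0)})$ are mutually inverse linear maps on ${\mathbb C}^{n^2 \times n^2}$. Since $k_{Q_0}(\widetilde Z^{(0)},\widetilde Z^{(0)})$ is completely positive, the substitution $Q := k_{Q_0}(\widetilde Z^{(0)},\widetilde Z^{(0)})(P)$ carries $\{P \succeq 0\}$ bijectively onto $\{Q \succeq 0 : \Psi(Q) \succeq 0\}$, and in this new variable the positivity requirement from the previous paragraph becomes exactly the Stein dominance $\bigoplus_1^n \Lambda_0 \succeq_\cS \bigoplus_1^n Q_0(Z^{(0)})$ of condition (2).

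The main technical point to handle with care is the bijection in the last paragraph: in one direction, for $P \succeq 0$ we invoke complete positivity of $k_{Q_0}(\widetilde Z^{(0)},\widetilde Z^{(0)})$ to get $Q \succeq 0$, and then \eqref{id1} identifies $\Psi(Q) = P \succeq 0$; in the converse direction \eqref{id2} recovers any admissible $Q$ as $k_{Q_0}(\widetilde Z^{(0)},\widetilde Z^{(0)})(\Psi(Q))$. This is also where the improvement over Proposition \ref{P:singlepoint} (non-strict Stein dominance with only $n$ copies, versus strict-Stein dominance with $n\cdot\dim\cY$ copies) originates: the scalar case $\cS = {\mathbb C}$ makes the full complete-positivity reformulation of Theorem \ref{T:multtanint} available via the Szegő-type kernel $k_{Q_0}$ and its inversion identities, and Choi's theorem pins the minimal inflation level at $n$.
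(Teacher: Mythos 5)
Your proposal is correct and follows essentially the same route as the paper's proof: specialize Theorem~\ref{T:multtanint}(1)~$\Leftrightarrow$~(2) to $a = I$, $b = \Lambda_0$ to reformulate solvability as complete positivity of the map $P \mapsto k_{Q_0}(Z^{(0)},Z^{(0)})(P)\otimes I_\cY - \Lambda_0(k_{Q_0}(Z^{(0)},Z^{(0)})(P)\otimes I_\cU)\Lambda_0^*$, then invoke Choi's theorem to replace complete positivity by $n$-positivity, and finally change variables via the inversion identities \eqref{id1}--\eqref{id2} (applied at $\oplus_1^n Z^{(0)}$) to translate $n$-positivity into the Stein dominance condition on $\oplus_1^n \Lambda_0$ versus $\oplus_1^n Q_0(Z^{(0)})$. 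Your presentation is slightly more explicit about the bijection between $\{P\succeq 0\}$ and $\{Q\succeq 0 : \Psi(Q)\succeq 0\}$ furnished by the mutual inverses $k_{Q_0}(\cdot,\cdot)$ and $\Psi$, but the content matches the paper's argument.
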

 
  \begin{remark}  \label{R:strictvsnonstrict}
  As explained in the proof of Theorem 
      \ref{T:multtanint} (``equivalence of conditions (1)''), the 
      contractive multiplier class $\overline{\cB} \cM(k_{Q_{0}} 
      \otimes I_{\cU}, k_{Q_{0}}  \otimes I_{\cY})$ coincides 
 with the Schur-Agler class $\mathcal{SA}_{Q_{0}}(\cU, \cY)$.  Thus 
 Proposition \ref{P:singlepoint} applied to this case tells us that 
 the interpolation problem \eqref{int-singlepoint'} has a contractive 
 multiplier solution $S \in \overline{\cB} \cM(k_{Q_{0}} \otimes 
 I_{\cU},  k_{Q_{0}} \otimes I_{\cY})$ if and only if the 
 strict-Stein dominance $\bigoplus_{1}^{k} \Lambda_{0} \succeq_{s\cS} 
 \bigoplus_{1}^{k} Q(Z^{(0)})$ with $k = n \cdot \dim \cY$
 (together with an extra hypothesis for the converse direction) holds.  The content of Proposition 
 \ref{P:singlepoint0} is that, for the special case where $Q = Q_{0}$ 
 has target space $\cS = {\mathbb C}$, the result of Proposition 
 \ref{P:singlepoint} holds with strict-Stein dominance 
 \eqref{strictStein1}--\eqref{strictStein2} replaced by simple 
 Stein dominance \eqref{Stein}, $n \le k = n \cdot \dim \cE$ 
 replacing $k$,   and with removal of the extra 
 hypothesis for the converse direction.
 \end{remark}

\begin{proof}[Proof of Proposition \ref{P:singlepoint0}]
  By the equivalence of (1) and (2) in Theorem \ref{T:multtanint}, we see that there is a 
 contractive multiplier solution $S \in \overline{\cB}\cM(k_{Q_{0}} 
 \otimes I_{\cU}, k_{Q_{0}} \otimes I_{\cY})$ of the interpolation 
 condition $S(Z^{(0)}) = \Lambda_{0}$ if and only if the map 
 $$
  P \mapsto k_{Q_{0}}(Z^{(0)}, Z^{(0)})(P) \otimes I_{\cY}  - 
  \Lambda_{0} \left( k_{Q_{0}}(Z^{(0)}, Z^{(0)})(P) \otimes I_{\cU} 
  \right) \Lambda_{0}^{*}
$$
is a completely positive map from ${\mathbb C}^{n \times n}$ into 
$\cL(\cY)^{n \times n}$.  By a result of M.-D.\ Choi (see 
\cite[Theorem 3.14]{Paulsen}, it suffices to check that this map in 
$n$-positive, i.e., that the map
\begin{align}
& P \mapsto k_{Q_{0}}\left(\bigoplus_{1}^{n} Z^{(0)}, \bigoplus_{1}^{n} 
Z^{(0)}\right) (P) \otimes I_{\cY}  \notag \\
& \quad - \left( \bigoplus_{1}^{n} \Lambda_{0} 
\right) \left( k_{Q_{0}}\left(\bigoplus_{1}^{n} Z^{(0)}, 
\bigoplus_{1}^{n} Z^{(0)}\right)(P) \otimes I_{\cU} \right) \left( 
\bigoplus_{1}^{n} \Lambda_{0} \right)^{*}
\label{n-posmap}
\end{align}
is a positive map from ${\mathbb C}^{n^{2} \times n^{2}}$ into 
$\cL(\cY)^{n^{2} \times n^{2}}$.  If we set 
$$
R = k_{Q_{0}}\left(\bigoplus_{1}^{n} Z^{(0)},\bigoplus_{1}^{n} 
Z^{(0)}\right)(P),
$$
then according to the identity \eqref{id1} we recover $P$ from $R$ via
$$
  P = R -\left(\bigoplus_{1}^{n} Q_{0}( Z^{(0)}) \right)\,  R\,
 \left(\bigoplus_{1}^{n} Q_{0}( Z^{(0)}) \right) ^{*}.
$$
Then the condition that the map \eqref{n-posmap} be positive can be 
reformulated as:
\begin{align*}
 & R \succeq 0 \text{ such that } R - \left(\bigoplus_{1}^{n} 
 Q(Z^{(0)}) \right) R 
   \left( \bigoplus_{1}^{n}Q_{0}( Z^{(0)}) \right)^{*} \succeq 0 \\
\Rightarrow
& \quad R -  \left(\bigoplus_{1}^{n} \Lambda_{0}\right) R\left ( \bigoplus_{1}^{n} 
\Lambda_{0}\right)^{*} \succeq 0
\end{align*}
This in turn amounts to the Stein dominance condition 
$\bigoplus_{1}^{n} \Lambda_{0} \succeq_{\cS} \bigoplus_{1}^{n} 
Q(Z^{(0)})$ (see \eqref{Stein}).
 \end{proof}
 
 \begin{remark}  \label{R:CL}  Propositions \ref{P:singlepoint} 
     and \ref{P:singlepoint0} were inspired by the work of 
     Cohen-Lewkowicz \cite{CL1, CL2} on the so-called Lyapunov order 
     on real symmetric matrices and the connection of this with the 
     Pick-matrix criterion for interpolation by positive real odd 
     functions (roughly, the real right-half-plane analogue of the 
     classical case of our topic here). 
  \end{remark}
 
 \subsection{Finite-Point Left-Tangential Pick Interpolation 
 Problem}  \label{S:LTTanPick}
 Let us consider the special case of Theorem \ref{T:multtanint} where 
 $\cV = \Xi = {\mathbb C}^{d}$.  We then write points $Z$ in $\cV_{{\rm 
 nc},n} = ({\mathbb C}^{d})^{n \times n}$ as $d$ tuples $Z = (Z_{1}, 
 \dots, Z_{d}) \in ({\mathbb C}^{n \times n})^{d} \cong ({\mathbb 
 C}^{d})^{n \times n}$.  As a nc function $Q$ on ${\mathbb 
 C}^{d}_{\rm nc}$ we choose $Q = Q_{\rm row}$ given by
 $$
  Q_{\rm row}(Z) = Q_{\rm}(Z_{1}, \dots, Z_{d}) = \begin{bmatrix} Z_{1} & 
  \cdots & Z_{d} \end{bmatrix}.
 $$
 The resulting nc domain ${\mathbb D}_{Q_{\rm row}}$ then amounts to the nc 
 operator ball
 $$
   {\mathbb B}^{d}_{\rm nc} = \amalg_{n=1}^{\infty} \{ Z = (Z_{1}, 
   \dots, Z_{d}) \in ({\mathbb C}^{n \times n})^{d} \colon
   Z_{1} Z_{1}^{*} + \cdots + Z_{d} Z_{d}^{*} \prec I_{n}\}.
 $$
 For this case the formula for the generalized Szeg\H{o} kernel can be 
 written out more concretely in coordinate form as
 $$
 k_{Q_{\rm row}}(Z,W)(P) = \sum_{\fa \in \free} Z^{\fa} P W^{* 
 \fa^{\top}}
 $$
 where we use nc functional calculus conventions as in Theorem 
 \ref{T:BGM2} in the Introduction: for $\fa = (i_{1}, \dots, 
 i_{N})$ in the unital free semigroup $\free$ and for
 $Z = (Z_{1}, \dots, Z_{d})$ and $W = (W_{1}, \dots, W_{d})$ in 
 ${\mathbb B}^{d}_{\rm nc}$, 
$$
 Z^{\fa}=  Z_{i_{N}} \cdots Z_{i_{1}},\quad
  W^{*} = (W_{1}^{*}, \dots, W_{d}^{*}), \quad
   W^{* \fa^{\top}} = W_{i_{1}}^{*} \cdots W_{i_{N}}^{*}. 
 $$
 Then nc functions $S \in \cT({\mathbb B}^{d}_{\rm nc}; \cL(\cU, 
 \cY)_{nc})$ 
 are given by a power-series representation as in 
 \eqref{powerseriesrep}
 \begin{equation}  \label{powerseriesrep'}
 S(Z) = \sum_{\fa \in \free} S_{\fa} \otimes Z^{\fa} 
 \text{ for } Z = (Z_{1}, \dots, Z_{d}) \in {\mathbb B}^{d}_{\rm nc}.
 \end{equation}
 We consider the \textbf{Finite-Point Left-Tangential Pick 
 Interpolation Problem} for the contractive multiplier class 
 associated with the kernel $k_{Q_{\rm row}}$:  {\sl Given $Z^{(1)}, 
 \dots, Z^{(N)} \in {\mathbb B}^{d}_{\rm nc}$ along with vectors 
 $A_{i} \in \cL(\cU^{n_{i}}, \cE_{i})$, $B_{i} \in \cL(\cY^{n_{i}}, \cE_{i})$
 ($n_{i}$ chosen so that $Z^{(i)} \in {\mathbb B}^{d}_{{\rm nc}, 
 n_{i}}$),  find 
 $S \in \overline{\cB} \cM(k_{Q_{\rm row}} \otimes I_{\cU}, k_{Q_{\rm 
 row}} \otimes I_{\cY})$ with}
 $$
    A_{i} S(Z^{(i)}) = B_{i} \text{ for } i = 1, \dots, N.
 $$
 Using the nc function structure, one can reduce any finite-point problem to a single-point 
 problem; specifically take the single-point data set $(Z^{(0)}, A_{0}, B_{0})$ to be
 $$
 Z^{(0)} = \sbm{ Z^{(1)} & & \\ & \ddots & \\ & & Z^{(N)} }, \quad
 A_{0} = \sbm{A_{1} & & \\ & \ddots & \\ & & A_{N}}, \quad
 B_{0} = \sbm{B_{1} & & \\ & \ddots & \\ & & B_{N}}.
 $$
 Thus we simplify the discussion here by considering only the 
 single-point version of the Left-Tangential Pick Interpolation 
 Problem with data set $\{ Z^{(0)}, A_{0}, B_{0}\}$.  The equivalence 
 of (1) $\Leftrightarrow$  (2) in Theorem \ref{T:multtanint} leads to 
 the following result.
 
 \begin{theorem}   \label{T:multtanintsingle}
     Suppose that we are given the data set $(Z^{(0)}, A_{0}, B_{0})$ 
     for a Single-Point Left-Tangential Pick Interpolation Problem. 
     Then the following are equivalent:
     \begin{enumerate}
	 \item The interpolation problem has a solution, i.e., there 
	 exists $S$ in $\overline{\cB}\cM(k_{Q_{\rm row}} \otimes I_{\cU},
	 k_{Q_{\rm row}} \otimes I_{\cU})$ with
\begin{equation}   \label{singletanint}
  A_{0} S(Z^{(0)}) = B_{0}.
  \end{equation}
  \item The map 
  \begin{equation}   \label{PickMS}
      P \mapsto \sum_{\fa \in \free} \left(A_{0} (Z^{(0) \fa} P 
      Z^{(0)* \fa^{\top}} \otimes  I_{\cY}) A_{0}^{*} -  B_{0}
      (Z^{(0) \fa} P  Z^{(0)* \fa^{\top}} \otimes I_{\cU}) 
      B_{0}^{*} \right)
  \end{equation}
  is completely positive.
  \end{enumerate}
  \end{theorem}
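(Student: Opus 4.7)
The plan is to deduce Theorem \ref{T:multtanintsingle} by specializing Theorem \ref{T:multtanint} to the single-point set $\Omega = \{Z^{(0)}\}$, equipped with the graded data $a(Z^{(0)}) = A_{0}$, $b(Z^{(0)}) = B_{0}$, and taking $Q_{0} = Q_{\rm row}$. Under this specialization the content of condition (1) in Theorem \ref{T:multtanint} matches condition (1) of the present theorem verbatim, so essentially all the work lies in recognizing that condition (2) in Theorem \ref{T:multtanint} translates into the complete-positivity statement (2) appearing here.

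First I would unpack the kernel. Using the generalized nc Szeg\H{o} formula \eqref{kQ0}, one checks directly by identifying the iterated $L_{Q_{0}(Z)^{*}}^{(k)}$ with stacked columns of noncommutative monomials that $k_{Q_{\rm row}}(Z,W)(P) = \sum_{\fa \in \free} Z^{\fa} P W^{* \fa^{\top}}$. Setting $Z = W = Z^{(0)}$ in \eqref{KgenSz} and substituting $a(Z^{(0)}) = A_{0}$, $b(Z^{(0)}) = B_{0}$ gives $K^{\dBR}_{A_{0},B_{0}}(Z^{(0)},Z^{(0)})(P) = \sum_{\fa} \bigl( A_{0}(Z^{(0)\fa} P Z^{(0)*\fa^{\top}} \otimes I_{\cY}) A_{0}^{*} - B_{0}(Z^{(0)\fa} P Z^{(0)*\fa^{\top}} \otimes I_{\cU}) B_{0}^{*} \bigr)$, which is exactly the map in condition (2).

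Next I would observe that on the singleton $\Omega = \{Z^{(0)}\}$ the definition \eqref{kercp'} of complete positivity of a nc kernel degenerates (since every $Z^{(i)}$ must coincide with $Z^{(0)}$) to precisely the statement that the single operator-valued map $K^{\dBR}_{A_{0},B_{0}}(Z^{(0)},Z^{(0)}) \colon \mathbb{C}^{n \times n} \to \cL(\cE)$ is a completely positive map in the standard operator-algebra sense. Combined with the previous paragraph, this matches (2) of Theorem \ref{T:multtanint} with (2) of the present theorem.

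The main subtlety — and what I expect to be the only real obstacle — is that Theorem \ref{T:multtanint} as stated requires $a,b$ to be nc functions on the ${\mathbb D}_{Q_{0}}$-relative full nc envelope $\Omega'$, not merely graded functions at $Z^{(0)}$. For the direction (2) $\Rightarrow$ (1) this is harmless, since the last sentence of Theorem \ref{T:multtanint} guarantees that the chain (2) $\Rightarrow$ (3) $\Rightarrow$ (1) goes through under the weaker graded hypothesis. For the direction (1) $\Rightarrow$ (2) I would bypass the reduction to Theorem \ref{T:multtanint} and argue directly from Theorem \ref{T:contmult}: a solution $S \in \overline{\cB}\cM(k_{Q_{\rm row}} \otimes I_{\cU}, k_{Q_{\rm row}} \otimes I_{\cY})$ produces a cp nc kernel $K^{\dBR}_{S}$ on all of ${\mathbb B}^{d}_{\rm nc}$, and the interpolation identity $A_{0} S(Z^{(0)}) = B_{0}$ lets one factor the map of (2) as $P \mapsto A_{0} K^{\dBR}_{S}(Z^{(0)},Z^{(0)})(P) A_{0}^{*}$, which is the conjugation of a cp map by the fixed operator $A_{0}$ and hence is itself cp.
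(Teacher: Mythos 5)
Your proof is correct, and it supplies the details behind the paper's one-line pointer ("The equivalence of (1) $\Leftrightarrow$ (2) in Theorem \ref{T:multtanint} leads to the following result"). You are right to single out the graded-vs-nc-function hypothesis as the only delicate point: the data $A_{0}, B_{0}$ are bare operators and need not extend to nc functions on the full nc envelope of $\{Z^{(0)}\}$, so the full four-fold equivalence of Theorem \ref{T:multtanint} does not directly apply. Your handling is sound on both sides. For (1) $\Rightarrow$ (2) you route through Theorem \ref{T:contmult}, restrict the resulting cp nc kernel $K^{\dBR}_{S}$ to the diagonal point, and conjugate by the fixed operator $A_{0}$ using the interpolation identity $A_{0}S(Z^{(0)})=B_{0}$; this is precisely the content of the paper's Theorem \ref{T:ncP-nec} specialized to a singleton, and it imposes no nc-function structure on $A_{0},B_{0}$. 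For (2) $\Rightarrow$ (1) you correctly invoke the final sentence of Theorem \ref{T:multtanint}, which allows merely graded tangential data. One small refinement you could add: on a singleton, being a cp nc kernel is formally stronger than cp of the single operator map $K^{\dBR}_{A_{0},B_{0}}(Z^{(0)},Z^{(0)})$, since the nc-kernel axioms also include the ``respects intertwinings'' condition \eqref{kerintertwine}, which a general $A_{0}$ need not satisfy. However, an inspection of the paper's proof of (2) $\Rightarrow$ (3) in Theorem \ref{T:ncInt} shows that only the Kolmogorov/Stinespring factorization of $\Gamma(Z^{(0)},Z^{(0)})$ is used, not its intertwining property, and for $\cA = \mathbb{C}$ every cp map from $\mathbb{C}^{n_{0}\times n_{0}}$ admits such a factorization; so your chain (2) $\Rightarrow$ (3) $\Rightarrow$ (1) does go through as stated.
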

  
  This particular nc Pick interpolation problem has already been 
  considered by some other authors. Our result Theorem 
  \ref{T:multtanintsingle} agrees with the result of Muhly-Solel
  in \cite[Theorem 6.3]{MS2004} and is a slight variation of a result 
  of Popescu \cite[Corollary 2.3]{P98}.   We mention that Muhly-Solel 
  actually considered a much more general setting where the 
  multiplier algebra $\cM(k_{Q_{\rm row}} \otimes I_{\cU}, 
  k_{Q_{\rm row}} \otimes I_{\cY})$ is replaced by the generalized Hardy algebra 
  $H^{\infty}_{E}$ associated with a correspondence $E$ over a von 
  Neumann algebra $M$ (see also \cite{MS1998, MS2008}).  An 
  interesting topic for future research is to get a better 
  understanding about how this general von Neumann-algebra 
  correspondence setting fits with the free nc-function setting used 
  here; preliminary steps in such a program have already been made in 
  \cite{BBFtH, MS2013, Norton}.

 \subsection{NC-function versus Left-Tangential Operator-Argument point evaluation: 
 the nc ball setting}  \label{S:NCvsLTOA}
A popular formalism for handling intricate univariate matrix-valued 
interpolation problems over the years has been to make use of a 
\textbf{Left-Tangential Operator Argument} point-evaluation (see 
e.g.\ \cite{BGR90, FFGK}).  A nc version of the Left Tangential Operator 
Argument point evaluation (as well as right and two-sided versions 
which we need not go into here) was introduced in \cite{BB07}, along 
with a study of associated interpolation problems.  We now describe 
one such result for the nc ball setting in this Subsection.

Suppose that $Z^{(0)} = (Z^{(0)}_{1}, \dots, Z^{(0)}_{d})$ is a point 
in ${\mathbb B}^{d}_{{\rm nc}, n}$, $X$ is in operator in $\cL(\cY, {\mathbb 
C}^{n})$,  and $S$ is a nc operator-valued function in  
$\cT({\mathbb B}^{d}_{\rm nc}; \cL(\cU, \cY)_{\rm nc})$ with associated formal power series
$S(z) = \sum_{\fa \in \free} S_{\fa} z^{\fa}$.  We define
$(X S)^{\wedge L}(Z^{(0)})$ (the Left-Tangential Operator Argument 
evaluation of $S$ at $Z^{(0)}$ in direction $X$) by
\begin{equation} \label{LTOA}
    (X S)^{\wedge L}(Z^{(0)}) = \sum_{\fa \in \free} Z^{(0) \fa^{\top}} X 
    S_{\fa} \in \cL(\cU, {\mathbb C}^{n}).
\end{equation}
Note that in contrast to the nc-function point evaluation 
\eqref{powerseriesrep'}, the power on $Z^{(0)}$ involves $\fa^{\top}$ 
rather than $\fa$ and all multiplications in \eqref{LTOA} are 
operator compositions (no tensor products). Given an interpolation 
data set $(Z^{(0)}, X, Y)$, where $Z^{(0)}$, $X$ are as above along 
with an operator $Y$ in $\cL(\cY, {\mathbb C}^{n})$,  the 
\textbf{Left-Tangential Operator Argument}(\textbf{LTOA}) 
interpolation problem is:  {\em find $S \in \overline{\cB} 
\cM(k_{Q_{\rm row}} \otimes \cU, k_{Q_{\rm row}} \otimes \cY)$ (or 
equivalently in $\mathcal{SA}_{Q_{\rm row}}(\cU, \cY)$  so that the 
Left-Tangential Operator Argument interpolation condition 
\begin{equation}  \label{LTOAint}
    (X S)^{\wedge L}(Z^{(0)})   = Y
\end{equation}
holds.}  The solution is as follows (see \cite[Theorem 7.4]{BB07} as 
well as \cite[Theorem 3.4]{CJ03} for a different but equivalent 
formulation).

\begin{theorem}  \label{T:BB07}
    Suppose that we are given the data set $(Z^{(0)},X,Y)$ for a LTOA 
    interpolation problem for $S \in \overline{\cB} 
\cM(k_{Q_{\rm row}} \otimes \cU, k_{Q_{\rm row}} \otimes \cY)$ as 
above.  Then the LTOA interpolation problem has a solution if and 
only if
$$
  \sum_{\fa \in \free} Z^{(0) \fa} (X X^{*} - Y Y^{*}) Z^{(0)* 
  \fa^{\top}} \succeq 0.
$$
\end{theorem}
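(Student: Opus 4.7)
My strategy is to exploit the identification $\mathcal{SA}_{Q_{\rm row}}(\cU,\cY) = \overline{\cB}\cM(k_{Q_{\rm row}}\otimes I_{\cU},\, k_{Q_{\rm row}}\otimes I_{\cY})$ from Section~\ref{S:mult}, for the nc Szeg\H{o}-type kernel $k_{Q_{\rm row}}(Z,W)(P) = \sum_{\fa\in\free} Z^{\fa}P W^{*\fa^{\top}}$. Under this identification the asserted positivity condition is precisely $k_{Q_{\rm row}}(Z^{(0)}, Z^{(0)})(XX^{*}-YY^{*}) \succeq 0$ in $\cL(\bbC^{n})$; convergence of the series is guaranteed by $\|Q_{\rm row}(Z^{(0)})\|<1$, since then the completely positive map $T\colon M \mapsto \sum_{j} Z^{(0)}_{j} M Z^{(0)*}_{j}$ on $\cL(\bbC^{n})$ has norm strictly less than one, and by identity~\eqref{id1} applied at $Z=W=Z^{(0)}$ the sum realizes $(I-T)^{-1}(M)$.

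For necessity, I plan to introduce a family of ``LTOA kernel elements'' $\{g_{\xi}\}_{\xi\in\bbC^{n}}$ in $\cH(k_{Q_{\rm row}}\otimes I_{\cY})$ whose Fock-space coefficients are built from the data $(Z^{(0)},X)$ so that the Gram matrix $\big[\langle g_{\xi}, g_{\eta}\rangle\big]_{\xi,\eta}$ reconstructs the operator $k_{Q_{\rm row}}(Z^{(0)},Z^{(0)})(XX^{*})$. The adjoint $M_{S}^{*}$ of the multiplication operator will then send $g_{\xi}$ to a companion element $h_{\xi} \in \cH(k_{Q_{\rm row}}\otimes I_{\cU})$ whose Gram matrix reproduces $k_{Q_{\rm row}}(Z^{(0)},Z^{(0)})(YY^{*})$; the key computational step is the substitution $\sum_{\fa} Z^{(0)\fa^{\top}} X S_{\fa} = Y$ coming from the LTOA condition, which collapses a convolution sum in the Fock expansion. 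Contractivity of $M_{S}^{*}$ then yields $k_{Q_{\rm row}}(Z^{(0)},Z^{(0)})(XX^{*}-YY^{*}) \succeq 0$ at the operator level.

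For sufficiency, I would construct a contractive (or unitary) colligation matrix $\bU$ as in Theorem~\ref{T:multtanint}(3) by the lurking-isometry argument, in parallel with the proof of (2)$\Rightarrow$(3) in Theorem~\ref{T:ncInt}. A Kolmogorov factorization of the positive operator $k_{Q_{\rm row}}(Z^{(0)},Z^{(0)})(XX^{*}-YY^{*})$, combined with the Stein identity~\eqref{id1} specialized at $Z=W=Z^{(0)}$ with $P=XX^{*}$ and $P=YY^{*}$, supplies an isometric relation between natural ``data-domain'' and ``data-range'' subspaces, which extends (after possible inflation of the state space) to a colligation $\bU$ of the form~\eqref{bU-mult}. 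The resulting transfer function~\eqref{transfunc-mult} then lies in $\overline{\cB}\cM(k_{Q_{\rm row}}\otimes I_{\cU}, k_{Q_{\rm row}}\otimes I_{\cY})$ by (3)$\Rightarrow$(1) of Theorem~\ref{T:multtanint}, and a Neumann-series expansion of $(I-(Q_{\rm row}(Z^{(0)})\otimes I_{\cX}) A^{(n)})^{-1}$ matched term-by-term against the colligation equations should force $\sum_{\fa}Z^{(0)\fa^{\top}}XS_{\fa} = Y$.

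The hard part will be the precise identification of the LTOA functional with $M_{S}^{*}$ acting on the elements $g_{\xi}$. Because LTOA evaluation $(XS)^{\wedge L}(Z^{(0)}) = \sum_{\fa} Z^{(0)\fa^{\top}} X S_{\fa}$ multiplies the monomials of $Z^{(0)}$ in reversed order $\fa^{\top}$ compared to the nc-function evaluation $S(Z^{(0)}) = \sum_{\fa} S_{\fa}\otimes Z^{(0)\fa}$, the LTOA functional is \emph{not} an nc-function point evaluation in the sense used throughout Sections~\ref{S:SAint}--\ref{S:mult}, and the Fock-space coefficients of the $g_{\xi}$'s must be constructed to systematically absorb this word-reversal. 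Getting this bookkeeping right --- defining the $g_{\xi}$'s, verifying the Gram-matrix formulas, and tracking the $M_{S}^{*}$-action through the word-reversal --- is the technical crux; once in place, both implications follow cleanly from the contraction property together with the lurking-isometry machinery already developed in the paper.
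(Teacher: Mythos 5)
The paper itself does not prove Theorem~\ref{T:BB07}: immediately after the statement it points the reader to \cite[Theorem~7.4]{BB07} and \cite[Theorem~3.4]{CJ03}, so there is no internal argument for you to be compared against. Your proposed route is, however, essentially the one that those sources take, and the details you flag as ``the hard part'' do work out. For necessity, the LTOA kernel elements should be taken with Fock coefficients $g_{\xi,\fa} = X^{*}\bigl(Z^{(0)\,\fa^{\top}}\bigr)^{*}\xi$; then $\|g_{\xi}\|^{2}=\langle k_{Q_{\rm row}}(Z^{(0)},Z^{(0)})(XX^{*})\xi,\xi\rangle$ because $\fb\mapsto\fb^{\top}$ is a bijection of $\free$, and because $(\fa\fb)^{\top}=\fb^{\top}\fa^{\top}$ one finds
\[
(M_{S}^{*}g_{\xi})_{\fb}=\sum_{\fa}S_{\fa}^{*}\,g_{\xi,\fa\fb}
=\Bigl(\sum_{\fa}S_{\fa}^{*}X^{*}\bigl(Z^{(0)\,\fa^{\top}}\bigr)^{*}\Bigr)\bigl(Z^{(0)\,\fb^{\top}}\bigr)^{*}\xi
=Y^{*}\bigl(Z^{(0)\,\fb^{\top}}\bigr)^{*}\xi,
\]
exactly as you anticipated, so $\|M_{S}^{*}\|\le 1$ gives the asserted positivity. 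For sufficiency, the Stein identity~\eqref{id1} with $P=XX^{*}-YY^{*}$ gives $G-\sum_{j}Z^{(0)}_{j}GZ^{(0)*}_{j}=XX^{*}-YY^{*}$ for $G=k_{Q_{\rm row}}(Z^{(0)},Z^{(0)})(XX^{*}-YY^{*})\succeq 0$; writing $G=HH^{*}$ and extending the resulting isometry from the $\cX^{d}\oplus\cY$ side to the $\cX\oplus\cU$ side to a coisometric $\bU^{*}$ (which is the direction taken in the proof of (2)$\Rightarrow$(3) of Theorem~\ref{T:ncInt}, and the one you need here) yields $XC=H-\sum_{j}Z^{(0)}_{j}HA_{j}$ and $XD=Y-\sum_{j}Z^{(0)}_{j}HB_{j}$, and the telescoping sum then collapses $\sum_{\fa}Z^{(0)\,\fa^{\top}}XS_{\fa}$ to $Y$.

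One small imprecision in your sketch: the LTOA evaluation $(XS)^{\wedge L}(Z^{(0)})=\sum_{\fa}Z^{(0)\,\fa^{\top}}XS_{\fa}$ is \emph{not} $X\cdot S(Z^{(0)})$, so it is not the level-$n$ Neumann series of $(I-(Q_{\rm row}(Z^{(0)})\otimes I_{\cX})A^{(n)})^{-1}$ that you should expand. What you want is the scalar-level Taylor coefficient formula $S_{\fa}=CA_{i_{N}}\cdots A_{i_{2}}B_{i_{1}}$ for $\fa=i_{N}\cdots i_{1}$ (coming from the level-one Neumann expansion), combined with the reversed power $Z^{(0)\,\fa^{\top}}=Z^{(0)}_{i_{1}}\cdots Z^{(0)}_{i_{N}}$; the telescope I described above then goes through cleanly term by term, with absolute convergence because $\|Q_{\rm row}(Z^{(0)})\|<1$ makes $\|A\|\le 1$ sufficient. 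Finally, note that unlike Proposition~\ref{P:singlepoint0}, where the solvability criterion is \emph{complete} positivity of a map (equivalently an inflated Choi-matrix test), here the criterion is positivity of a single $n\times n$ operator; this is precisely the point of Subsection~\ref{S:NCvsLTOA}, and your proof correctly stays at this ``un-inflated'' level because the LTOA functional itself lives at a single level.
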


There is a curious connection 
between the nc-function Left-Tangential Pick Interpolation Problem 
versus the LTOA Interpolation Problem which we now discuss.  Let us again restrict to 
the scalar nc Schur class ${\mathcal SA}_{Q_{\rm row}}({\mathbb C}) = 
\overline{\cB}\cM(k_{Q_{\rm row}})$.  Choose a point $Z^{(0)} \in 
{\mathbb B}^{d}_{{\rm nc},n}$ along with an a matrix $\Lambda_{0} \in 
{\mathbb C}^{n}$ and consider the single-point interpolation problem:
{\em find $s \in {\mathcal SA}_{Q_{\rm row}}({\mathbb C})$ so that 
the interpolation condition 
\begin{equation}   \label{scalarint}
    s(Z^{(0)}) = \Lambda_{0}.
\end{equation}
holds.} According to Theorem 
\ref{T:multtanintsingle}, this  interpolation problem has a 
solution if and only if the map from ${\mathbb C}^{n 
\times n}$ into $\cL(\cE)$ given by 
\begin{equation}   \label{PickMS''}
     P \mapsto \sum_{\fa \in \free} \left( Z^{(0) \fa} P 
      Z^{(0)* \fa^{\top}}   -  \Lambda_{0}
      Z^{(0) \fa} P  Z^{(0)* \fa^{\top}}
      \Lambda_{0}^{*} \right)
\end{equation}
is completely positive.  As the domain for this map is ${\mathbb C}^{n \times n}$, 
the Choi criterion (see \cite[Theorem 3.14]{Paulsen}) gives a test for 
complete positivity in terms of positive definiteness of a single 
operator:  {\em the map \eqref{PickMS''} is completely positive if and 
only if the block matrix
\begin{equation}  \label{Choi}
  \left[ \sum_{\fa \in \free} 
\left( Z^{(0) \fa} e_{\kappa} e_{\kappa'}^{*} Z^{(0)* \fa^{\top}}  - 
\Lambda_{0} Z^{(0) \fa} e_{\kappa} e_{\kappa'}^{*} Z^{(0)*\fa^{\top}} \Lambda_{0}^{*} 
      \right)\right]_{\kappa, \kappa' \in {\mathfrak B}} 
\end{equation}
is positive semidefinite}, where $\{ e_{\kappa} \colon  \kappa  \in 
{\mathfrak B}\}$ is the 
standard basis for ${\mathbb C}^{n}$.

On the other hand, we can view the interpolation condition as a 
twisted version of a LTOA interpolation condition as follows.  Note 
that the interpolation condition \eqref{scalarint} can also be 
expressed as
$$
    s(Z^{(0)}) e_{\kappa} = \Lambda_{0} e_{\kappa} \text{ for each } 
    \kappa \in {\mathfrak B}.
$$
Writing this condition out in terms of a series and using that the 
coefficients $s_{\fa}$ are scalar, we get
\begin{equation}   \label{scalarint'}
  \sum_{\fa \in \free} s_{\fa} Z^{(0) \fa} e_{\kappa}
  = \sum_{\fa \in \free}  Z^{(0) \fa} e_{\kappa} s_{\fa} = 
  \Lambda_{0} e_{\kappa} \text{ for each } \kappa \in {\mathfrak B}.
\end{equation}
Let us introduce the twisted  LTOA point evaluation
\begin{equation}   \label{twistLTOA}
    (X S)^{\wedge \tau \circ L}(Z) = \sum_{\fa \in \free} 
    Z^{ \fa} X S_{\fa},
\end{equation}
i.e., the  formula \eqref{LTOA} but with the power of $Z^{(0)}$ equal 
to $\fa$ instead of $\fa^{\top}$.  
Then the nc-function interpolation condition \eqref{scalarint} can be 
reexpressed as the set of  nc twisted LTOA interpolation conditions
$$ 
  (e_{\kappa} s)^{\tau \circ L}(Z^{(0)}) = \Lambda_{0} e_{\kappa} 
  \text{ for } \kappa \in {\mathfrak B}.
$$
If we introduce the column vector $E = {\rm col}_{\kappa \in 
{\mathfrak B}} [e_{\kappa}]$, we can convert the problem to a single 
twisted LTOA interpolation condition
\begin{equation}   \label{twistedLTOAprob}
  (E s)^{\wedge \tau \circ L}(\oplus_{\kappa \in {\mathfrak B}} Z^{(0)}) = 
  \left( \bigoplus_{\kappa \in {\mathfrak B}}\Lambda_{0} \right) E.
\end{equation}
By our previous analysis we know that the positive semidefiniteness 
of the matrix \eqref{Choi} is necessary and sufficient for there to 
be a solution $s \in {\mathcal SA}_{Q_{\rm row}}({\mathbb C})$ of the 
interpolation condition \eqref{twistedLTOAprob}.

The same data set $(\oplus_{\kappa \in {\mathfrak B}}Z^{(0)}, E, 
(\bigoplus_{1}^{n} \Lambda_{0})E)$ is the data set 
for a (untwisted) LTOA interpolation problem:  {\em find $s \in 
{\mathcal SA}_{Q_{\rm row}}({\mathbb C})$ such that}
$$
    (E s)^{\wedge L}\left( \oplus_{\kappa \in {\mathfrak B}} 
    Z^{(0)}\right) = 
    \left( \bigoplus_{\kappa \in {\mathfrak B}}\Lambda_{0} \right) E.
$$
or equivalently, {\em such that}
\begin{equation}   \label{LTOAprob}
  \sum_{\fa \in \free} s_{\fa^{\top}} Z^{\fa} = \Lambda_{0}.
\end{equation}
The solution  criterion for this problem is positive semidefiniteness 
of the block matrix
\begin{equation}   \label{Choi'}
  \left[\sum_{\fa \in \free} Z^{(0) \fa} (e_{\kappa} 
  e_{\kappa'}^{*} - \Lambda_{0} e_{\kappa} e_{\kappa'}^{*} 
  \Lambda_{0}^{*} ) Z^{(0)* \fa^{\top}}   \right]_{\kappa, \kappa' 
  \in {\mathfrak B}}.
\end{equation}
or equivalently (by the Choi test), complete positivity of the map
\begin{equation}   \label{Choi''}
    P \mapsto \sum_{\fa \in \free} Z^{(0) \fa} (P - \Lambda_{0} 
    P \Lambda_{0}^{*}) Z^{(0) * \fa^{\top}}.
\end{equation}
Note that the problems \eqref{twistedLTOAprob} and \eqref{LTOAprob} 
are the same in case the components $Z^{(0)}_{1}, \dots, Z^{(0)}_{d}$ 
commute with each other (so $Z^{\fa} = Z^{\fa^{\top}}$).  A 
consequence of the ``respects intertwinings'' condition for nc 
functions is that $\Lambda_{0}$ must be in the double commutant of 
the collection $Z^{(0)}_{1}, \dots, Z^{(0)}_{d}$ if $\Lambda_{0} = 
S(Z^{0)})$ for a nc function $S$.  Thus, for the case of commutative 
$d$-tuple $Z^{(0)}$, positivity of the matrix 
\eqref{Choi} in fact implies that $\Lambda^{(0)}$ commutes with each 
$Z^{(0)}_{k}$ and the matrices \eqref{Choi} and \eqref{Choi'} are the 
same, consistent with positivity of either being the solution 
criterion for existence of a solution to the same problem 
\eqref{scalarint} or \eqref{LTOAprob}.  In case $Z^{(0)}$ is not a 
commutative tuple, we are led to the conclusion that the 
interpolation conditions \eqref{scalarint} and \eqref{LTOAprob} are 
different problems with each having its own independent solution 
criterion, positive semidefiniteness of \eqref{Choi} and of 
\eqref{Choi'} respectively.  For the case of commuting variables, 
nc-function point-evaluation (or Riesz-Dunford) interpolation conditions
can be reduced to the older theory of LTOA interpolation conditions 
and one can recover the solution criterion of one from the solution 
criterion for the other; this point is explored in more detail in 
\cite{BtH09}.  The recent paper of Norton \cite{Norton} explores 
similar connections between the interpolation theory of 
Constantinescu-Johnson \cite{CJ03} and that of Muhly-Solel 
\cite{MS2004}.

\smallskip

\noindent
\textbf{Acknowledgement:}  The research of the first and third authors was partially 
supported by the US-Israel Binational Science Foundation.  It is also  a pleasure to 
acknowledge the contribution of Orr Shalit for penetrating discussions leading to 
the observations in Section \ref{S:commutative}.


\begin{thebibliography}{99}
    


\bibitem{Agler-Hellinger}
J.~Agler, {\it On the representation of certain holomorphic functions
defined on a polydisk}, in {\it Topics in Operator Theory: Ernst D.
Hellinger memorial Volume} (L.~de~Branges, I.~Gohberg and J.~Rovnyak,
eds.), Operator Theory: Advances and Applications \textbf{OT 48},
Birkh\"auser, Basel, 1990, pp. 47-66.

\bibitem{Agler-pre} J.\ Agler, {\em Interpolation}, unpublished 
manuscript circa 1988.

\bibitem{AMcC99} J.\ Agler and J.E.\ McCarthy, {\em Nevanlinna-Pick 
interpolation on the bidisk}, J.\ Reine Angew.\ Math. \textbf{506} 
(1999), 191--2004.

\bibitem{AMcC00} J.\ Agler and J.E.\ McCarthy, {\em Complete 
Nevanlinna-Pick kernels}, J.\ Funct.\ Anal.\ \textbf{175} (2000) no.\ 
1,  111--124.

\bibitem{AMcC-book}  J.\ Agler and J.E.\ McCarthy, {\em Pick 
Interpolation and Hilbert Function Spaces}, Graduate Studies in 
Mathematics Vol.\ \textbf{44}, Amer.\ Math.\ Soc., Providence, RI, 
2002.

\bibitem{AMcC-global} J.\ Agler and J.E.\ McCarthy, {\em Global 
holomorphic functions in several non-commuting variables}, 
Canadian J.\ Math.\ \textbf{67} (2015) no.\ 2, 241--285.

\bibitem{AMcC-Pick}  J.\ Agler and J.E.\ McCarthy, {\em Pick 
interpolation for free holomorphic functions}, Amer.\ J.\ Math.\
\textbf{137} no.\ 6 (2015), 1685--1701.

\bibitem{AKV} D.\ Alpay and  D.S.\ Kaliuzhnyi-Verbovetskyi, 
Matrix-$J$-unitary non-commutative rational formal power series, 
in: {\em The State
Space Method: Generalizations and
Applications} pp.\ 49--113, Oper.\ Theory Adv.\ Appl.\ {\bf
161}, Birkh\" auser, Basel, 2006.


\bibitem{AT} C.-G.\ Ambrozie and D.\ Timotin, {\em A von Neumann type 
inequality for certain domains in ${\mathbb C}^{n}$}, Proc.\ Amer.\ 
Math.\ Soc.\ \textbf{131} (2003) no.\ 3, 859--869.

\bibitem{AriasPopescu} A.\ Arias and G.\ Popescu, {\em Noncommutative 
interpolation and Poisson transforms}, Israel J.\ Math. \textbf{115} 
(2000), 205--234.

\bibitem{AP} A.V.\ Arkhangel'ski\u\i and L.S.\ Pontryagin (eds.), 
{\em General Topology I}, Springer, Berlin, 1990.

\bibitem{Arv76} W.\ Arveson, {\em An Invitation to $C^{*}$-Algebras},
Graduate Texts in Mathematics \textbf{39},
Springer-Verlag, New York, 1976.

\bibitem{Arv} W.\ Arveson, {\em Subalgebras of $C^{*}$-algebras III:  
Multivariable operator theory}, Acta Math. \textbf{181} (1998), 
159--228.


\bibitem{Bala1} S.\ Balasubramanian, {\em Non-commutative 
Carath\'eodory interpolation}, Integ.\ Eqs.\ Oper.\ Theory 
\textbf{68} (2010), 529--550.

\bibitem{Bala2} S.\ Balasubramanian, {\em Toeplitz corona and the 
Douglas property for free functions}, J.\ Math.\ Anal.\ Appl.\ 
\textbf{428} (2015), 1--11.


\bibitem{BBFtH}  J.A.\ Ball, A.\ Biswas, Q.\ Fang, and S.\ ter Horst, 
{\em Multivariable generalizations of the Schur class:  positive 
kernel characterization and transfer function realization}, in: Recent 
Advances in Operator Theory and Applications (Ed. T.\ Ando, R.E.\ 
Curto, I.B.\ Jung and W.Y.\ Lee), pp.\ 17--79,
Oper.\ Theory Adv.\ Appl. \textbf{187}, Birkh\"auser, Basel, 2009.

\bibitem{BB04} J.A.\ Ball and V.\ Bolotnikov, {\em Realization and 
interpolation for Schur-Agler-class functions on domains with matrix 
polynomial defining function in ${\mathbb C}^{n}$}, J.\ Funct.\ 
Anal.\ \textbf{213} (2004) No.\ 1, 45--87.

\bibitem{BB05} J.A.\ Ball and V.\ Bolotnikov,  {\em Nevanlinna-Pick 
interpolation for Schur-Agler class functions on domains with matrix 
polynomial defining function in ${\mathbb C}^{d}$}, New York J.\ Math. 
\textbf{11} (2005), 1--44.

\bibitem{BB07} J.A.\ Ball and V.\ Bolotnikov, {\em Interpolation
in the noncommutative Schur-Agler class}, J.\ Operator Theory \textbf{58} (2007) no.\ 1,
83--126.


    
\bibitem{BGR90} J.A.\ Ball, I.\ Gohberg, and L.\ Rodman, 
{\em Interpolation of Rational Matrix Functions}, Oper.\ Th.\  Adv.\ Appl.\ \textbf{OT45}, 
Birkh\"auser, Basel, 1990.



\bibitem{BGM2}
J.A.\ Ball, G.\ Groenewald, and T.\ Malakorn, Conservative structured
noncommutative multidimensional linear systems, in: {\em The State
Space Method: Generalizations and
Applications} pp.\ 179-223, Oper.\ Theory Adv.\ Appl.\ {\bf
161}, Birkh\" auser, Basel, 2006.

\bibitem{BGH} J.A.\ Ball and M.\ Guerra-Huam\'an, {\em Test 
functions, Schur-Agler classes and transfer-function realizations: 
the matrix-valued setting}, Complex Anal.\ Oper.\ Theory \textbf{7} 
(2013) no.\ 3, 529--575.

\bibitem{BMV1}  J.\ A.\ Ball, G.\ Marx, and V.\ Vinnikov, {\em 
Noncommutative reproducing kernel Hilbert spaces}, preprint.

\bibitem{BtH09} J.A.\ Ball and S.\ ter Horst, {\em
Multivariable operator-valued Nevanlinna-Pick interpolation: a survey}, 
in: Operator Algebras, 
Operator Theory and Applications (Ed.~J.J.~Grobler, L.E.~Labuschagne, 
and M.~M\"oller), pages 1--72, Oper.\ Th.\ Adv.\ Appl. \textbf{OT
195}, Birkh\"auser, Basel, 2009.

\bibitem{BT} J.A.\ Ball and T.T.\ Trent, {\em Unitary colligations, 
reproducing kernel Hilbert spaces, and Nevanlinna-Pick interpolation 
in several variables}, J.\ Funct.\ Anal.\ \textbf{157} (1998) No.\ 1, 
1--61.

\bibitem{BTV} J.A.\ Ball, T.T.\ Trent, and V.\ Vinnikov,
 {\em Interpolation and commutant lifting for multipliers on reproducing kernels Hilbert
spaces}, in: Operator Theory and Analysis: The M.A. Kaashoek
Anniversary Volume (Workshop in Amsterdam, Nov.\ 1997), pages 89-138,
 Oper.\ Theory Adv.\ Appl.\ \textbf{OT 122}, Birkh\"auser, Basel, 2001.

\bibitem{BBLS} S.D.\ Barreto, B.V.R.\ Bhat, V.\ Liebscher, and M.\ 
Skeide, {\em Type I product systems of Hilbert modules}, J.\ Funct.\ 
Anal.\ \textbf{212} (2004), 121--181.


\bibitem{BCR98}  J.\ Bochnack, M.\ Costo, M.-F.\ Roy, {\em Real 
Algebraic Geometry}, Ergebnisse der Mathematik und ihrer Grenzgebiete 
\textbf{3}, Springer, 1998.

\bibitem{Carleson}  L.\ Carleson, {\em Interpolation by bounded 
analytic functions and the corona problem}, Ann.\ of Math.\ 
\textbf{76} (1962), 547--559.

\bibitem{CL1} N.\  Cohen and I.\ Lewkowicz, {\em The Lyapunov order 
for real matrices}, Linear Algebra Appl.\ \textbf{430} (2009) no.\ 7, 
1849--1866.

\bibitem{CL2}  N.\  Cohen and I.\ Lewkowicz, {\em Nevanlinna-Pick 
interpolation: a matrix-theoretic approach}, unpublished manuscript 
circa 2009.

\bibitem{CJ03}  T.\ Constantinescu and J.L.\ Johnson, {\em A 
note on noncommutative interpolation}, Canad.\ Math.\ Bull \textbf{46} 
 (2003) no.\ 1, 59--70.
 
 \bibitem{Curto} R.\ Curto, {\em Applications of several complex 
 variables to multiparameter spectral theory}, in: J.B.\ Conway, 
 B.B.\ Morrel (Eds.), Surveys of Some Recent Results in Operator 
 Theory, Vol.\ \textbf{II}, Longman Scientific \& Technical, Essex, 
 UK, 1988, pp. 25--90.
 
 \bibitem{DU} J.\ Diestel and J.J.\ Uhl, {\em Vector Measures}, 
 Mathematical Surveys \textbf{15}, American Mathematical Society, 
 Providence, 1977.
 
 \bibitem{Douglas}
R.G.\ Douglas, On majorization, factorization, and range inclusion of
operators on Hilbert space, {\em Proc.\ Amer.\ Math.\ Soc.}
\textbf{17} (1966), 413--415.
 

\bibitem{DMMcC} M.A.\ Dritschel, S.\ Marcantognini, and S.\ 
McCullough, {\em Interpolation in semigroupoid algebras}, J.\
Reine Angew.\ Math.\ \textbf{606} (2007), 1--40.

\bibitem{DMcC} M.A.\ Dritschel and S.\ McCullough, {\em Test 
functions, kernels, realizations and interpolation}, in: Operator 
Theory, Structured Matrices and Dilations: Tiberiu Constantinescu 
Memorial Volume (ed. M.\ Bakonyi, A.\ Gheondea, M.\ Putinar, and J.\ 
Rovnyak), pp.\ 153--179, Theta Series in Advanced Mathematics, Theta, 
Bucharest, 2007.

\bibitem{DP} K.R.\ Davidson and D.R.\ Pitts, {\em Nevanlinna-Pick 
interpolation for non-commutative analytic Toeplitz algebras},  
Integral Equations Operator Theory \textbf{31} (1998) no.\ 3, 
321--337.

\bibitem{DritPick} M.A.\ Dritschel and J.\ Pickering, {\em Test functions 
in constrained interpolation}, Trans.\ Amer.\ Math.\ Soc.\ 
\textbf{364} (2012) no.\ 11, 5589--5604.

\bibitem{ERbook} E.G.\ Effros and Z.-J.\ Ruan, {\em Operator Spaces}, 
London Mathematical Society Monographs New Series \textbf{23}, Oxford 
Science Publications, 2000.

\bibitem{EW} E.G.\ Effros and S.\ Winkler, {\em Matrix convexity: 
operator analogues of the bipolar and Hahn-Banach theorems}, J.\ 
Funct.\ Anal. \textbf{144} (1997), 117--152.


\bibitem{EP} J.\ Eschmeier and M.\ Putinar, {\em Spherical 
contractions and interpolation problems on the unit ball}, J.\ Reine 
Angew.\ Math.\ \textbf{542} (2002), 219--236.

\bibitem{FF} C.\ Foias and A.\ Frazho, {\em The Commutant Lifting 
Approach to Interpolation Problems}, Oper.\ Th.\  Adv.\ Appl.\ 
\textbf{OT44}, Birkh\"auser, Basel, 1990.

\bibitem{FFGK}  C.\ Foias, A.E.\ Frazho, I.\ Gohberg, and M.A.\ 
Kaashoek, {\em Metric Constrained Interpolation, Commutant Lifting and 
Systems}, Oper.\ Th.\  Adv.\ Appl.\ 
\textbf{OT100}, Birkh\"auser, Basel, 1998.

\bibitem{HKMcC1} J.W.\ Helton, I.\ Klep, and S.A.\ McCullough, 
{\em Proper analytic free maps}, J.\ funct.\ anal.\ \textbf{260} 
(2011) no.\ 5, 1476--1490.

\bibitem{HKMcC2} J.W.\ Helton, I.\ Klep, and S.A.\ McCullough, 
{\em Analytic mappings between noncommutative pencil balls},
J.\ Math.\ Anal.\ Appl. \textbf{376} (2011) no.\ 2, 407--428.

\bibitem{HMcC} J.W.\ Helton and S.A.\ McCullough, {\em Every convex 
free basic semi-algebraic set has an LMI representation}, Ann.\ of 
Math.\ \textbf{176} (2012) no.\ 2, 979--1013.


\bibitem{HZ} J.W.\ Helton and A.H.\ Zemanian, {\em Cascade loading of 
passive Hilbert ports}, SIAM J.\ Appl.\ Math.\ \textbf{23} (1972), 
292--306.

\bibitem{Hormander} L.\ Hormander, {\em An Introduction to Complex 
Analysis in Several Variables}, North-Holland, Amsterdam, 1973.

\bibitem{JKMcC} M.T.\ Jury, G.\ Knese, and S.\ McCullough, {\em Agler 
interpolation families of kernels}, Oper.\ Matrices \textbf{3} (2009) 
no.\ 4, 571--587.

\bibitem{KVV2005}  D.S.\ Kaliuzhnyi-Verbovetskyi, {\em Carath\'eodory 
interpolation on the noncommutative polydisk}, J.\ Funct.\ Anal.\ 
\textbf{229} (2005), 241--276.


\bibitem{KVV-book} D.S.\ Kaliuzhnyi-Verbovetskyi and V.\ Vinnikov, 
{\em Foundations of Noncommutative Function Theory}, Mathematical 
Surveys and Monographs \textbf{199}, Amer.\ Math.\ Soc., Providence, 
2014.

\bibitem{KKY}
V.~Katsnelson, A.~Kheifets, and P.~Yuditskii, {\it An abstract
interpolation problem and the extension theory of isometric
operators}, in: {\it Operators in function
spaces and problems in function theory} (V.~A.~Marchenko, ed.), Naukova
Dumka, Kiev, 1987, pp. 83--96. English translation in
{\it Topics in Interpolation Theory} (H.~Dym,
B.~Fritzsche, V.~Katsnelson and B.~Kirstein, eds.), 
 Oper.\ Theory Adv.\ Appl.\ {\bf OT 95}, Birkh\"auser, Basel, 1997, pp. 283--298.

\bibitem{Kheifets}
A.~Kheifets, {\it The abstract interpolation problem and
applications}, in: \emph{ Holomorphic spaces} (D.~Sarason, S.~Axler,
J.~McCarthy, eds.), Math. Sci. Res. Inst. Publ. {\bf 33},
Cambridge Univ. Press, Cambridge, 1998, pp. 351--379.



\bibitem{KY}
A.~Kheifets and P.~Yuditskii,
{\it An analysis and extension of V. P. Potapov's approach to
interpolation problems with applications to the generalized
bi-tangential Schur-Nevanlinna-Pick problem and $J$-inner-outer
factorization},
in: {\it Matrix and operator valued functions} (I.~Gohberg
and L.~Sakhnovich, eds.),  Oper.\ Theory Adv.\ Appl.\
{\bf OT 72}, Birkh\"auser, Basel, 1994, pp. 133--161.

\bibitem{LMP}  S.\ Lata, M.\ Mittal and V.I.\ Paulsen, {\em An 
operator algebraic proof of Agler's factorization theorem}, Proc.\ 
Amer.\ Math.\ Soc.\ \textbf{137} no.\ 11 (2009), 3741--3748.

\bibitem{Livsic1} M.S.\ Liv\v{s}ic, {\em On a class of linear 
operators in Hilbert space}, Amer.\ Math.\ Soc.\ Transl.\ (2) 
\textbf{13} (1960), 61--83 [= (in Russian) Mat.\ Sb.\ (N.S.) 
\textbf{19} (\textbf{61}) (1946), 239--264.]

\bibitem{Livsic2} M.S.\ Liv\v{s}ic, {\em Isometric operators with 
equal deficiency indices, quasi-unitary operators},  Amer.\ Math.\ Soc.\ Transl.\ (2) 
\textbf{13} (1960) 85--103 [= (in Russian) Dokl. Akad.\ Nauk SSR 
(N.S.) \textbf{58} (1947), 13--15].

\bibitem{McCS} S.\ McCullough and S.\ Sultanic, {\em Ersatz commutant 
lifting with test functions},  Complex Anal.\ Oper.\ Theory 
\textbf{1} (2007), 581--620.

\bibitem{MP} M.\ Mittal and V.I.\ Paulsen, {\em Operator algebras of 
functions}, J.\ Funct.\ Anal.\ \textbf{258} (2010), 3195--3225.

\bibitem{MS1998}  P.S.\ Muhly and B.\ Solel, 
{\em Tensor algebras over $C^{*}$-correspondences: representations, 
dilations, and $C^{*}$-envelopes}, J.\ Funct.\ Anal.\ \textbf{158} 
(1998), 389--457.

\bibitem{MS2004}  P.S.\ Muhly and B.\ Solel, {\em Hardy algebras, 
$W^{*}$-correspondences and interpolation theory}, Math.\ Ann.\ 
\textbf{330} (2004), 353--415.

\bibitem{MS2008}  P.S.\ Muhly and B.\ Solel, {\em Schur class 
operator functions and automorphisms of Hardy algebras}, Documenta 
Math.\ \textbf{13} (2008), 365--411.

\bibitem{MS2012}  P.S.\ Muhly and B.\ Solel, {\em Absolute 
continuity, Interpolation and the Lyapunov order}, Integr.\ Equ.\ 
Oper.\ Theory \textbf{72} (2012) no.\ 1, 91--101.

\bibitem{MS2013}  P.S.\ Muhly and B.\ Solel, {\em Tensorial function 
theory:  from Berezin transforms to Taylor's Taylor series and back}, 
Integr.\ Equ.\ Oper. Theory \textbf{76} (2013), 463--508.

\bibitem{Nevanlinna} R.\ Nevanlinna, {\em \"Uber beschr\"ankte 
Funktione n, die in gegebenen Punkten vorgeschriebene Werte 
annehmen}, Ann.\ Acad.\ Sci.\ Fenn., \textbf{ 13:1} (1919), 71 pp.

\bibitem{Norton} R.\ Norton, {\em Comparing two generalized 
noncommutative Nevanlinna-Pick theorems}, arXiv:1512.01308v1.

\bibitem{Paulsen}  V.\ Paulsen, {\em Completely Bounded Maps and 
Operator Algebras}, Cambridge Studies in Advanced Mathematics 
\textbf{78}, 2002.

\bibitem{Pick} G.\ Pick,{\em  \"Uber die Beschr\"ankungen analytischer 
Funktionen, welche durch vorgegebene Funktionswerte bewirkt sind},
Math.\ Ann.\ \textbf{77} (1916), 7--23; 
Math.\ Ann.\ \textbf{78} (1918), 270--275.



\bibitem{PV} M.\ Popa and V.\ Vinnikov,  {\em Non-commutative 
functions and the non-commutative free L\'evy-Hin\v cin formula}, 
Advances in Mathematics \textbf{236} (2013), 131--157.


\bibitem{P98} G.\ Popescu, {\em Interpolation problems in
several variables}, J.\ Math.\ Anal.\ Appl.\ \textbf{227} (1998), 227--250.

\bibitem{Quiggen} P.\ Quiggen, {\em For which reproducing kernel 
Hilbert spaces is Pick's theorem true}, Integral Equations and 
Operator Theory \textbf{16} (1993) no.\ 2, 244--266.

\bibitem{RW} I.\ Raeburn and D.P.\ Williams, {\em Morita Equivalence 
and Continuous-Trace $C^{*}$-Algebras}, Mathematical Surveys and 
Monographs \textbf{60}, Amer.\ Math.\ Soc., Providence, 1998.


\bibitem{RR85} M.\ Rosenblum  and J.\ Rovnyak, {\em Hardy 
Classes and Operator Theory}, Oxford Univ.\ Press, 1985 (reprinted by 
Dover).

\bibitem{Rudin} W.\ Rudin, {\em Functional Analysis Second Edition}. 
McGraw-Hill, 1991,

\bibitem{Sarason} D.\ Sarason, {\em Generalized interpolation in 
$H^{\infty}$}, Trans.\ Amer.\ Math.\ Soc.\ \textbf{127} (1967), 
179--203.

\bibitem{NK} B.\ Sz.-Nagy and A.\ Koranyi, {\em Relations d'un 
probl\`eme  de Nevanlinna et Pick avec la th\'eorie des op\'erateurs 
de l'espace hilbertien}, Acta Math.\ Sci.\ Hungar.\ \textbf{7} 
(1956), 295--302.

\bibitem{Takesaki} M.\ Takesaki, {\em Theory of Operator Algebra I}, 
Encyclopaedia of Mathematical Sciences \textbf{124}: Operator Algebras and 
Non-Commutative Geometry \textbf{V}, Springer, 1979.


\bibitem{Taylor}  J.L.\ Taylor, {\em A general framework for a 
multi-operator functional calculus}, Adv. Math.\ \textbf{9} (1972), 
183--252.

\bibitem{Tomerlin}  A.T.\ Tomerlin, {\em Products of Nevanlinna-Pick 
kernels and operator colligations},  Integral Equations and Operator 
Theory \textbf{38} (2000), 350--356.

\bibitem{Voiculescu1} D.\ Voiculescu, {\em Free analysis questions I. 
Duality transform for the coalgebra of $\partial_{X:b}$}, Int.\ Math.\ 
Res.\ Not.\ \textbf{16} (2004), 703--822.

\bibitem{Voiculescu2} D.-V.\ Voiculescu, {\em Free analysis questions 
II:  the Grassmannian completion and the series expansions at the 
origin}, J.\ Reine Angew.\ Math.\ \textbf{645} (2010), 155--236.
    
    
\end{thebibliography}
\end{document}